\newtheoremstyle{plain}
  {\medskipamount}
  {\smallskipamount}
  {\slshape}
  {0pt}
  {\bfseries}
  {.}
  { }
  {\thmname{#1}\thmnumber{ #2}{\normalfont\thmnote{ (#3)}}}
\newtheorem{theorem}             {Theorem}[section]
\newtheorem{lemma}     [theorem] {Lemma}
\newtheorem{definition}[theorem] {Definition}   
\newtheorem{proposition}[theorem]{Proposition}   
\newtheorem{remark}[theorem]	 {Remark}
\newtheorem{problem}   [theorem] {Problem}   
\newtheorem{claim}[theorem]      {Claim}
\newtheorem*{theorem*}{Theorem}
\newtheorem*{lemma*}{Lemma}
\def\o{\mathop{\text{\rm O}}\nolimits}
\def\sup{\mathop{\text{\rm sup}}\nolimits}
\def\dist{\mathop{\text{\rm dist}}\nolimits}
\def\im{\mathop{\text{\rm im}}\nolimits}
\def\bbn{{\mathbb N}}
\def\calf{{\mathcal F}}
\def\calt{{\mathcal T}}
\let\oldepsilon\epsilon
\let\epsilon\varepsilon
\def\sup{\mathop{\text{\rm sup}}\nolimits}
\def\dist{\mathop{\text{\rm dist}}\nolimits}
\def\im{\mathop{\text{\rm im}}\nolimits}
\newcommand{\tsete}{T_5^7}
\newcommand{\toito}{T_5^8}
\newcommand{\tnove}{T_5^9}
\newcommand{\tdez}{T_5^{10}}
\newcommand{\tonze}{T_5^{11}}
\newcommand{\tdoze}{T_5^{12}}
\newcommand{\trianglen}[1][n]{\vec C^3_{#1}}
\newcommand{\set}[2][]{#1\{#2#1\}}
\newcommand{\ceil}[2][]{#1\lceil#2#1\rceil}
\newcommand{\floor}[2][]{#1\lfloor#2#1\rfloor}
\DeclareMathOperator{\Tr}{Tr}
\DeclareMathOperator{\id}{id}
\DeclareMathOperator{\Hom}{Hom}
\DeclareMathOperator{\Aut}{Aut}
\newcommand{\comp}{\mathbin{\circ}}
\newcommand{\prob}[1]{\mathbb{P}\left[#1\right]}
\newcommand{\expect}[1]{\mathbb{E}\left[#1\right]}
\newcommand{\Homp}[1]{\Hom^+(\mathcal{A}^{#1},\mathbb{R})}
\newcommand{\Csem}[1]{\mathcal{C}_{\text{\rm sem}}(\mathcal{F}^{#1})}
\newcommand{\cC}[1]{\mathcal{C}(\mathcal{F}^{#1})}
\newcommand{\phiqr}{\phi_{\operatorname{qr}}}
\newcommand{\phiR}{\phi_{\operatorname{R}}}
\newcommand{\phiC}{\phi_{\vec C_3}}
\newcommand{\as}{\;\;\text{a.s.}}
\newcommand{\tind}{\mathop{t_{\operatorname{ind}}}}
\title{On the maximum density of fixed strongly connected subtournaments}
\begin{document}

\author{%
  Leonardo N.~Coregliano%
  \thanks{Institute of Mathematics and Statistics, Universidade de S\~ao Paulo,
    supported by Funda\c c\~ao de Amparo \`a Pesquisa do Estado de S\~ao Paulo
    (FAPESP) under grant no.~2013/23720-9}%
  \and%
  Roberto F.~Parente%
  \thanks{Institute of Mathematics and Statistics, Universidade de S\~ao Paulo,
	Supported by CNPq (140987/2012-6).}
  \and%
  Cristiane M.~Sato%
  \thanks{Center of Mathematics, Computing and Cognition, Universidade Federal do
    ABC. Partially supported by FAPESP (Proc.2103/03447-6).}%
}

\date{\today}


\pagestyle{plain}

\thispagestyle{empty}

\onehalfspacing

\maketitle

\begin{abstract}
  We study the density of fixed strongly connected subtournaments on~$5$ vertices in
  large tournaments. We determine the maximum density asymptotically for five
  tournaments as well as unique extremal sequences for each tournament. As a
  byproduct we also characterize tournaments that are recursive blow-ups of
  a~$3$-cycle as tournaments that avoid three specific tournaments of size~$5$.
\end{abstract}

\section{Introduction}

A \emph{locally transitive} tournament is a tournament~$T$ such that the
outneighbourhood~$N^+(v)=\{w\in V(T):vw\in A(T)\}$ and the
inneighbourhood~$N^-(v)=\{w\in V(T):wv\in A(T)\}$ of every vertex~$v\in V(T)$ are both
transitive. Alternatively a locally transitive tournament is a tournament that has no
occurrences of~$W_4$ nor of~$L_4$, where~$W_4$ and~$L_4$ are the tournaments of
size~$4$ with outdegree sequences~$(1,1,1,3)$ and~$(0,2,2,2)$ respectively. On the
other hand, a \emph{balanced} tournament is a tournament with an odd number of
vertices~$2n+1$ where each vertex has outdegree~$n$. With these definitions, there is
only one locally transitive balanced tournament~$R_{2n+1}$ of order~$2n+1$ up to
isomorphism called \emph{carousel tournament} (see Figure~\ref{fig:R}). This
tournament is defined
by~$V(R_{2n+1})=\mathbb{Z}_{2n+1} = \{0,1,\ldots,2n\}$ and
\begin{align*}
  A(R_{2n+1}) & = \{(v,(v+i)\bmod (2n+1)) : v\in V(R_{2n+1})\land i\in[n]\},
\end{align*}
where~$[n]=\{1,2,\ldots,n\}$.

\begin{figure}[ht]
  \begin{center}
    \tikzsetnextfilename{R}
\begin{tikzpicture}
\begingroup
\def\radius{2cm}
\def\shorten{0.2cm}
\def\step{5cm}
\foreach \n in {2,3,4}{
  \pgfmathsetmacro{\tn}{2*\n}
  \pgfmathsetmacro{\N}{\tn+1}
  \foreach \i in {0,...,\tn}{
    \pgfmathsetmacro{\angle}{\i*360/\N+90}
    \filldraw ($(\angle:\radius) + (\n*\step,0cm)$) circle (2pt);

    \foreach \j in {1,...,\n}{
      \pgfmathsetmacro{\angletwo}{\angle-\j*360/\N}
      \draw[shorten >=\shorten,shorten <=\shorten,arrows={-latex}]
      ($(\angle:\radius) + (\n*\step,0cm)$) --
      ($(\angletwo:\radius) + (\n*\step,0cm)$);
    }

    \node[below] at (\n*\step,-\radius) {$R_{\pgfmathprintnumber{\N}}$};
  }
}

\endgroup
\end{tikzpicture}
    \caption{Carousel tournaments~$R_{2n+1}$ for~$n=2,3,4$.}
    \label{fig:R}
  \end{center}
\end{figure}
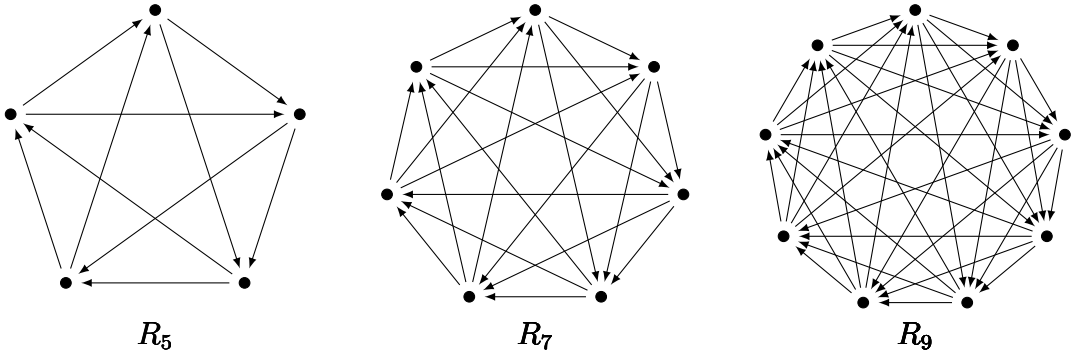

In~1964, Colombo~\cite{C:SuiCircuitiNeiGrafiCompleti} proved that the maximum number
of oriented~$4$-cycles in a tournament of odd size is attained by the carousel
tournament~$R_{2n+1}$. One year later, Beineke and~Harary~\cite{BeHa65} extended this
result by proving that the carousel tournament~$R_{2n+1}$ also maximizes the number
of strongly connected subtournaments of a fixed size in a tournament of odd
size\footnote{Both results also provided a maximizer for even size, but we refrain
  from defining it here.}.

The problem of maximizing subtournaments in a tournament of fixed size is in general
a hard problem and determining which tournaments are extremal is an even harder
one. However, in many cases, the easier problem of maximizing the asymptotic density
is completely solvable, that is, not only can we find the value of the maximum
asymptotic density, but we can also characterize all extremal families of
tournaments.

Such characterization of an asymptotically extremal family~$(T_n)_{n\in\mathbb{N}}$
generally comes in the flavour of saying that~$(T_n)_{n\in\mathbb{N}}$ converges to a
certain ``limit object'', that is, for every fixed tournament~$T$, we have
\begin{align*}
  \lim_{n\to\infty}p(T;T_n) & = \phi(T),
\end{align*}
where~$p(T;T_n)$ denotes the unlabelled density of~$T$ in~$T_n$ (which is a number
in~$[0,1]$)\footnote{Of course that if~$(T_n)_{n\in\mathbb{N}}$ maximizes the
  asymptotic density of~$T$, then we will know the value
  of~$\lim_{n\to\infty}p(T;T_n)$, but the strength of this characterization is that
  extremality for~$T$ forces the densities of other tournaments~$T'\neq T$ to
  converge to specific values.}.

There are basically two approaches to defining what is a ``limit object''. The first
approach is to define the limit object to be semantically close to the underlying
object, which has been carried out successfully for several combinatorial objects
such as graphs~\cite{LoSz06}, uniform hypergraphs~\cite{ES:MeasureTheoreticApproach},
digraphs~\cite[Section~9]{DJ:GraphLimitsAndExchangeableRandomGraphs}\footnote{As
  expected, the limit object for a tournament is just a special case of the limit
  object for a digraph.} and
permutations~\cite{HKMRS:LimitsOfPermutationSequences}. The second approach is to
define the limit object syntactically, that is, to study what sorts of properties
must a sequence~$(\phi(T))_T$ satisfy if it is obtained as~$\forall
T,\lim_{n\to\infty}p(T;T_n)=\phi(T)$ for a sequence of
objects~$(T_n)_{n\in\mathbb{N}}$. This latter approach is precisely the thrust of the
theory of flag algebras~\cite{Raz07} and in what follows we will mostly use this language.

In this paper, we study the problem of maximizing the asymptotic density of a
\emph{single} fixed strongly connected tournament~$T$ of size~$5$, that is, we are
interested in computing
\begin{align*}
  \lim_{n\to\infty} \max\{p(T;T_n) : \lvert V(T_n)\rvert = n\},
\end{align*}
for~$T\in\{T_5^7,T_5^8,T_5^9,T_5^{10},T_5^{11},T_5^{12}\}$ (see
Figure~\ref{fig:strongT5})\footnote{It is easy to see that the sequence of maximum
  values has decreasing tail, hence it is convergent.} and characterizing
sequences~$(T_n)_{n\in\mathbb{N}}$ such that~$\lim_{n\to\infty}p(T;T_n)$ is equal to
this value. Note that this differs from the result of Beineke and~Harary in that they
proved that for every~$k\in\mathbb{N}$, we have
\begin{align*}
  \max\left\{\sum_{T\in\mathcal{S}_k}p(T;T_{2n+1}) :
  \lvert V(T_{2n+1})\rvert = 2n+1\right\}
  & =
  \sum_{T\in\mathcal{S}_k}p(T;R_{2n+1}),
\end{align*}
where~$\mathcal{S}_k$ denotes the set of all strongly connected tournaments of
size~$k$.

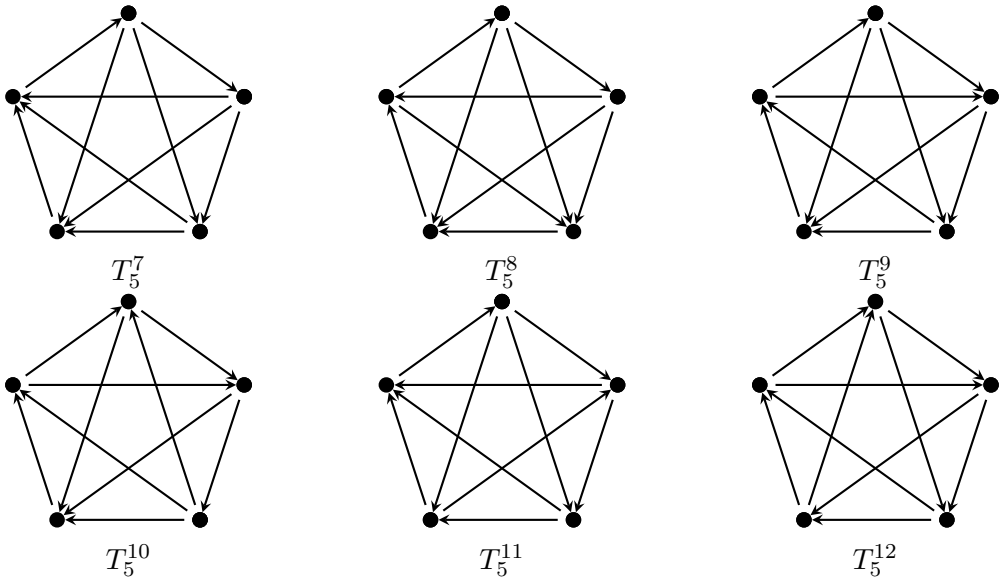
\begin{figure}[ht]
  \begin{center}
    \tikzstyle{every node}=[circle, draw, fill=black!100, inner sep=0, minimum width=5pt]
\begin{subfigure}[t]{0.3\textwidth}
  \begin{center}
    \tikzsetnextfilename{strongT7}
	\begin{tikzpicture}[thick,scale=0.8,shorten >=3pt,shorten <=3pt]
      \draw \foreach \x in {18,90,162,...,306}
            {
              (\x:2) node {}  edge[->,>=stealth] (\x-72:2)
            };

            \draw 
                {
                  (90:2) node {}  edge[->,>=stealth] (90-2*72:2)
                  (90:2) node {}  edge[->,>=stealth] (90-3*72:2)
                  (18:2) node {}  edge[->,>=stealth] (18-2*72:2)
                  (18:2) node {}  edge[->,>=stealth] (18-3*72:2)
                  (306:2) node {}  edge[->,>=stealth] (306-2*72:2)
                };
    \end{tikzpicture}
    \caption*{$\tsete$}
  \end{center}
\end{subfigure}
\begin{subfigure}[t]{0.3\textwidth}
  \begin{center}
    \tikzsetnextfilename{strongT8}
    \begin{tikzpicture}[thick,scale=0.8,shorten >=3pt,shorten <=3pt]
      \draw \foreach \x in {18,90,162,...,306}
            {
              (\x:2) node {}  edge[->,>=stealth] (\x-72:2)
            };

            \draw 
                {
                  (90:2) node {}  edge[->,>=stealth] (90-2*72:2)
                  (90:2) node {}  edge[->,>=stealth] (90-3*72:2)
                  (18:2) node {}  edge[->,>=stealth] (18-2*72:2)
                  (18:2) node {}  edge[->,>=stealth] (18-3*72:2)
                  (162:2) node {}  edge[->,>=stealth] (162+2*72:2)
                };
    \end{tikzpicture}
    \caption*{$\toito$}
  \end{center}
\end{subfigure}
\begin{subfigure}[t]{0.3\textwidth}
  \begin{center}
    \tikzsetnextfilename{strongT9}
	\begin{tikzpicture}[thick,scale=0.8,shorten >=3pt,shorten <=3pt]
	  \draw \foreach \x in {18,90,162,...,306}
	        {
		      (\x:2) node {}  edge[->,>=stealth] (\x-72:2)
	        };

	        \draw 
	            {
		          (90:2) node {}  edge[->,>=stealth] (90-2*72:2)
		          (90:2) node {}  edge[->,>=stealth] (90-3*72:2)
		          (18:2) node {}  edge[->,>=stealth] (18-2*72:2)
		          (306:2) node {}  edge[->,>=stealth] (306-2*72:2)
		          (162:2) node {}  edge[->,>=stealth] (162-2*72:2)
	            };
	\end{tikzpicture}
	\caption*{$\tnove$}
  \end{center}
\end{subfigure}
\begin{subfigure}[t]{0.3\textwidth}
  \begin{center}
    \tikzsetnextfilename{strongT10}
	\begin{tikzpicture}[thick,scale=0.8,shorten >=3pt,shorten <=3pt]
	  \draw \foreach \x in {18,90,162,...,306}
	        {
		      (\x:2) node {}  edge[->,>=stealth] (\x-72:2)
	        };

	        \draw 
	            {
		          (90:2) node {}  edge[->,>=stealth] (90-3*72:2)
		          (18:2) node {}  edge[->,>=stealth] (18-2*72:2)
		          (306:2) node {}  edge[->,>=stealth] (306-2*72:2)
		          (306:2) node {}  edge[->,>=stealth] (306-3*72:2)
		          (162:2) node {}  edge[->,>=stealth] (162-2*72:2)
	            };
	\end{tikzpicture}
	\caption*{$\tdez$}
  \end{center}
\end{subfigure}
\begin{subfigure}[t]{0.3\textwidth}
  \begin{center}
    \tikzsetnextfilename{strongT11}
	\begin{tikzpicture}[thick,scale=0.8,shorten >=3pt,shorten <=3pt]
	  \draw \foreach \x in {18,90,162,...,306}
	        {
		      (\x:2) node {}  edge[->,>=stealth] (\x-72:2)
	        };

	        \draw 
	            {
		          (90:2) node {}  edge[->,>=stealth] (90-2*72:2)
		          (90:2) node {}  edge[->,>=stealth] (90-3*72:2)
		          (18:2) node {}  edge[->,>=stealth] (18+2*72:2)
		          (306:2) node {}  edge[->,>=stealth] (306-2*72:2)
		          (234:2) node {}  edge[->,>=stealth] (234-3*72:2)
	            };
	\end{tikzpicture}
	\caption*{$\tonze$}
  \end{center}
\end{subfigure}
\begin{subfigure}[t]{0.3\textwidth}
  \begin{center}
    \tikzsetnextfilename{strongT12}
	\begin{tikzpicture}[thick,scale=0.8,shorten >=3pt,shorten <=3pt]
	  \draw \foreach \x in {18,90,162,...,306}
		    {
			  (\x:2) node {}  edge[->,>=stealth] (\x-72:2)
		    };

		    \draw \foreach \x in {18,90,162,...,306}
		          {
			        (\x:2) node {}  edge[->,>=stealth] (\x-144:2)
		          };
	\end{tikzpicture}
	\caption*{$\tdoze$}
  \end{center}
\end{subfigure}

    \caption{Strongly connected tournaments of size~$5$.}
    \label{fig:strongT5}
  \end{center}
\end{figure}

\medskip

For the asymptotic study of tournaments, we have a weaker notion of balanced
tournament. Namely a sequence of tournaments~$(T_n)_{n\in\mathbb{N}}$ of increasing
sizes is said to be \emph{asymptotically balanced} if all except for~$o(\lvert
T_n\rvert)$ vertices of~$T_n$ have outdegree~$(1/2+o(1))\lvert T_n\rvert$.

In the theory of flag algebras, the notion of a limit object is captured by a
positive homomorphism~$\phi\in\Hom^+(\mathcal{A}^0,\mathbb{R})$ (see
Section~\ref{subsec:flagbasic}) and we have the analogous notion of a \emph{balanced
  homomorphism}, which a homomorphism that is the limit of an asymptotically balanced
sequence of tournaments.

In this paper, we prove extremality theorems involving three important balanced
homomorphisms.

The first homomorphism is the \emph{quasi-random homomorphism}~$\phiqr$, which is the main
topic of study of the theory of tournament quasi-randomness. 

For every~$n\in\mathbb{N}$, let~$\bm{R_{n,1/2}}$ be the random tournament of size~$n$
where each arc orientation is present with probability~$1/2$ independently of all
other pairs of vertices. It is a straightforward exercise in binomial concentration
to prove that
\begin{align*}
  \forall T\text{ tournament},
  \lim_{n\to\infty}p(T;\bm{R_{n,1/2}}) & = \expect{p(T;\bm{R_{\lvert T\rvert,1/2}})}
  = \frac{\lvert T\rvert!}{\lvert\Aut(T)\rvert 2^{\binom{\lvert T\rvert}{2}}},
\end{align*}
that is, the sequence~$(\bm{R_{n,1/2}})_{n\in\mathbb{N}}$ of random tournaments is
convergent with probability~$1$. The quasi-random homomorphism~$\phiqr$ is then
defined as the almost sure limit of this sequence.

The theory of quasi-randomness started with the study of quasi-random graphs in the
seminal papers by Thomason~\cite{T:PseudoRandomGraphs} and Chung, Graham
and~Wilson~\cite{ChGrWi88} and now has branches in several other theories such as
uniform hypergraphs~\cite{CG:QuasiRandomHypergraphs,
  C:QuasiRandomHypergraphsRevisited, BR:NoteOnUpperDensityOfQuasiRandomHypergraphs},
graph orientations~\cite{G:QuasiRandomOrientedGraphs},
permutations~\cite{C:QuasirandomPermutations,
  KP:QuasirandomPermutationsAreCharacterized} and
tournaments~\cite{CG:QuasiRandomTournaments,KS:ANoteOnEvenCyclesAndQuasirandomTournaments,
  Na15}. We do not attempt to provide a detailed review of the theory of
quasi-randomness here (see~\cite{KS:PseudoRandomGraphs} for a survey), but its main
gist is that there are several a priori different properties of a
homomorphism~$\phi\in\Hom^+(\mathcal{A}^0,\mathbb{R})$, called quasi-random
properties, that force~$\phi=\phiqr$.

For instance, an example of a quasi-random property was proven by the first author
and Razborov~\cite{NaRaz15a}: for every~$k\geq 4$, if~$\phi$ minimizes the density of
the transitive tournament of size~$4$, then~$\phi=\phiqr$.

Regarding~$\phiqr$, we prove the following result.

\begin{theorem}\label{thm:quasirandom}
  We have
  \begin{align*}
    \lim_{n \to \infty}\max\{p(\toito;T_n) : \lvert T_n\rvert = n\} & = \frac{15}{128}.
  \end{align*}
  Furthermore, if~$(T_n)_{n\in\mathbb{N}}$ is a sequence of tournaments of increasing
  sizes, then~$\lim_{n\to\infty} p(\toito,T_n) = 15/128$ if and
  only if~$(T_n)_{n\in \bbn}$ is quasi-random, that is, if and only
  if~$(T_n)_{n\in\mathbb{N}}$ converges to~$\phiqr$.
\end{theorem}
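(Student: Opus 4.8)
\emph{Proof plan.}

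The \emph{if} direction requires no new work. If $(T_n)_{n\in\bbn}$ converges to $\phiqr$ then, since $\lvert\Aut(\toito)\rvert=1$ (which one reads off directly from Figure~\ref{fig:strongT5}), the displayed formula for $p(T;\phiqr)$ gives
\begin{align*}
  \lim_{n\to\infty}p(\toito;T_n)=p(\toito;\phiqr)=\frac{5!}{\lvert\Aut(\toito)\rvert\,2^{\binom{5}{2}}}=\frac{120}{1024}=\frac{15}{128}.
\end{align*}
Hence the whole statement is equivalent to the single inequality $\phi(\toito)\le 15/128$ for every $\phi\in\Homp{0}$, together with the claim that equality forces $\phi=\phiqr$; the value $15/128$ of the maximization problem then drops out by the usual compactness argument (an accumulation homomorphism of a near-optimal sequence caps the limit at $15/128$, and $(\bm{R_{n,1/2}})_{n\in\bbn}$ attains it).

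For the inequality I would use the semidefinite (Cauchy--Schwarz) method of flag algebras. Working in the span of tournaments on $5$ vertices (lifting to $6$ vertices if the certificate needs it), I would choose a short list of small types $\sigma_i$ --- starting with the one-vertex type and one or two of the $3$-vertex types --- and, for each, a positive semidefinite matrix $Q_{\sigma_i}$ indexed by the corresponding flags; after diagonalizing, this produces elements $q_i\in\mathcal{A}^{\sigma_i}$ (linear combinations of flags on $3$ or $4$ vertices) with
\begin{align*}
  \frac{15}{128}-\toito=\sum_i\llbracket q_i^2\rrbracket_{\sigma_i}
\end{align*}
as an identity in $\cala^0$. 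Applying $\phi$ and using that $\phi\in\Homp{0}$ is nonnegative on expressions $\llbracket q^2\rrbracket_\sigma$ gives $\phi(\toito)\le 15/128$ at once. The matrices come from an SDP solver and are rounded to exact rationals, and the identity is checked by expanding both sides in the $5$-vertex basis, a finite if bulky computation. The one subtle design choice is that the certificate be \emph{exactly tight} at $\phiqr$, i.e.\ $\phiqr(\llbracket q_i^2\rrbracket_{\sigma_i})=0$ for every $i$ --- an over-constrained certificate still yields the bound but is useless for the uniqueness part (and note that no slack term $c_H H$ with $c_H>0$ is admissible here, since every $5$-vertex tournament has positive density under $\phiqr$, so the representation must be a pure sum of averaged squares).

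Now suppose $\phi(\toito)=15/128$. Since each summand $\phi(\llbracket q_i^2\rrbracket_{\sigma_i})$ is nonnegative and they sum to $0$, every one of them vanishes; by Cauchy--Schwarz inside each $\mathcal{A}^{\sigma_i}$ this forces $\phi(\llbracket q_i\cdot r\rrbracket_{\sigma_i})=0$ for all $r\in\mathcal{A}^{\sigma_i}$, i.e.\ a concrete list of linear identities on $\phi$. I would then translate this list into combinatorial language: the one-vertex type yields that $\phi$ is balanced (out-degrees asymptotically regular, equivalently $p(\trianglen[3];\phi)=1/4$), and the $3$-vertex type(s) yield a second-order ``co-degree regularity'' statement for pairs of vertices. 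Together these are one of the standard equivalent forms of tournament quasi-randomness --- in particular they imply that $\phi$ attains the minimum density $3/8$ of the transitive tournament $\mathrm{Tr}_4$ on $4$ vertices --- so by the characterization of~\cite{NaRaz15a} (or the original analysis of Chung and Graham~\cite{CG:QuasiRandomTournaments}) we conclude $\phi=\phiqr$, as desired; the converse inclusion is the \emph{if} direction already handled.

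The crux, and the step I expect to be hardest, is the coupling between the last two paragraphs: the squared terms must be chosen so that they assemble with no slack into $\tfrac{15}{128}-\toito$ \emph{and} so that the joint vanishing of the forms $q_i$ is strong enough to single out $\phiqr$ rather than some larger balanced family (e.g.\ iterated blow-ups of $\trianglen[3]$, which are balanced but far from quasi-random). Producing a rational certificate whose tight set is exactly $\{\phiqr\}$ --- equivalently, with the correct kernels --- is where the effort goes; and if the extracted identities do not literally match an existing quasi-random characterization, the fallback is to prove by hand the short implication that a balanced homomorphism satisfying our particular co-degree identity must equal $\phiqr$.
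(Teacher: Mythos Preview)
Your plan is essentially the paper's: an SDP certificate for the upper bound, kernel analysis of the squared terms for uniqueness, and an appeal to Chung--Graham to identify the extremal homomorphism with $\phiqr$. Your observation that the certificate must be slack-free (since $\phiqr$ charges every $5$-vertex tournament) is exactly right and is implicit in the paper's construction.

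The one place your plan diverges from the paper is in the choice of types for the uniqueness step. You propose the one-vertex type plus one or two $3$-vertex types, hoping to extract balancedness and then some co-degree condition. The paper instead uses four types --- $1$, the arc type $A$, $\Tr_3^*$, and $\vec C_3^*$ --- and the entire uniqueness argument runs through the type-$A$ block alone: that block is rank one, $Q(\toito,A)=\tfrac{99}{3200}\,vv^\top$ with $v=(1,-1,-1,1)$ indexed by $(I^A,\vec C_3^A,\Tr_3^A,O^A)$, and the vanishing of $\bm{\phi^A}(F(v))$ rearranges immediately to $\bm{\phi^A}(O^A+I^A)=\tfrac12$ almost surely, which is precisely Chung--Graham's property~$P_4$. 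No separate balancedness step, no co-degree manipulation, no detour through $\Tr_4$: one eigenvector gives one equation which is already a known quasi-random characterization. The type-$1$ and type-$3$ blocks are there to make the bound $15/128$ feasible but play no role in pinning down $\phiqr$.

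So your fallback paragraph --- proving by hand that balancedness plus your extracted co-degree identity forces $\phiqr$ --- is entirely avoidable if you include the $2$-vertex arc type in your list; the certificate then comes pre-aligned with an existing quasi-random characterization and the uniqueness is a one-liner.
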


The second homomorphism studied is the \emph{carousel homomorphism}~$\phiR$, which is
the limit of the sequence~$(R_{2n+1})_{n\in\mathbb{N}}$ of carousel
tournaments. Analogously to quasi-random properties, the quasi-carousel
properties~\cite{Na15} are properties of a
homomorphism~$\phi\in\Hom^+(\mathcal{A}^0,\mathbb{R})$ that force~$\phi=\phiR$.

In analogy with a locally transitive tournament, a
homomorphism~$\phi\in\Hom^+(\mathcal{A}^0,\mathbb{R})$ satisfying~$\phi(W_4+L_4)=0$
is called \emph{locally transitive}. Perhaps the most important quasi-carousel
property says that~$\phiR$ is the only homomorphism that is both balanced and locally
transitive.

In this paper, we prove the following result involving~$\phiR$.

\begin{theorem}\label{thm:carousel}
  We have
  \begin{align}
    \lim_{n\to\infty}\max\{p(\tsete;T_n) : \lvert T_n\rvert = n\} & = \frac{5}{16};
    \nonumber
    \\
    \lim_{n\to\infty}\max\{p(\tdoze;T_n) : \lvert T_n\rvert = n\} & = \frac{1}{16}.
    \label{eq:t12}
  \end{align}
  Furthermore, a sequence of tournaments~$(T_n)_{n\in\mathbb{N}}$ of increasing sizes
  is extremal for any of~$\tsete$ or~$\tdoze$ if and only if it is quasi-carousel,
  that is, if and only if~$(T_n)_{n\in\mathbb{N}}$ converges to~$\phiR$.
\end{theorem}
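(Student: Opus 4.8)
The plan is to deduce the whole statement from two flag‑algebra inequalities, one for $\tsete$ and one for $\tdoze$, together with an analysis of their equality cases. Writing $c_{\tsete} = 5/16$ and $c_{\tdoze} = 1/16$, I would first produce, inside the flag algebra $\mathcal{A}$ of tournaments, for each $T \in \{\tsete, \tdoze\}$ an identity of the form
\[
  c_T - T \;=\; \lambda_T\Bigl(\tfrac14 - \vec{C}_3\Bigr) + \mu_T\,(W_4 + L_4) + \sum_k \llbracket q_{T,k}^2 \rrbracket ,
\]
where $\lambda_T,\mu_T \ge 0$ are explicit rationals, $\vec{C}_3$ is the density element of the cyclic triangle, and the $q_{T,k}$ lie in flag algebras over small types. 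Evaluating an arbitrary $\phi \in \Homp{0}$ and using the classical bound $\phi(\vec{C}_3) \le \tfrac14$, with equality exactly for balanced $\phi$, together with $\phi(W_4 + L_4) \ge 0$ and nonnegativity of averaged squares, this gives $\phi(T) \le c_T$. A direct computation using the description of $\phiR$ via five independent uniform points $x_1,\dots,x_5 \in \mathbb{R}/\mathbb{Z}$ with $x_i \to x_j$ iff $(x_j - x_i)\bmod 1 \in (0,1/2)$ shows $\phiR(T) = c_T$, so $\phiR$ meets every term of the identity with equality; combined with the decreasing‑tail remark (so $\lim_n \max\{p(T;T_n):|T_n|=n\}$ exists) and the standard correspondence between convergent tournament sequences and homomorphisms, this identifies that limit with $\sup_\phi \phi(T) = c_T$, which is the numerical part of the theorem.

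For the characterisation of extremal sequences I would argue as follows. If $\phi \in \Homp{0}$ achieves $\phi(T) = c_T$, then every summand on the right of the identity vanishes; in particular, since $\lambda_T,\mu_T > 0$, we get $\phi(\vec{C}_3) = \tfrac14$ and $\phi(W_4 + L_4) = 0$, so $\phi$ is balanced and locally transitive, and hence $\phi = \phiR$ by the quasi‑carousel characterisation of $\phiR$ as the unique balanced locally transitive homomorphism~\cite{Na15}. The converse is the already‑noted equality $\phiR(T) = c_T$. To pass from homomorphisms back to sequences, if $(T_n)_{n\in\mathbb{N}}$ is extremal for $T$, then by compactness of $\Homp{0}$ it has convergent subsequences, each limit $\psi$ satisfies $\psi(T) = c_T$ and hence equals $\phiR$; as all subsequential limits coincide, $(T_n)_{n\in\mathbb{N}} \to \phiR$, i.e.\ it is quasi‑carousel, and conversely any quasi‑carousel sequence is extremal. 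The two tournaments run through this scheme with separate certificates, but the last part of the argument is verbatim the same.

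The hard part will be producing and rigorously checking the two certificates: in practice one runs a semidefinite program over a finite set of types and flags (types of size at most $3$ should suffice) and then rounds the numerical solution to exact rationals, and the subtlety is to set it up so that the two ``named'' slack terms $\tfrac14 - \vec{C}_3$ and $W_4 + L_4$ can be split off with strictly positive coefficients — this is precisely what lets the equality analysis conclude ``balanced and locally transitive'' rather than something weaker. If a certificate of exactly this shape turns out to be infeasible for one of the two bounds, my fallback would be to keep it as a pure sum of squares, extract from the vanishing square terms enough linear relations on the extremal $\phi$ to force local transitivity first, and then run the equality (or a stability) case of the balanced‑triangle inequality to recover balancedness, after which the same uniqueness theorem closes the argument.
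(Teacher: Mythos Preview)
Your overall strategy coincides with the paper's: lower bounds from $R_{2n+1}$, upper bounds via an SDP/sum-of-squares certificate over types of size $\le 3$, and uniqueness by showing any extremal $\phi$ is balanced and locally transitive, hence $\phi=\phiR$ by the quasi-carousel characterisation. The compactness argument you give to pass from homomorphisms back to sequences is also exactly what is used.

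The packaging of the certificate, however, is different, and the paper's proof is in fact an instance of your \emph{fallback} rather than your main plan. The paper never splits off a named $\mu_T(W_4+L_4)$ term; it uses a pure sum-of-squares $g$ with $T+g\le_0 c_T$, and then extracts the two structural facts separately:
\begin{itemize}
\item \textbf{Balanced.} One of the matrices, $Q(T,1)$, is rank one with eigenvector $(1,-1,-1,1)$, so the corresponding vanishing square is literally $\llbracket(\alpha-\beta)^2\rrbracket_1$, giving $\bm{\phi^1}(\alpha)=\bm{\phi^1}(\beta)$ a.s. This is the order opposite to your fallback (the paper gets ``balanced'' first, from a square, not from the $\vec C_3\le 1/4$ equality case).
\item \textbf{Locally transitive.} This comes not from vanishing squares but from \emph{slack analysis}: if $p(T+g;T')<c_T$ for some $T'\in\mathcal T_5$, then $\phi(T')=0$ for every extremal $\phi$. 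For $T_5^7$ the tight set turns out to be exactly $\{T_5^1,T_5^7,T_5^9,T_5^{12}\}$, i.e.\ precisely the locally transitive $5$-tournaments, so $\phi(W_4+L_4)=0$ follows immediately. For $T_5^{12}$ the tight set is larger and contains non-locally-transitive tournaments (e.g.\ $T_5^8,T_5^{10}$), so a certificate with a strictly positive $\mu_T(W_4+L_4)$ coefficient of your proposed shape would \emph{not} arise from this particular SDP solution; instead the paper reads off $\phi(T_5^2)=\phi(T_5^5)=0$ from the slack, and then uses the identities $\llbracket (L_4^{\vec C_3^*})^2\rrbracket_{\vec C_3^*}=\tfrac{1}{20}T_5^2$ and $\llbracket (W_4^{\vec C_3^*})^2\rrbracket_{\vec C_3^*}=\tfrac{1}{20}T_5^5$ together with $\phi(\vec C_3)=1/4$ to deduce $\phi(W_4+L_4)=0$.
\end{itemize}
So your plan is sound, but you should be aware that slack analysis (not just kernel/eigenvector analysis of the SOS matrices) is the mechanism that actually delivers local transitivity here, and that the $T_5^{12}$ case needs the small extra step above.
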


We remark that~\eqref{eq:t12} partially confirms a conjecture proposed by the first
author in~\cite{Na15}.

Finally, the last homomorphism studied is what we call here \emph{triangular
  homomorphism}~$\phiC$.

To define~$\phiC$, we must first define recursive blow-ups of the~$3$-cycle~$\vec
C_3$. For every~$n\geq 3$, let~$n_0\geq n_1\geq n_2$ be such that~$n_0+n_1+n_2 = n$
and~$n_i\in\set{\floor{n/3}, \ceil{n/3}}$ for all~$i\in\set{0,1,2}$. Define~$A_0 =
\set{1,\dotsc, n_0}$, $A_1 = \set{n_0+1,\dotsc, n_1}$ and~$A_2 = \set{n_1+1,\dotsc,
  n_2}$. Let~$\trianglen$ be the tournament on~$[n]$ such that~$vw\in A(\trianglen)$
for every~$v\in A_i$ and~$w\in A_{(i+1)\bmod 3}$, and~$\trianglen[n]\vert_{A_i}$ is
isomorphic to~$\trianglen[n_i]$ for every~$i=0,1,2$ (see Figure~\ref{fig:C3-rec}).

\begin{figure}[ht]
  \begin{center}
    \tikzsetnextfilename{C3recblowup}
\begin{tikzpicture}[scale=0.5]
  \begingroup
  \def\iterationsmo{4}
  \def\initialradius{5}
  \def\fracprop{0.35}
  \def\initialarrowscale{2}
  \def\initiallinewidth{5}

  \coordinate (P) at (0cm,0cm);

  \def\prevcenters{P}
  \foreach[%
    remember=\centers as \prevcenters,
    remember=\radius as \prevradius (initially \initialradius),
    remember=\arrowscale as \prevarrowscale (initially \initialarrowscale),
    remember=\linewidth as \prevlinewidth (initially \initiallinewidth),
    evaluate=\radius using \prevradius * \fracprop,
    evaluate=\arrowscale using \prevarrowscale * \fracprop,
    evaluate=\linewidth using \prevlinewidth * \fracprop,
    evaluate=\drawradius using 1/(1-\fracprop)*\radius%
  ] \i in {0,...,\iterationsmo}{
    \xdef\centers{\relax}
    \foreach \c in \prevcenters {
      \coordinate (\c1) at ($(\c) + (90:\prevradius)$);
      \coordinate (\c2) at ($(\c) + (210:\prevradius)$);
      \coordinate (\c3) at ($(\c) + (330:\prevradius)$);

      \coordinate (\c-12) at ($(\c1) + (240:\drawradius)$);
      \coordinate (\c-13) at ($(\c1) + (-60:\drawradius)$);
      \coordinate (\c-21) at ($(\c2) + (60:\drawradius)$);
      \coordinate (\c-23) at ($(\c2) + (0:\drawradius)$);
      \coordinate (\c-31) at ($(\c3) + (120:\drawradius)$);
      \coordinate (\c-32) at ($(\c3) + (180:\drawradius)$);

      \draw[line width=\prevlinewidth,arrows={->}]
      (\c-12) -- (\c-21);
      \draw[line width=\prevlinewidth,arrows={->}]
      (\c-23) -- (\c-32);
      \draw[line width=\prevlinewidth,arrows={->}]
      (\c-31) -- (\c-13);

      \draw (\c1) circle (\drawradius);
      \draw (\c2) circle (\drawradius);
      \draw (\c3) circle (\drawradius);

      \if\centers\relax
      \xdef\centers{\c1,\c2,\c3}
      \else
      \xdef\centers{\centers,\c1,\c2,\c3}
      \fi
    }
  }
  \endgroup
\end{tikzpicture}
    \caption{Typical structure of~$\trianglen$.}
    \label{fig:C3-rec}
  \end{center}
\end{figure}

We define then~$\phiC$ as the limit of the sequence~$(\trianglen)_{n\in\mathbb{N}}$
(see Proposition~\ref{prop:triangleconv} for the convergence of this sequence).

Our final extremality result concerns~$\phiC$.

\begin{theorem}\label{thm:triangle}
  We have
  \begin{align*}
    \lim_{n\to\infty}\max\{p(\tnove;T_n) : \lvert T_n\rvert = n\} & = \frac{3}{8};\\
    \lim_{n\to\infty}\max\{p(\tonze;T_n) : \lvert T_n\rvert = n\} & = \frac{1}{16}.
  \end{align*}
  Furthermore, a sequence of tournaments~$(T_n)_{n\in\mathbb{N}}$ of increasing sizes
  is extremal for any of~$\tnove$ or~$\tonze$ if and only if it is quasi-triangular,
  that is, if and only if~$(T_n)_{n\in\mathbb{N}}$ converges to~$\phiC$.
\end{theorem}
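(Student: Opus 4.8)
The plan is to prove each maximum in two matching halves: that $\phiC$ attains the claimed value, and that no $\phi\in\Homp{0}$ does better, with $\phi=\phiC$ forced whenever equality holds. Granting the two halves, the ``only if'' in the theorem is immediate and the ``if'' is Proposition~\ref{prop:triangleconv}; we run the argument separately for $\tnove$ and for $\tonze$.

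For the lower bound I would evaluate $\phiC$ at these two tournaments using the self-similar structure of $\trianglen$. Picking five vertices of $\trianglen$ uniformly at random and conditioning on how they are distributed among the three top parts $A_0,A_1,A_2$ (the profile converging to a symmetric multinomial in the limit), the only profiles that can produce a strongly connected tournament are $(5,0,0)$, $(3,1,1)$ and $(2,2,1)$ up to permutation, since every other profile leaves a dominating or a dominated vertex. The profile $(5,0,0)$ contributes $\tfrac1{81}\phiC(T)$ by self-similarity, and the remaining contributions are explicit rational multiples of $\phiC$-values of tournaments on at most three vertices, which obey their own solvable fixed-point equations (in particular $\phiC(\vec C_3)=\tfrac14$). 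Identifying the $\vec C_3$-join structure of the two tournaments — one checks $\tnove\cong\vec C_3[K,K,P]$ and $\tonze\cong\vec C_3[\vec C_3,P,P]$, where $K$ is the two-vertex tournament and $P$ the one-vertex tournament, and that no other $\vec C_3$-join realises them — one obtains $\phiC(\tnove)=\tfrac1{81}\phiC(\tnove)+\tfrac{10}{27}$ and $\phiC(\tonze)=\tfrac1{81}\phiC(\tonze)+\tfrac{20}{81}\phiC(\vec C_3)$, hence $\phiC(\tnove)=\tfrac38$ and $\phiC(\tonze)=\tfrac1{16}$.

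For the upper bound I would produce flag-algebra certificates: identities in $\mathcal{A}^0$ writing $\tfrac38-\tnove$ and $\tfrac1{16}-\tonze$ as nonnegative combinations of squares (over suitable types) plus \emph{positive} multiples of $\toito$, $\tdez$ and $\tdoze$ — the three size-$5$ tournaments whose avoidance characterises (subtournaments of) recursive blow-ups of $\vec C_3$. Such certificates are found by the semidefinite method and then rationalised; evaluating at an arbitrary $\phi$ gives $\phi(\tnove)\le\tfrac38$ and $\phi(\tonze)\le\tfrac1{16}$. Suppose now equality holds. Every slack term must vanish at $\phi$, so in particular $\phi(\toito)=\phi(\tdez)=\phi(\tdoze)=0$; by the structural characterisation, suitably extended from tournaments to homomorphisms (a routine compactness plus removal-lemma argument), $\phi$ is a limit of recursive blow-ups of $\vec C_3$ and hence, after passing to a subsequence, admits a top-level decomposition $\phi=\vec C_3[(p_0,\phi_0),(p_1,\phi_1),(p_2,\phi_2)]$ with $(p_0,p_1,p_2)$ a probability vector and each $\phi_j$ of the same type (if all the mass sits in one part one simply descends into the corresponding $\phi_j$). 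Re-expanding $\phi(\tnove)$ through the decomposition — only profiles $(5,0,0)$, $(3,1,1)$, $(2,2,1)$ survive and on $(2,2,1)$ the induced tournament is deterministically $\tnove$ — gives
\begin{align*}
  \phi(\tnove)=\sum_{j=0}^{2}p_j^5\,\phi_j(\tnove)+30\,p_0p_1p_2(p_0p_1+p_1p_2+p_2p_0),
\end{align*}
and likewise $\phi(\tonze)=\sum_{j}p_j^5\,\phi_j(\tonze)+20\sum_{j}p_j^3p_{j+1}p_{j+2}\,\phi_j(\vec C_3)$ with indices modulo $3$. Feeding in the already proved bounds $\phi_j(\tnove)\le\tfrac38$, $\phi_j(\tonze)\le\tfrac1{16}$ and $\phi_j(\vec C_3)\le\tfrac14$, equality forces $(p_0,p_1,p_2)=(\tfrac13,\tfrac13,\tfrac13)$ — the relevant rational function on the simplex is maximised uniquely at the barycentre, essentially by symmetry and AM--GM, with the boundary excluded because the ``cross'' term vanishes there — and forces each $\phi_j$ to satisfy the same equalities. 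Since the weight of the branch in which all five vertices stay together forever is $0$, $\phi(T)$ for any fixed $T$ is an absolutely convergent sum over finitely many descent patterns, completely determined by the data ``barycentre at every node, sub-homomorphisms satisfying the same equalities'', which is exactly the data computing $\phiC(T)$. Hence $\phi=\phiC$, and the argument for $\tonze$ is identical with its own decomposition.

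The main obstacle is the combination of certificate and equality analysis: one needs flag-algebra inequalities that are tight \emph{exactly} at $\phiC$, i.e.\ one must identify the correct slack tournaments (that they can be taken to be $\toito,\tdez,\tdoze$ is precisely the point of the structural characterisation) and the correct square terms, and then verify that the residual equality constraints genuinely close the recursion and rule out every unbalanced top-level split. A secondary, more mechanical difficulty is the bookkeeping: confirming the $\vec C_3$-join decompositions of $\tnove$ and $\tonze$ and the list of contributing profiles, solving the two one-parameter optimisations on the $2$-simplex, and checking the extension of the structural characterisation to the limit.
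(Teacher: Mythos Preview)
Your lower bound via a fixed-point equation for $\phiC$ and your SDP strategy for the upper bound are fine and essentially match the paper. The gap is in uniqueness, at the step you call routine: obtaining a \emph{non-trivial} top-level split $\phi=\vec C_3[(p_0,\phi_0),(p_1,\phi_1),(p_2,\phi_2)]$ from $\phi(\toito+\tdez+\tdoze)=0$ alone. Your simplex inequality is tight at every vertex $(1,0,0)$ as well as at the barycentre, so the recursion by itself cannot exclude the degenerate split; and ``descending into $\phi_0$'' just returns the same $\phi$, so the descent need not terminate. Concretely, for a converging sequence of $\vec C_3$-decomposable tournaments there is no way to merge the small pieces shed along the $1^k$-branch of the decomposition tree into a single three-part split of the whole: if $A\to B\to C\to A$ at the top and $A=A'\cup B'\cup C'$ with $A'\to B'\to C'\to A'$ one level down, then $C\to B'$, which blocks the obvious merge $(A',\,B'\cup B,\,C'\cup C)$. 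Overcoming this obstruction is exactly the content of the paper's Lemma~\ref{lem:twononneg}, whose proof occupies all of Section~\ref{sec:prooftwononneg}.

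The missing input is balancedness, which the paper extracts from the \emph{square} blocks of the certificate via Proposition~\ref{prop:eigen}, not from the slack: for $\tonze$ one has $Q(\tonze,1)=\tfrac{5}{16}vv^\top$ with $v=(1,-1,-1,1)$, giving $\bm{\phi^1}((\alpha-\beta)^2)=0$ a.s.; for $\tnove$ two eigenvectors of $Q(\tnove,1)$ combine to force $\phi(\vec C_3)=\tfrac14$, hence balancedness by Lemma~\ref{lem:balanced}. Balancedness is then what drives Lemma~\ref{lem:twononneg} and hence the existence of a split with two parts of size $\Omega(n)$. Once such a split is in hand, your recursive equalisation would be a legitimate alternative to the paper's Lemma~\ref{lem:balneigh}-based route to $(p_0,p_1,p_2)=(\tfrac13,\tfrac13,\tfrac13)$ at every level --- though your simplex inequality (``uniquely maximised at the barycentre, essentially by symmetry and AM--GM'') still needs an actual proof rather than a slogan.
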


Motivated by the definition of~$\trianglen$, we say that a tournament is~\emph{$\vec
  C_3$-decomposable} (see Definition~\ref{def:C3dec}) if it has the same structure
of~$\trianglen$ but without requiring the parts to be as equal as possible. And, as
an auxiliary result, we prove (Theorem~\ref{thm:C3deccharac}) that a tournament
is~$\vec C_3$-decomposable if and only if it has no copies of~$T_5^8$, $T_5^{10}$ nor
of~$T_5^{12}$. We then extend this notion to homomorphisms by saying that a
homomorphism~$\phi\in\Hom^+(\mathcal{A}^0,\mathbb{R})$ is~$\vec C_3$-decomposable
if~$\phi(T_5^8 + T_5^{10} + T_5^{12})=0$.

As a byproduct of Theorem~\ref{thm:triangle}, we prove several quasi-triangular
properties (i.e., properties of a
homomorphism~$\phi\in\Hom^+(\mathcal{A}^0,\mathbb{R})$ that force~$\phi=\phiC$). One
of them, namely~\ref{it:QTbalC3dec}, says that~$\phiC$ is the only homomorphism that
is both balanced and~$\vec C_3$-decomposable.

Let us finally remark that our results fail to cover only one strongly connected
tournament of size~$5$, which is~$T_5^{10}$.

\medskip

The paper is organized as follows. We prove the lower bounds of
Theorems~\ref{thm:quasirandom}, \ref{thm:carousel} and~\ref{thm:triangle} in
Section~\ref{sec:LB}. The proof of the upper bounds are given in Section~\ref{sec:UB}
and are an application of Razborov's semidefinite method for flag
algebras~\cite{Raz10} (see also~\cite{BHLPUV:MinimumNumberOfMonotoneSubsequences,
  CKPSTY:MonochromaticTrianglesInThreeColouredGraphs,
  DHMNS:AProblemOfErdosOnTheMinimumNumberOfkCliques,
  FV:ApplicationsOfTheSemidefiniteMethod, PiVa13} for some examples). In
Section~\ref{sec:sdp}, we present a brief overview of flag algebras and this
method. The uniqueness proofs are presented in Section~\ref{sec:unique}. In
Section~\ref{sec:ext}, we show how to extract informations about extremal sequences
from the semidefinite method. We postpone the proof of the characterization of~$\vec
C_3$-decomposable tournaments to Section~\ref{sec:proofdec} and postpone the proof of
a technical lemma on quasi-triangular properties to Section~\ref{sec:prooftwononneg}.


\section{Lower bounds}
\label{sec:LB}
In this section, we prove the lower bounds in
Theorems~\ref{thm:carousel}, \ref{thm:quasirandom} and~\ref{thm:triangle}.

We start by recalling the definition of labelled density in tournaments.

\begin{definition}
  If~$T_1$ and~$T_2$ are tournaments with~$\lvert T_1\rvert\leq\lvert T_2\rvert$,
  then the labelled density of~$T_1$ in~$T_2$ (denoted~$\tind(T_1;T_2)$) is the
  probability that an injective mapping from~$V(T_1)$ to~$V(T_2)$ picked uniformly at
  random is an embedding of~$T_1$ in~$T_2$.
\end{definition}

It is easy to see that
\begin{align*}
  \tind(T_1;T_2) & = \frac{\lvert\Aut(T_1)\rvert}{\lvert T_1\rvert!}p(T_1;T_2),
\end{align*}
where~$\Aut(T_1)$ is the group of automorphisms of~$T_1$.

\begin{lemma}\label{lemma:lb} We have 
  \begin{align*}
    \lim_{n \to \infty}p(\tsete;R_{2n+1}) & = \frac{5}{16}; &
    \lim_{n \to \infty}p(\tdoze;R_{2n+1}) & = \frac{1}{16}.
  \end{align*}
\end{lemma}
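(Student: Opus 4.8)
The plan is to compute both limits directly by analysing the structure of the carousel tournament $R_{2n+1}$ and counting, asymptotically, the number of $5$-subsets that induce a copy of $\tsete$ (respectively $\tdoze$). Since $p(T;R_{2n+1}) = \tind(T;R_{2n+1})\cdot\lvert T\rvert!/\lvert\Aut(T)\rvert$, it suffices to estimate the probability that a uniformly random injection $[5]\to V(R_{2n+1})$ is an embedding, and then multiply by the combinatorial factor; equivalently, one may count unordered $5$-subsets $S$ with $R_{2n+1}\vert_S\cong T$ and divide by $\binom{2n+1}{5}$. Because the arc set of $R_{2n+1}$ depends only on the cyclic differences $(v-w)\bmod(2n+1)$, the isomorphism type of $R_{2n+1}\vert_S$ is determined (up to the asymptotically negligible event that some difference is ``large'', i.e.\ close to $n$) by the cyclic order type of the $5$ chosen residues together with the pattern of gaps; in the limit $n\to\infty$ this becomes a question about $5$ points chosen uniformly at random on a circle $\mathbb{Z}_{2n+1}\approx \mathbb{R}/\mathbb{Z}$, where $vw$ is an arc iff $w$ lies within the ``forward half'' $(v, v+1/2)$ of $v$.

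Concretely, I would fix a labelling $x_0 < x_1 < x_2 < x_3 < x_4$ of the $5$ points in cyclic order on $\mathbb{R}/\mathbb{Z}$ and describe, for each of the $5$ consecutive arcs $x_ix_{i+1}$ and each of the $5$ ``skip-one'' arcs $x_ix_{i+2}$, whether it points forward or backward in terms of the gap lengths $g_i = x_{i+1}-x_i$ (indices mod $5$); note a consecutive pair is always oriented the ``short way'' and the only freedom is in the five length-$2$ chords, for which $x_ix_{i+2}$ is a forward arc iff $g_i+g_{i+1} < 1/2$. So the induced tournament on $S$ is completely governed by the subset $J\subseteq\mathbb{Z}_5$ of indices $i$ with $g_i+g_{i+1} \geq 1/2$ (``long'' chords). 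One then identifies which patterns $J$ yield $\tsete$ and which yield $\tdoze$ by comparing outdegree sequences / the explicit descriptions in Figure~\ref{fig:strongT5}: e.g.\ $\tdoze$ is the rotationally symmetric ``double carousel'' $R_5$ itself, corresponding to $J=\emptyset$ (all length-$2$ chords forward), and $\tsete$ corresponds to the patterns with exactly one long chord. Finally I would compute, for the random gap vector $(g_0,\dots,g_4)$ — which is uniform on the simplex $\{g_i\geq 0,\ \sum g_i = 1\}$ — the probability of each relevant event: $\prob{J=\emptyset}$ and $\prob{\lvert J\rvert = 1}$, using inclusion–exclusion on the events $\{g_i + g_{i+1}\geq 1/2\}$ and the standard fact that $\prob{g_{i_1}+\cdots+g_{i_k}\geq t} = (1-t)_+^{\,4}$ for a $k$-subset of the five gaps. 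A short computation should give density $1/16$ for the all-forward pattern (matching $\tdoze$ after multiplying by $5!/\lvert\Aut(\tdoze)\rvert = 5!/5 = 24$) and $5/16$ for the one-long-chord family (matching $\tsete$, which has trivial automorphism group so the factor is $5! = 120$, or the appropriate smaller automorphism group — to be checked against Figure~\ref{fig:strongT5}).

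To make this rigorous rather than heuristic, I would first justify passing from $\mathbb{Z}_{2n+1}$ to the continuous model: a uniformly random injection of $[5]$ into $\mathbb{Z}_{2n+1}$ can be coupled with $5$ i.i.d.\ uniform points on $\mathbb{R}/\mathbb{Z}$ rounded to the nearest $1/(2n+1)$-grid point, and the induced tournament agrees with the continuous one except on the event that two points are within $O(1)$ grid steps or that some gap sum equals exactly $1/2$ up to $O(1/n)$; both events have probability $O(1/n)$, so the discrete densities converge to the continuous ones. I expect the main obstacle to be purely bookkeeping: correctly translating the pictures in Figure~\ref{fig:strongT5} into gap-pattern conditions, i.e.\ verifying precisely which sets $J\subseteq\mathbb{Z}_5$ (up to rotation) give $\tsete$ versus $\tdoze$ versus one of the other strongly connected $5$-tournaments, and keeping track of the automorphism-group factors so the final numbers come out as $5/16$ and $1/16$ rather than off by a symmetry factor. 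Everything else — the simplex volume computations and the coupling error estimate — is routine.
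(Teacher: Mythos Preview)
Your continuous-circle approach is valid and genuinely different from the paper's. The paper argues only for $\tdoze$ (declaring $\tsete$ analogous): it fixes the image of vertex~$0$ using the vertex-transitivity of $R_{2n+1}$, then counts embeddings of $R_5$ by a double sum over the images of vertices~$1$ and~$2$ (the images of~$3$ and~$4$ are then forced into short intervals), obtaining $\tind(R_5;R_{2n+1}) = \frac{1}{3\cdot 2^7}+O(1/n)$ directly. Your method instead classifies all $5$-subsets at once by their gap pattern on the circle and reduces to simplex volumes; this is more structural and yields all four locally transitive densities in $\phiR$ simultaneously, at the cost of a discrete-to-continuous coupling argument.

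That said, three concrete points would make your computation fail as written. First, the formula $\prob{g_{i_1}+\cdots+g_{i_k}\geq t}=(1-t)_+^4$ is correct only for $k=1$; a sum of $k$ of the five gaps has a $\mathrm{Beta}(k,5-k)$ law, so in particular $\prob{g_i+g_{i+1}\geq 1/2}=5/16$, not $1/16$. Second, your identification is off: the case $\lvert J\rvert=1$ has outdegree sequence $(1,2,2,2,3)$ and is $\tnove$, whereas $\tsete$ (outdegree sequence $(1,1,2,3,3)$) corresponds to $\lvert J\rvert=2$ with the two long chords cyclically adjacent. Third, ``consecutive pairs are oriented the short way'' does not mean they are always forward: if some $g_i>1/2$ the arc $x_i x_{i+1}$ reverses and the induced tournament is $\Tr_5$; since $\{g_i>1/2\}\subset A_{i-1}\cap A_i$ this must be subtracted when isolating $\tsete$. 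With these fixes inclusion--exclusion gives $\Pr[J=\varnothing]=1-5\cdot\tfrac{5}{16}+5\cdot\tfrac{1}{8}=\tfrac{1}{16}$ for $\tdoze$ and $5\cdot\tfrac{1}{8}-\tfrac{5}{16}=\tfrac{5}{16}$ for $\tsete$, and no automorphism factor is needed since these are already the unlabelled densities.
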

\begin{proof}
  We will prove only the assertion for~$\tdoze$, since the proof for~$\tsete$ is very
  similar.


  Fix~$n\geq 2$ and let~$f\:V(R_5)\to V(R_{2n+1})$ be an embedding of~$R_5$
  in~$R_{2n+1}$.


  Suppose that the vertex~$0$ from~$R_5$ is mapped to the vertex~$0$ of~$R_n$. If
  vertex~$1$ is mapped to a vertex~$i$, then~$1\leq i\leq n$ and vertex~$2$ has to be
  mapped to a vertex~$j$ such that~$i+1\leq j\leq n$. Vertex~$3$ has to be mapped to
  a vertex~$k$ such that~$n+1\leq k\leq i+n$ (since~$(3,0)$ and~$(1,3)$ are arcs
  of~$R_5$). Finally, vertex~$4$ has to be mapped to a vertex~$\ell$ such
  that~$i+n\leq\ell\leq j+n$ (since~$(4,1)$ and~$(2,4)$ are arcs of~$R_5$). See
  Figure~\ref{f:embbed:T_5-12_to_R_n}.

  \begin{figure}[ht]
    \begin{center}
      \includegraphics[width=0.25\textwidth]{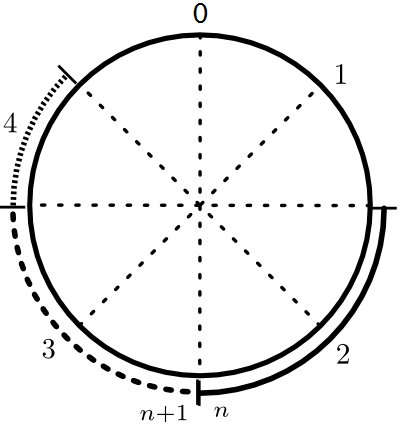}
      \caption{Possibilities of embedding~$\tdoze=R_5$ in~$R_{2n+1}$.}
      \label{f:embbed:T_5-12_to_R_n}
    \end{center}
  \end{figure}

  Note that, after we fix the images of the vertices~$0$, $1$ and~$2$, the number of
  choices for the vertex~$3$ becomes~$i$ and for the vertex~$4$ becomes~$j-i+1$.

  From the symmetry of~$R_{2n+1}$, we know that this is also the case for every other
  choice of the image of the vertex~$0$ of~$R_5$. Thus, we
  have
  \begin{align*}
    \tind(R_5;R_{2n+1})
    & = \frac{1}{(2n+1)_5}\cdot (2n+1)\sum_{i=1}^n\sum_{j=i+1}^n i(j-i+1)\\
    & = \frac{1}{(2n)_4}\sum_{i=1}^n\frac{i^3 - (2n-2)i^2 + n(n-2)i}{2}\\
    & =
    \frac{1}{(2n)_4}  \left(\frac{n^4}{24} + O(n^3) \right)\\
	& = \frac{1}{3\cdot 2^7} + O\left(\frac{1}{n}\right),
  \end{align*}
  where~$(\ell)_k=\ell(\ell-1)\cdots(\ell-k+1)$ denotes the falling factorial.
	
  Therefore
  \begin{align*}
	\lim_{n\to\infty}p(\tdoze;R_{2n+1}) & =
    \lim_{n\to\infty}\frac{5!}{5}\tind(R_5; R_n) =
    \frac{1}{16}.
    \qedhere
  \end{align*}
\end{proof}

We now prove the lower bounds in Theorem~\ref{thm:triangle}.

\begin{lemma}\label{lemma:lb_triangle} We have
  \begin{align*}
    \lim_{n \to \infty}p(\tnove;\trianglen[3^n]) & = \frac{3}{8}; &
    \lim_{n \to \infty}p(\tonze; \trianglen[3^n]) = \frac{1}{16}.
  \end{align*}
\end{lemma}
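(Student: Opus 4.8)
The plan is to mimic the computation in Lemma~\ref{lemma:lb}, but now exploiting the recursive (self-similar) structure of~$\trianglen$. Fix $T\in\{\tnove,\tonze\}$ and write $a_n = \tind(T;\trianglen[3^n])$. The key observation is that an embedding $f\:V(T)\to V(\trianglen[3^n])$ either sends all five vertices into a single part $A_i$ (contributing recursively in terms of $\trianglen[3^{n-1}]$), or it splits the vertices nontrivially among the three parts $A_0,A_1,A_2$. Since every arc from $A_i$ to $A_{(i+1)\bmod 3}$ is oriented the same way, a nontrivial split is possible only when the induced ``pattern of parts'' is itself a blow-up structure compatible with $T$; concretely, because $T$ is strongly connected on~$5$ vertices it cannot be contained in a transitive blow-up, so any valid split must use all three parts and realize $T$ as a blow-up of $\vec C_3$ with part sizes summing to~$5$. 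First I would enumerate, for each of $\tnove$ and $\tonze$, exactly which ordered triples $(b_0,b_1,b_2)$ of nonnegative integers with $b_0+b_1+b_2=5$ admit a partition of $V(T)$ into sets $B_0,B_1,B_2$ of those sizes with all arcs going $B_i\to B_{(i+1)\bmod 3}$, and for each such triple count the number of such ordered partitions and identify the isomorphism type of $T\vert_{B_i}$ (necessarily each $B_i$ spans either a transitive tournament or a $\vec C_3$ or a single vertex — anything $\vec C_3$-decomposable of size $\le 3$).

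This yields a recursion of the shape
\begin{align*}
  a_n &= \frac{3\cdot(3^{n-1})_5}{(3^n)_5}\,a_{n-1}
  \;+\; \sum_{(b_0,b_1,b_2)} c_{(b_0,b_1,b_2)}\,
  \frac{\prod_{i=0}^2 (3^{n-1})_{b_i}}{(3^n)_5},
\end{align*}
where the sum ranges over the nontrivial splits found above and $c_{(b_0,b_1,b_2)}$ absorbs both the number of ordered set-partitions of $V(T)$ realizing that split and the relevant labelled densities $\tind(T\vert_{B_i};\trianglen[3^{n-1}])$ of the (small, $\vec C_3$-decomposable) pieces inside a single part — these small densities are either constants or themselves converge to easily computed limits (e.g. the density of a single vertex is $1$, the density of $\vec C_3$ in $\trianglen[m]$ tends to a known value, etc.). Since $(3^{n-1})_5/(3^n)_5 \to 3^{-5}$ and each nontrivial term is $\prod(3^{n-1})_{b_i}/(3^n)_5 = \Theta(3^{-(5-\#\{i:b_i>0\})\,n})$, the dominant nontrivial contributions come from the splits into three nonempty parts (pattern $(b_0,b_1,b_2)$ a permutation of $(1,2,2)$ or $(1,1,3)$), each scaling like $3^{-3n}\cdot\text{const}$, while the pure-recursion term scales like $3\cdot 3^{-5n}\cdot a_{n-1}$, which is negligible. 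Hence $a_n = L\cdot 3^{-3n} + o(3^{-3n})$ for a constant $L$ determined by summing the three-part contributions, and then
\begin{align*}
  \lim_{n\to\infty} p(T;\trianglen[3^n])
  &= \lim_{n\to\infty}\frac{5!}{\lvert\Aut(T)\rvert}\,a_n,
\end{align*}
which one checks equals $3/8$ for $\tnove$ and $1/16$ for $\tonze$ after plugging in the combinatorial constants.

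The main obstacle I expect is the purely combinatorial bookkeeping in identifying the admissible three-part splits: one must verify, from the explicit arc sets drawn in Figure~\ref{fig:strongT5}, that $\tnove$ and $\tonze$ are indeed blow-ups of $\vec C_3$ (they must be, by the forthcoming Theorem~\ref{thm:C3deccharac}, since they avoid $T_5^8$, $T_5^{10}$ and $T_5^{12}$), determine the unique (up to the cyclic/relabelling symmetries) way each decomposes, count the ordered set-partitions correctly accounting for $\Aut(T)$, and correctly evaluate the small sub-densities; an off-by-a-factor error here propagates directly to the final constant. A convenient cross-check is that the answer must be at least as large as the general lower bound and, combined with the upper bounds proved later in Section~\ref{sec:UB}, must match $3/8$ and $1/16$ exactly, so I would use that as a sanity check on the arithmetic rather than trusting the hand computation alone. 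Once the recursion is set up correctly, taking the limit is routine, exactly as in the proof of Lemma~\ref{lemma:lb}.
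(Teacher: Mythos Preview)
Your approach is essentially the paper's: set up a recursion by splitting embeddings into ``all five vertices in one part'' versus ``spread over the three parts'', identify the admissible splits (indeed $(2,2,1)$ for $\tnove$ and $(1,3,1)$ for $\tonze$), and solve. The paper simply works with the raw embedding count $F(n)$ rather than with the normalized $a_n=\tind(T;\trianglen[3^n])$, obtaining e.g.\ for $\tnove$ the clean recursion
\[
  F(n)=3F(n-1)+3\binom{3^{n-1}}{2}^{\!2}3^{n-1},
\]
and then summing the geometric series.

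Where your write-up goes wrong is in the asymptotic bookkeeping. You claim that $\prod_i(3^{n-1})_{b_i}/(3^n)_5=\Theta\!\bigl(3^{-(5-\#\{i:b_i>0\})n}\bigr)$, that the recursion term scales like $3\cdot 3^{-5n}\cdot a_{n-1}$, and hence that $a_n=L\cdot 3^{-3n}+o(3^{-3n})$. None of these are correct: since $\sum_i b_i=5$, one has $\prod_i(3^{n-1})_{b_i}\sim 3^{5(n-1)}$ and $(3^n)_5\sim 3^{5n}$, so the ratio is $\Theta(1)$ (namely $\to 3^{-5}$), not decaying in $n$; likewise $3\,(3^{n-1})_5/(3^n)_5\to 3^{-4}$. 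The labelled density $a_n$ is a probability in $[0,1]$ and converges to a positive constant, not to~$0$. The correct limiting recursion is (up to $o(1)$)
\[
  a_n \;=\; 3^{-4}\,a_{n-1} \;+\; C\cdot 3^{-5},
\]
giving $a_\infty = C/(3^5-3)=C/240$, after which multiplying by $5!/\lvert\Aut(T)\rvert$ yields the stated values. This is exactly what falls out of the paper's computation with $F(n)$. So your plan is right, but redo the scaling; working with $F(n)$ directly, as the paper does, avoids this slip entirely.
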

\begin{proof}
  Again, we will prove only the assertion for~$\tnove$, since the proof for~$\tonze$
  is very similar.

  Let~$T$ denote the tournament in Figure~\ref{f:t9label}, which is isomorphic
  to~$\tnove$.
  \begin{figure}[ht]
    \centering    
    \tikzsetnextfilename{T9}
\tikzstyle{every node}=[circle, draw, fill=white, inner sep=1, minimum width=5pt]
	\begin{tikzpicture}[thick,scale=0.8,shorten >=8pt,shorten <=5pt]
	    \draw \foreach \x in {18,90,162,...,306}
	    {
		(\x:2) node {}  edge[->,>=stealth] (\x-72:2)
	    };

	    \draw 
	    {
		(90:2) node {}  edge[->,>=stealth] (90-2*72:2)
		(90:2) node {}  edge[->,>=stealth] (90-3*72:2)
		(18:2) node {}  edge[->,>=stealth] (18-2*72:2)
		(306:2) node {}  edge[->,>=stealth] (306-2*72:2)
		(162:2) node {}  edge[->,>=stealth] (162-2*72:2)
	    };
        \draw
        \foreach \x [count=\xi] in {90,378,306,234,162}
                 {
                   (\x:2) node {\xi} 
                 };
	\end{tikzpicture}
    \caption{Tournament isomorphic to~$\tnove$.}
    \label{f:t9label}
  \end{figure}

  Recall the definition of~$\trianglen[3^n]$ and let~$A_0=[3^{n-1}]$,
  $A_1=\{3^{n-1}+1,3^{n-1}+2,\ldots,2\cdot 3^{n-1}\}$ and~$A_2=\{2\cdot
  3^{n-1}+1,2\cdot 3^{n-1}+2,\ldots,3^n\}$.

  Let~$F(n)$ be the number of embeddings of~$T$ in~$\trianglen[3^n]$. Every such
  embedding either maps all vertices of~$T$ to a single~$A_i$ or it maps~$1$ and~$5$
  to some part~$A_i$, $3$ and~$4$ to~$A_{(i+1)\bmod 3}$ and~$2$
  to~$A_{(i+2)\bmod3}$. Thus we have~$F(1) = 0$ and, for every~$n\geq 2$, we have
  \begin{align*}
    F(n)
    & =
    3\binom{3^{n-1}}{2}^23^{n-1} + 3F(n-1)
    \leq
    \frac{3^{5n-4}}{4} + 3F(n-1).
  \end{align*}

  Therefore, it follows that
  \begin{align*}
    F(n)
    & \leq
    \sum_{i=0}^{n-1}3^i\frac{3^{5(n-i)-4}}{4}
    =
    \frac{3^{5n-4}}{4}\cdot\frac{1-3^{-4n}}{1-3^{-4}}
    =
    \frac{3^{5n}}{320} + O(3^{4n}).
  \end{align*}

  On the other hand, we have
  \begin{align*}
    F(n)
    & \geq
    \frac{3^{5n-4} - 3^{4n-2}}{4} + 3F(n-1),
  \end{align*}
  hence
  \begin{align*}
    F(n)
    & \geq
    \frac{3^{5n}}{320}
    - \sum_{i=0}^{n-1}3^i\frac{3^{2(n-i)-1}}{4}
    - O(3^{4n})
    =
    \frac{3^{5n}}{320} - O(3^{4n}).
  \end{align*}

  Therefore
  \begin{align*}
    \lim_{n\to\infty}p(\tnove;\trianglen[3^n]) & = 
    \lim_{n\to\infty}\frac{F(n)}{(3^n)_5}\cdot 5! =
    \frac{3}{8}.
    \qedhere
  \end{align*}
\end{proof}

Finally, we prove the lower bound for~$\toito$ in Theorem~\ref{thm:quasirandom}.

\begin{lemma}\label{lemma:lb_qr} We have
  \begin{align*}
    \lim_{n \to \infty}\expect{p(\toito;\bm{R_{n,1/2}})} & = \frac{15}{128}.
  \end{align*}
\end{lemma}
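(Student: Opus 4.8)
The plan is to compute the expected density directly from the formula
$$\expect{p(\toito;\bm{R_{n,1/2}})} = \expect{p(\toito;\bm{R_{\lvert\toito\rvert,1/2}})} = \frac{\lvert\toito\rvert!}{\lvert\Aut(\toito)\rvert 2^{\binom{\lvert\toito\rvert}{2}}} = \frac{5!}{\lvert\Aut(\toito)\rvert 2^{10}},$$
which is exactly the identity recalled in the introduction for the limit of $(\bm{R_{n,1/2}})_{n\in\mathbb{N}}$. So the entire content of the lemma is the computation $\lvert\Aut(\toito)\rvert = 1$: once we know $\toito$ is asymmetric, the right-hand side is $120/1024 = 15/128$, as claimed. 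Note also that the expectation over $n$ is eventually constant (equal to its value at $n=5$), so the limit is immediate and no concentration argument is needed here — the concentration argument was already used in the introduction to justify that the sequence converges almost surely to $\phiqr$, but for the expectation the finite-$n$ value stabilizes trivially.

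First I would record the standard fact that for a labelled random tournament $\bm{R_{n,1/2}}$ and any fixed tournament $T$ with $\lvert T\rvert \le n$, the probability that a fixed injection $V(T)\to V(\bm{R_{n,1/2}})$ is an embedding equals $2^{-\binom{\lvert T\rvert}{2}}$, since the $\binom{\lvert T\rvert}{2}$ relevant arc orientations are independent fair coin flips; hence $\expect{\tind(T;\bm{R_{n,1/2}})} = 2^{-\binom{\lvert T\rvert}{2}}$ and, using $\tind(T;T_2) = (\lvert\Aut(T)\rvert/\lvert T\rvert!)\,p(T;T_2)$, we get $\expect{p(T;\bm{R_{n,1/2}})} = \lvert T\rvert!/(\lvert\Aut(T)\rvert\,2^{\binom{\lvert T\rvert}{2}})$. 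Applying this with $T = \toito$ and $\lvert T\rvert = 5$, $\binom{5}{2}=10$, reduces everything to determining $\lvert\Aut(\toito)\rvert$.

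Then I would verify $\lvert\Aut(\toito)\rvert = 1$. The clean way is via the (out)degree sequence: reading off Figure~\ref{fig:strongT5}, $\toito$ has outdegree sequence $(1,1,2,3,3)$ (the five strongly connected tournaments on $5$ vertices are distinguished precisely by their score sequences, and $\toito$'s is $(1,1,2,3,3)$). An automorphism must preserve outdegrees, so it can only permute the two outdegree-$1$ vertices among themselves and the two outdegree-$3$ vertices among themselves, fixing the unique outdegree-$2$ vertex. A short case check on the arcs incident to these pairs — using that the two outdegree-$1$ vertices have distinct out-neighbourhoods and likewise the two outdegree-$3$ vertices have distinct in-neighbourhoods — rules out the nontrivial transpositions, so $\Aut(\toito)$ is trivial. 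Plugging in gives $\expect{p(\toito;\bm{R_{n,1/2}})} = 120/1024 = 15/128$ for every $n\ge 5$, and taking $n\to\infty$ finishes the proof.

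The only genuinely fiddly point is the automorphism count, and even that is routine: the main obstacle is just being careful to read the correct orientations off the figure for $\toito$ and to confirm the score sequence is $(1,1,2,3,3)$ rather than that of a neighbouring tournament in the list. Everything else is a one-line substitution into the identity already stated in the introduction.
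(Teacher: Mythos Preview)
Your proposal is correct and follows essentially the same route as the paper: both compute $\expect{\tind(\toito;\bm{R_{n,1/2}})}=2^{-10}$ and convert to the unlabelled density via the factor $5!/\lvert\Aut(\toito)\rvert$, with $\lvert\Aut(\toito)\rvert=1$. The paper's proof is just the two-line version of yours, leaving the automorphism count implicit, whereas you spell it out via the score sequence $(1,1,2,3,3)$ and the relation of the two degree-$3$ (resp.\ degree-$1$) vertices to the fixed degree-$2$ vertex. One inessential slip: your parenthetical says ``the five strongly connected tournaments on $5$ vertices''; there are six ($T_5^7$ through $T_5^{12}$), and they are not all distinguished by score sequence alone --- but this aside is not used in your argument, which only needs the score sequence of $\toito$ itself.
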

\begin{proof}
  From the definition of~$\bm{R_{n,1/2}}$, it follows that
  \begin{align*}
    \expect{\tind(\toito;\bm{R_{n,1/2}})} & = \frac{1}{2^{10}},
  \end{align*}
  for every~$n\geq 5$, hence
  \begin{align*}
    \lim_{n \to \infty}\expect{p(\toito;\bm{R_{n,1/2}})}
    & = 
    \frac{1}{2^{10}}\cdot 5!
    =
    \frac{15}{128}.
    \qedhere
  \end{align*}
\end{proof}



		    
\section{Razborov's semidefinite method for flag algebras}
\label{sec:sdp}

In this section, we briefly review the basics of the flag algebra theory and its
semidefinite method. Although we work here only with the theory of tournaments, we
remark that flag algebras can be defined in the general setting of any universal
theory of first-order (see~\cite{Raz07} and~\cite{Raz10}, see also the
surveys~\cite{R:FlagAlgebrasInterim} and~\cite{R:WhatIsAFlagAlgebra}).

\subsection{Basic definitions and properties}
\label{subsec:flagbasic}

First recall the definition of~$\mathcal{T}_n$ as the set of all tournaments of
on~$n$ vertices up to isomorphism and
define~$\mathcal{T}=\bigcup_{n\in\mathbb{N}}\mathcal{T}_n$ as the set of all
tournaments up to isomorphism on a finite number of vertices. For every
tournament~$T$, we will denote its \emph{size} by~$\lvert T\rvert = \lvert
V(T)\rvert$.

A \emph{type} is a tournament with vertex set~$[k]=\{1,2,\ldots,k\}$ for
some~$k\in\mathbb{N}$ and, given a type~$\sigma$ of size~$k=\lvert\sigma\rvert$,
a~\emph{$\sigma$-flag} is a partially labelled tournament such that the labelled part
is a copy of~$\sigma$. Formally, a~$\sigma$-flag is a pair~$(T,\theta)$, where~$T$ is
a tournament and~$\theta\:[k]\to V(T)$ is an embedding of~$\sigma$ into~$T$, that is,
the function~$\theta$ is an isomorphism between~$\sigma$ and the tournament induced
by~$\im(\theta)$ on~$T$. We define the \emph{size} of
the~$\sigma$-flag~$F=(T,\theta)$ as~$\lvert F\rvert=\lvert T\rvert$.

We extend the notion of isomorphism to~$\sigma$-flags declaring that a
function~$f\:V(T_1)\to V(T_2)$ is an \emph{isomorphism} between the
flags~$F_1=(T_1,\theta_1)$ and~$F_2=(T_2,\theta_2)$ if it is an isomorphism
between~$T_1$ and~$T_2$ and~$f\comp\theta_1 = \theta_2$ (i.e., the function~$f$
preserves labels). Naturally, we say that two flags~$F_1$ and~$F_2$ are
\emph{isomorphic} (denoted~$F_1\cong F_2$) if there exists an isomorphism between
them.

This allows us to define~$\mathcal{F}^\sigma_n$ as the set of all~$\sigma$-flags of
size~$n$ up to isomorphism
and~$\mathcal{F}^\sigma=\bigcup_{n\in\mathbb{N}}\mathcal{F}^\sigma_n$ as the set of
all finite~$\sigma$-flags up to isomorphism.

Let us denote the unique type of size~$0$ by~$0$ and note that a~$0$-flag can be
identified with a tournament. Let us also note that for every type~$\sigma$, the
set~$\mathcal{F}^\sigma_{\lvert\sigma\rvert}$ has only one element~$(\sigma,\id)$,
which we will denote by~$1_\sigma$.

If~$F=(T,\theta)$ is a~$\sigma$-flag and~$W\subset V(T)$ is such
that~$\im(\theta)\subset W$ (i.e., the set~$W$ contains all labelled vertices), then
we define the \emph{subflag induced by~$W$ on~$F$} as the
flag~$F\vert_W=(T\vert_W,\theta)$, where~$T\vert_W$ is the subtournament induced
by~$W$ on~$T$.

We now extend the notion of density to flags as well and also to a more general
setting of density of several flags.

\begin{definition}
  Let~$\sigma$ be a type of size~$k$ and~$\ell,\ell_1,\ell_2,\ldots,\ell_t\geq k$ be
  integers such that
  \begin{align*}
    \left(\sum_{i=1}^t\ell_i\right) - (t-1)k & \leq \ell.
  \end{align*}
  
  Let also~$F=(M,\theta),F_1,F_2,\ldots,F_t\in\mathcal{F}^\sigma$ be~$\sigma$-flags
  of sizes~$\ell,\ell_1,\ell_2,\ldots,\ell_t$ respectively.

  The \emph{joint density} of~$F_1,F_2,\ldots,F_t$ in~$F$, denoted
  by~$p(F_1,F_2,\ldots,F_t;F)$, is defined through the following random experiment.

  Pick uniformly at random pairwise disjoint
  subsets~$\bm{W_1},\bm{W_2},\ldots,\bm{W_t}$ of~$V(F)\setminus\im(\theta)$ subject
  to~$\lvert\bm{W_i}\rvert = \ell_i-k$ for every~$i\in[t]$ and define
  \begin{align*}
    p(F_1,F_2,\ldots,F_t;F) & = \prob{\forall i\in[t],
      F\vert_{\im(\theta)\cup\bm{W_i}}\cong F_i}.
  \end{align*}

  We also extend~$p$ linearly in each of its coordinates.
\end{definition}

We can (finally) present the \emph{flag algebra} of a type~$\sigma$.

\begin{proposition}[Razborov~{\cite[Lemma~2.4]{Raz07}}]\label{prop:flagalgebra}
  Let~$\sigma$ be a type of size~$k$
  and~$\mathcal{A}^\sigma=\mathbb{R}\mathcal{F}^\sigma/\mathcal{K}^\sigma$ denote the
  quotient of the set~$\mathbb{R}\mathcal{F}^\sigma$ of all formal linear
  combinations of elements of~$\mathcal{F}^\sigma$ by the linear
  subspace~$\mathcal{K}^\sigma$ generated by elements of the form
  \begin{align*}
    \widetilde{F} & - \sum_{F\in\mathcal{F}^\sigma_{\ell}}p(\widetilde{F};F)F,
  \end{align*}
  where~$\ell\geq\lvert\widetilde{F}\rvert$.

  Define also the linear
  product~${{}\cdot{}}\:\mathcal{A}^\sigma\times\mathcal{A}^\sigma\to\mathcal{A}^\sigma$
  through
  \begin{align*}
    F_1\cdot F_2 & = \sum_{F\in\mathcal{F}^\sigma_\ell}p(F_1,F_2;F)F,
  \end{align*}
  where~$F_1,F_2\in\mathcal{F}^\sigma$ and~$\ell\geq\lvert F_1\rvert+\lvert F_2\rvert
  - k$.

  Under these conditions, this product is well-defined and the
  set~$\mathcal{A}^\sigma$ equipped with this product (and the usual addition) is a
  commutative associative algebra over~$\mathbb{R}$ with unity~$1_\sigma$.
\end{proposition}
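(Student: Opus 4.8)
The statement is exactly Razborov's~\cite[Lemma~2.4]{Raz07}, so the plan is to reproduce his proof. First I would record the bookkeeping point that, since each~$\mathcal{T}_n$ is finite, each~$\mathcal{F}^\sigma_n$ is finite; hence all sums below are genuine finite sums and every formal manipulation in~$\mathbb{R}\mathcal{F}^\sigma$ and in the quotient~$\mathcal{A}^\sigma$ is legitimate. The whole argument then rests on one combinatorial ``chain rule'' for the density function~$p$; once that is in hand, well-definedness, commutativity, the unit law and associativity all follow by short formal computations.

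The chain rule I would isolate says: for~$\sigma$-flags~$F_1,\dots,F_t$, an index~$0\le s\le t$ and a size parameter~$m$,
\[
  p(F_1,\dots,F_t;F)=\sum_{G\in\mathcal{F}^\sigma_m}p(F_1,\dots,F_s;G)\,p(G,F_{s+1},\dots,F_t;F)
\]
whenever all three displayed densities are defined (which forces~$\sum_{i\le s}\lvert F_i\rvert-(s-1)\lvert\sigma\rvert\le m$ and~$m+\sum_{i>s}\lvert F_i\rvert-(t-s)\lvert\sigma\rvert\le\lvert F\rvert$). I would prove it by unwinding the defining experiment: choosing pairwise disjoint uniform subsets of~$V(F)\setminus\im(\theta)$, one of them of size~$m-\lvert\sigma\rvert$ and hosting~$s$ further pairwise disjoint uniform subsets, and reading off the subflag~$G$ induced by that larger set together with the labels, produces exactly the same distribution on the~$t$ innermost subsets as choosing all~$t$ of them directly and pairwise disjointly, because a two-stage uniform choice of a set and then a subset of it is again uniform; conditioning on the isomorphism types of the induced subflags then gives the identity. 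Two special cases are worth isolating: taking~$s=t$ gives~$p(F_1,\dots,F_t;F)=\sum_G p(F_1,\dots,F_t;G)\,p(G;F)$, and taking~$s=t=1$ recovers precisely the relation generating~$\mathcal{K}^\sigma$.

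Granting the chain rule, the rest goes as follows. For flags~$F_1,F_2$ and~$\ell\ge\lvert F_1\rvert+\lvert F_2\rvert-\lvert\sigma\rvert$, set~$F_1\times_\ell F_2=\sum_{F\in\mathcal{F}^\sigma_\ell}p(F_1,F_2;F)F$. Comparing~$\ell$ with~$\ell+1$ in~$\mathcal{A}^\sigma$: each size-$\ell$ flag~$F$ equals~$\sum_{F'\in\mathcal{F}^\sigma_{\ell+1}}p(F;F')F'$ by a defining relation, so~$F_1\times_\ell F_2=\sum_{F'}\bigl(\sum_F p(F_1,F_2;F)p(F;F')\bigr)F'=\sum_{F'}p(F_1,F_2;F')F'=F_1\times_{\ell+1}F_2$ by the chain rule with~$s=t=2$; hence the image of~$F_1\times_\ell F_2$ in~$\mathcal{A}^\sigma$ is independent of~$\ell$, and extending bilinearly yields a map~$\mathbb{R}\mathcal{F}^\sigma\times\mathbb{R}\mathcal{F}^\sigma\to\mathcal{A}^\sigma$. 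To see it descends to~$\mathcal{A}^\sigma\times\mathcal{A}^\sigma$ it suffices to check it kills a generator~$g=\widetilde F-\sum_{G\in\mathcal{F}^\sigma_m}p(\widetilde F;G)G$ of~$\mathcal{K}^\sigma$: for~$\ell$ large, $g\times_\ell F_2=\sum_F\bigl(p(\widetilde F,F_2;F)-\sum_G p(\widetilde F;G)p(G,F_2;F)\bigr)F$, which vanishes by the chain rule with~$s=1$. Commutativity is immediate from the symmetry~$p(F_1,F_2;F)=p(F_2,F_1;F)$ of the experiment, and~$1_\sigma=(\sigma,\id)$ is a unit since, computing~$1_\sigma\cdot F_2$ with~$\ell=\lvert F_2\rvert$, the~$1_\sigma$-part of the experiment is empty, the~$F_2$-part is forced to be all of~$V(F)\setminus\im(\theta)$, and~$F\vert_{\im(\theta)}\cong 1_\sigma$ always, so~$p(1_\sigma,F_2;F)$ equals~$1$ if~$F\cong F_2$ and~$0$ otherwise.

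The one step needing a little care is associativity. I would fix~$F_1,F_2,F_3$, take~$\ell\ge\lvert F_1\rvert+\lvert F_2\rvert+\lvert F_3\rvert-2\lvert\sigma\rvert$, and expand~$(F_1\cdot F_2)\cdot F_3$ in~$\mathcal{A}^\sigma$ using the chain rule twice: first replace~$F_1\cdot F_2$ by its sum over an intermediate flag~$G$ of size~$\lvert F_1\rvert+\lvert F_2\rvert-\lvert\sigma\rvert$, then use the chain rule with~$s=2$ to collapse the inner sum over~$G$, obtaining~$\sum_{F\in\mathcal{F}^\sigma_\ell}p(F_1,F_2,F_3;F)F$. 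The analogous expansion of~$F_1\cdot(F_2\cdot F_3)$ also yields~$\sum_F p(F_1,F_2,F_3;F)F$ — the intermediate flag is different, but~$p(\cdot,\cdot,\cdot;F)$ is symmetric in its flag arguments, so the two sums coincide — and hence the two triple products agree. The main obstacle throughout is the chain rule itself: it is the only genuinely non-formal ingredient, and the real subtlety there is tracking the size constraints and invoking ``nested uniform sampling is again uniform'' correctly; everything else is the routine formal bookkeeping above.
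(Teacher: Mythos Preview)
The paper does not prove this proposition at all; it merely states it and attributes it to Razborov~\cite[Lemma~2.4]{Raz07}. Your proposal is a correct sketch of Razborov's original argument, with the chain rule for joint densities as the single substantive ingredient and the algebraic verifications (independence of~$\ell$, descent to the quotient, unit, commutativity, associativity) all following formally from it, so there is nothing to compare.
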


Let us denote by~$\Hom(\mathcal{A}^\sigma,\mathbb{R})$ the set of
all~$\mathbb{R}$-algebra homomorphisms from~$\mathcal{A}^\sigma$ to~$\mathbb{R}$ and
define the set of positive homomorphisms as
\begin{align*}
  \Homp{\sigma} & =
  \{\phi\in\Hom(\mathcal{A}^\sigma,\mathbb{R}) :
  \forall F\in\mathcal{F}^\sigma,\phi(F)\in[0,1]\}.
\end{align*}

We will now define the notion of a convergent sequence of flags.

\begin{definition}
  Let~$(F_n)_{n\in\mathbb{N}}$ be a sequence of~$\sigma$-flags.

  The sequence~$(F_n)_{n\in\mathbb{N}}$ is called \emph{increasing} if~$\lvert
  F_n\rvert<\lvert F_{n+1}\rvert$ for every~$n\in\mathbb{N}$.

  The sequence~$(F_n)_{n\in\mathbb{N}}$ is called \emph{convergent} if it is
  increasing and for every fixed~$\sigma$-flag~$F\in\mathcal{F}^\sigma$, the
  sequence~$(p(F;F_n))_{n\in\mathbb{N}}$ is convergent.

  If~$\phi\in\Homp{\sigma}$ is a homomorphism, we say that the
  sequence~$(F_n)_{n\in\mathbb{N}}$ \emph{converges to~$\phi$} if it is convergent
  and
  \begin{align*}
    \lim_{n\to\infty}p(F;F_n) & = \phi(F),
  \end{align*}
  for every~$\sigma$-flag~$F\in\mathcal{F}^\sigma$.
\end{definition}

It is easy to see (e.g., by a diagonalization argument) that every increasing
sequence of flags has a convergent subsequence. The next theorem says that the set of
positive homomorphisms~$\Homp{\sigma}$ captures precisely the limits of convergent
sequences of~$\sigma$-flags.

\begin{theorem}[Lovász--Szegedy~\cite{LoSz06}, Razborov~{\cite[Theorem~3.3]{Raz07}}]
  Every convergent sequence of~$\sigma$-flags converges to a positive homomorphism
  in~$\Homp{\sigma}$ and for every positive homomorphism~$\phi\in\Homp{\sigma}$ there
  exists a sequence of~$\sigma$-flags converging to~$\phi$.
\end{theorem}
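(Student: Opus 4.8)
The statement is classical (as the attribution indicates); the plan is to verify both directions directly from Proposition~\ref{prop:flagalgebra}. For the first direction, given a convergent sequence $(F_n)_{n\in\mathbb{N}}$ of $\sigma$-flags, I would set $\phi(F)=\lim_{n\to\infty}p(F;F_n)$ for $F\in\mathcal{F}^\sigma$ and extend linearly to $\mathbb{R}\mathcal{F}^\sigma$, and then check the three defining properties of a positive homomorphism. That $\phi$ descends to $\mathcal{A}^\sigma$ (i.e.\ vanishes on $\mathcal{K}^\sigma$) follows from the ``chain rule''
\begin{align*}
  p(\widetilde{F};F_n) & = \sum_{F\in\mathcal{F}^\sigma_\ell}p(\widetilde{F};F)\,p(F;F_n)\qquad(\lvert F_n\rvert\geq\ell),
\end{align*}
proved by first drawing a uniform $\ell$-subset of $V(F_n)$ containing $\im(\theta)$ and then drawing within it, so that passing to the limit annihilates the generator $\widetilde{F}-\sum_F p(\widetilde{F};F)F$. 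Multiplicativity follows from the analogous chain rule for joint densities, which gives $\phi(F_1\cdot F_2)=\lim_n p(F_1,F_2;F_n)$, together with the estimate $\lvert p(F_1,F_2;F)-p(F_1;F)p(F_2;F)\rvert=O(1/\lvert F\rvert)$ (two independently chosen subsets of bounded size are disjoint with probability $1-O(1/\lvert F\rvert)$), which forces $\lim_n p(F_1,F_2;F_n)=\phi(F_1)\phi(F_2)$. Since $p(1_\sigma;F_n)=1$ and $p(F;F_n)\in[0,1]$ for all $n$, we conclude $\phi\in\Homp{\sigma}$ and $(F_n)_n$ converges to it.

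For the second direction, fix $\phi\in\Homp{\sigma}$. The first observation is that $1_\sigma=\sum_{F\in\mathcal{F}^\sigma_n}F$ in $\mathcal{A}^\sigma$ for every $n\geq\lvert\sigma\rvert$ (the defining relation for $\widetilde{F}=1_\sigma$, since $p(1_\sigma;F)=1$), so $q_n(F):=\phi(F)$ is a probability distribution on $\mathcal{F}^\sigma_n$; moreover the relation $F=\sum_{G\in\mathcal{F}^\sigma_N}p(F;G)G$ makes these distributions consistent, so if $\bm{G_N}$ is a random $\sigma$-flag with law $q_N$ then $\expect{p(F;\bm{G_N})}=\phi(F)$ for every $\sigma$-flag $F$ of size at most $N$. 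I would then estimate the variance: from $\lvert p(F;G)^2-p(F,F;G)\rvert=O(1/\lvert G\rvert)$, from $\expect{p(F,F;\bm{G_N})}=\phi(F\cdot F)$ (expand $F\cdot F$ in $\mathcal{F}^\sigma_N$), and crucially from multiplicativity $\phi(F\cdot F)=\phi(F)^2$, we obtain $\var(p(F;\bm{G_N}))=O(1/N)$.

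To conclude, I would choose $\epsilon_N\downarrow 0$ and $m_N\uparrow\infty$ slowly enough that $c_{m_N}/(N\epsilon_N^2)\to 0$, where $c_m<\infty$ is the number of $\sigma$-flags of size at most $m$. By Chebyshev's inequality and a union bound over those finitely many flags, for $N$ large the event ``$\lvert p(F;\bm{G_N})-\phi(F)\rvert<\epsilon_N$ for every $\sigma$-flag $F$ of size at most $m_N$'' has positive probability, so a deterministic witness $G_N$ of size $N$ exists, and the increasing sequence $(G_N)_N$ converges to $\phi$. I expect the second direction to be the main obstacle: the heart of the matter is recognizing that positivity is exactly what makes $(\phi(F))_{F\in\mathcal{F}^\sigma_n}$ a probability distribution and that multiplicativity is exactly what cancels the leading term in the variance of $p(F;\bm{G_N})$; once this is noticed, concentration plus a diagonal choice of $G_N$ is routine.
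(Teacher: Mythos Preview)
The paper does not prove this theorem: it is stated with attribution to Lov\'asz--Szegedy and Razborov and used as a black box, so there is no ``paper's own proof'' to compare against. Your sketch is essentially the standard argument from~\cite{Raz07}: the chain rule identity $p(\widetilde{F};F_n)=\sum_F p(\widetilde{F};F)p(F;F_n)$ handles well-definedness on $\mathcal{A}^\sigma$, the asymptotic independence estimate $\lvert p(F_1,F_2;F)-p(F_1;F)p(F_2;F)\rvert=O(1/\lvert F\rvert)$ (Razborov's Lemma~2.3) gives multiplicativity in one direction and the variance bound in the other, and the existence of a realizing sequence then follows by sampling from the distribution $(\phi(F))_{F\in\mathcal{F}^\sigma_N}$ and concentration. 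The outline is correct and complete for the level of detail appropriate to a cited foundational result.
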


Recall that we are interested in maximizing the density of a fixed tournament~$T$
asymptotically. This means that, in the language of flag algebras, we are interested
in the following problem.

\begin{problem}\label{prob:maxflag}
  Given a fixed tournament~$T\in\mathcal{T}$, compute
  \begin{align*}
    \max\{\phi(T) : \phi\in\Homp{\sigma}\}.
  \end{align*}
\end{problem}

\begin{remark}
  Here, we used~$\max$ instead of~$\sup$ because~$\Homp{\sigma}$ is compact.
\end{remark}

\subsection{Semidefinite method}
\label{subsec:semidefinite}

Providing lower bounds to Problem~\ref{prob:maxflag} is easy. Indeed, every
increasing sequence of tournaments~$(T_n)_{n\in\mathbb{N}}$ provides the lower bound
\begin{align*}
  \limsup_{n\to\infty}p(T;T_n).
\end{align*}

The hard part of this problem is to compute upper bounds. A first and na\"{i}ve way of
doing so is the following. If~$\phi\in\Homp{\sigma}$ is a homomorphism, then
Proposition~\ref{prop:flagalgebra} gives us
\begin{align}\label{eq:naive}
  \phi(T) & = \sum_{T'\in\mathcal{T}_\ell}p(T;T')\phi(T')\nonumber\\
  & \leq
  \left(\max_{T'\in\mathcal{T}_\ell}p(T;T')\right)\sum_{T'\in\mathcal{T}_\ell}\phi(T')
  = \left(\max_{T'\in\mathcal{T}_\ell}p(T;T')\right)\phi(1_0)
  = \left(\max_{T'\in\mathcal{T}_\ell}p(T;T')\right),
\end{align}
for every~$\ell\geq\lvert T\rvert$, since~$\phi$ is linear and
\begin{align*}
  1_0 & = \sum_{T'\in\mathcal{T}_\ell}T'.
\end{align*}

However, in general this bound is too weak to find extremal values. In what follows,
we will present the semidefinite method, which builds up on this simple argument but
can obtain much better bounds for Problem~\ref{prob:maxflag}.

Let us start by defining some flag algebra notation that will help us.

\begin{definition}
  Let~$\sigma$ be a type. We define the \emph{semantic cone of type~$\sigma$} as the
  set
  \begin{align*}
    \Csem{\sigma} & = \{f\in\mathcal{A}^\sigma : \forall\phi\in\Homp{\sigma},
    \phi(f)\geq 0\},
  \end{align*}
  that is, the semantic cone is the set of all ``positive'' elements
  of~$\mathcal{A}^\sigma$ with respect to positive homomorphisms.

  We define also the \emph{ordinary cone of type~$\sigma$} as the set
  \begin{align*}
    \cC{\sigma} & = \left\{\sum_{i=1}^t F_i\cdot f_i^2 : t\in\mathbb{N}
    \land F_1,F_2,\ldots,F_t\in\mathcal{F}^\sigma
    \land f_1,f_2,\ldots,f_t\in\mathcal{A}^\sigma
    \right\}.
  \end{align*}

  Finally, we define the preorder relation~$\leq_\sigma$ over~$\mathcal{A}^\sigma$
  through
  \begin{align*}
    f\leq_\sigma g & \iff g-f\in\Csem{\sigma}.
  \end{align*}
\end{definition}

Trivially we have~$\mathcal{F}^\sigma\subset\cC{\sigma}\subset\Csem{\sigma}$.

The idea of the semidefinite method is to use elements of~$\Csem{0}$ to
compensate large~$p(T;T')$ in~\eqref{eq:naive} as follows. If~$g\in\Csem{0}$, then
\begin{align}\label{eq:withg}
  T & \leq_0 T+g 
  = \sum_{T'\in\mathcal{T}_\ell}(p(T;T') + p(g;T'))T'\nonumber\\
  & \leq_0
  \left(\max_{T'\in\mathcal{T}_\ell}p(T;T') + p(g;T')\right)\sum_{T'\in\mathcal{T}_\ell}T'
  = \left(\max_{T'\in\mathcal{T}_\ell}p(T;T') + p(g;T')\right)1_0,
\end{align}
where~$\ell\in\mathbb{N}$ is large enough (so that we can write~$g$ as a combination
of tournaments of size smaller than~$\ell$). Our hope in doing so is to be able to
choose~$g$ such that~$p(g;T')$ is negative when~$p(T;T')$ is large, but
taking~$p(g;T')$ positive enough to ensure~$g\in\Csem{0}$ when~$p(T;T')$ is
small.

However, deciding whether an arbitrary~$g$ is an element of~$\Csem{\sigma}$ is hard
(in fact, our problem is exactly to prove that~$c1_0 - T\in\Csem{0}$ for a
certain~$c$).

We will now define the downward operator, which will help us in obtaining elements
of~$\Csem{0}$ from elements of~$\Csem{\sigma}$.

\begin{definition}
  Let~$\sigma$ be a type of size~$k$ and~$F=(T,\theta)$ be a~$\sigma$-flag. We denote
  the \emph{underlying tournament} of~$F$ by~$F\vert_0=T$ and we define the
  \emph{normalizing factor} of~$F$ (denoted~$q_\sigma(F)$) through the following
  random experiment.

  We pick uniformly at random an injective function~$\bm{\theta}\:[k]\to V(F\vert_0)$
  and let
  \begin{align*}
    q_\sigma(F) & = \prob{(F\vert_0,\bm{\theta})\cong F}.
  \end{align*}

  We also define the downward operator~$\llbracket{{}\cdot{}}\rrbracket_\sigma$ by
  letting
  \begin{align*}
    \llbracket F\rrbracket_\sigma & = q_\sigma(F)F\vert_0\quad\in\mathcal{A}^0,
  \end{align*}
  and extending it linearly to combinations of~$\sigma$-flags.
\end{definition}

\begin{theorem}[Razborov~{\cite[Theorems~2.5 and~3.1a]{Raz07}}]
  The downward operator~$\llbracket{{}\cdot{}}\rrbracket_\sigma$ is well-defined as
  an operator~$\mathcal{A}^\sigma\to\mathcal{A}^0$ and we have
  \begin{align*}
    \llbracket\Csem{\sigma}\rrbracket_\sigma & \subset \Csem{0}.
  \end{align*}
\end{theorem}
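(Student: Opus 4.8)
The plan is to prove two things in sequence. First, that the linear map $\llbracket\cdot\rrbracket_\sigma$ (which is linear on $\mathbb{R}\mathcal{F}^\sigma$ by construction) annihilates the subspace $\mathcal{K}^\sigma$, so that it descends to a well-defined $\mathbb{R}$-linear map $\mathcal{A}^\sigma\to\mathcal{A}^0$. Second, that this descended map sends $\Csem{\sigma}$ into $\Csem{0}$. Both parts rest on the same elementary change-of-variables principle: for a tournament $M$, choosing a uniformly random injection $\bm{\theta}\colon[k]\to V(M)$ together with a uniformly random set of $\ell-k$ further vertices disjoint from $\im(\bm{\theta})$ produces the same joint distribution as choosing a uniformly random $\ell$-subset $\bm{U}$ of $V(M)$ and then a uniformly random injection $[k]\to\bm{U}$.

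For well-definedness it suffices to check that every generator $\widetilde F-\sum_{F\in\mathcal{F}^\sigma_\ell}p(\widetilde F;F)F$ of $\mathcal{K}^\sigma$ is mapped to $0$ in $\mathcal{A}^0$, i.e.\ that $q_\sigma(\widetilde F)\,\widetilde F\vert_0=\sum_{F\in\mathcal{F}^\sigma_\ell}p(\widetilde F;F)\,q_\sigma(F)\,F\vert_0$ in $\mathcal{A}^0$. Rewriting the left-hand side via the defining relation $\widetilde F\vert_0=\sum_{H\in\mathcal{T}_\ell}p(\widetilde F\vert_0;H)H$ of $\mathcal{A}^0$ and grouping the right-hand side by the underlying tournament $H=F\vert_0$, I would reduce this to the numerical identity
\begin{align*}
  q_\sigma(\widetilde F)\,p(\widetilde F\vert_0;H) & = \sum_{\substack{F\in\mathcal{F}^\sigma_\ell\\ F\vert_0\cong H}}p(\widetilde F;F)\,q_\sigma(F) \qquad(H\in\mathcal{T}_\ell),
\end{align*}
and verify it by noting that both sides equal the probability that, when the two equivalent sampling procedures above are run on $M=H$ with $\ell$ replaced by $\lvert\widetilde F\rvert$, the resulting $\sigma$-flag is isomorphic to $\widetilde F$. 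This is routine double counting and I do not expect it to be the hard part.

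For the cone inclusion the key identity is that for any tournament $M$, any $\sigma$-flag $F$, and a uniformly random injection $\bm{\theta}\colon[k]\to V(M)$ one has $\expect{p(F;(M,\bm{\theta}))}=q_\sigma(F)\,p(F\vert_0;M)$ — again an instance of the same change of variables. Extending linearly and using that every $f\in\mathcal{A}^\sigma$ has a representative $\tilde f=\sum_i c_iF_i$ supported on flags of a single size $m\ge k$ (push each flag up via the relations of $\mathcal{A}^\sigma$), I get $\expect{p(\tilde f;(M,\bm{\theta}))}=\sum_i c_i\,q_\sigma(F_i)\,p(F_i\vert_0;M)$ for every tournament $M$ with $\lvert M\rvert\ge m$. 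Now fix $\psi\in\Homp{0}$ and, by the representation theorem quoted above, a convergent sequence of tournaments $(T_n)_{n\in\mathbb{N}}$ with $\psi(T')=\lim_n p(T';T_n)$ for all $T'$. Since $\llbracket f\rrbracket_\sigma=\sum_i c_i\,q_\sigma(F_i)\,F_i\vert_0$ in $\mathcal{A}^0$ (well-defined by the first part), linearity and continuity of $\psi$ give
\begin{align*}
  \psi\bigl(\llbracket f\rrbracket_\sigma\bigr) & = \lim_{n\to\infty}\expect{p(\tilde f;(T_n,\bm{\theta}_n))},
\end{align*}
with $\bm{\theta}_n\colon[k]\to V(T_n)$ uniformly random. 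If $f\in\Csem{\sigma}$ but this limit were some $-\delta<0$, then for all large $n$ the expectation would be below $-\delta/2$, so there would be a particular injection $\theta_n$ with $p(\tilde f;(T_n,\theta_n))<-\delta/2$; the $\sigma$-flags $(T_n,\theta_n)$ then have strictly increasing sizes, so a subsequence converges to some $\phi\in\Homp{\sigma}$, and $\phi(f)=\lim_j p(\tilde f;(T_{n_j},\theta_{n_j}))\le-\delta/2<0$, contradicting $f\in\Csem{\sigma}$. Hence $\psi(\llbracket f\rrbracket_\sigma)\ge 0$ for every $\psi\in\Homp{0}$, i.e.\ $\llbracket f\rrbracket_\sigma\in\Csem{0}$.

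The only non-elementary inputs are that every $\psi\in\Homp{0}$ is represented by a convergent sequence of tournaments and that a convergent sequence of $\sigma$-flags of increasing size has a limit in $\Homp{\sigma}$ — both quoted immediately before this theorem — so the remaining work is the bookkeeping in the two sampling identities and the care in taking $\tilde f$ on a single level. The conceptual crux is the last step: a negative value of $\psi(\llbracket f\rrbracket_\sigma)$ forces, through a bad choice of root, an actual positive homomorphism of type $\sigma$ witnessing $f\notin\Csem{\sigma}$.
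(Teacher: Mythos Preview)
The paper does not supply a proof of this statement: it is quoted as Razborov's result \cite[Theorems~2.5 and~3.1a]{Raz07} and used as a black box, so there is no in-paper argument to compare against.

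Your sketch is correct. The well-definedness reduction to the identity
\[
  q_\sigma(\widetilde F)\,p(\widetilde F\vert_0;H)
  \;=\;
  \sum_{\substack{F\in\mathcal{F}^\sigma_\ell\\ F\vert_0\cong H}}p(\widetilde F;F)\,q_\sigma(F)
\]
is exactly the chain rule \cite[Lemma~2.5/Theorem~2.5]{Raz07}, and your two-stage sampling justification is the standard one. For the cone inclusion, your route---represent $\psi\in\Homp{0}$ by a sequence $(T_n)$, average over uniformly random roots, and from a negative limit extract a single bad root $\theta_n$ and a subsequential limit $\phi\in\Homp{\sigma}$---is a legitimate alternative to Razborov's original argument, which instead constructs the random extension $\bm{\phi^\sigma}$ (Theorem~\ref{thm:ext} here) and reads off $\phi(\llbracket f\rrbracket_\sigma)=\phi(\sigma)\,\expect{\bm{\phi^\sigma}(f)}\ge 0$. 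Your approach avoids building $\bm{\phi^\sigma}$ and relies only on the two representation facts the paper already quotes; Razborov's approach yields the stronger quantitative identity, which is what the paper later uses in Proposition~\ref{prop:eigen}. One small point worth making explicit when you write it out: the deterministic root $\theta_n$ you pick with $p(\tilde f;(T_n,\theta_n))<-\delta/2$ is automatically an embedding of $\sigma$, since otherwise every term $p(F_i;(T_n,\theta_n))$ vanishes and the sum would be $0$; this is why $(T_n,\theta_n)$ is a bona fide $\sigma$-flag and the compactness step goes through.
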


This theorem allows us to choose~$g$ of~\eqref{eq:withg} in the easier
set~$\llbracket\cC{\sigma}\rrbracket_\sigma$ for some type~$\sigma$. This reduces the
problem to finding a positive semidefinite matrix in the following way.

Fix a type~$\sigma$ of size~$k$, a~$\sigma$-flag~$F'$ and let~$\widetilde{\ell}$
and~$\ell$ be integers such that~$k\leq\widetilde{\ell}$ and~$\lvert F'\rvert +
2\widetilde{\ell}-2k\leq\ell$.

If~$v\in\mathbb{R}^{\mathcal{F}^\sigma_{\widetilde{\ell}}}$ is a vector indexed
by~$\mathcal{F}^\sigma_{\widetilde{\ell}}$, then let~$F(v)$ denote the element
\begin{align*}
  \sum_{F\in\mathcal{F}^\sigma_{\widetilde{\ell}}}v_FF
  & \in \mathcal{A}^\sigma.
\end{align*}

Analogously, if~$Q$ is a matrix indexed by
by~$\mathcal{F}^\sigma_{\widetilde{\ell}}\times\mathcal{F}^\sigma_{\widetilde{\ell}}$, let~$F(Q)$
denote the element
\begin{align*}
  \sum_{F_1,F_2\in\mathcal{F}^\sigma_{\widetilde{\ell}}}Q_{F_1F_2}F_1F_2
  & \in \mathcal{A}^\sigma.
\end{align*}

Note that if~$Q$ is positive semidefinite ($Q\succeq 0$), then by the Spectral
Theorem there exist
vectors~$v_1,v_2,\ldots,v_r\in\mathbb{R}^{\mathcal{F}^\sigma_{\widetilde{\ell}}}$
such that
\begin{align*}
  Q & = \sum_{i=1}^r v_iv_i^\top,
\end{align*}
which means that
\begin{align*}
  F(Q) & = \sum_{i=1}^r F(v_i)^2.
\end{align*}

Hence we have~$F'\cdot F(Q)\in\cC{\sigma}$ and we can take~$g$ in~\eqref{eq:withg} to
be equal to
\begin{align*}
  \llbracket F'\cdot F(Q)\rrbracket_\sigma.
\end{align*}
This yields the following semidefinite program
\begin{align*}
  \min\; & y \\
  \text{s.t.} & \quad
  p(T,T') +
  \sum_{F\in\mathcal{F}^\sigma_\ell}\sum_{F_1,F_2\in\mathcal{F}^\sigma_{\widetilde{\ell}}}
  Q_{F_1F_2}p(F',F_i,F_j;F)p(\llbracket F\rrbracket_\sigma;T')
  \leq y \qquad \forall T'\in\mathcal{T}_\ell;\\
  & \quad
  Q \in
  \mathbb{R}^{\mathcal{F}^\sigma_{\widetilde{\ell}}\times\mathcal{F}^\sigma_{\widetilde{\ell}}}
  \text{ is positive semidefinite};
\end{align*}
whose solutions have values that are upper bounds to the value in
Problem~\ref{prob:maxflag}.

In fact, we can even take~$g = \sum_{i=1}^mg_i$ in~\eqref{eq:withg}, where each~$g_i$
is of the form
\begin{align*}
  \llbracket F'_i\cdot F(Q_i)\rrbracket_{\sigma_i},
\end{align*}
for some type~$\sigma_i$, some~$\sigma_i$-flag~$F'_i$ and some positive semidefinite
matrix~$Q_i$ indexed
by~$\mathcal{F}^{\sigma_i}_{\ell_i}\times\mathcal{F}^{\sigma_i}_{\ell_i}$.

We state the resulting semidefinite program in the proposition below.

\begin{proposition}[\cite{Raz10}]\label{prop:semidefinite}
  Let~$T\in\mathcal{T}$ be a tournament, let~$\sigma_1,\sigma_2,\ldots,\sigma_m$ be
  types of sizes~$k_1,k_2,\ldots,k_m$ respectively and for each~$t\in[m]$,
  let~$F'_t\in\mathcal{F}^{\sigma_t}$ be a~$\sigma_t$-flag. Let
  also~$\ell_1,\ell_2,\ldots,\ell_m,\ell$ be integers such that
  \begin{align*}
    k_t & \leq \ell_t; & \lvert F'_t\rvert + 2\ell_t - 2k_t \leq \ell;
  \end{align*}
  for every~$t\in[m]$ and such that~$\lvert T\rvert\leq\ell$.

  Under these circumstances, the every value of every solution of the semidefinite
  program
  \begin{align}
    \min\; & y \nonumber\\
    \text{s.t.} & \quad
    p(T,T') +
    \sum_{t=1}^m\sum_{F\in\mathcal{F}^{\sigma_t}_\ell}\sum_{F_1,F_2\in\mathcal{F}^{\sigma_t}_{\ell_t}}
    Q^{(t)}_{F_1F_2}p(F'_t,F_i,F_j;F)p(\llbracket F\rrbracket_{\sigma_t};T')
    \leq y \qquad \forall T'\in\mathcal{T}_\ell;\nonumber\\
    & \quad
    Q^{(t)} \in
    \mathbb{R}^{\mathcal{F}^{\sigma_t}_{\ell_t}\times\mathcal{F}^{\sigma_t}_{\ell_t}}
    \text{ is positive semidefinite} \qquad \forall t\in[m];
    \label{eq:semidefinite}
  \end{align}
  is an upper bound to the value in Problem~\ref{prob:maxflag}, that is, if~$V$ is
  the value of a solution of~\eqref{eq:semidefinite}, then
  \begin{align*}
    \max\{\phi(T) : \phi\in\Homp{0}\} & \leq V.
  \end{align*}
\end{proposition}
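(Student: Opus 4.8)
The proof will essentially formalize the discussion preceding the statement in Section~\ref{subsec:semidefinite}, the only additional work being to accommodate a sum of $m$ semidefinite terms rather than a single one. The plan is as follows. First I would fix an arbitrary feasible solution $(Q^{(1)},\dots,Q^{(m)},y)$ of~\eqref{eq:semidefinite} and use the Spectral Theorem to write each $Q^{(t)}$ as $\sum_i v^{(t)}_i(v^{(t)}_i)^\top$ with $v^{(t)}_i\in\mathbb{R}^{\mathcal{F}^{\sigma_t}_{\ell_t}}$, so that $F'_t\cdot F(Q^{(t)})=\sum_i F'_t\cdot F(v^{(t)}_i)^2\in\cC{\sigma_t}\subset\Csem{\sigma_t}$. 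Applying the downward operator (using $\llbracket\Csem{\sigma_t}\rrbracket_{\sigma_t}\subset\Csem{0}$) then shows $g_t:=\llbracket F'_t\cdot F(Q^{(t)})\rrbracket_{\sigma_t}\in\Csem{0}$, and hence $g:=\sum_{t=1}^m g_t\in\Csem{0}$ as well.

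The second step is to identify the left-hand side of the feasibility constraint with $p(T;T')+p(g;T')$. Here I would expand, for each $t$ and each pair $F_1,F_2\in\mathcal{F}^{\sigma_t}_{\ell_t}$, the product $F'_t\cdot F_1\cdot F_2=\sum_{F\in\mathcal{F}^{\sigma_t}_\ell}p(F'_t,F_1,F_2;F)\,F$ furnished by Proposition~\ref{prop:flagalgebra}; applying $\llbracket\,{\cdot}\,\rrbracket_{\sigma_t}$ and then the (linear) density functional $p(\,{\cdot}\,;T')$ yields
\begin{align*}
  p(g_t;T') & = \sum_{F_1,F_2\in\mathcal{F}^{\sigma_t}_{\ell_t}} Q^{(t)}_{F_1F_2}
  \sum_{F\in\mathcal{F}^{\sigma_t}_\ell} p(F'_t,F_1,F_2;F)\,p(\llbracket F\rrbracket_{\sigma_t};T'),
\end{align*}
so that, summing over $t$, the constraint becomes exactly $p(T;T')+p(g;T')\le y$ for every $T'\in\mathcal{T}_\ell$. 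At this point I would run the chain of inequalities in~\eqref{eq:withg}: since $1_0=\sum_{T'\in\mathcal{T}_\ell}T'$ and $g\in\Csem{0}$, we obtain $T\le_0 T+g=\sum_{T'\in\mathcal{T}_\ell}(p(T;T')+p(g;T'))T'\le_0 y\,1_0$, and evaluating at an arbitrary $\phi\in\Homp{0}$ gives $\phi(T)\le y\,\phi(1_0)=y$, which is the claim.

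The main thing to be careful about is the bookkeeping of sizes and indices in the middle step: one must check that the hypotheses $k_t\le\ell_t$, $\lvert F'_t\rvert+2\ell_t-2k_t\le\ell$ and $\lvert T\rvert\le\ell$ are precisely what make all the densities $p(F'_t,F_1,F_2;F)$ (with $\lvert F\rvert=\ell$), $p(T;T')$, and $p(\llbracket F\rrbracket_{\sigma_t};T')$ (with $\lvert T'\rvert=\ell$) legitimate, and that the matrix indices $F_1,F_2\in\mathcal{F}^{\sigma_t}_{\ell_t}$ line up with the flags appearing in the expansion of $F'_t\cdot F(Q^{(t)})$. Everything else reduces to linearity together with the already-quoted facts about the product on $\mathcal{A}^{\sigma_t}$ and about the downward operator, so I do not expect any genuine obstacle beyond this routine verification.
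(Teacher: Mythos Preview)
Your proposal is correct and follows exactly the approach that the paper lays out in the discussion of Subsection~\ref{subsec:semidefinite} preceding the proposition (the paper itself does not give a separate formal proof, deferring to~\cite{Raz10}). You have simply made explicit the passage from a single type to a sum over $m$ types and verified that the size hypotheses ensure all densities are well-defined; nothing further is needed.
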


In this text, all instances of~\eqref{eq:semidefinite} will be
with~$F'_t=1_{\sigma_t}$ for every~$t\in[m]$. Furthermore, when we use
Proposition~\ref{prop:semidefinite} to give upper bounds to
Problem~\ref{prob:maxflag}, we will denote each of the~$Q^{(t)}$
in~\eqref{eq:semidefinite} by~$Q(T,\sigma_t)$ as a reminder of which problem we are
solving and of what is the type involved. Moreover, for each~$T'\in\mathcal{T}_\ell$,
we define
\begin{align*}
  c(Q(T,\sigma_t);T') & =
  p(\llbracket F(Q(T,\sigma_t))\rrbracket_{\sigma_t};T') =
  \sum_{F\in\mathcal{F}^{\sigma_t}_\ell}\sum_{F_1,F_2\in\mathcal{F}^{\sigma_t}_{\ell_t}}
  Q(T,\sigma_t)_{F_1F_2}p(F_i,F_j;F)p(\llbracket F\rrbracket_{\sigma_t};T')
\end{align*}
and let
\begin{align*}
  c(T;T') & = \sum_{t=1}^m c(Q(T,\sigma_t);T'),
\end{align*}
so that~\eqref{eq:semidefinite} becomes
\begin{align}
  \min\; & y
  \nonumber\\
  \text{s.t.} & \quad
  p(T,T') + c(T;T') \leq y \qquad \forall T'\in\mathcal{T}_\ell;
  \nonumber\\
  & \quad
  c(T;T') = \sum_{t=1}^m c(Q(T,\sigma_t);T') \qquad \forall T'\in\mathcal{T}_\ell;
  \nonumber\\
  & \quad
  c(Q(T,\sigma_t);T') =
  \sum_{F\in\mathcal{F}^{\sigma_t}_\ell}\sum_{F_1,F_2\in\mathcal{F}^{\sigma_t}_{\ell_t}}
  Q(T,\sigma_t)_{F_1F_2}p(F_i,F_j;F)p(\llbracket F\rrbracket_{\sigma_t};T');
  \nonumber\\
  & \quad
  Q(T,\sigma_t) \in
  \mathbb{R}^{\mathcal{F}^{\sigma_t}_{\ell_t}\times\mathcal{F}^{\sigma_t}_{\ell_t}}
  \text{ is positive semidefinite} \qquad \forall t\in[m].\label{eq:semidefiniteshort}
\end{align}

\subsection{Tournaments, types and flags used}
\label{subsec:typesandflags}

Throughout this text, we denote the transitive tournament of size~$k$ by~$\Tr_k$. We
also denote (see Figure~\ref{fig:typesandflags}).
\begin{itemize}
\item the~$3$-cycle by~$\vec C_3$;
\item the only tournament of size~$4$ that has a~$4$-cycle by~$R_4$;
\item the only tournament with outdegree sequence~$(1,1,1,3)$ by~$W_4$;
\item the only tournament with outdegree sequence~$(0,2,2,2)$ by~$L_4$.
\end{itemize}

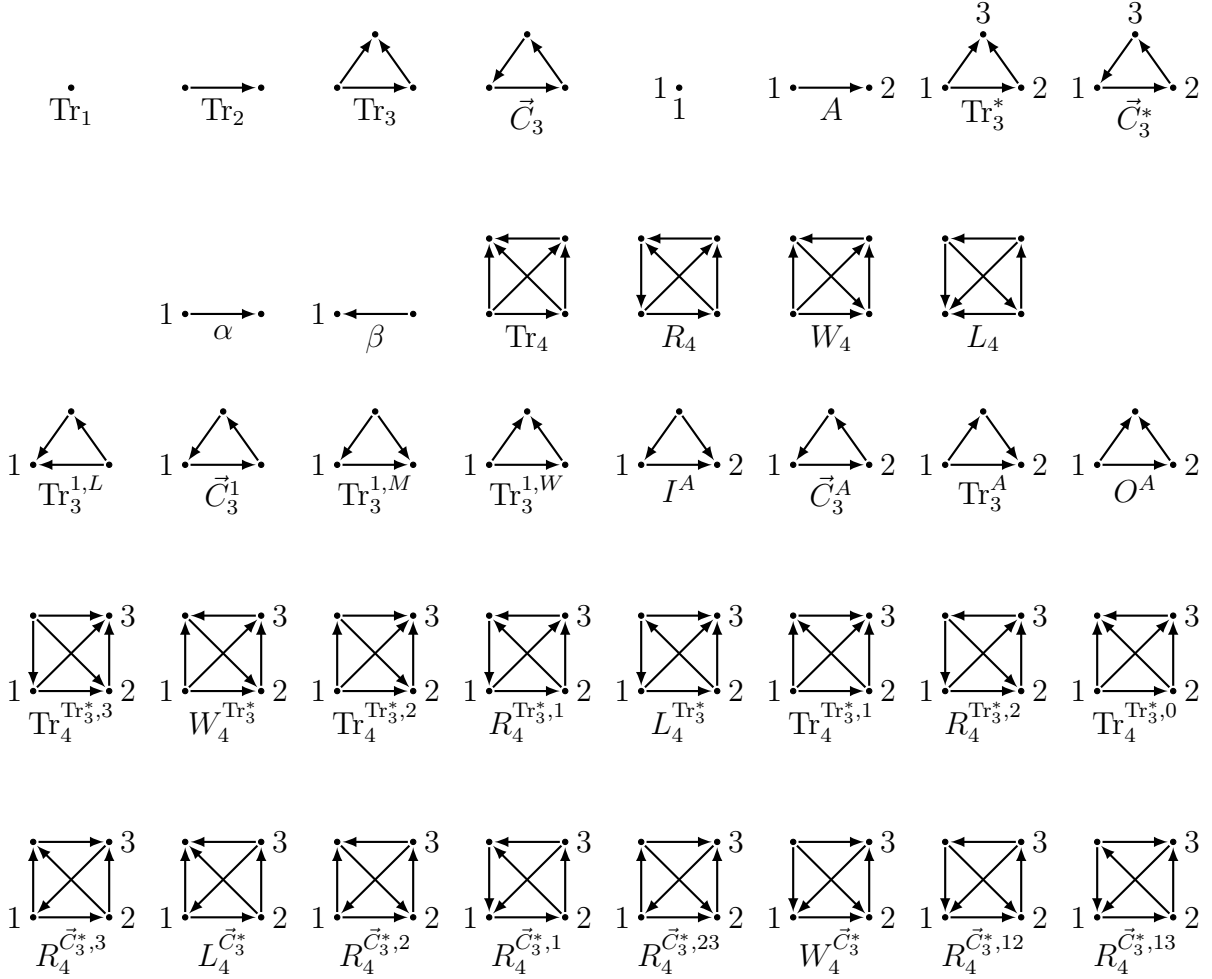
\begin{figure}[ht]
  \tikzsetnextfilename{typesandflags}
\begin{tikzpicture}
  \def\psize{1pt}
  \def\shorten{2pt}

  \foreach \i in {0,...,7}{
    \pgfmathsetmacro{\base}{2*\i}
    \pgfmathsetmacro{\next}{\base+1}

    \coordinate (T3A\i) at (\base cm, 0cm);
    \coordinate (T3B\i) at (\next cm, 0cm);
    \coordinate (T3M\i) at ($1/2*(T3A\i) + 1/2*(T3B\i)$);
    \coordinate (T3C\i) at ($(T3M\i) + (0cm,0.707106cm)$);

    \coordinate (T4A\i) at (\base cm,-3cm);
    \coordinate (T4B\i) at (\next cm,-3cm);
    \coordinate (T4C\i) at (\next cm,-2cm);
    \coordinate (T4D\i) at (\base cm,-2cm);
    \coordinate (T4M\i) at ($1/2*(T4A\i)+1/2*(T4B\i)$);

    \coordinate (F2A\i) at (\base cm, -5cm);
    \coordinate (F2B\i) at (\next cm, -5cm);
    \coordinate (F2M\i) at ($1/2*(F2A\i) + 1/2*(F2B\i)$);
    \coordinate (F2C\i) at ($(F2M\i) + (0cm,0.707106cm)$);

    \coordinate (FTr3A\i) at (\base cm,-8cm);
    \coordinate (FTr3B\i) at (\next cm,-8cm);
    \coordinate (FTr3C\i) at (\next cm,-7cm);
    \coordinate (FTr3D\i) at (\base cm,-7cm);
    \coordinate (FTr3M\i) at ($1/2*(FTr3A\i)+1/2*(FTr3B\i)$);

    \coordinate (FC3A\i) at (\base cm,-11cm);
    \coordinate (FC3B\i) at (\next cm,-11cm);
    \coordinate (FC3C\i) at (\next cm,-10cm);
    \coordinate (FC3D\i) at (\base cm,-10cm);
    \coordinate (FC3M\i) at ($1/2*(FC3A\i)+1/2*(FC3B\i)$);
  }

  \foreach \i in {0,4}
  \filldraw (T3M\i) circle (\psize);

  \foreach \i in {1,2,3,5,6,7}{
    \filldraw (T3A\i) circle (\psize);
    \filldraw (T3B\i) circle (\psize);
    \draw[thick,arrows={-latex},shorten <=\shorten,shorten >=\shorten]
    (T3A\i) -- (T3B\i);
  }

  \foreach \i in {2,3,6,7}{
    \filldraw (T3C\i) circle (\psize);
    \draw[thick, arrows={-latex},shorten <=\shorten,shorten >=\shorten]
    (T3B\i) -- (T3C\i);
  }

  \foreach \i in {2,6}
  \draw[thick, arrows={-latex},shorten <=\shorten,shorten >=\shorten]
  (T3A\i) -- (T3C\i);

  \foreach \i in {3,7}
  \draw[thick, arrows={-latex},shorten <=\shorten,shorten >=\shorten]
  (T3C\i) -- (T3A\i);

  \node[left] at (T3M4) {$1$};
  \foreach \i in {5,...,7}{
    \node[left] at (T3A\i) {$1$};
    \node[right] at (T3B\i) {$2$};
  }
  \foreach \i in {6,7}
  \node[above] at (T3C\i) {$3$};

  \foreach \i/\L in {%
    0/\Tr_1, 1/\Tr_2, 2/\Tr_3, 3/\vec C_3, 4/1, 5/A, 6/\Tr_3^*, 7/\vec C_3^*%
  }
  \node[below] at (T3M\i) {$\L$};


  \foreach \i in {1,2}{
    \filldraw (T4A\i) circle (\psize);
    \filldraw (T4B\i) circle (\psize);
    \node[left] at (T4A\i) {$1$};
  }
  \draw[thick, arrows={-latex},shorten <=\shorten,shorten >=\shorten]
  (T4A1) -- (T4B1);
  \draw[thick, arrows={-latex},shorten <=\shorten,shorten >=\shorten]
  (T4B2) -- (T4A2);
  \foreach \i/\L in {%
    1/\alpha, 2/\beta%
  }
  \node[below] at (T4M\i) {$\L$};


  \foreach \i in {3,...,6}{
    \filldraw (T4A\i) circle (\psize);
    \filldraw (T4B\i) circle (\psize);
    \filldraw (T4C\i) circle (\psize);
    \filldraw (T4D\i) circle (\psize);
  }

  \foreach \i in {3,4,5,6}{
    \draw[thick, arrows={-latex},shorten <=\shorten,shorten >=\shorten]
    (T4B\i) -- (T4C\i);
    \draw[thick, arrows={-latex},shorten <=\shorten,shorten >=\shorten]
    (T4C\i) -- (T4D\i);
  }

  \foreach \i in {3,4,5}{
    \draw[thick, arrows={-latex},shorten <=\shorten,shorten >=\shorten]
    (T4A\i) -- (T4B\i);
    \draw[thick, arrows={-latex},shorten <=\shorten,shorten >=\shorten]
    (T4A\i) -- (T4C\i);
  }
  \draw[thick, arrows={-latex},shorten <=\shorten,shorten >=\shorten]
  (T4B6) -- (T4A6);
  \draw[thick, arrows={-latex},shorten <=\shorten,shorten >=\shorten]
  (T4C6) -- (T4A6);

  \foreach \i in {3,4}
  \draw[thick, arrows={-latex},shorten <=\shorten,shorten >=\shorten]
  (T4B\i) -- (T4D\i);
  \foreach \i in {5,6}
  \draw[thick, arrows={-latex},shorten <=\shorten,shorten >=\shorten]
  (T4D\i) -- (T4B\i);

  \foreach \i in {3,5}
  \draw[thick, arrows={-latex},shorten <=\shorten,shorten >=\shorten]
  (T4A\i) -- (T4D\i);
  \foreach \i in {4,6}
  \draw[thick, arrows={-latex},shorten <=\shorten,shorten >=\shorten]
  (T4D\i) -- (T4A\i);
  
  \foreach \i/\L in {%
    3/\Tr_4, 4/R_4, 5/W_4, 6/L_4%
  }
  \node[below] at (T4M\i) {$\L$};


  \foreach \i in {0,...,7}{
    \filldraw (F2A\i) circle (\psize);
    \filldraw (F2B\i) circle (\psize);
    \filldraw (F2C\i) circle (\psize);
  }

  \foreach \i in {1,...,7}
  \draw[thick, arrows={-latex},shorten <=\shorten,shorten >=\shorten]
  (F2A\i) -- (F2B\i);
  \draw[thick, arrows={-latex},shorten <=\shorten,shorten >=\shorten]
  (F2B0) -- (F2A0);

  \foreach \i in {3,6,7}
  \draw[thick, arrows={-latex},shorten <=\shorten,shorten >=\shorten]
  (F2A\i) -- (F2C\i);
  \foreach \i in {0,1,2,4,5}
  \draw[thick, arrows={-latex},shorten <=\shorten,shorten >=\shorten]
  (F2C\i) -- (F2A\i);

  \foreach \i in {0,1,3,5,7}
  \draw[thick, arrows={-latex},shorten <=\shorten,shorten >=\shorten]
  (F2B\i) -- (F2C\i);
  \foreach \i in {2,4,6}
  \draw[thick, arrows={-latex},shorten <=\shorten,shorten >=\shorten]
  (F2C\i) -- (F2B\i);

  \foreach \i in {0,...,7}
  \node[left] at (F2A\i) {$1$};

  \foreach \i in {4,...,7}
  \node[right] at (F2B\i) {$2$};

  \foreach \i/\L in {%
    0/\Tr_3^{1,L}, 1/\vec C_3^1, 2/\Tr_3^{1,M}, 3/\Tr_3^{1,W},%
    4/I^A, 5/\vec C_3^A, 6/\Tr_3^A, 7/O^A%
  }
  \node[below] at (F2M\i) {$\L$};


  \foreach \i in {0,...,7}{
    \filldraw (FTr3A\i) circle (\psize);
    \filldraw (FTr3B\i) circle (\psize);
    \filldraw (FTr3C\i) circle (\psize);
    \filldraw (FTr3D\i) circle (\psize);
    \draw[thick, arrows={-latex},shorten <=\shorten,shorten >=\shorten]
    (FTr3A\i) -- (FTr3B\i);
    \draw[thick, arrows={-latex},shorten <=\shorten,shorten >=\shorten]
    (FTr3A\i) -- (FTr3C\i);
    \draw[thick, arrows={-latex},shorten <=\shorten,shorten >=\shorten]
    (FTr3B\i) -- (FTr3C\i);
    \node[left] at (FTr3A\i) {$1$};
    \node[right] at (FTr3B\i) {$2$};
    \node[right] at (FTr3C\i) {$3$};
  }

  \foreach \i in {1,2,5,7}
  \draw[thick, arrows={-latex},shorten <=\shorten,shorten >=\shorten]
  (FTr3A\i) -- (FTr3D\i);
  \foreach \i in {0,3,4,6}
  \draw[thick, arrows={-latex},shorten <=\shorten,shorten >=\shorten]
  (FTr3D\i) -- (FTr3A\i);

  \foreach \i in {3,4,5,7}
  \draw[thick, arrows={-latex},shorten <=\shorten,shorten >=\shorten]
  (FTr3B\i) -- (FTr3D\i);
  \foreach \i in {0,1,2,6}
  \draw[thick, arrows={-latex},shorten <=\shorten,shorten >=\shorten]
  (FTr3D\i) -- (FTr3B\i);

  \foreach \i in {1,3,6,7}
  \draw[thick, arrows={-latex},shorten <=\shorten,shorten >=\shorten]
  (FTr3C\i) -- (FTr3D\i);
  \foreach \i in {0,2,4,5}
  \draw[thick, arrows={-latex},shorten <=\shorten,shorten >=\shorten]
  (FTr3D\i) -- (FTr3C\i);

  \foreach \i/\L in {%
    0/\Tr_4^{\Tr_3^*,3}, 1/W_4^{\Tr_3^*},%
    2/\Tr_4^{\Tr_3^*,2}, 3/R_4^{\Tr_3^*,1},%
    4/L_4^{\Tr_3^*}, 5/\Tr_4^{\Tr_3^*,1},%
    6/R_4^{\Tr_3^*,2}, 7/\Tr_4^{\Tr_3^*,0}%
  }
  \node[below] at (FTr3M\i) {$\L$};


  \foreach \i in {0,...,7}{
    \filldraw (FC3A\i) circle (\psize);
    \filldraw (FC3B\i) circle (\psize);
    \filldraw (FC3C\i) circle (\psize);
    \filldraw (FC3D\i) circle (\psize);
    \draw[thick, arrows={-latex},shorten <=\shorten,shorten >=\shorten]
    (FC3A\i) -- (FC3B\i);
    \draw[thick, arrows={-latex},shorten <=\shorten,shorten >=\shorten]
    (FC3C\i) -- (FC3A\i);
    \draw[thick, arrows={-latex},shorten <=\shorten,shorten >=\shorten]
    (FC3B\i) -- (FC3C\i);
    \node[left] at (FC3A\i) {$1$};
    \node[right] at (FC3B\i) {$2$};
    \node[right] at (FC3C\i) {$3$};
  }

  \foreach \i in {0,1,2,4}
  \draw[thick, arrows={-latex},shorten <=\shorten,shorten >=\shorten]
  (FC3A\i) -- (FC3D\i);
  \foreach \i in {3,5,6,7}
  \draw[thick, arrows={-latex},shorten <=\shorten,shorten >=\shorten]
  (FC3D\i) -- (FC3A\i);

  \foreach \i in {0,1,3,7}
  \draw[thick, arrows={-latex},shorten <=\shorten,shorten >=\shorten]
  (FC3B\i) -- (FC3D\i);
  \foreach \i in {2,4,5,6}
  \draw[thick, arrows={-latex},shorten <=\shorten,shorten >=\shorten]
  (FC3D\i) -- (FC3B\i);

  \foreach \i in {1,2,3,6}
  \draw[thick, arrows={-latex},shorten <=\shorten,shorten >=\shorten]
  (FC3C\i) -- (FC3D\i);
  \foreach \i in {0,4,5,7}
  \draw[thick, arrows={-latex},shorten <=\shorten,shorten >=\shorten]
  (FC3D\i) -- (FC3C\i);

  \foreach \i/\L in {%
    0/R_4^{\vec C_3^*,3}, 1/L_4^{\vec C_3^*},%
    2/R_4^{\vec C_3^*,2}, 3/R_4^{\vec C_3^*,1},%
    4/R_4^{\vec C_3^*,23}, 5/W_4^{\vec C_3^*},%
    6/R_4^{\vec C_3^*,12}, 7/R_4^{\vec C_3^*,13}%
  }
  \node[below] at (FC3M\i) {$\L$};
\end{tikzpicture}

  \caption{Types and flags of size at most~$4$ used.}
  \label{fig:typesandflags}
\end{figure}

We will also use the notation of Figure~\ref{fig:tour5} for the non isomorphic tournaments of
size~$5$.

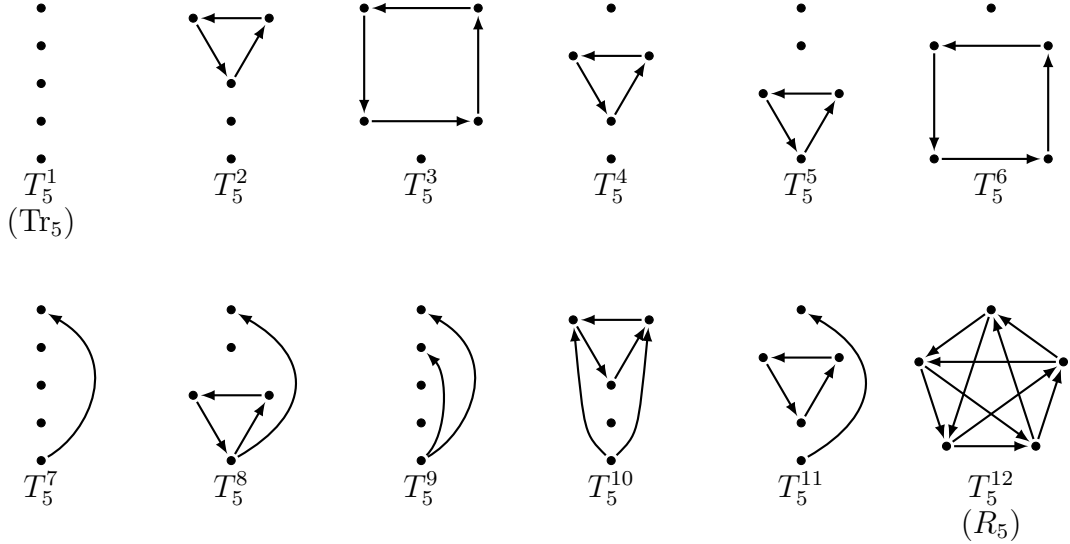
\begin{figure}[ht]
  \tikzsetnextfilename{tour5}
\begin{tikzpicture}[scale=0.5]
  \def\psize{3pt}
  \def\dist{1cm}
  \def\tdist{2cm}
  \def\sdist{3cm}
  \def\prad{2cm}
  \def\latdist{5cm}
  \def\tinycontout{1cm}
  \def\smallcontout{1.5cm}
  \def\contout{2.5cm}
  \def\sh{0.1cm}
  \def\altldist{1cm}

  \foreach \i in {1,...,6}{
    \coordinate (M\i) at ($\i*(\latdist,0cm)$);
    \coordinate (MA\i) at ($(\i*\latdist,-\altldist)$);
    \node[below] at (M\i) {$T_5^{\i}$};

    \foreach \j in {0,...,4}{
      \coordinate (\i-\j) at ($(M\i) + \j*(0cm,\dist)$);
      \coordinate (\i-\j TL) at ($(\i-\j) + (120:\tdist)$);
      \coordinate (\i-\j TR) at ($(\i-\j) + (60:\tdist)$);
      \coordinate (\i-\j SL0) at ($(\i-\j) + 1/2*(-\sdist,0cm)$);
      \coordinate (\i-\j SR0) at ($(\i-\j) + 1/2*(\sdist,0cm)$);
      \coordinate (\i-\j SL1) at ($(\i-\j) + 1/2*(-\sdist,0cm) + (0cm,\sdist)$);
      \coordinate (\i-\j SR1) at ($(\i-\j) + 1/2*(\sdist,0cm) + (0cm,\sdist)$);
    }
  }

  \begin{scope}[shift={($-8*(0cm,\dist) + -6*(\latdist,0cm)$)}]
    \foreach \i in {7,...,11}{
      \coordinate (M\i) at ($\i*(\latdist,0cm)$);
      \coordinate (MA\i) at ($\i*(\latdist,-\altldist)$);
      \node[below] at (M\i) {$T_5^{\i}$};

      \foreach \j in {0,...,4}{
        \coordinate (\i-\j) at ($(M\i) + \j*(0cm,\dist)$);
        \coordinate (\i-\j TL) at ($(\i-\j) + (120:\tdist)$);
        \coordinate (\i-\j TR) at ($(\i-\j) + (60:\tdist)$);
        \coordinate (\i-\j SL0) at ($(\i-\j) + 1/2*(-\sdist,0cm)$);
        \coordinate (\i-\j SR0) at ($(\i-\j) + 1/2*(\sdist,0cm)$);
        \coordinate (\i-\j SL1) at ($(\i-\j) + 1/2*(-\sdist,0cm) + (0cm,\sdist)$);
        \coordinate (\i-\j SR1) at ($(\i-\j) + 1/2*(\sdist,0cm) + (0cm,\sdist)$);
      }
    }
    \node[below] at (MA1) {$(\Tr_5)$};

    \coordinate (M12) at ($12*(\latdist,0cm)$);
    \coordinate (MA12) at ($(12*\latdist,-\altldist)$);
    \node[below] at (M12) {$T_5^{12}$};
    \node[below] at (MA12) {$(R_5)$};
    \foreach \i in {0,...,6}{
      \pgfmathsetmacro{\angle}{90 + \i * 72}
      \coordinate (12-\i) at ($(M12) + 2*(0cm,\dist) + (\angle:\prad)$);
    }
  \end{scope}

  \foreach \i in {1,7,9}{
    \foreach \j in {0,...,4}
    \filldraw (\i-\j) circle (\psize);
  }

  \foreach \i in {2,10}{
    \foreach \j in {0,...,2}
    \filldraw (\i-\j) circle (\psize);
    \filldraw (\i-2TL) circle (\psize);
    \filldraw (\i-2TR) circle (\psize);
  }

  \filldraw (3-0) circle (\psize);
  \filldraw (3-1SL0) circle (\psize);
  \filldraw (3-1SR0) circle (\psize);
  \filldraw (3-1SL1) circle (\psize);
  \filldraw (3-1SR1) circle (\psize);

  \foreach \i in {4,11}{
    \foreach \j in {0,1,4}
    \filldraw (\i-\j) circle (\psize);
    \filldraw (\i-1TL) circle (\psize);
    \filldraw (\i-1TR) circle (\psize);
  }

  \foreach \i in {5,8}{
    \foreach \j in {0,3,4}
    \filldraw (\i-\j) circle (\psize);
    \filldraw (\i-0TL) circle (\psize);
    \filldraw (\i-0TR) circle (\psize);
  }

  \filldraw (6-0SL0) circle (\psize);
  \filldraw (6-0SR0) circle (\psize);
  \filldraw (6-0SL1) circle (\psize);
  \filldraw (6-0SR1) circle (\psize);
  \filldraw (6-4) circle (\psize);


  \draw[thick,arrows={-latex},shorten <=\sh,shorten >=\sh] (2-2) -- (2-2TR);
  \draw[thick,arrows={-latex},shorten <=\sh,shorten >=\sh] (2-2TR) -- (2-2TL);
  \draw[thick,arrows={-latex},shorten <=\sh,shorten >=\sh] (2-2TL) -- (2-2);

  \draw[thick,arrows={-latex},shorten <=\sh,shorten >=\sh] (3-1SL0) -- (3-1SR0);
  \draw[thick,arrows={-latex},shorten <=\sh,shorten >=\sh] (3-1SR0) -- (3-1SR1);
  \draw[thick,arrows={-latex},shorten <=\sh,shorten >=\sh] (3-1SR1) -- (3-1SL1);
  \draw[thick,arrows={-latex},shorten <=\sh,shorten >=\sh] (3-1SL1) -- (3-1SL0);

  \draw[thick,arrows={-latex},shorten <=\sh,shorten >=\sh] (4-1) -- (4-1TR);
  \draw[thick,arrows={-latex},shorten <=\sh,shorten >=\sh] (4-1TR) -- (4-1TL);
  \draw[thick,arrows={-latex},shorten <=\sh,shorten >=\sh] (4-1TL) -- (4-1);

  \draw[thick,arrows={-latex},shorten <=\sh,shorten >=\sh] (5-0) -- (5-0TR);
  \draw[thick,arrows={-latex},shorten <=\sh,shorten >=\sh] (5-0TR) -- (5-0TL);
  \draw[thick,arrows={-latex},shorten <=\sh,shorten >=\sh] (5-0TL) -- (5-0);

  \draw[thick,arrows={-latex},shorten <=\sh,shorten >=\sh] (6-0SL0) -- (6-0SR0);
  \draw[thick,arrows={-latex},shorten <=\sh,shorten >=\sh] (6-0SR0) -- (6-0SR1);
  \draw[thick,arrows={-latex},shorten <=\sh,shorten >=\sh] (6-0SR1) -- (6-0SL1);
  \draw[thick,arrows={-latex},shorten <=\sh,shorten >=\sh] (6-0SL1) -- (6-0SL0);

  \draw[thick,arrows={-latex},shorten <=\sh,shorten >=\sh] (7-0)
  .. controls +(30:\smallcontout) and ($(7-4)+(-30:\contout)$)
  .. (7-4);

  \draw[thick,arrows={-latex},shorten <=\sh,shorten >=\sh] (8-0) -- (8-0TR);
  \draw[thick,arrows={-latex},shorten <=\sh,shorten >=\sh] (8-0TR) -- (8-0TL);
  \draw[thick,arrows={-latex},shorten <=\sh,shorten >=\sh] (8-0TL) -- (8-0);
  \draw[thick,arrows={-latex},shorten <=\sh,shorten >=\sh] (8-0)
  .. controls +(30:\contout) and ($(8-4)+(-30:\contout)$)
  .. (8-4);

  \draw[thick,arrows={-latex},shorten <=\sh,shorten >=\sh] (9-0)
  .. controls +(30:\smallcontout) and ($(9-4)+(-30:\contout)$)
  .. (9-4);
  \draw[thick,arrows={-latex},shorten <=\sh,shorten >=\sh] (9-0)
  .. controls +(45:\tinycontout) and ($(9-3)+(-45:\tinycontout)$)
  .. (9-3);

  \draw[thick,arrows={-latex},shorten <=\sh,shorten >=\sh] (10-2) -- (10-2TR);
  \draw[thick,arrows={-latex},shorten <=\sh,shorten >=\sh] (10-2TR) -- (10-2TL);
  \draw[thick,arrows={-latex},shorten <=\sh,shorten >=\sh] (10-2TL) -- (10-2);
  \draw[thick,arrows={-latex},shorten <=\sh,shorten >=\sh] (10-0)
  .. controls +(135:\tinycontout)
  .. (10-2TL);
  \draw[thick,arrows={-latex},shorten <=\sh,shorten >=\sh] (10-0)
  .. controls +(45:\tinycontout)
  .. (10-2TR);

  \draw[thick,arrows={-latex},shorten <=\sh,shorten >=\sh] (11-1) -- (11-1TR);
  \draw[thick,arrows={-latex},shorten <=\sh,shorten >=\sh] (11-1TR) -- (11-1TL);
  \draw[thick,arrows={-latex},shorten <=\sh,shorten >=\sh] (11-1TL) -- (11-1);
  \draw[thick,arrows={-latex},shorten <=\sh,shorten >=\sh] (11-0)
  .. controls +(30:\contout) and ($(11-4)+(-30:\contout)$)
  .. (11-4);

  \foreach \i in {0,...,4}{
    \pgfmathsetmacro{\n}{\i+1}
    \pgfmathsetmacro{\nn}{\i+2}
    \filldraw (12-\i) circle (\psize);
    \draw[thick,arrows={-latex},shorten <=\sh,shorten >=\sh] (12-\i) -- (12-\n);
    \draw[thick,arrows={-latex},shorten <=\sh,shorten >=\sh] (12-\i) -- (12-\nn);
  }
\end{tikzpicture}

  \caption{Tournaments of size~$5$. The arcs omitted are all oriented downward.}
  \label{fig:tour5}
\end{figure}

Furthermore, we define the following types (see Figure~\ref{fig:typesandflags}).
\begin{itemize}
\item The only type of size~$1$ is denoted by~$1$;
\item The type of size~$2$ where the vertex with label~$1$ beats the vertex with
  label~$2$ is denoted~$A$;
\item The type of size~$3$ isomorphic to~$\Tr_3$ such that the winner has label~$1$
  and the loser has label~$3$ is denoted~$\Tr_3^*$;
\item The type of size~$3$ isomorphic to~$\vec C_3$ such that the vertex with
  label~$1$ beats the vertex with label~$2$ is denoted~$\vec C_3^*$.
\end{itemize}

If~$T$ is a tournament and~$\sigma$ is a type such that there exists exactly
one~$\sigma$-flag~$F$ such that~$F\vert_0=T$, then we denote such flag
by~$T^\sigma$. Note that this uniquely defines the following flags.
\begin{align*}
  \vec C_3^1, \vec C_3^A, W_4^{\Tr_3^*}, L_4^{\Tr_3^*}, W_4^{\vec C_3^*}, L_4^{\vec C_3^*}.
\end{align*}

For the remaining flags, we use the notation of Figure~\ref{fig:typesandflags}. Let
us only comment the reasoning behind our notation.
\begin{itemize}
\item The notation for the flags~$O^A$ and~$I^A$ are meant to be a mnemonic for common
  outneighbourhood and common inneighbourhood respectively;
\item The flag~$\Tr_3^A$ is not the only~$A$-flag over~$\Tr_3$, but this notation is
  nevertheless used since~$\Tr_3^A$ is the only remaining~$A$-flag over~$\Tr_3$;
\item The~$\Tr_3^*$-flags over~$\Tr_4$ and~$R_4$ are uniquely determined by the
  outdegree~$d$ of the unlabelled vertex and as such, we denote them accordingly
  by~$\Tr_4^{\Tr_3^*,d}$ and~$R_4^{\Tr_3^*,d}$;
\item The~$\vec C_3^*$-flags over~$R_4$ are uniquely determined by the
  outneighbourhood of the unlabelled vertex and as such, we denote the accordingly by
  listing the vertices in the outneighbourhood of the unlabelled vertex in the
  superscript.
\end{itemize}


		   
\section{Upper bounds}\label{sec:UB}

In this section we prove the upper bounds in Theorems~\ref{thm:carousel},
\ref{thm:quasirandom} and \ref{thm:triangle}.  We use the semidefinite method of flag
algebras as presented in Section~\ref{sec:sdp}.

\begin{lemma}\label{lemma:ub} For every~$n$-vertex tournament~$T_n$, 
  \begin{align*}
	\lim_{n \to \infty}p(\tdoze;T_n) & \leq \frac{1}{16}.
  \end{align*}
\end{lemma}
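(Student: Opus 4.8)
The plan is to apply the semidefinite method of Proposition~\ref{prop:semidefinite} directly to the tournament $T=\tdoze$, exhibiting an explicit feasible solution whose value equals $1/16$. Since the lower bound $\lim_{n\to\infty}p(\tdoze;R_{2n+1})=1/16$ was already established in Lemma~\ref{lemma:lb}, matching it with an upper bound of $1/16$ closes the gap and proves the claimed limit. Concretely, I would fix the density level $\ell=5$ (so that the constraints in~\eqref{eq:semidefiniteshort} range over all $T'\in\mathcal{T}_5$, i.e.\ the twelve tournaments of size $5$ from Figure~\ref{fig:tour5}), and choose a small collection of types $\sigma_1,\dots,\sigma_m$ among those introduced in Section~\ref{subsec:typesandflags}: the type $1$ of size $1$ (with $\ell_t=3$), and the size-$3$ types $\Tr_3^*$ and $\vec C_3^*$ (with $\ell_t=4$), since $\lvert T'\rvert + 2\ell_t - 2k_t \le \ell = 5$ is then satisfied. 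For each such type we must supply a positive semidefinite matrix $Q(\tdoze,\sigma_t)$ on $\mathcal{F}^{\sigma_t}_{\ell_t}\times\mathcal{F}^{\sigma_t}_{\ell_t}$.

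The key steps, in order, are as follows. First, set up the combinatorial data: compute the densities $p(\tdoze;T')$ for all $T'\in\mathcal{T}_5$ (many of these vanish because $\tdoze=R_5$ embeds only in sufficiently ``spread out'' tournaments), and compute the structure constants $p(F_i,F_j;F)$ and the downward coefficients $q_{\sigma_t}(F)\,p(F\vert_0;T')$ that enter the definition of $c(Q(\tdoze,\sigma_t);T')$. Second, solve the semidefinite program~\eqref{eq:semidefiniteshort} numerically to locate an optimal $y^\star$; one expects $y^\star=1/16$ up to numerical error, with the extremal homomorphism being $\phiR$ (consistent with Lemma~\ref{lemma:lb}). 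Third — the crucial rounding step — convert the floating-point optimal matrices into exact rational (in fact, I would guess, small-denominator) positive semidefinite matrices $Q(\tdoze,\sigma_t)$ that still satisfy $p(\tdoze;T') + c(\tdoze;T') \le 1/16$ for every $T'\in\mathcal{T}_5$, with equality forced on those $T'$ that occur with positive density in the carousel tournaments. Fourth, verify — this part is purely a finite exact computation — both the twelve linear inequalities and the positive semidefiniteness of each rounded $Q(\tdoze,\sigma_t)$ (e.g.\ via an exact Cholesky or $LDL^\top$ factorization), and conclude by Proposition~\ref{prop:semidefinite} that $\max\{\phi(\tdoze):\phi\in\Homp{0}\}\le 1/16$, hence $\lim_{n\to\infty}p(\tdoze;T_n)\le 1/16$ for every sequence $(T_n)$.

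The main obstacle is the rounding step: the numerical SDP solution will almost certainly have full-rank blocks whose kernel must be adjusted to align exactly with the ``tight'' directions dictated by the extremal homomorphism $\phiR$, and naive rationalization typically destroys either feasibility of the linear constraints or positive semidefiniteness. The standard remedy, which I would follow, is to identify the zero eigenvectors of the optimal blocks (these correspond to flags whose $\phiR$-value, appropriately conditioned, vanishes), project onto the orthogonal complement of that kernel, round there, and then pad back with a tiny positive multiple of the identity on the complement if slack permits — exploiting that all the constraints $p(\tdoze;T')+c(\tdoze;T')\le 1/16$ are strict for the non-extremal $T'$. A secondary, more bookkeeping-type obstacle is simply getting the flag-algebra structure constants right: one must be careful with the normalizing factors $q_{\sigma_t}(F)$ and with automorphisms when passing between labelled density $\tind$ and unlabelled density $p$, as recorded in Section~\ref{subsec:flagbasic}. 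Once the rounded certificate is in hand, the verification is routine and the lemma follows immediately.
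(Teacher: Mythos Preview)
Your proposal is correct and follows essentially the same approach as the paper: the paper likewise applies Proposition~\ref{prop:semidefinite} with $\ell=5$, the three types $1$, $\Tr_3^*$, $\vec C_3^*$ (with $\ell_1=3$, $\ell_2=\ell_3=4$), produces explicit rational matrices $Q(\tdoze,1)$, $Q(\tdoze,\Tr_3^*)$, $Q(\tdoze,\vec C_3^*)$ found by an SDP solver and rounded, and then verifies the twelve constraints $p(\tdoze;T')+c(\tdoze;T')\le 1/16$ exactly. The only cosmetic difference is that the paper certifies positive semidefiniteness by inspecting the sign pattern of the characteristic polynomials rather than by an $LDL^\top$ factorization.
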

\begin{proof}
  In order to use the semidefinite method, we need to fix~$\ell$, which is used to
  define set~$\calt_\ell$. Then we need to define~$c(T;T')$ for every~$T'\in
  \calt_\ell$ as in~\eqref{eq:semidefiniteshort}. To define~$c(T;T')$, we choose how
  many types~$m$ we will use and the types~$\sigma_t$ we want to use. For each
  type~$\sigma_t$, we choose an integer~$\ell_t$ satisfying~$\ell_t \leq (\ell +
  \lvert\sigma_t\rvert)/2$ and a positive
  semidefinite~$\lvert\calf_{\ell_t}^{\sigma_t}\rvert\times
  \lvert\calf_{\ell_t}^{\sigma_t}\rvert$ matrix~$Q(T,\sigma_t)$.

  Fix~$m = 3$, $\ell=5$, $\ell_1 = 3$, $\ell_2 = \ell_3 = 4$ and let~$\sigma_1 = 1$,
  $\sigma_2 = \Tr_3^*$ and~$\sigma_3 = \vec C_3^*$ be types as defined in
  Section~\ref{subsec:typesandflags} (see
  Figure~\eqref{fig:typesandflags}).

  Let~$Q(\tdoze,1)$, $Q(\tdoze,\Tr_3^*)$ and~$Q(\tdoze,\vec C_3^*)$ be the positive
  semidefinite matrices of orders~$\calf_3^1\times\calf_3^1$,
  $\calf_4^{\Tr_3^*}\times\calf_4^{\Tr_3^*}$ and~$\calf_4^{\vec
    C_3^*}\times\calf_4^{\vec C_3^*}$ respectively shown in
  Appendix~\ref{appendix:matrix} (note that~$\lvert\calf_3^1\rvert = 4$
  and~$\lvert\calf_4^{\Tr_3^*}\rvert = \lvert\calf_4^{\vec C_3^*}\rvert = 8$).

  To see that~$Q(\tdoze,1)$, $Q(\tdoze,\Tr_3^*)$ and~$Q(\tdoze,\vec C_3^*)$ are
  positive semidefinite, we analyse their characteristic
  polynomials~$p_{Q(\tdoze,1)}(x)$, $p_{Q(\tdoze,\Tr_3^*)}(x)$ and~$p_{Q(\tdoze,\vec
    C_3^*)}(x)$ shown in Appendix~\ref{appendix:polchar}. Since the only negative
  coefficients of these polynomials are all of odd order, it follows that all of
  their roots are non-negative, hence the matrices are positive semidefinite.

  We then compute~$p(\tdoze,T)$ and~$c(Q(\tdoze,\sigma_t);T)$ for
  every~$T\in\calt_5$ (see Figure~\ref{fig:tour5}) and every~$t\in[3]$.


  Finally, by Proposition~\ref{prop:semidefinite}, we have
  \begin{align*}
    \lim_{n \to \infty}p(\tdoze;T_n)
    & \leq \max_{T \in \calt_5}\{p(\tdoze;T)+c(\tdoze;T)\}
    = \frac{1}{16},
  \end{align*}
  where~$c(\tdoze;T)=c(Q(\tdoze,1);T)+c(Q(\tdoze,\Tr_3^*);T)+c(Q(\tdoze,\vec
  C_3^*);T)$ for every~$T\in\calt_5$.
\end{proof}

\begin{remark}
  All of the matrices in Appendix~\ref{appendix:matrix} were found with the aid of
  semidefinite programming solvers \hbox{CSDP}~\cite{CSDP} and
  \hbox{SDPA}~\cite{SDPAFamily}.

  Furthermore, the solution provided by these solvers was rounded to an exact
  solution using the rounding method described by Baber~\cite{thesiBaber}.

  Finally, the characteristic polynomials in Appendix~\ref{appendix:polchar} were
  found with the aid of the symbolic mathematics software
  \hbox{Maxima}~\cite{maxima}.
\end{remark}

The proofs of the upper bounds for~$\tsete$, $\toito$, $\tnove$ and~$\tonze$ are very
similar to the proof of Lemma~\ref{lemma:ub}. We choose how many types~$m$ we will
use and the types~$\sigma_i$ we want to use. For each type~$\sigma_i$, we choose an
integer~$\ell_i$ satisfying~$\ell_i\leq(\ell + \lvert\sigma_i\rvert)/2$ and find positive
semidefinite matrices~$Q_i=Q(T^j_5,\sigma_i)$.

The matrices and their characteristic polynomials are shown in
Appendix~\ref{appendix}. As in the proof of Lemma~\ref{lemma:ub}, the matrices are
easily seen to be positive semidefinite since the only negative coefficients of their
characteristic polynomials are all of odd order.


For each~$j\in\{7,8,9,11,12\}$, we then compute~$c(T_5^j;T) = \sum_{i=i}^m
c(Q(T_5^j,\sigma_i);T)$ and~$p(T_5^j;T)$, for every~$T\in\calt_\ell$, and obtain
the desired bounds according to the following tables.

\begin{footnotesize}
\noindent
  \begin{tabular}[t]{|c|c|}
  \hline  
  $\tsete$ & \\
  \hline
  $m$ & $3$ \\
  $\sigma_1$& $ 1 $\\
  $\sigma_2$  & $\Tr_3^*$\\
  $\sigma_3$ & $\vec C_3^*$\\
  $\ell$ & $5$\\
  $\ell_1$ & $3$\\
  $\ell_2$ & $4$ \\
  $\ell_3$ & $4$\\
  $Q_1$ & $Q(\tsete,1)$\\
  $Q_2$ & $Q(\tsete,\Tr_3^*)$\\
  $Q_3$ & $Q(\tsete,\vec C_3^*)$\\\hline
 \end{tabular}
  \begin{tabular}[t]{|c|c|}
  \hline  
  $\toito$ & \\
  \hline
  $m$ & $4$ \\
  $\sigma_1$& $ 1 $\\
  $\sigma_2$ & $A$\\
  $\sigma_3$  & $\Tr_3^*$\\
  $\sigma_4$ & $\vec C_3^*$\\
  $\ell$ & $5$\\
  $\ell_1$ & $3$\\
  $\ell_2$ & $3$ \\
  $\ell_3$ & $4$\\
  $\ell_4$ & $4$\\
  $Q_1$ & $Q(\toito,1)$\\
  $Q_2$ & $Q(\toito,A)$\\
  $Q_3$ & $Q(\toito,\Tr_3^*)$\\
  $Q_4$ & $Q(\toito,\vec C_3^*)$\\\hline
 \end{tabular}
 \begin{tabular}[t]{|c|c|}
  \hline  
  $\tnove$ & \\
  \hline
  $m$ & $2$ \\
  $\sigma_1$& $ 1 $\\
  $\sigma_2$ & $\vec C_3^*$\\
  $\ell$ & $5$\\
  $\ell_1$ & $3$\\
  $\ell_2$ & $4$ \\
  $Q_1$ & $Q(\tnove,1)$\\
  $Q_2$ & $Q(\tnove,\vec C_3^*)$\\\hline
 \end{tabular}
 \begin{tabular}[t]{|c|c|}
  \hline  
  $\tonze$ & \\
  \hline
  $m$ & $2$ \\
  $\sigma_1$& $ 1 $\\
  $\sigma_2$ & $\vec C_3^*$\\
  $\ell$ & $5$\\
  $\ell_1$ & $3$\\
  $\ell_2$ & $4$ \\
  $Q_1$ & $Q(\tonze,1)$\\
  $Q_2$ & $Q(\tonze,\vec C_3^*)$\\\hline
 \end{tabular}
  \begin{tabular}[t]{|c|c|}
  \hline  
  $\tdoze$ & \\
  \hline
  $m$ & $3$ \\
  $\sigma_1$& $ 1 $\\
  $\sigma_2$  & $\Tr_3^*$\\
  $\sigma_3$ & $\vec C_3^*$\\
  $\ell$ & $5$\\
  $\ell_1$ & $3$\\
  $\ell_2$ & $4$ \\
  $\ell_3$ & $4$\\
  $Q_1$ & $Q(\tdoze,1)$\\
  $Q_2$ & $Q(\tdoze,\Tr_3^*)$\\
  $Q_3$ & $Q(\tdoze,\vec C_3^*)$\\\hline
 \end{tabular}
\end{footnotesize}

		   
\section{Extracting more information from the semidefinite method}
\label{sec:ext}

In this section, we review some techniques in flag algebras to extract information
about extremal homomorphisms of Problem~\ref{prob:maxflag} from a tight solution of
the semidefinite program~\eqref{eq:semidefinite} (see also the more general
version~\eqref{eq:semidefiniteshort}). Again, we will work here only with the theory
of tournaments, but these techniques can be used in a more general setting.

The first technique is used to prove that the tournaments~$T'$ corresponding to
non-tight restrictions in~\eqref{eq:semidefinite} must have zero density in
the extremal homomorphisms.

\begin{proposition}\label{prop:slack}
  Let~$T\in\mathcal{T}$ be a tournament and let
  \begin{align*}
    c & = \max\{\phi(T) : \phi\in\Homp{0}\}.
  \end{align*}

  If~$\ell\geq\lvert T\rvert$ and~$g\in\Csem{0}$ are such that
  \begin{align*}
    \max\{p(T+g;T') : T'\in\mathcal{T}_\ell\} & = c,
  \end{align*}
  and~$\phi\in\Homp{0}$ is extremal for~$T$ (that is, if~$\phi(T) = c$), then
  \begin{align*}
    \phi(T') & = 0,
  \end{align*}
  for every~$T'\in\mathcal{T}_\ell$ such that~$p(T+g;T') < c$.
\end{proposition}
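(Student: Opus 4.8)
The plan is to exploit the fact that $\phi$ is a positive homomorphism, so that $\phi$ applied to the inequality $T+g\leq_0 c\,1_0$ must be an equality, and then extract from this equality that every ``overshooting'' term $T'$ with $p(T+g;T')<c$ is forced to have $\phi(T')=0$. First I would fix $\ell$ and $g$ as in the hypothesis and use Proposition~\ref{prop:flagalgebra} to expand $T+g$ as an element of $\mathcal{A}^0$ supported on $\mathcal{T}_\ell$, namely
\begin{align*}
  T+g & = \sum_{T'\in\mathcal{T}_\ell}\bigl(p(T;T')+p(g;T')\bigr)T'.
\end{align*}
Applying $\phi$ and using linearity together with $\phi(1_0)=1=\sum_{T'\in\mathcal{T}_\ell}\phi(T')$ gives
\begin{align*}
  \phi(T)+\phi(g) & = \sum_{T'\in\mathcal{T}_\ell}\bigl(p(T;T')+p(g;T')\bigr)\phi(T')
  \leq c\sum_{T'\in\mathcal{T}_\ell}\phi(T') = c,
\end{align*}
where the inequality uses $p(T+g;T')\leq c$ for all $T'$ together with $\phi(T')\geq 0$ (which holds since $\phi\in\Homp{0}$ and $T'\in\mathcal{F}^0$).

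Next I would observe that the left-hand side equals $c$: since $g\in\Csem{0}$ we have $\phi(g)\geq 0$, and since $\phi$ is extremal $\phi(T)=c$, so $\phi(T)+\phi(g)\geq c$; combined with the displayed upper bound this forces
\begin{align*}
  \sum_{T'\in\mathcal{T}_\ell}\bigl(p(T;T')+p(g;T')\bigr)\phi(T') & = c = c\sum_{T'\in\mathcal{T}_\ell}\phi(T').
\end{align*}
Rearranging, $\sum_{T'\in\mathcal{T}_\ell}\bigl(c - p(T+g;T')\bigr)\phi(T') = 0$. Each summand is a product of a nonnegative factor $c-p(T+g;T')\geq 0$ and a nonnegative factor $\phi(T')\geq 0$, so a sum of nonnegatives equal to zero forces each term to vanish: for every $T'\in\mathcal{T}_\ell$, either $p(T+g;T')=c$ or $\phi(T')=0$. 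In particular, whenever $p(T+g;T')<c$ we must have $\phi(T')=0$, which is exactly the claim. (As a side remark this also shows $\phi(g)=0$, since then $\phi(T+g)=\sum p(T+g;T')\phi(T')=c\sum\phi(T')=c=\phi(T)$.)

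There is essentially no serious obstacle here; the only points requiring a little care are the bookkeeping facts that $g$ (being in $\Csem{0}$) can be written as a linear combination of tournaments of size at most $\ell$ so that the expansion over $\mathcal{T}_\ell$ is legitimate — this is guaranteed by the hypothesis $\ell\geq\lvert T\rvert$ being chosen large enough in the statement, exactly as in~\eqref{eq:withg} — and the elementary ``sum of nonnegatives is zero implies each is zero'' step. The substantive input is entirely contained in the hypotheses $\phi\in\Homp{0}$, $g\in\Csem{0}$, and the extremality $\phi(T)=c$; no further machinery is needed.
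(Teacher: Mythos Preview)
Your proof is correct and follows essentially the same approach as the paper's: expand $T+g$ over $\mathcal{T}_\ell$, sandwich $\phi(T+g)$ between $c$ (from $\phi(T)=c$ and $\phi(g)\geq 0$) and $c$ (from the termwise bound $p(T+g;T')\leq c$), and conclude that the sum $\sum_{T'}(c-p(T+g;T'))\phi(T')$ of nonnegative terms vanishes, forcing each term to vanish. The paper's argument is identical in substance, only slightly more terse in the final step.
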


\begin{proof}
  Recall the semidefinite method from Subsection~\ref{subsec:semidefinite}. We know
  that
  \begin{align*}
    c & = \phi(T) \leq \phi(T+g) = \sum_{T'\in\mathcal{T}_\ell}p(T+g;T')\phi(T')\\
    & \leq
    \left(\max_{T'\in\mathcal{T}_\ell}p(T+g;T')\right)\sum_{T'\in\mathcal{T}_\ell}\phi(T')
    = c\phi(1_0) = c.
  \end{align*}
  
  Hence, we must have equality throughout. In particular, equality in the last
  inequality implies that
  \begin{align*}
    \sum_{T'\in\mathcal{T}_\ell}p(T+g;T')\phi(T') & = c\sum_{T'\in\mathcal{T}_\ell}\phi(T'),
  \end{align*}
  and since~$\phi(T')\geq 0$ for every~$T'\in\mathcal{T}_\ell$, we have
  \begin{align*}
    \phi(T')(c - p(T+g;T')) & = 0,
  \end{align*}
  for every~$T'\in\mathcal{T}_\ell$. Therefore, the result follows.
\end{proof}

For the next technique, we will need the notion of a homomorphism extension, so we
recall below the main theorem on the matter.

\begin{theorem}[Razborov~{\cite[Theorem~3.5]{Raz07}}]\label{thm:ext}
  If~$\sigma$ is a type and~$\phi\in\Homp{0}$ is a homomorphism such
  that~$\phi(\sigma)>0$, then there exists a random element~$\bm{\phi^\sigma}$
  of~$\Homp{\sigma}$ (called \emph{homomorphism extension}) such that
  \begin{align*}
    \expect{\bm{\phi^\sigma}(f)} & =
    \frac{\phi(\llbracket f\rrbracket_\sigma)}
         {\phi(\llbracket 1_\sigma\rrbracket_\sigma)},
  \end{align*}
  for every~$f\in\mathcal{A}^\sigma$.
\end{theorem}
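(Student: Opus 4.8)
The plan is to realise $\phi$ as a limit of a convergent sequence of finite tournaments and then obtain $\bm{\phi^\sigma}$ as a subsequential distributional limit of uniformly random embeddings of $\sigma$ into those tournaments. By the Lov\'asz--Szegedy/Razborov theorem quoted above (applied with the empty type), fix a convergent sequence of tournaments $(T_n)_{n\in\mathbb{N}}$, regarded as $0$-flags, with $\lim_{n\to\infty}p(F;T_n)=\phi(F)$ for every tournament $F$. Writing $k=\lvert\sigma\rvert$, the hypothesis $\phi(\sigma)>0$ implies $p(\sigma;T_n)>0$ for all large $n$, so for such $n$ we may define a random $\sigma$-flag $\bm{G_n}=(T_n,\bm{\theta_n})$ by taking $\bm{\theta_n}\colon[k]\to V(T_n)$ uniformly among the embeddings of $\sigma$ into $T_n$.

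The key step is the finite identity
\begin{align*}
  \expect{p(F;\bm{G_n})}
  &= \frac{k!\,q_\sigma(F)\,p(F\vert_0;T_n)}{\lvert\Aut(\sigma)\rvert\,p(\sigma;T_n)}
  = \frac{p(\llbracket F\rrbracket_\sigma;T_n)}{p(\llbracket 1_\sigma\rrbracket_\sigma;T_n)},
\end{align*}
valid for every $\sigma$-flag $F$ and every $n$ with $\lvert T_n\rvert\geq\lvert F\rvert$. For the first equality I would write $\expect{p(F;\bm{G_n})}$ as a ratio of counts over pairs $(\theta,W)$, where $\theta$ is an embedding of $\sigma$ into $T_n$ and $W$ an unlabelled part of the appropriate size: the numerator equals (number of $\lvert F\rvert$-subsets of $V(T_n)$ inducing $F\vert_0$) times $q_\sigma(F)\,(\lvert F\rvert)_k$, by the very definition of the normalising factor, the denominator equals $\emb(\sigma;T_n)\binom{\lvert T_n\rvert-k}{\lvert F\rvert-k}$, and collecting binomial coefficients together with $\emb(\sigma;T_n)=\lvert\Aut(\sigma)\rvert\,p(\sigma;T_n)\binom{\lvert T_n\rvert}{k}$ gives the stated value. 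The second equality follows from $\llbracket F\rrbracket_\sigma=q_\sigma(F)\,F\vert_0$, $\llbracket 1_\sigma\rrbracket_\sigma=q_\sigma(1_\sigma)\,\sigma$ and $q_\sigma(1_\sigma)=\lvert\Aut(\sigma)\rvert/k!$. Since $\phi(\llbracket 1_\sigma\rrbracket_\sigma)=\lvert\Aut(\sigma)\rvert\,\phi(\sigma)/k!>0$, letting $n\to\infty$ shows the right-hand side converges to $\phi(\llbracket F\rrbracket_\sigma)/\phi(\llbracket 1_\sigma\rrbracket_\sigma)$.

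It remains to package the random flags $(\bm{G_n})_n$ into a single random homomorphism. I would view each $\bm{G_n}$ via its density vector $\bm{X_n}=(p(F;\bm{G_n}))_{F\in\mathcal{F}^\sigma}$, a random element of the compact metrizable space $K=[0,1]^{\mathcal{F}^\sigma}$, and let $\nu_n$ be its law; by weak-$*$ compactness of the space of Borel probability measures on $K$, pass to a subsequence with $\nu_{n_j}\to\nu$. The subset $\Homp{\sigma}\subseteq K$ is closed, being cut out by the countably many linear relations that make $x$ descend to a functional on $\mathcal{A}^\sigma$, the multiplicativity relations $x(F_1)x(F_2)=x(F_1F_2)$, and $x(1_\sigma)=1$. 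Each linear relation is satisfied exactly by $\bm{X_n}$ once $\lvert T_n\rvert$ is large enough, each multiplicativity relation is satisfied by $\bm{X_n}$ up to a deterministic error $O(1/\lvert T_n\rvert)$, and $x(1_\sigma)=1$ always holds; testing the bounded continuous squared defect of each such relation against $\nu_{n_j}\to\nu$ shows $\nu$ gives full measure to $\Homp{\sigma}$. Finally, each coordinate projection $x\mapsto x_F$ is bounded and continuous, so $\int_K x_F\,d\nu(x)=\lim_j\expect{p(F;\bm{G_{n_j}})}=\phi(\llbracket F\rrbracket_\sigma)/\phi(\llbracket 1_\sigma\rrbracket_\sigma)$; taking $\bm{\phi^\sigma}$ to be a random element of $\Homp{\sigma}$ with law $\nu$ and extending from flags to arbitrary $f\in\mathcal{A}^\sigma$ by linearity completes the argument.

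The main obstacle is this last paragraph: turning the sequence of random \emph{finite} $\sigma$-flags into a bona fide random element of $\Homp{\sigma}$, i.e.\ setting up the compactness argument and verifying that the limiting measure is concentrated on genuine homomorphisms. The finite identity, by contrast, is a routine — if slightly fiddly — double-counting computation.
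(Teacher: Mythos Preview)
The paper does not prove this theorem; it is quoted verbatim as a background result from Razborov's original flag algebras paper~\cite{Raz07} and used as a black box in Proposition~\ref{prop:eigen}. There is therefore no ``paper's own proof'' to compare against.

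That said, your sketch is correct and is essentially Razborov's original argument: realise $\phi$ by a convergent sequence, randomise the root embedding of $\sigma$, verify the finite averaging identity $\expect{p(F;\bm{G_n})}=p(\llbracket F\rrbracket_\sigma;T_n)/p(\llbracket 1_\sigma\rrbracket_\sigma;T_n)$ by double counting, and then pass to a weak limit of the resulting laws on $[0,1]^{\mathcal{F}^\sigma}$, using the deterministic $O(1/\lvert T_n\rvert)$ multiplicativity error to show the limit measure is supported on $\Homp{\sigma}$. Your counting for the finite identity checks out (the ratio of binomial factors indeed collapses to $k!$), and your handling of the support of $\nu$ via squared defects of the countably many algebraic relations is the standard way to close the argument. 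The one point worth being slightly more careful about is that the multiplicativity relation should be written for a fixed expansion $F_1F_2=\sum_F p(F_1,F_2;F)F$ at a single size $\ell$, so that the ``defect'' is a finite polynomial in the coordinates of $\bm{X_n}$; you have this implicitly, but making it explicit avoids any worry about which relations you are testing.
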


The next technique says that if the element~$\llbracket F\cdot f^2\rrbracket_\sigma$
was used in a tight solution of~\eqref{eq:semidefinite}, then we must
have~$\bm{\phi^\sigma}(F\cdot f) = 0$ almost surely for every extremal
homomorphism~$\phi\in\Homp{0}$.

\begin{proposition}\label{prop:eigen}
  With the definitions and notation of Proposition~\ref{prop:semidefinite}, let
  \begin{align*}
    c & = \max\{\phi(T) : T\in\Homp{0}\},
  \end{align*}
  suppose that the optimum solution~$(Q^{(t)})_{t=1}^m$ of~\eqref{eq:semidefinite}
  has value~$c$ and write
  \begin{align*}
    Q^{(t)} & = \sum_{i=1}^{r_t}v^{(t)}_i(v^{(t)}_i)^\top,
  \end{align*}
  for every~$t\in[m]$.

  Under these circumstances, if~$\phi\in\Homp{0}$ is extremal for~$T$, that is,
  if~$\phi(T)=c$, then for every~$t\in[m]$ with~$\phi(\sigma_t)>0$ and
  every~$i\in[r_t]$, we have
  \begin{align*}
    \bm{\phi^{\sigma}}(F'_t\cdot F(v^{(t)}_i)) & = 0
  \end{align*}
  almost surely.
\end{proposition}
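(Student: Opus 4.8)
The plan is to combine the semidefinite-method slackness argument (as in the proof of Proposition~\ref{prop:slack}) with the homomorphism extension of Theorem~\ref{thm:ext}, so that the positive-semidefinite structure of each $Q^{(t)}$ forces its individual square summands $F'_t\cdot F(v^{(t)}_i)$ to vanish on every extremal $\phi$. First I would set $g=\sum_{t=1}^m\llbracket F'_t\cdot F(Q^{(t)})\rrbracket_{\sigma_t}$; by the theorem on $\llbracket\cdot\rrbracket_\sigma$ and the fact that each $F'_t\cdot F(Q^{(t)})=\sum_{i=1}^{r_t}F'_t\cdot F(v^{(t)}_i)^2\in\cC{\sigma_t}$, we have $g\in\Csem{0}$, and by hypothesis $\max\{p(T+g;T'):T'\in\mathcal{T}_\ell\}=c$. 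Running the chain of inequalities $c=\phi(T)\le\phi(T+g)=\phi(T)+\phi(g)$ shows equality throughout, hence $\phi(g)=0$.

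Next I would unpack $\phi(g)=0$. Since $g=\sum_t\llbracket F'_t\cdot F(Q^{(t)})\rrbracket_{\sigma_t}=\sum_t\sum_{i=1}^{r_t}\llbracket F'_t\cdot F(v^{(t)}_i)^2\rrbracket_{\sigma_t}$ and each term lies in $\Csem{0}$ (so is evaluated nonnegatively by $\phi$), the vanishing of the sum forces $\phi(\llbracket F'_t\cdot F(v^{(t)}_i)^2\rrbracket_{\sigma_t})=0$ for every $t\in[m]$ and every $i\in[r_t]$. Now fix $t$ with $\phi(\sigma_t)>0$ and apply Theorem~\ref{thm:ext}: with $h_i:=F'_t\cdot F(v^{(t)}_i)\in\mathcal{A}^{\sigma_t}$ we get
\begin{align*}
  \expect{\bm{\phi^{\sigma_t}}(h_i)^2}
  & = \expect{\bm{\phi^{\sigma_t}}(h_i^2)}
  = \frac{\phi(\llbracket h_i^2\rrbracket_{\sigma_t})}{\phi(\llbracket 1_{\sigma_t}\rrbracket_{\sigma_t})}
  = 0,
\end{align*}
using that $\bm{\phi^{\sigma_t}}$ is an algebra homomorphism (so $\bm{\phi^{\sigma_t}}(h_i^2)=\bm{\phi^{\sigma_t}}(h_i)^2$) and that $\phi(\llbracket 1_{\sigma_t}\rrbracket_{\sigma_t})=\phi(\sigma_t)>0$. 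A nonnegative random variable with zero expectation is zero almost surely, so $\bm{\phi^{\sigma_t}}(h_i)=0$ almost surely, which is exactly $\bm{\phi^{\sigma}}(F'_t\cdot F(v^{(t)}_i))=0$ a.s.

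The main obstacle is purely bookkeeping: making sure the relation $\phi(\llbracket\,\text{sum of nonnegatives}\,\rrbracket_\sigma)=0\Rightarrow$ each summand vanishes is applied correctly — each $\llbracket F'_t\cdot F(v^{(t)}_i)^2\rrbracket_{\sigma_t}$ is genuinely in $\Csem{0}$ and hence $\phi$ of it is $\ge 0$, so a finite sum of such terms being $0$ forces each to be $0$ — and that the size conditions of Proposition~\ref{prop:semidefinite} guarantee all the products $F'_t\cdot F(v^{(t)}_i)^2$ and their images under $\llbracket\cdot\rrbracket_{\sigma_t}$ live in the intended algebras (so that $\phi$ and the experiment defining $\bm{\phi^{\sigma_t}}$ make sense). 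Once these are in place, the argument is exactly the two-line computation above.
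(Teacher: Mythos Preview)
Your approach is essentially identical to the paper's: show $\phi(g)=0$ by the tight chain of inequalities, split $g$ into its nonnegative summands to force each $\phi(\llbracket F'_t\cdot F(v^{(t)}_i)^2\rrbracket_{\sigma_t})=0$, and then pass to the random extension via Theorem~\ref{thm:ext}.

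One small slip in your final display: with $h_i=F'_t\cdot F(v^{(t)}_i)$ you have $h_i^2=(F'_t)^2\cdot F(v^{(t)}_i)^2$, whereas what you proved is $\phi(\llbracket F'_t\cdot F(v^{(t)}_i)^2\rrbracket_{\sigma_t})=0$; these differ when $F'_t\neq 1_{\sigma_t}$. The fix is immediate (and is how the paper phrases it): since $\bm{\phi^{\sigma_t}}$ is a homomorphism and $F'_t$ is a flag, $\bm{\phi^{\sigma_t}}(F'_t\cdot F(v^{(t)}_i)^2)=\bm{\phi^{\sigma_t}}(F'_t)\,\bm{\phi^{\sigma_t}}(F(v^{(t)}_i))^2\ge 0$, so its zero expectation forces it to vanish a.s., and then $\bm{\phi^{\sigma_t}}(F'_t\cdot F(v^{(t)}_i))=\bm{\phi^{\sigma_t}}(F'_t)\,\bm{\phi^{\sigma_t}}(F(v^{(t)}_i))=0$ a.s.\ as well.
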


\begin{proof}
  Recall the semidefinite method from Subsection~\ref{subsec:semidefinite}. We know
  that
  \begin{align*}
    c = \phi(T) & \leq
    \phi(T) + \sum_{i=1}^m\phi(\llbracket F'_t\cdot F(Q^{(t)})\rrbracket_{\sigma_t})
    \\
    & =
    \sum_{T'\in\mathcal{T}_\ell}\left(
    p(T;T') +
    \sum_{i=1}^mp(\llbracket F'_t\cdot F(Q^{(t)})\rrbracket_{\sigma_t};T')
    \right)
    \phi(T')\\
    & \leq
    \max_{T'\in\mathcal{T}_\ell}\left(
    p(T;T') +
    \sum_{i=1}^mp(\llbracket F'_t\cdot F(Q^{(t)})\rrbracket_{\sigma_t};T')
    \right)
    \phi(1_0)\\
    & =
    c.
  \end{align*}

  Hence, we must have equality throughout. In particular, equality in the first
  inequality implies that
  \begin{align*}
    \sum_{i=1}^m\phi(\llbracket F'_t\cdot F(Q^{(t)})\rrbracket_{\sigma_t}) & = 0,
  \end{align*}
  and since~$\llbracket F'_t\cdot F(Q^{(t)})\rrbracket_{\sigma_t}\in\Csem{0}$ for
  every~$t\in[m]$, we get that
  \begin{align}\label{eq:brackQ}
    \phi(\llbracket F'_t\cdot F(Q^{(t)})\rrbracket_{\sigma_t}) & = 0,
  \end{align}
  for every~$t\in[m]$.

  Fix now~$t\in[m]$ such that~$\phi(\sigma_t)>0$ and recall that
  \begin{align*}
    \llbracket F'_t\cdot F(Q^{(t)})\rrbracket_{\sigma_t} & =
    \sum_{i=1}^{r_t}\llbracket F'_t\cdot F(v^{(t)}_i)^2\rrbracket_{\sigma_t}.
  \end{align*}
  This along with~\eqref{eq:brackQ} implies that
  \begin{align*}
    \phi(\llbracket F'_t\cdot F(v^{(t)}_i)^2\rrbracket_{\sigma_t}) & = 0.
  \end{align*}

  From Theorem~\ref{thm:ext}, we have
  \begin{align*}
    \expect{\bm{\phi^{\sigma_t}}(F'_t\cdot F(v^{(t)}_i)^2)} & = 0,
  \end{align*}
  and since this variable is (almost surely) non-negative, we get
  \begin{align*}
    \bm{\phi^{\sigma_t}}(F'_t\cdot F(v^{(t)}_i)) & = 0
  \end{align*}
  almost surely, as desired.
\end{proof}


	        
\section{Uniqueness}
\label{sec:unique}

In this section, we will prove the uniqueness results. Namely, we will prove that a
homomorphism~$\phi\in\Homp{0}$ maximizes the density of~$T_5^8$ if and only if~$\phi$
is the quasi-random homomorphism~$\phiqr$. We will also prove that~$\phi\in\Homp{0}$
maximizes the density of~$T_5^7$ or of~$T_5^{12}$ ($R_5$) if and only if~$\phi$ is
the carousel homomorphism~$\phiR$. Finally, we will also prove that~$\phi \in
\Homp{0}$ maximizes the density of~$T_5^9$ or of~$T_5^{11}$ if and only if~$\phi$ is
the limit of the sequence~$(\trianglen)_{n\in\mathbb{N}}$.

\subsection{Quasi-random uniqueness}
\label{subsec:qrunique}

First we recall the definition of the quasi-random homomorphism~$\phiqr\in\Homp{0}$
as the almost sure limit of the sequence of random
tournaments~$(\bm{R_{n,1/2}})_{n\in\mathbb{N}}$.
Alternatively, the quasi-random homomorphism is defined by
\begin{align*}
  \phiqr(T) & = \frac{\ell!}{\lvert\Aut(T)\rvert 2^{\binom{\ell}{2}}},
\end{align*}
for every tournament~$T$ of size~$\ell\in\mathbb{N}$, where~$\Aut(T)$ denotes the
group of automorphisms of the tournament~$T$.

We also recall the equivalence of the following quasi-random properties in the lemma
below.

\begin{lemma}[Chung--Graham~{\cite[Theorem~1]{CG:QuasiRandomTournaments}}]
  \label{lem:ChungGraham}
  Let~$\phi\in\Homp{0}$ be a homomorphism. The following are equivalent.
  \begin{enumerate}[label={$P_{\arabic*}$:}, ref={\ensuremath{P_{\arabic*}}}]
  \item $\phi = \phiqr$;
    \label{it:phiqr}
    \setcounter{enumi}{3}
  \item $\bm{\phi^A}(O^A+I^A) = 1/2$ a.s.
    \label{it:sameness}
  \end{enumerate}
\end{lemma}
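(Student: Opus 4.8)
The plan is to prove the two implications separately; the implication \ref{it:phiqr} $\Rightarrow$ \ref{it:sameness} is a short computation while the converse is the substantial part. For \ref{it:phiqr} $\Rightarrow$ \ref{it:sameness}, recall that $\phiqr$ is the almost sure limit of $(\bm{R_{n,1/2}})_{n\in\bbn}$. If we condition on a uniformly random arc $u\to v$ of $\bm{R_{n,1/2}}$ and pick a further vertex $w$ uniformly at random, then the four possibilities for the position of $w$ relative to $\{u,v\}$ --- lying in the common outneighbourhood of $u$ and $v$, in their common inneighbourhood, or completing with $u\to v$ a transitive triangle, or a cyclic one --- are equiprobable, each of probability $1/4$, independently of which arc was conditioned on. By binomial concentration this forces the homomorphism extension $\bm{\phi^A}$ (with $\phi=\phiqr$) to be almost surely the deterministic homomorphism assigning density $1/4$ to each of $I^A$, $O^A$, $\Tr_3^A$, $\vec C_3^A$; in particular $\bm{\phi^A}(O^A+I^A)=1/2$ almost surely.

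For the converse, suppose $\bm{\phi^A}(O^A+I^A)=1/2$ almost surely and put $X\equaldef\bm{\phi^A}(O^A+I^A)$. Since $X$ takes values in $[0,1]$, this hypothesis is equivalent to the two scalar identities $\expect{X}=1/2$ and $\expect{X^2}=1/4$ (which together force $\var(X)=0$). The extension exists because $\phi(A)=1>0$ (the underlying tournament of $1_A$ is the unique tournament on two vertices), so by Theorem~\ref{thm:ext} we have $\expect{X}=\phi(\llbracket O^A+I^A\rrbracket_A)/\phi(\llbracket 1_A\rrbracket_A)$ and $\expect{X^2}=\phi(\llbracket(O^A+I^A)^2\rrbracket_A)/\phi(\llbracket 1_A\rrbracket_A)$, the square being taken in $\mathcal{A}^A$. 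The underlying tournament of each of $O^A$ and $I^A$ is $\Tr_3$, which admits a unique placement of the labelled arc, so $q_A(O^A)=q_A(I^A)=1/6$, and similarly $q_A(1_A)=1/2$; hence $\llbracket O^A+I^A\rrbracket_A=\tfrac13\Tr_3$ and $\llbracket 1_A\rrbracket_A=\tfrac12\,1_0$. Therefore $\expect{X}=\tfrac23\phi(\Tr_3)=\tfrac23(1-\phi(\vec C_3))$, and the identity $\expect{X}=1/2$ is equivalent to $\phi(\vec C_3)=1/4$.

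To exploit the second identity, I would expand $(O^A+I^A)^2=(O^A)^2+2\,O^AI^A+(I^A)^2$ in $\mathcal{A}^A$ as a linear combination of the $A$-flags on four vertices (via $F_1\cdot F_2=\sum_{F\in\mathcal{F}^A_4}p(F_1,F_2;F)F$), apply the downward operator, and thereby rewrite $\expect{X^2}=2\,\phi(\llbracket(O^A+I^A)^2\rrbracket_A)$ as a linear functional in the densities $\phi(\Tr_4),\phi(R_4),\phi(W_4),\phi(L_4)$ of the four tournaments on four vertices. Combining $\expect{X^2}=1/4$ with the normalization $\phi(\Tr_4)+\phi(R_4)+\phi(W_4)+\phi(L_4)=1$ and with $\phi(\vec C_3)=1/4$ re-expressed as $2\phi(R_4)+\phi(W_4)+\phi(L_4)=1$ (using $p(\vec C_3;\Tr_4)=0$, $p(\vec C_3;R_4)=1/2$, $p(\vec C_3;W_4)=p(\vec C_3;L_4)=1/4$, which already yields $\phi(\Tr_4)=\phi(R_4)$), I expect the system --- together with $\phi(T)\ge 0$ and the inequality $\var(X)\ge0$, which on the hyperplane $\phi(\vec C_3)=1/4$ should read as $\phi(\Tr_4)\ge 3/8$ --- to force $\phi(\Tr_4)=3/8=\phiqr(\Tr_4)$. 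Then by the theorem of the first author and Razborov~\cite{NaRaz15a}, a homomorphism minimizing the density of $\Tr_4$ equals $\phiqr$, completing the proof. The main obstacle is the degree-$4$ bookkeeping: enumerating the $A$-flags of size four, computing the products $p(O^A,O^A;F)$, $p(O^A,I^A;F)$, $p(I^A,I^A;F)$ and the brackets $\llbracket F\rrbracket_A$, and verifying that the single linear relation they produce is, modulo the other two constraints, a positive multiple of $\phi(\Tr_4)-3/8$; should this verification be awkward, a safe alternative at this last step is to run Chung--Graham's original argument, deducing from $\var(X)=0$ that the tournament matrix of an extremal sequence has second largest singular value $o(n)$, which is one of the standard quasi-random properties forcing convergence to $\phiqr$.
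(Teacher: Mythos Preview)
The paper does not prove this lemma; it is quoted verbatim from Chung and Graham and used as a black box. So there is no ``paper's own proof'' to compare against, but your sketch can be assessed on its own.

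The forward direction is fine. For the converse, your plan is sound and the bookkeeping you flagged as the obstacle works out cleanly: one computes
\[
\llbracket (O^A+I^A)^2\rrbracket_A \;=\; \tfrac14\,\Tr_4+\tfrac1{12}\,R_4
\]
(the coefficients of $W_4$ and $L_4$ vanish), hence $\expect{X^2}=2\phi(\llbracket(O^A+I^A)^2\rrbracket_A)=\tfrac12\phi(\Tr_4)+\tfrac16\phi(R_4)$. Setting this equal to $1/4$ and using $\phi(\Tr_4)=\phi(R_4)$ (which you correctly derived from $\phi(\vec C_3)=1/4$) gives $\phi(\Tr_4)=3/8$ exactly; no inequality argument is needed. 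Invoking~\cite{NaRaz15a} then finishes the proof.

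Two remarks. First, your route is genuinely different from Chung and Graham's: they work with the tournament adjacency matrix and its singular values, while you stay inside flag-algebra density identities and finish with the $\Tr_4$-minimisation theorem. Your argument is tidier in the present language but imports~\cite{NaRaz15a} as a black box; it is worth noting that that result is proved by an independent flag-algebra computation, so there is no circularity. Second, your proposed fallback of ``running Chung--Graham's original argument'' is not a genuine alternative, since that \emph{is} the theorem being cited here.
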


\begin{remark}
  Although we will only use two quasi-random properties, let us mention that Chung
  and~Graham proved equivalence of a total of~11 quasi-random properties ($P_1$
  to~$P_{11}$).
\end{remark}

We are now in condition of proving that the density of~$T_5^8$ is maximized only by
the quasi-random homomorphism.

\begin{theorem}\label{thm:T8unique}
  If~$\phi\in\Homp{0}$ is a homomorphism, then
  \begin{align*}
    \phi(T_5^8) & \leq \frac{15}{128},
  \end{align*}
  with equality if and only if~$\phi=\phiqr$.
\end{theorem}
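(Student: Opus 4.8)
The plan is to establish the inequality $\phi(T_5^8)\le 15/128$ via the semidefinite argument already set up in Section~\ref{sec:UB} (see the table for $\toito$ with $m=4$ and types $1$, $A$, $\Tr_3^*$, $\vec C_3^*$), and then to leverage the tightness of that solution together with Propositions~\ref{prop:slack} and~\ref{prop:eigen} to pin down the extremal homomorphism. The inequality itself follows from Proposition~\ref{prop:semidefinite} once one checks that $\max_{T\in\calt_5}\{p(\toito;T)+c(\toito;T)\}=15/128$, where $c(\toito;T)=\sum_{i=1}^4 c(Q(\toito,\sigma_i);T)$ is computed from the positive semidefinite matrices in the appendix; the lower bound $\phi\ge 15/128$ is already in hand from Lemma~\ref{lemma:lb_qr}, since $\phiqr$ is the limit of $(\bm{R_{n,1/2}})_{n\in\mathbb{N}}$, so $\phiqr(\toito)=15/128$ and $\phiqr$ is one witness of equality.

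For the uniqueness direction, suppose $\phi\in\Homp{0}$ satisfies $\phi(\toito)=15/128$. First I would apply Proposition~\ref{prop:slack}: every tournament $T'\in\calt_5$ for which the restriction in the semidefinite program is slack must satisfy $\phi(T')=0$. I expect (and would verify from the computed values of $p(\toito;T)+c(\toito;T)$) that the tight tournaments are exactly those having the same $\toito$-density and correction as $R_{5,1/2}$ realizes in the limit — in particular this should force $\phi(W_4)$ and $\phi(L_4)$ to behave correctly and, crucially, should kill enough $5$-vertex tournaments to constrain $\phi$ heavily. Then I would invoke Proposition~\ref{prop:eigen} with the type $\sigma=A$ (using $\phi(A)=1/2>0$, which holds because $A$ is the unique type of size $2$ and $\phi(1_A\cdot 1_A)$-type identities force $\phi(A)=1/2$ for any homomorphism — or more carefully, $\phi(A)>0$ since otherwise $\phi$ would be a transitive-type limit with $\phi(\toito)=0\ne 15/128$). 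For each eigenvector $v_i^{(t)}$ of $Q(\toito,A)$, Proposition~\ref{prop:eigen} gives $\bm{\phi^A}(F(v_i^{(t)}))=0$ almost surely. The goal is to combine these linear relations on $\bm{\phi^A}$ over the $A$-flags $\{I^A,\vec C_3^A,\Tr_3^A,O^A\}$ (of size $3$, together with the normalization $\bm{\phi^A}(1_A)=1$ and $\bm{\phi^A}(\Tr_2^A$-type sums$)=1$) to deduce $\bm{\phi^A}(O^A+I^A)=1/2$ a.s.

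Once $\bm{\phi^A}(O^A+I^A)=1/2$ a.s.\ is established, property~\ref{it:sameness} of Lemma~\ref{lem:ChungGraham} applies and yields $\phi=\phiqr$, completing the proof. The main obstacle I anticipate is the middle step: showing that the particular kernel vectors of $Q(\toito,A)$ — which are determined by the rounded numerical matrix in the appendix — span enough of $\mathbb{R}^{\calf_3^A}$ that, modulo the automatic constraints $\sum_{F\in\calf_3^A}\bm{\phi^A}(F)=1$ and the chain rule relating $\calf_3^A$ to $\calf_2^A$, the only surviving degree of freedom is forced to the value $1/2$ for $O^A+I^A$. This is a finite linear-algebra computation, but it is where the real content of the matrix lies; if the $A$-block alone is insufficient, one would additionally feed in the vanishing statements from Proposition~\ref{prop:slack} (certain $\phi(T')=0$ for $T'\in\calt_5$) via the identity $\bm{\phi^A}(F_1\cdot F_2)$ expressed back in $\Homp{0}$, or use the $\Tr_3^*$- and $\vec C_3^*$-blocks similarly, to finish. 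The remaining steps — the value computation for the inequality, and the final citation of~\ref{it:sameness} — are routine.
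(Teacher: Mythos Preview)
Your approach is essentially identical to the paper's: the proof goes through Proposition~\ref{prop:eigen} applied to the $A$-block and then invokes Lemma~\ref{lem:ChungGraham} via property~\ref{it:sameness}. Your anticipated ``main obstacle'' dissolves once you look at the explicit matrix in the appendix: $Q(\toito,A)=\tfrac{99}{3200}\,vv^\top$ with $v=(1,-1,-1,1)$ is rank one, so Proposition~\ref{prop:eigen} gives the single relation $\bm{\phi^A}(I^A-\vec C_3^A-\Tr_3^A+O^A)=0$ a.s., and combining this with $I^A+\vec C_3^A+\Tr_3^A+O^A=1_A$ yields $\bm{\phi^A}(O^A+I^A)=\tfrac12$ immediately---no appeal to Proposition~\ref{prop:slack} or to the $\Tr_3^*$- and $\vec C_3^*$-blocks is needed.
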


\begin{proof}
  By Lemma~\ref{lemma:lb_qr} and by Proposition~\ref{prop:semidefinite} (see also
  Section~\ref{sec:UB}), we know that
  \begin{align*}
    \max\{\phi(T_5^8) : \phi\in\Homp{0}\} & = \frac{15}{128} = \phiqr(T_5^8).
  \end{align*}

  Furthermore, we know that the matrices~$Q(T_5^8,1)$, $Q(T_5^8,A)$, $Q(T_5^8,\Tr_3^*)$
  and~$Q(T_5^8,\vec C_3^*)$ from the semidefinite method are an optimum solution with
  value~$15/128$.

  Since
  \begin{align*}
    Q(T_5^8,A) & = \frac{99}{3200}vv^\top,
  \end{align*}
  where~$v = (1,-1,-1,1)$ (indexed by~$(I^A,\vec C_3^A,\Tr_3^A,O^A)$),
  Proposition~\ref{prop:eigen} implies that if~$\phi\in\Homp{0}$ is such
  that~$\phi(T_5^8)=15/128$, then
  \begin{align*}
    \bm{\phi^A}(F(v)) & = \bm{\phi^A}(I^A - \vec C_3^A - \Tr_3^A + O^A) = 0\as
  \end{align*}

  Since~$\vec C_3^A+\Tr_3^A = 1_0 - O^A + I^A$, we get
  \begin{align*}
    \bm{\phi^A}(O^A+I^A) & = \frac{1}{2}\as,
  \end{align*}
  hence~$\phi$ satisfies Property~\ref{it:sameness} from
  Lemma~\ref{lem:ChungGraham}. Therefore~$\phi=\phiqr$.
\end{proof}

\subsection{Quasi-carousel uniqueness}
\label{subsec:qcunique}

First we recall the definition of the carousel homomorphism~$\phiR\in\Homp{0}$ as the
limit of the sequence~$(R_{2n+1})_{n\in\mathbb{N}}$ of carousel
tournaments. Analogously to quasi-random properties, the quasi-carousel
properties~\cite{Na15} are equivalent properties over a
homomorphism~$\phi\in\Homp{0}$ that force~$\phi=\phiR$. We recall two of the carousel
properties below.

\begin{lemma}[{\cite[Lemma~3.2]{Na15}}]\label{lem:QC}
  Let~$\phi\in\Homp{0}$ be a homomorphism. The following are equivalent.
  \begin{enumerate}[label={$S_{\arabic*}$:}, ref={\ensuremath{S_{\arabic*}}}]
  \item $\phi = \phiR$;
    \label{it:phiR}
  \item $\phi$ is balanced and locally transitive, that is, we have
    \begin{align*}
      \bm{\phi^1}(\alpha) & = \bm{\phi^1}(\beta)\as; &
      \phi(W_4+L_4) & = 0.
    \end{align*}
    \label{it:balloctran}
  \end{enumerate}
\end{lemma}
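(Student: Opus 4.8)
The plan is to prove the two implications of Lemma~\ref{lem:QC} separately. The implication \ref{it:phiR}$\Rightarrow$\ref{it:balloctran} is routine: I would unwind the definition of~$\phiR$ as the limit of~$(R_{2n+1})_{n\in\mathbb{N}}$. Each~$R_{2n+1}$ is vertex-transitive with every vertex of out-degree~$n$, so the out-degree density of a uniformly random vertex is~$n/(2n)\to 1/2$, and passing to the limit (via the homomorphism extension of Theorem~\ref{thm:ext}) gives~$\bm{\phiR^1}(\alpha)=\bm{\phiR^1}(\beta)=1/2$ almost surely. Moreover every out-neighbourhood and every in-neighbourhood of~$R_{2n+1}$ is a set of~$n$ consecutive residues inducing a transitive subtournament, so~$R_{2n+1}$ contains no~$W_4$ and no~$L_4$ (a copy of~$W_4$ is a vertex dominating a~$3$-cycle, i.e.\ a~$3$-cycle inside an out-neighbourhood, and dually for~$L_4$); hence~$p(W_4;R_{2n+1})=p(L_4;R_{2n+1})=0$ for all~$n\ge 2$, whence~$\phiR(W_4+L_4)=0$.

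For \ref{it:balloctran}$\Rightarrow$\ref{it:phiR}, let~$\phi\in\Homp{0}$ be balanced and locally transitive. Since~$\phi(W_4),\phi(L_4)\ge 0$ and~$\phi(W_4+L_4)=0$, we get~$\phi(W_4)=\phi(L_4)=0$ individually. The crucial step is to upgrade this to a geometric description: I would show that~$\phi$ is a \emph{circular tournament limit}, i.e.\ that there is a Borel probability measure~$\nu$ on the circle~$\mathbb{R}/\mathbb{Z}$ such that~$\phi(T)$ is the probability that~$\lvert T\rvert$ independent~$\nu$-distributed points, oriented by an arc~$x\to y$ exactly when the clockwise arc from~$x$ to~$y$ has length in~$(0,\tfrac12)$, induce a copy of~$T$. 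The mechanism is that vanishing of~$W_4$ (respectively~$L_4$) forces the out-neighbourhood (respectively in-neighbourhood) of a~$\bm{\phi^1}$-random root to be transitive almost surely, hence carried by a measurable linear order, and~$\{W_4,L_4\}$-freeness across all~$4$-subsets is precisely the amalgamation condition fusing these local orders into one cyclic order; concretely, the finite tournaments with no~$W_4$ and no~$L_4$ are exactly the ``local orders'' realizable by points on a circle, and an induced-removal-lemma argument together with weak-$*$ compactness of measures on the circle promotes this to the statement about~$\phi$.

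Granting the representation~$\phi=\phi_\nu$, the argument concludes quickly. Balancedness~$\bm{\phi^1}(\alpha)=\tfrac12$ a.s.\ translates to~$\nu\bigl((t,t+\tfrac12)\bigr)=\tfrac12$ for~$\nu$-almost every~$t$; this forces the circular cumulative distribution function~$h(t)=\nu\bigl([0,t)\bigr)$ to satisfy~$h(t+\tfrac12)-h(t)=\tfrac12$, i.e.\ $h$ is an orientation-preserving homeomorphism of~$\mathbb{R}/\mathbb{Z}$ commuting with the half-turn~$t\mapsto t+\tfrac12$. A direct check shows that reparametrizing by such an~$h$ leaves the induced tournament on any finite set of points unchanged (the ``clockwise arc in~$(0,\tfrac12)$'' relation is preserved), while~$h_*\nu$ is the uniform measure. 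Hence~$\phi=\phi_\nu=\phi_{h_*\nu}$ is the uniform circular limit, which is exactly the limit of the equally spaced circular tournaments~$R_{2n+1}$, so~$\phi=\phiR$ (consistent also with the intro's fact that~$R_{2n+1}$ is the \emph{unique} balanced locally transitive tournament of its order).

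I expect the main obstacle to be the structural step: passing from ``$\phi$ is locally transitive'' to ``$\phi=\phi_\nu$ is a circular limit''. This needs the finite classification of~$\{W_4,L_4\}$-free tournaments as local orders, an induced removal lemma to move from exact zero density to a genuine limit object, and care with atoms of~$\nu$ — antipodal atoms produce the ambiguous arcs of length exactly~$\tfrac12$, so one must also prescribe a transitive order inside each atom when setting up~$\phi_\nu$. Atoms do not affect the final answer, since balancedness ultimately eliminates them, but they complicate the setup; everything after the circular representation is essentially the one-line reparametrization above.
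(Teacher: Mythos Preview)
The paper does not prove Lemma~\ref{lem:QC}; it is quoted as a black box from~\cite[Lemma~3.2]{Na15} and invoked without argument, so there is no proof here to compare your attempt against.

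That said, your outline is the standard route and is essentially correct. The forward direction is routine as you say. For the converse, the key structural input is exactly the classical fact you name: finite tournaments with no~$W_4$ and no~$L_4$ are precisely the ``circular'' tournaments realizable by points on~$\mathbb{R}/\mathbb{Z}$ with the half-arc orientation. Passing to the limit (via a removal-lemma argument or, equivalently, working in the open subtheory~$\phi(W_4)=\phi(L_4)=0$) gives the representation~$\phi=\phi_\nu$ for a Borel probability measure~$\nu$ on the circle, with a transitive order inside each atom. Your treatment of atoms is right: inside an atom of mass~$m$ the internal order is transitive, so out-degrees of atom vertices span an interval of length~$m$, and balancedness~$\bm{\phi^1}(\alpha)=\tfrac12$ a.s.\ forces~$m=0$. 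Once~$\nu$ is atomless, the reparametrization by the CDF~$h$ is a genuine circle homeomorphism commuting with the half-turn, and the conclusion~$\phi=\phiR$ follows as you wrote. The only place to be careful is making the ``circular limit'' representation precise in the presence of atoms and antipodal pairs before you eliminate them, which you already flagged.
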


Furthermore, we will need an equivalence regarding balanced homomorphisms.

\begin{lemma}[Chung--Graham~{\cite[Theorem~2]{CG:QuasiRandomTournaments}}]
  \label{lem:balanced}
  Let~$\phi\in\Homp{0}$ be a homomorphism. The following are equivalent.
  \begin{enumerate}[label={$Q_{\arabic*}$:}, ref={\ensuremath{Q_{\arabic*}}}]
  \item $\phi(\Tr_3)=3/4$ and~$\phi(\vec C_3)=1/4$;
    \setcounter{enumi}{3}
  \item $\phi$ is balanced, that is, we have~$\bm{\phi^1}(\alpha) =
    \bm{\phi^1}(\beta)$ a.s.
  \end{enumerate}
\end{lemma}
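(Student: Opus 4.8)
The plan is to express both conditions in terms of the single random variable $\bm{\phi^1}(\alpha)$, where $\alpha$ is the $1$-flag of size~$2$ in which the labelled vertex beats the unlabelled one, and then to read off the equivalence from the nonnegativity of its variance. First I would record two trivial reductions. Since $\mathcal{F}^1_2=\{\alpha,\beta\}$ we have $\alpha+\beta=1_1$ in $\mathcal{A}^1$, hence $\bm{\phi^1}(\alpha)+\bm{\phi^1}(\beta)=1$ almost surely and $Q_4$ is equivalent to $\bm{\phi^1}(\alpha)=1/2$ a.s.; and since $\mathcal{T}_3=\{\Tr_3,\vec C_3\}$ we have $\Tr_3+\vec C_3=1_0$, so $\phi(\Tr_3)=1-\phi(\vec C_3)$ and $Q_1$ is equivalent to $\phi(\Tr_3)=3/4$.

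The computational core is the identity $\alpha^2=\Tr_3^{1,W}$ in $\mathcal{A}^1$: expanding $\alpha\cdot\alpha$ as a combination of size-$3$ flags, the only flag for which the product density is nonzero is the one in which the labelled vertex beats both unlabelled vertices, i.e.\ the transitive triangle with the labelled vertex as source, and there the coefficient is~$1$. Applying the downward operator of type~$1$ with normalising factors $q_1(1_1)=1$, $q_1(\alpha)=1/2$ and $q_1(\Tr_3^{1,W})=1/3$ (a uniformly random vertex of $\Tr_3$ is its source with probability $1/3$) yields
\begin{align*}
  \llbracket 1_1\rrbracket_1 & = 1_0, &
  \llbracket\alpha\rrbracket_1 & = \tfrac12\,1_0, &
  \llbracket\alpha^2\rrbracket_1 & = \tfrac13\Tr_3 .
\end{align*}
Since $\phi(1)=1>0$, Theorem~\ref{thm:ext} supplies the homomorphism extension $\bm{\phi^1}$ with $\expect{\bm{\phi^1}(f)}=\phi(\llbracket f\rrbracket_1)/\phi(\llbracket 1_1\rrbracket_1)=\phi(\llbracket f\rrbracket_1)$ for all $f\in\mathcal{A}^1$. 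In particular $\expect{\bm{\phi^1}(\alpha)}=1/2$ and, because $\bm{\phi^1}$ is an algebra homomorphism, $\expect{\bm{\phi^1}(\alpha)^2}=\expect{\bm{\phi^1}(\alpha^2)}=\tfrac13\phi(\Tr_3)$, whence
\begin{align*}
  \var\bigl(\bm{\phi^1}(\alpha)\bigr) & = \tfrac13\phi(\Tr_3)-\tfrac14 .
\end{align*}

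To conclude I would use that a random variable has zero variance if and only if it is almost surely constant, and then equal to its mean: thus $\bm{\phi^1}(\alpha)=1/2$ a.s.\ if and only if $\var(\bm{\phi^1}(\alpha))=0$ if and only if $\phi(\Tr_3)=3/4$, which by the first paragraph is precisely $Q_4\iff Q_1$. (The same identity incidentally gives $\var(\bm{\phi^1}(\alpha))\ge 0$, hence re-proves the sharp bound $\phi(\vec C_3)\le 1/4$ for every $\phi\in\Homp{0}$.) None of this is deep; the one place that needs care is the identification $\alpha^2=\Tr_3^{1,W}$ together with the exact values of the factors $q_1(\cdot)$ entering the downward operator, since a slip there would perturb the constant $1/4$ and destroy the equivalence.
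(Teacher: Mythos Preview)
Your argument is correct: the identification $\alpha^2=\Tr_3^{1,W}$, the normalising factors $q_1(1_1)=1$, $q_1(\alpha)=1/2$, $q_1(\Tr_3^{1,W})=1/3$, and the resulting variance identity $\var(\bm{\phi^1}(\alpha))=\tfrac13\phi(\Tr_3)-\tfrac14$ are all right, and the equivalence follows exactly as you say.

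There is nothing to compare against in the paper itself: Lemma~\ref{lem:balanced} is quoted from Chung--Graham~\cite{CG:QuasiRandomTournaments} without proof and used as a black box. Your write-up is in fact the flag-algebra transcription of the classical variance argument (Goodman-type identity: the number of directed $3$-paths centred at a vertex equals $d^+(v)d^-(v)$, so maximising $\vec C_3$'s is equivalent to all outdegrees being $n/2$), which is what Chung--Graham do in the finitary language. As a side remark, the paper does implicitly use the \emph{same} identity elsewhere: in the proofs of Theorems~\ref{thm:T7unique} and~\ref{thm:T12unique} it records that $\Tr_3^{1,L}-\vec C_3^1-\Tr_3^{1,M}+\Tr_3^{1,W}=(\alpha-\beta)^2$, which expands to $1-4\alpha+4\alpha^2$ and leads to the very formula you derived.
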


Analogously to Theorem~\ref{thm:T8unique}, uniqueness for the carousel homomorphism
will follow from quasi-carousel Property~\ref{it:balloctran}.

\begin{theorem}\label{thm:T7unique}
  If~$\phi\in\Homp{0}$ is a homomorphism, then
  \begin{align*}
    \phi(T_5^7) & \leq \frac{5}{16},
  \end{align*}
  with equality if and only if~$\phi=\phiR$.
\end{theorem}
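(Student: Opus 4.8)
The plan is to imitate the proof of Theorem~\ref{thm:T8unique}, substituting the quasi-carousel characterization of Lemma~\ref{lem:QC} for the quasi-random one: it suffices to prove that any $\phi\in\Homp{0}$ attaining $\phi(\tsete)=5/16$ satisfies Property~\ref{it:balloctran}, i.e.\ is both \emph{balanced} ($\bm{\phi^1}(\alpha)=\bm{\phi^1}(\beta)$ a.s.)\ and \emph{locally transitive} ($\phi(W_4+L_4)=0$); Lemma~\ref{lem:QC} then forces $\phi=\phiR$, and the reverse inequality $\phiR(\tsete)=5/16$ is Lemma~\ref{lemma:lb}. I would first record that, by Lemma~\ref{lemma:lb} and Proposition~\ref{prop:semidefinite} with the data of Section~\ref{sec:UB}, $\max\{\phi(\tsete):\phi\in\Homp{0}\}=5/16$, and that the positive semidefinite matrices $Q(\tsete,1)$, $Q(\tsete,\Tr_3^*)$ and $Q(\tsete,\vec C_3^*)$ of Appendix~\ref{appendix} constitute an optimal solution of value $5/16$. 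Fix such a $\phi$.

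For balancedness I would pass to~$\mathcal{A}^1$, where a direct density computation gives $\alpha=\Tr_3^{1,W}+\tfrac12\vec C_3^1+\tfrac12\Tr_3^{1,M}$ and $\beta=\Tr_3^{1,L}+\tfrac12\vec C_3^1+\tfrac12\Tr_3^{1,M}$, so that $\alpha-\beta=\Tr_3^{1,W}-\Tr_3^{1,L}=F(v)$, where $v$ has coordinates $(-1,0,0,1)$ in the basis $(\Tr_3^{1,L},\vec C_3^1,\Tr_3^{1,M},\Tr_3^{1,W})$ of $\mathbb{R}^{\calf_3^1}$. After checking that $v$ lies in the image of $Q(\tsete,1)$ (hence is a scalar multiple of one of the vectors in its spectral decomposition), Proposition~\ref{prop:eigen} yields $\bm{\phi^1}(\alpha-\beta)=\bm{\phi^1}(F(v))=0$ a.s., so $\phi$ is balanced.

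For local transitivity, note that every $4$-vertex tournament contains a transitive triangle, so $\phi(\Tr_3)>0$ and the homomorphism extension $\bm{\phi^{\Tr_3^*}}$ is available; moreover $W_4^{\Tr_3^*}$ and $L_4^{\Tr_3^*}$ are the unique $\Tr_3^*$-flags lying over $W_4$ and $L_4$. From the spectral decomposition of $Q(\tsete,\Tr_3^*)$, Proposition~\ref{prop:eigen} gives $\bm{\phi^{\Tr_3^*}}(F(w))=0$ a.s.\ for every $w$ in its image, and, using that $\bm{\phi^{\Tr_3^*}}$ is non-negative on each $\Tr_3^*$-flag, I would isolate from these relations that $\bm{\phi^{\Tr_3^*}}(W_4^{\Tr_3^*})=\bm{\phi^{\Tr_3^*}}(L_4^{\Tr_3^*})=0$ a.s. Then Theorem~\ref{thm:ext} gives $0=\expect{\bm{\phi^{\Tr_3^*}}(W_4^{\Tr_3^*})}=q_{\Tr_3^*}(W_4^{\Tr_3^*})\,\phi(W_4)/\phi(\llbracket 1_{\Tr_3^*}\rrbracket_{\Tr_3^*})$ with $q_{\Tr_3^*}(W_4^{\Tr_3^*})>0$ and $\phi(\llbracket 1_{\Tr_3^*}\rrbracket_{\Tr_3^*})=q_{\Tr_3^*}(1_{\Tr_3^*})\phi(\Tr_3)>0$, whence $\phi(W_4)=0$; likewise $\phi(L_4)=0$, so $\phi(W_4+L_4)=0$. (Alternatively, $\phi(W_4)=\phi(L_4)=0$ can be read off from Proposition~\ref{prop:slack}, once one verifies that every tournament of $\calt_5$ with a copy of $W_4$ or $L_4$ yields a strict inequality in the optimal solution; the matrix $Q(\tsete,\vec C_3^*)$ may likewise be used.) Combining the two parts, $\phi$ satisfies Property~\ref{it:balloctran}, hence $\phi=\phiR$.

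The step I expect to be the main obstacle is the computational core of the two applications of Proposition~\ref{prop:eigen}: checking that the concrete rounded matrix $Q(\tsete,1)$ has the vector encoding $\alpha-\beta$ in its image, and that the rank-one summands of $Q(\tsete,\Tr_3^*)$---after the non-negativity bookkeeping on $\Tr_3^*$-flags needed to separate them---genuinely pin down the flags $W_4^{\Tr_3^*}$ and $L_4^{\Tr_3^*}$. The remainder is a transcription of the argument of Theorem~\ref{thm:T8unique}.
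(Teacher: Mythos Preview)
Your overall strategy---prove balancedness and local transitivity, then invoke Lemma~\ref{lem:QC}---is exactly the paper's, but both of your primary computational steps fail as written.

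For balancedness, the check you propose will not go through: from Appendix~\ref{appendix:matrix} the matrix $Q(\tsete,1)$ has rank one with image spanned by $(1,-1,-1,1)$, and your vector $(-1,0,0,1)$ encoding $\alpha-\beta$ is simply not in that span. The paper's resolution is that the eigenvector $(1,-1,-1,1)$ encodes $(\alpha-\beta)^2$ rather than $\alpha-\beta$ itself: one has
\[
\Tr_3^{1,L} - \vec C_3^1 - \Tr_3^{1,M} + \Tr_3^{1,W} \;=\; \beta^2 - 2\alpha\beta + \alpha^2 \;=\; (\alpha-\beta)^2,
\]
so Proposition~\ref{prop:eigen} yields $\bm{\phi^1}((\alpha-\beta)^2)=0$ a.s., hence $\bm{\phi^1}(\alpha)=\bm{\phi^1}(\beta)$ a.s.

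For local transitivity, your primary route via $Q(\tsete,\Tr_3^*)$ also collapses: that matrix is rank one with eigenvector $(1,0,-1,1,0,1,-1,-1)$, whose coordinates on $W_4^{\Tr_3^*}$ and $L_4^{\Tr_3^*}$ are both zero, so Proposition~\ref{prop:eigen} produces a single linear relation that does not mention either flag; there is nothing to ``isolate''. Your parenthetical alternative via Proposition~\ref{prop:slack} is the correct argument and is precisely what the paper does: the tight constraints in Table~\ref{tab:T7slack} occur only at $T_5^1,T_5^7,T_5^9,T_5^{12}$, which are exactly the locally transitive tournaments on five vertices, so $\phi(W_4+L_4)=0$ follows immediately.
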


\begin{proof}
  By Lemma~\ref{lemma:lb} and by Proposition~\ref{prop:semidefinite} (see also
  Section~\ref{sec:UB}), we know that
  \begin{align*}
    \max\{\phi(T_5^7) : \phi\in\Homp{0}\} & = \frac{5}{16} = \phiR(T_5^7).
  \end{align*}

  Our goal is to prove that every~$\phi\in\Homp{0}$ such that~$\phi(T_5^7)=5/16$ is
  balanced and locally transitive.

  To prove that such~$\phi$ is balanced, we note that the matrices~$Q(T_5^7,1)$,
  $Q(T_5^7,\Tr_3^*)$ and~$Q(T_5^7,\vec C_3^*)$ from the semidefinite method are an
  optimum solution with value~$5/16$, and since
  \begin{align*}
    Q(T_5^7,1) & = \frac{35}{48}vv^\top,
  \end{align*}
  where~$v = (1,-1,-1,1)$ (indexed by~$(\Tr_3^{1,L},\vec
  C_3^1,\Tr_3^{1,M},\Tr_3^{1,W})$), Proposition~\ref{prop:eigen} implies that
  \begin{align*}
    \bm{\phi^1}(F(v)) & =
    \bm{\phi^1}(\Tr_3^{1,L} - \vec C_3^1 - \Tr_3^{1,M} + \Tr_3^{1,W}) =
    \bm{\phi^1}((\alpha-\beta)^2) =
    0\as
  \end{align*}
  Therefore~$\bm{\phi^1}(\alpha)=\bm{\phi^1}(\beta)$ a.s., that is, the
  homomorphism~$\phi$ is balanced.

  To prove that~$\phi$ is also locally transitive, we will use
  Proposition~\ref{prop:slack}. Table~\ref{tab:T7slack} has the values
  of~$p(T_5^7+g;T')$ for~$T'\in\mathcal{T}_5$ and where
  \begin{align*}
    g & = \llbracket F(Q(T_5^7,1))\rrbracket_1
    + \llbracket F(Q(T_5^7,\Tr_3^*))\rrbracket_{\Tr_3^*}
    + \llbracket F(Q(T_5^7,\vec C_3^*))\rrbracket_{\vec C_3^*}.
  \end{align*}

  \begin{table}[ht]
    \begin{center}
      \begin{tabular}{%
          >{$\displaystyle}c<{$}|*{12}{>{$\displaystyle}c<{$}}
        }
        T' & T_5^1 & T_5^2 & T_5^3 & T_5^4 & T_5^5 & T_5^6
        & T_5^7 & T_5^8 & T_5^9 & T_5^{10} & T_5^{11} & T_5^{12}
        \\
        \hline\noalign{\smallskip}
        p(T_5^7+g;T') &
        \frac{5}{16} & -\frac{7}{80} & \frac{11}{48} &
        -\frac{29}{240} & -\frac{7}{80} & \frac{11}{48} &
        \frac{5}{16} & -\frac{13}{48} & \frac{5}{16} &
        \frac{1}{16} & -\frac{109}{240} & \frac{5}{16}
      \end{tabular}
      \caption{Values~$p(T_5^7+g;T')$ for~$T'\in\mathcal{T}_5$ and where~$g =
        \llbracket F(Q(T_5^7,1))\rrbracket_1 + \llbracket
        F(Q(T_5^7,\Tr_3^*))\rrbracket_{\Tr_3^*} + \llbracket F(Q(T_5^7,\vec
        C_3^*))\rrbracket_{\vec C_3^*}$.}
      \label{tab:T7slack}
    \end{center}
  \end{table}

  Proposition~\ref{prop:slack} implies that if~$\phi(T')>0$
  for~$T'\in\mathcal{T}_5$, then~$T'\in\{T_5^1,T_5^7,T_5^9,T_5^{12}\}$, and since
  these four tournaments are the only locally transitive tournaments of size~$5$
  (i.e., the only tournaments~$T'\in\mathcal{T}_5$ with~$p(W_4+L_4;T') = 0$), we
  have~$\phi(W_4+L_4) = 0$, that is, the homomorphism~$\phi$ is locally transitive.

  Therefore~$\phi$ satisfies quasi-carousel Property~\ref{it:balloctran},
  hence~$\phi=\phiR$ by Lemma~\ref{lem:QC}.
\end{proof}

\begin{theorem}\label{thm:T12unique}
  If~$\phi\in\Homp{0}$ is a homomorphism, then
  \begin{align*}
    \phi(T_5^{12}) & \leq \frac{1}{16},
  \end{align*}
  with equality if and only if~$\phi=\phiR$.
\end{theorem}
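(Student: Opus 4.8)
The plan is to reproduce, \emph{mutatis mutandis}, the argument used for Theorem~\ref{thm:T7unique}. The extremal value is already known: by Lemma~\ref{lemma:lb} (lower bound) together with Lemma~\ref{lemma:ub} and Proposition~\ref{prop:semidefinite} (upper bound),
\[
  \max\{\phi(T_5^{12}):\phi\in\Homp{0}\} \;=\; \frac{1}{16} \;=\; \phiR(T_5^{12}),
\]
so it remains to show that any $\phi\in\Homp{0}$ with $\phi(T_5^{12})=1/16$ equals $\phiR$. By Lemma~\ref{lem:QC} it suffices to prove that such a $\phi$ is both \emph{balanced} and \emph{locally transitive}, and I would extract these two facts from the optimal semidefinite solution $Q(T_5^{12},1)$, $Q(T_5^{12},\Tr_3^*)$, $Q(T_5^{12},\vec C_3^*)$ of Section~\ref{sec:UB} (recorded in Appendix~\ref{appendix}) via the techniques of Section~\ref{sec:ext}.

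For balance, I would inspect the type-$1$ block $Q(T_5^{12},1)$, expecting it, exactly as in the $T_5^7$ case, to carry the rank-one summand proportional to $vv^\top$ with $v=(1,-1,-1,1)$ indexed by $(\Tr_3^{1,L},\vec C_3^1,\Tr_3^{1,M},\Tr_3^{1,W})$. Since $F(v)=(\alpha-\beta)^2$, Proposition~\ref{prop:eigen} then gives $\bm{\phi^1}(F(v))=\bm{\phi^1}((\alpha-\beta)^2)=0\as$, whence $\bm{\phi^1}(\alpha)=\bm{\phi^1}(\beta)\as$; that is, $\phi$ is balanced. Should the optimal $Q(T_5^{12},1)$ happen not to split off precisely this eigenvector, the fallback is Lemma~\ref{lem:balanced}: one combines the support computed below with the linear identities expressing $\phi(\Tr_3)$ and $\phi(\vec C_3)$ as combinations of the $\phi(T')$, $T'\in\calt_5$, to force $\phi(\Tr_3)=3/4$ and $\phi(\vec C_3)=1/4$. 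The parallel with Theorem~\ref{thm:T7unique} makes the direct route very likely.

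For local transitivity, I would set
\[
  g \;=\; \llbracket F(Q(T_5^{12},1))\rrbracket_1 + \llbracket F(Q(T_5^{12},\Tr_3^*))\rrbracket_{\Tr_3^*} + \llbracket F(Q(T_5^{12},\vec C_3^*))\rrbracket_{\vec C_3^*},
\]
tabulate $p(T_5^{12}+g;T')$ over all $T'\in\calt_5$ (the analogue of Table~\ref{tab:T7slack}), and verify that the entries attaining the optimum $1/16$ all lie among $T_5^1,T_5^7,T_5^9,T_5^{12}$, the four locally transitive tournaments of size $5$ (i.e.\ the only $T'\in\calt_5$ with $p(W_4+L_4;T')=0$). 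Proposition~\ref{prop:slack} then forces $\phi(T')=0$ for every other $T'\in\calt_5$, hence $\phi(W_4+L_4)=0$ and $\phi$ is locally transitive. Being balanced and locally transitive, $\phi$ satisfies quasi-carousel Property~\ref{it:balloctran}, so $\phi=\phiR$ by Lemma~\ref{lem:QC}, completing the proof.

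Most of the work — evaluating $p(T_5^{12};T')$ and the coefficients $c(Q(T_5^{12},\sigma_t);T')$, assembling the slack table, and reading the eigenvectors of $Q(T_5^{12},1)$ — is routine once the matrices of Appendix~\ref{appendix} are fixed. The one step with genuine content, and thus the main obstacle, is obtaining balance: it is immediate precisely when $(\alpha-\beta)^2$ lies in the image of the optimal type-$1$ matrix, and checking this (or else running the detour through Lemma~\ref{lem:balanced}) is where care is needed; local transitivity is then a direct read-off from the slack table.
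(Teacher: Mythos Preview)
Your balance argument is fine: $Q(T_5^{12},1)$ is indeed $\tfrac{1}{16}vv^\top$ with $v=(1,-1,-1,1)$, so Proposition~\ref{prop:eigen} gives $\bm{\phi^1}((\alpha-\beta)^2)=0$ a.s.\ exactly as you anticipate.

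The gap is in the local transitivity step. Your plan hinges on the slack table for $T_5^{12}$ behaving like the one for $T_5^7$, with tight entries only at the four locally transitive $T'\in\calt_5$. It does not. With the certificate matrices in Appendix~\ref{appendix}, one finds $p(T_5^{12}+g;T')=\tfrac{1}{16}$ for $T'\in\{T_5^1,T_5^3,T_5^6,T_5^7,T_5^8,T_5^9,T_5^{10},T_5^{12}\}$; the strictly non-tight entries are only $T_5^2,T_5^4,T_5^5,T_5^{11}$. So Proposition~\ref{prop:slack} yields $\phi(T_5^2)=\phi(T_5^4)=\phi(T_5^5)=\phi(T_5^{11})=0$ and nothing more, and several of the surviving $T'$ (e.g.\ $T_5^3,T_5^6,T_5^8,T_5^{10}$) do contain $W_4$ or $L_4$. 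Hence you cannot read off $\phi(W_4+L_4)=0$ directly from the slack table.

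The missing idea, which the paper supplies, is to exploit the two zeros $\phi(T_5^2)=\phi(T_5^5)=0$ together with the flag identities
\[
  \llbracket (L_4^{\vec C_3^*})^2\rrbracket_{\vec C_3^*}=\tfrac{1}{20}T_5^2,
  \qquad
  \llbracket (W_4^{\vec C_3^*})^2\rrbracket_{\vec C_3^*}=\tfrac{1}{20}T_5^5.
\]
Since $\phi$ is already known to be balanced, Lemma~\ref{lem:balanced} gives $\phi(\vec C_3)=\tfrac14>0$, so the extension $\bm{\phi^{\vec C_3^*}}$ exists and Theorem~\ref{thm:ext} yields $\expect{\bm{\phi^{\vec C_3^*}}(W_4^{\vec C_3^*})^2}=\expect{\bm{\phi^{\vec C_3^*}}(L_4^{\vec C_3^*})^2}=0$, hence $\bm{\phi^{\vec C_3^*}}(W_4^{\vec C_3^*}+L_4^{\vec C_3^*})=0$ a.s., and taking expectations again gives $\phi(W_4+L_4)=0$. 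In short: the slack table alone is too weak here, and you need this extra $\vec C_3^*$-flag step (which in turn relies on the balance you already established) to close the argument.
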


\begin{proof}
  By Lemma~\ref{lemma:lb} and by Proposition~\ref{prop:semidefinite} (see also
  Section~\ref{sec:UB}), we know that
  \begin{align*}
    \max\{\phi(T_5^{12}) : \phi\in\Homp{0}\} & = \frac{1}{16} = \phiR(T_5^{12}).
  \end{align*}

  Again, our goal is to prove that every~$\phi\in\Homp{0}$ such
  that~$\phi(T_5^{12})=5/16$ is balanced and locally transitive.

  To prove that such~$\phi$ is balanced, we note that the matrices~$Q(T_5^{12},1)$,
  $Q(T_5^{12},\Tr_3^*)$ and~$Q(T_5^{12},\vec C_3^*)$ from the semidefinite method are an
  optimum solution with value~$1/16$, and since
  \begin{align*}
    Q(T_5^{12},1) & = \frac{1}{16}vv^\top,
  \end{align*}
  where~$v = (1,-1,-1,1)$ (indexed by~$(\Tr_3^{1,L},\vec
  C_3^1,\Tr_3^{1,M},\Tr_3^{1,W})$), Proposition~\ref{prop:eigen} implies that
  \begin{align*}
    \bm{\phi^1}(F(v)) & =
    \bm{\phi^1}(\Tr_3^{1,L} - \vec C_3^1 - \Tr_3^{1,M} + \Tr_3^{1,W}) =
    \bm{\phi^1}((\alpha-\beta)^2)
    0\as
  \end{align*}
  Therefore~$\bm{\phi^1}(\alpha)=\bm{\phi^1}(\beta)$ a.s., that is, the
  homomorphism~$\phi$ is balanced.

  To prove that~$\phi$ is also locally transitive, we will use again
  Proposition~\ref{prop:slack}. Table~\ref{tab:T12slack} has the values
  of~$p(T_5^{12}+g;T')$ for~$T'\in\mathcal{T}_5$ and where
  \begin{align*}
    g & = \llbracket F(Q(T_5^{12},1))\rrbracket_1
    + \llbracket F(Q(T_5^{12},\Tr_3^*))\rrbracket_{\Tr_3^*}
    + \llbracket F(Q(T_5^{12},\vec C_3^*))\rrbracket_{\vec C_3^*}.
  \end{align*}

  \begin{table}[ht]
    \begin{center}
      \begin{tabular}{%
          >{$\displaystyle}c<{$}|*{12}{>{$\displaystyle}c<{$}}
        }
        T' & T_5^1 & T_5^2 & T_5^3 & T_5^4 & T_5^5 & T_5^6
        & T_5^7 & T_5^8 & T_5^9 & T_5^{10} & T_5^{11} & T_5^{12}
        \\
        \hline\noalign{\smallskip}
        p(T_5^{12}+g;T')
        & \frac{1}{16} & \frac{1}{80} & \frac{1}{16}
        & -\frac{3}{16} & \frac{1}{80} & \frac{1}{16}
        & \frac{1}{16} & \frac{1}{16} & \frac{1}{16}
        & \frac{1}{16} & -\frac{39}{80} & \frac{1}{16}
      \end{tabular}
      \caption{Values~$p(T_5^{12}+g;T')$ for~$T'\in\mathcal{T}_5$ and where~$g =
        \llbracket F(Q(T_5^{12},1))\rrbracket_1 + \llbracket
        F(Q(T_5^{12},\Tr_3^*))\rrbracket_{\Tr_3^*} + \llbracket F(Q(T_5^{12},\vec
        C_3^*))\rrbracket_{\vec C_3^*}$.}
      \label{tab:T12slack}
    \end{center}
  \end{table}

  Proposition~\ref{prop:slack} implies that~$\phi(T_5^2+T_5^5) = 0$.

  Now, since we have
  \begin{align*}
    \llbracket (L_4^{\vec C_3^*})^2\rrbracket_{\vec C_3^*} & = \frac{1}{20}T_5^2; &
    \llbracket (W_4^{\vec C_3^*})^2\rrbracket_{\vec C_3^*} & = \frac{1}{20}T_5^5;
  \end{align*}
  and since~$\phi$ is balanced, by Lemma~\ref{lem:balanced}, we have~$\phi(\vec
  C_3)=1/4$, hence
  \begin{align*}
    \expect{\bm{\phi^{\vec C_3^*}}(W_4^{\vec C_3^*})^2 +
      \bm{\phi^{\vec C_3^*}}(L_4^{\vec C_3^*})^2} & =
    \frac{1}{10}\cdot\frac{\phi(T_5^2 + T_5^5)}{\phi(\vec C_3)} = 0,
  \end{align*}
  which implies that~$\bm{\phi^{\vec C_3^*}}(W_4^{\vec C_3^*}+L_4^{\vec C_3^*})=0$
  a.s.

  This in turn implies that
  \begin{align*}
    0 & = \expect{\bm{\phi^{\vec C_3^*}}(W_4^{\vec C_3^*}+L_4^{\vec C_3^*})}
    = \frac{1}{4}\cdot\frac{\phi(W_4+L_4)}{\phi(\vec C_3)},
  \end{align*}
  hence~$\phi(W_4+L_4)=0$, that is, the homomorphism~$\phi$ is locally transitive.

  Therefore~$\phi$ satisfies quasi-carousel Property~\ref{it:balloctran},
  hence~$\phi=\phiR$ by Lemma~\ref{lem:QC}.
\end{proof}

\subsection{Quasi-triangular uniqueness}
\label{subsec:qtunique}

We start by defining a~$\vec C_3$-decomposable tournament inductively, which
intuitively are tournaments similar in structure to~$\trianglen$, but without
requiring the ``blow-up'' to have parts as balanced as possible.

\begin{definition}\label{def:C3dec}
  Define the sequence of sets~$(\mathcal{B}_n)_{n\in\mathbb{N}}$ inductively as
  follows.

  Let~$\mathcal{B}_0=\mathcal{T}_0$ and~$\mathcal{B}_1=\mathcal{T}_1$ and for~$n\geq
  2$, let~$\mathcal{B}_n\subset\mathcal{T}_n$ be the set of all tournaments~$T$ of
  size~$n$ such that there exist sets~$A$, $B$ and~$C$ such
  that
  \begin{enumerate}[label={\roman*.}, ref={(\roman*)}]
  \item The sets~$A$, $B$ and~$C$ are strictly contained in~$V(T)$, that is, we
    have~$A,B,C\subsetneq V(T)$;
    \label{it:strictsubset}
  \item The sets~$A$, $B$ and~$C$ are pairwise disjoint;
    \label{it:pairwisedisjoint}
  \item We have~$V(T) = A\cup B\cup C$;
    \label{it:union}
  \item We have~$T\vert_A\in\mathcal{B}_{\lvert A\rvert}$,
    $T\vert_B\in\mathcal{B}_{\lvert B\rvert}$ and~$T\vert_C \in\mathcal{B}_{\lvert
    C\rvert}$;
    \label{it:recursion}
  \item We have~$A\times B, B\times C, C\times A\subset A(T)$.
    \label{it:C3}
  \end{enumerate}

  Finally, we say that a tournament~$T$ of size~$n$ is~\emph{$\vec
    C_3$-decomposable} if~$T\in\mathcal{B}_n$.
\end{definition}

\begin{remark}
  Note that items~\ref{it:strictsubset}, \ref{it:pairwisedisjoint} and~\ref{it:union}
  together say that~$\{A,B,C\}\setminus\{\varnothing\}$ is a partition of~$V(T)$ into
  either two or three sets.

  Furthermore, note that item~\ref{it:pairwisedisjoint} actually follows from
  item~\ref{it:C3}.

  Finally, note that item~\ref{it:recursion} is well-defined since~$\max\{\lvert
  A\rvert,\lvert B\rvert, \lvert C\rvert\} < n$ (due to item~\ref{it:strictsubset}).
\end{remark}

The next theorem provides a characterization of~$\vec C_3$-decomposable theorems as
the class of tournaments avoiding~$T_5^8$, $T_5^{10}$ and~$T_5^{12}$. We defer the
proof of this theorem to Section~\ref{sec:proofdec}.

\begin{theorem}\label{thm:C3deccharac}
  A tournament~$T$ is~$\vec C_3$-decomposable if and only if it has no copies
  of~$T_5^8$, $T_5^{10}$ nor of~$T_5^{12}$.
\end{theorem}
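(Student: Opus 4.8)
The plan is to prove the two implications separately. The forward implication is short; the reverse one is the heart of the matter, and I would prove it by induction on $n=\lvert T\rvert$ using the substitution (modular) decomposition of tournaments.

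I would first record two preliminary observations. \emph{Heredity}: the class of $\vec C_3$-decomposable tournaments is closed under induced subtournaments. This is a routine induction on $\lvert T\rvert$: if $T$ arises by substituting tournaments into the three parts of $\vec C_3$ (resp.\ into the two parts of a dominating split), and $W\subseteq V(T)$, then $T\vert_W$ arises the same way from $T\vert_{W\cap A}$, $T\vert_{W\cap B}$, $T\vert_{W\cap C}$, each on fewer vertices. \emph{Small cases}: every tournament on at most $4$ vertices is $\vec C_3$-decomposable, there being only $\Tr_1,\Tr_2,\Tr_3,\vec C_3,\Tr_4,W_4,L_4,R_4$, each with an evident decomposition ($\Tr_k$ by splitting off its source, $W_4$ and $L_4$ by their dominating/dominated vertex, $R_4=\vec C_3[\Tr_1,\Tr_2,\Tr_1]$, $\vec C_3$ trivially). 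Given heredity, the forward implication reduces to verifying that $T_5^8$, $T_5^{10}$ and $T_5^{12}$ are themselves not $\vec C_3$-decomposable: each is strongly connected, hence has no dominating split, and a short case check over the admissible block-size patterns of a $\vec C_3$-substitution on $5$ vertices --- $(3,1,1)$, $(2,2,1)$ and their rotations --- shows that none of them has a nontrivial module. (Equivalently, among the twelve tournaments on $5$ vertices these three are precisely the prime ones, a finite check.)

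For the reverse implication I would induct on $n$, the cases $n\le 4$ being the small cases above. Let $n\ge 5$ and suppose $T$ has no copy of $T_5^8$, $T_5^{10}$ nor $T_5^{12}$. If $T$ is not strongly connected, write $V(T)=X\cup Y$ with $X\times Y\subseteq A(T)$ and $X,Y\ne\varnothing$; then $T\vert_X,T\vert_Y$ inherit the absence of the three forbidden tournaments, so by induction they are $\vec C_3$-decomposable, and taking $A=X$, $B=Y$, $C=\varnothing$ gives $T\in\mathcal{B}_n$. If $T$ is strongly connected, apply the substitution decomposition theorem for tournaments: $T$ is either $\Tr_k[T_1,\dots,T_k]$ with $k\ge 2$ --- impossible here, since such a tournament is never strongly connected --- or $P[T_1,\dots,T_r]$ with $P$ a prime tournament on $r\ge 3$ vertices and the $T_i$ proper modules. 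If $r=3$, then $P=\vec C_3$, the $T_i$ are nonempty proper subsets of $V(T)$ carrying no forbidden copy, hence $\vec C_3$-decomposable by induction, so $T=\vec C_3[T_1,T_2,T_3]\in\mathcal{B}_n$. If $r\ge 4$, then choosing one vertex per block exhibits $P$ --- a prime tournament on $r\ge 4$ vertices --- as an induced subtournament of $T$; since there is no prime tournament on exactly $4$ vertices, $\lvert P\rvert\ge 5$, and the key step below applies to $P$, hence to $T$.

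The key step, which I expect to be the main obstacle, is the claim that \emph{every prime tournament on at least $5$ vertices contains an induced prime tournament on exactly $5$ vertices}. Combined with the finite fact that $T_5^8$, $T_5^{10}$, $T_5^{12}$ are all the prime $5$-vertex tournaments, this forces the case $r\ge 4$ to produce an induced copy of one of the three forbidden tournaments --- a contradiction --- so in fact $r=3$ always, and the induction closes. To prove the key step I would invoke the classical descent theorem for prime structures (every prime tournament on $m\ge 5$ vertices has an induced prime subtournament on $m-1$ or $m-2$ vertices; cf.\ Schmerl--Trotter and Ille) and iterate it: starting from a prime tournament on $\ge 6$ vertices one can always continue descending, and since landing on a $4$-vertex tournament is impossible, one necessarily reaches a prime subtournament on exactly $5$ vertices. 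If a citation is undesirable, this descent lemma for tournaments admits a self-contained proof (delete a vertex chosen so that the remaining tournament is "nearly prime", then delete one more to restore primality); alternatively, one can bypass modular decomposition entirely and argue the strongly connected case directly by induction, reattaching a single vertex to a $\vec C_3$-decomposition of $T-v$ and using the three forbidden tournaments to pin down the attachment, at the cost of a longer case analysis. The remaining ingredients --- the small-case list, the primality of the three forbidden tournaments, and that they exhaust the prime $5$-vertex tournaments --- are finite checks.
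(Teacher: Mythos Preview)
Your proof is correct and takes a genuinely different route from the paper's.

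The paper argues the reverse implication directly and self-containedly: given a non-transitive $T$, it fixes a $3$-cycle $(a,b,c)$ chosen to minimise $\lvert V_{abc}\cup V_\varnothing\rvert$, partitions $V(T)\setminus\{a,b,c\}$ into the eight sets $V_{abc},V_{ab},V_{bc},V_{ac},V_a,V_b,V_c,V_\varnothing$ according to orientations towards $a,b,c$, and then checks in eight short cases that the absence of $T_5^8,T_5^{10},T_5^{12}$ forces every arc between two distinct classes to point the ``right'' way (the minimality of $\lvert V_{abc}\cup V_\varnothing\rvert$ handles $V_{abc}\times V_\varnothing$). This yields the decomposition explicitly, and induction closes.

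Your approach instead recognises that ``$\vec C_3$-decomposable'' is exactly ``every prime quotient in the modular decomposition tree has size at most $3$'', and then uses the Schmerl--Trotter/Ille descent (together with the finite facts that there is no prime tournament on $4$ vertices and that $T_5^8,T_5^{10},T_5^{12}$ are precisely the prime tournaments on $5$ vertices) to conclude that a prime quotient of size $\geq 5$ would force one of the three forbidden subtournaments. The descent argument is sound: from $n\geq 6$ one reaches a prime induced subtournament on $n-1$ or $n-2$ vertices, and since $4$ is impossible for tournaments, one cannot jump over $5$. The trade-off is clear: your argument is shorter and exposes the structural reason behind the characterisation, but it imports a nontrivial external theorem; the paper's argument is longer and more hands-on, but entirely elementary and self-contained. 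Your fallback sketch (reattach a deleted vertex to a decomposition of $T-v$) is in spirit closer to what the paper actually does, though the paper organises it around a fixed $3$-cycle rather than a deleted vertex.
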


Motivated by the theorem above let us say that a homomorphism~$\phi\in\Homp{0}$ in
the theory of tournaments is~$\vec C_3$-decomposable
if~$\phi(T_5^8+T_5^{10}+T_5^{12})=0$.

Note that the fact that a sequence of tournaments~$(T_n)_{n\in\mathbb{N}}$ converges
to a~$\vec C_3$-decomposable homomorphism does \emph{not} imply that any of the
tournaments is~$\vec C_3$-decomposable. Rather, it only implies that the densities of
the tournaments~$T_5^8$, $T_5^{10}$ and~$T_5^{12}$ in~$T_n$ go to zero as~$n$ goes to
infinity.

We now define the notion of a~$k$-equally~$\vec C_3$-decomposable tournament
inductively.

\begin{definition}
  A tournament~$T$ is~\emph{$0$-equally~$\vec C_3$-decomposable} if it is~$\vec
  C_3$-decomposable.

  For~$k>0$, a tournament~$T$ is~\emph{$k$-equally~$\vec C_3$-decomposable} if
  either~$\lvert T\rvert\leq 1$ or there exists~$(A,B,C)$ as in
  Definition~\ref{def:C3dec} satisfying also the following properties.
  \begin{enumerate}[label={\alph*.}, ref={(\alph*)}]
  \item We have
    \begin{align*}
        \max\{\lvert A\rvert, \lvert B\rvert, \lvert C\rvert\} -
        \min\{\lvert A\rvert, \lvert B\rvert, \lvert C\rvert\} & \leq
        1;
    \end{align*}
      \label{it:topequally}
    \item The tournaments~$T\vert_A$, $T\vert_B$ and~$T\vert_C$
      are~$(k-1)$-equally~$\vec C_3$-decomposable.
      \label{it:recursionequally}
  \end{enumerate}

\end{definition}

Trivially, every~$k$-equally~$\vec C_3$-decomposable tournament is
also~$(k-1)$-equally~$\vec C_3$-decomposable.

Note also that if~$n\leq 3^k$, then the only~$k$-equally~$\vec C_3$-decomposable
tournament of size~$n$ is~$\trianglen$.
We claim now that the sequence~$(\trianglen)_{n\in\mathbb{N}}$ is convergent, but we
defer the proof of this claim. We will call the limit of this sequence
the~\emph{triangular homomorphism} and denote it by~$\phiC$.

The next theorem states the equivalence of what we could call \emph{quasi-triangular
  properties}. The equivalence of Properties~\ref{it:QTphiC3}, \ref{it:QTmaxT9}
and~\ref{it:QTmaxT11} imply that~$\phiC$ is the only homomorphism that maximizes the
density of~$T_5^9$ and is the only homomorphism that maximizes the density
of~$T_5^{11}$.

\begin{theorem}\label{thm:QT}
  If~$\phi\in\Homp{0}$ is a homomorphism in the theory of tournaments, then the
  following are equivalent.
  \begin{enumerate}[label={$L_{\arabic*}:$}, ref={\ensuremath{L_{\arabic*}}}]
  \item $\phi=\phiC$;
    \label{it:QTphiC3}
  \item $\phi$ maximizes the density of~$T_5^9$, that is, we have
    \begin{align*}
      \phi(T_5^9) & = \max\{\psi(T_5^9) : \psi\in\Homp{0}\};
    \end{align*}
    \label{it:QTmaxT9}
  \item $\phi$ maximizes the density of~$T_5^{11}$, that is, we have
    \begin{align*}
      \phi(T_5^{11}) & = \max\{\psi(T_5^{11}) : \psi\in\Homp{0}\};
    \end{align*}
    \label{it:QTmaxT11}
  \item $\phi$ is balanced and~$\vec C_3$-decomposable, that is, we have
    \begin{align*}
      \bm{\phi^1}(\alpha) & = \bm{\phi^1}(\beta)\as; &
      \phi(T_5^8+T_5^{10}+T_5^{12}) & = 0;
    \end{align*}
    \label{it:QTbalC3dec}
  \item For every~$k\in\mathbb{N}$, there exists a
    sequence~$(T^{(k)}_n)_{n\in\mathbb{N}}$ of~$k$-equally~$\vec C_3$-decomposable
    tournaments that converges to~$\phi$.
    \label{it:QTseqskequallyC3dec}
  \end{enumerate}
\end{theorem}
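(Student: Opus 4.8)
The plan is to prove the five properties equivalent through a cyclic chain of implications. I would establish
\[
\ref{it:QTphiC3}\Rightarrow\ref{it:QTmaxT9}\Rightarrow\ref{it:QTbalC3dec}\Rightarrow\ref{it:QTseqskequallyC3dec}\Rightarrow\ref{it:QTphiC3}
\]
together with the two extra edges $\ref{it:QTphiC3}\Rightarrow\ref{it:QTmaxT11}\Rightarrow\ref{it:QTbalC3dec}$; the resulting implication digraph on the five properties is strongly connected. The implications out of $\ref{it:QTphiC3}$ are immediate: by Lemma~\ref{lemma:lb_triangle} we have $\phiC(T_5^9)=3/8$ and $\phiC(T_5^{11})=1/16$, while the upper bounds of Section~\ref{sec:UB} give $\psi(T_5^9)\le 3/8$ and $\psi(T_5^{11})\le 1/16$ for every $\psi\in\Homp{0}$, so $\phiC$ is extremal for both; and $\ref{it:QTphiC3}\Rightarrow\ref{it:QTseqskequallyC3dec}$ holds because every $\trianglen[m]$ is $k$-equally $\vec C_3$-decomposable for all $k$ and $(\trianglen[m])_{m}$ converges to $\phiC$ by Proposition~\ref{prop:triangleconv}, so this single sequence witnesses $\ref{it:QTseqskequallyC3dec}$ for all $k$ at once.

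For $\ref{it:QTmaxT9}\Rightarrow\ref{it:QTbalC3dec}$ and $\ref{it:QTmaxT11}\Rightarrow\ref{it:QTbalC3dec}$ I would follow the pattern of Theorems~\ref{thm:T7unique}, \ref{thm:T8unique} and~\ref{thm:T12unique}. In each of the two semidefinite programs (both using only the types $1$ and $\vec C_3^*$), the $\sigma=1$ block of the optimal solution should be a rank-one matrix proportional to $vv^{\top}$ with $v=(1,-1,-1,1)$ indexed by $(\Tr_3^{1,L},\vec C_3^1,\Tr_3^{1,M},\Tr_3^{1,W})$, so Proposition~\ref{prop:eigen} forces $\bm{\phi^1}((\alpha-\beta)^2)=0$ a.s., i.e.\ $\phi$ is balanced. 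To obtain $\vec C_3$-decomposability I would apply Proposition~\ref{prop:slack} to the slack vectors $p(T_5^9+g;\cdot)$ and $p(T_5^{11}+g;\cdot)$ over $\mathcal{T}_5$: the non-tight entries force $\phi(T')=0$ for every $T'\in\mathcal{T}_5$ that is not $\vec C_3$-decomposable, and by Theorem~\ref{thm:C3deccharac} the only such tournaments in $\mathcal{T}_5$ are $T_5^8$, $T_5^{10}$ and $T_5^{12}$, whence $\phi(T_5^8+T_5^{10}+T_5^{12})=0$. As in Theorem~\ref{thm:T12unique}, for $T_5^{11}$ (and possibly $T_5^9$) the slack argument may only yield $\phi(h)=0$ for an intermediate combination $h$ of size-$5$ tournaments, after which a homomorphism-extension argument over $\vec C_3^*$ is needed to reach $\phi(T_5^8+T_5^{10}+T_5^{12})=0$; this is where the technical lemma deferred to Section~\ref{sec:prooftwononneg} enters.

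The step $\ref{it:QTbalC3dec}\Rightarrow\ref{it:QTseqskequallyC3dec}$ is the first substantial one, and I expect it to be the main obstacle. Fix $k$ and take any convergent sequence $(T_n)_{n}$ with limit $\phi$. Since $\phi(T_5^8+T_5^{10}+T_5^{12})=0$, each $T_n$ contains $o(n^5)$ copies of each of $T_5^8,T_5^{10},T_5^{12}$, so by the (induced) removal lemma for tournaments one can reverse $o(n^2)$ arcs of $T_n$ to obtain $T_n'$ with no copy of $T_5^8,T_5^{10},T_5^{12}$; by Theorem~\ref{thm:C3deccharac} each $T_n'$ is $\vec C_3$-decomposable and still $T_n'\to\phi$. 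Write a top-level decomposition $V(T_n')=A^n_0\cup A^n_1\cup A^n_2$ as in Definition~\ref{def:C3dec}, and let $d^+(v)$ and $d^+_i(v)$ denote the out-degree of $v$ in $T_n'$ and in $T_n'\vert_{A^n_i}$ respectively. Every $v\in A^n_i$ satisfies $d^+(v)=\lvert A^n_{i+1}\rvert+d^+_i(v)$ with $0\le d^+_i(v)<\lvert A^n_i\rvert$, and balancedness of $\phi$ (equivalently $\bm{\phi^1}(\alpha)=\bm{\phi^1}(\beta)=1/2$ a.s., using $\alpha+\beta=1_1$) forces all but $o(n)$ vertices of $T_n'$ to have out-degree $(1/2+o(1))n$; averaging over $A^n_i$ then gives $\lvert A^n_{i+1}\rvert+\lvert A^n_i\rvert/2=(1/2+o(1))n$ for each $i$, and these three relations with $\sum_i\lvert A^n_i\rvert=n$ force $\lvert A^n_i\rvert=(1/3+o(1))n$. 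Consequently $d^+_i(v)=(1/2+o(1))\lvert A^n_i\rvert$ for all but $o(n)$ vertices of $A^n_i$, so each restriction $T_n'\vert_{A^n_i}$ is asymptotically balanced, and it is $\vec C_3$-decomposable as an induced subtournament of a $\vec C_3$-decomposable tournament. Passing to a subsequence along which the three restricted sequences converge and iterating this decomposition $k$ times (a finite refinement of the sequence), I would then move $o(n)$ vertices between parts at each of the $k$ levels --- placing each moved vertex consistently with $\vec C_3$-decomposability inside its new level-$k$ part --- so that the three children of every decomposition node have sizes differing by at most $1$; since $k$ is fixed this reverses $o(n^2)$ arcs and produces a $k$-equally $\vec C_3$-decomposable tournament $T_n^{(k)}$ with $p(S;T_n^{(k)})=\phi(S)+o(1)$ for every fixed $S$. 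The care needed in invoking the removal lemma and in verifying that balancedness propagates to all three parts at every level of the recursion, so that the final rebalancing perturbs densities only by $o(1)$, is the delicate part of the whole theorem.

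Finally, for $\ref{it:QTseqskequallyC3dec}\Rightarrow\ref{it:QTphiC3}$, fix a tournament $S$ of size $s$ and, for each $k$, let $(T_n^{(k)})_n$ be the given $k$-equally $\vec C_3$-decomposable sequence converging to $\phi$. When $T$ is $k$-equally $\vec C_3$-decomposable and $\lvert T\rvert$ is large, each part of its level-$j$ partition has size $(1+o(1))\lvert T\rvert/3^{j}$ for $j\le k$, so a uniformly random $s$-subset of $V(T)$ has, with probability $1-O_s(3^{-k})-o(1)$, all $\binom s2$ pairs lying in distinct level-$k$ parts; on that event the induced subtournament is determined by the cyclic divergence pattern of the $s$ points in the depth-$k$ ternary tree, so $p(S;T)=\pi_k(S)+\theta$ with $\theta\in[o(1),\,O_s(3^{-k})+o(1)]$ and $\pi_k(S)$ depending only on $k$ and $S$. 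Applying this to $(T_n^{(k)})_n$ as $n\to\infty$, and to $(\trianglen[m])_m$ as $m\to\infty$ (using Proposition~\ref{prop:triangleconv}), shows that $\phi(S)$ and $\phiC(S)$ both lie within $O_s(3^{-k})$ of $\pi_k(S)$, hence $\lvert\phi(S)-\phiC(S)\rvert=O_s(3^{-k})$; letting $k\to\infty$ gives $\phi(S)=\phiC(S)$ for every $S$, that is, $\phi=\phiC$.
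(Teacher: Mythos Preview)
Your overall implication scheme matches the paper's, and your argument for $\ref{it:QTseqskequallyC3dec}\Rightarrow\ref{it:QTphiC3}$ is a clean alternative to the paper's diagonalization (the paper instead builds a diagonal sequence $U_n=T^{(n)}_{f(n)}$ and shows it differs from $\trianglen[\lvert U_n\rvert]$ in only $o(\lvert U_n\rvert^2)$ arcs). However, for $\ref{it:QTmaxT9}\Rightarrow\ref{it:QTbalC3dec}$ the matrix $Q(T_5^9,1)$ is \emph{not} rank-one: its characteristic polynomial $x^4-\tfrac{48}{5}x^3+\tfrac{693}{100}x^2$ shows rank~$2$, so your claim that $\bm{\phi^1}((\alpha-\beta)^2)=0$ a.s.\ does not follow. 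The paper instead uses the two non-zero eigenvectors $v_1,v_2$ of $Q(T_5^9,1)$; Proposition~\ref{prop:eigen} gives $\bm{\phi^1}(F(v_i))=0$ a.s., and taking expectations of the combination $F(v_1)-F(v_2)$ yields $\phi(\vec C_3)=\tfrac13\phi(\Tr_3)$, hence $\phi(\vec C_3)=1/4$, after which Lemma~\ref{lem:balanced} gives balancedness. (For $T_5^{11}$ your rank-one argument is correct; and in both cases the slack argument via Proposition~\ref{prop:slack} already yields $\phi(T_5^8+T_5^{10}+T_5^{12})=0$ directly, since all three are non-tight, so no further extension step is needed there.)

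The more serious gap is in $\ref{it:QTbalC3dec}\Rightarrow\ref{it:QTseqskequallyC3dec}$. After removal you obtain a $\vec C_3$-decomposable $T_n'$ and pick \emph{some} top-level decomposition $(A_0^n,A_1^n,A_2^n)$, but Definition~\ref{def:C3dec} allows one part to have $(1-o(1))n$ vertices and the other two $o(n)$. Your averaging identity $\lvert A^n_{i+1}\rvert+\lvert A^n_i\rvert/2=(1/2+o(1))n$ is only valid when $\lvert A^n_i\rvert=\Omega(n)$, since otherwise the $o(n)$ unbalanced vertices may swallow all of $A^n_i$; and this degeneracy can persist through arbitrarily many levels of the decomposition, so simply recursing does not fix it. This is exactly what Lemma~\ref{lem:twononneg} (proved in Section~\ref{sec:prooftwononneg}) handles: after editing $o(n^2)$ arcs and passing to a subsequence, it produces a decomposition with at least two top-level parts of size $\Omega(n)$, after which Lemma~\ref{lem:balneigh} forces all three parts to size $(1/3+o(1))n$. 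You have therefore misplaced the role of the technical lemma: it enters in $\ref{it:QTbalC3dec}\Rightarrow\ref{it:QTseqskequallyC3dec}$, not in $\ref{it:QTmaxT9},\ref{it:QTmaxT11}\Rightarrow\ref{it:QTbalC3dec}$.
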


We will prove Theorem~\ref{thm:QT} through a series of lemmas. We have already proved
in Sections~\ref{sec:LB} and~\ref{sec:UB}
that~$\ref{it:QTphiC3}\implies\ref{it:QTmaxT9}\land\ref{it:QTmaxT11}$.

The next two lemmas follow from the techniques presented in Section~\ref{sec:ext}.

\begin{lemma}
  We have~$\ref{it:QTmaxT9}\implies\ref{it:QTbalC3dec}$.
\end{lemma}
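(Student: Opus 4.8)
The plan is to follow the template of the proofs of Theorems~\ref{thm:T7unique}, \ref{thm:T12unique} and~\ref{thm:T8unique}. Assume $\phi\in\Homp{0}$ satisfies $\phi(T_5^9)=\max\{\psi(T_5^9):\psi\in\Homp{0}\}$. By Lemma~\ref{lemma:lb_triangle} and the upper bound established in Section~\ref{sec:UB}, this maximum is $3/8$, and the matrices $Q(T_5^9,1)$ and $Q(T_5^9,\vec C_3^*)$ from Appendix~\ref{appendix} form an optimal solution of~\eqref{eq:semidefiniteshort} of value $3/8$ (with $m=2$, $\sigma_1=1$, $\sigma_2=\vec C_3^*$, $\ell=5$, $\ell_1=3$, $\ell_2=4$). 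First I would read off from the appendix that $Q(T_5^9,1)$ is the rank-one matrix $c\,vv^\top$ with $v=(1,-1,-1,1)$ indexed by $(\Tr_3^{1,L},\vec C_3^1,\Tr_3^{1,M},\Tr_3^{1,W})$ and some $c>0$.

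To show that $\phi$ is balanced I would argue exactly as in the proof of Theorem~\ref{thm:T7unique}: since $\phi(1)=1>0$, Proposition~\ref{prop:eigen} applied to the rank-one term $c\,vv^\top$ gives
\begin{align*}
  \bm{\phi^1}(F(v)) & =
  \bm{\phi^1}(\Tr_3^{1,L} - \vec C_3^1 - \Tr_3^{1,M} + \Tr_3^{1,W}) =
  \bm{\phi^1}((\alpha-\beta)^2) = 0 \as,
\end{align*}
so $\bm{\phi^1}(\alpha)=\bm{\phi^1}(\beta)$ a.s., i.e.\ $\phi$ is balanced; in particular, by Lemma~\ref{lem:balanced}, $\phi(\vec C_3)=1/4$.

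To show that $\phi$ is $\vec C_3$-decomposable, i.e.\ $\phi(T_5^8+T_5^{10}+T_5^{12})=0$, I would invoke Proposition~\ref{prop:slack} with $\ell=5$ and
\begin{align*}
  g & = \llbracket F(Q(T_5^9,1))\rrbracket_1 + \llbracket F(Q(T_5^9,\vec C_3^*))\rrbracket_{\vec C_3^*},
\end{align*}
and tabulate $p(T_5^9+g;T')$ for all $T'\in\mathcal{T}_5$. The relevant extremal homomorphism is $\phiC$, which vanishes precisely on the non-$\vec C_3$-decomposable tournaments, and by Theorem~\ref{thm:C3deccharac} the $\vec C_3$-decomposable members of $\mathcal{T}_5$ are exactly $\mathcal{T}_5\setminus\{T_5^8,T_5^{10},T_5^{12}\}$ (a size-$5$ tournament contains a copy of $T_5^8$, $T_5^{10}$ or $T_5^{12}$ iff it is isomorphic to one of them). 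I therefore expect the table to satisfy $p(T_5^9+g;T')<3/8$ precisely for $T'\in\{T_5^8,T_5^{10},T_5^{12}\}$, so that Proposition~\ref{prop:slack} yields $\phi(T_5^8)=\phi(T_5^{10})=\phi(T_5^{12})=0$ and hence $\phi(T_5^8+T_5^{10}+T_5^{12})=0$, as wanted.

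The main obstacle is the same one encountered in the proof of Theorem~\ref{thm:T12unique}: the particular SDP certificate need not exhibit strict complementary slackness for all three of $T_5^8$, $T_5^{10}$, $T_5^{12}$. If, say, $T_5^{12}$ comes out with zero slack in the table, the fallback is to run the homomorphism-extension argument of Theorem~\ref{thm:T12unique}: using that $\phi$ is already known to be balanced (so $\phi(\vec C_3)=1/4>0$) and that $\phi$ vanishes on whatever non-$\vec C_3$-decomposable tournaments \emph{do} appear with strict slack, I would identify squares of suitable $\vec C_3^*$-flags over the offending size-$5$ tournament(s) with scalar multiples of those tournaments, deduce that the corresponding $\bm{\phi^{\vec C_3^*}}$-values vanish almost surely, and push the equality back down through $\llbracket\cdot\rrbracket_{\vec C_3^*}$ to conclude that $\phi$ also vanishes on the missing tournament. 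Thus the only genuinely delicate point is the explicit value table of $p(T_5^9+g;\cdot)$, possibly supplemented by a short bridging computation of this flavour; everything else is a verbatim adaptation of arguments already carried out in Section~\ref{sec:unique}.
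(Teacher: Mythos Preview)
Your approach to~$\vec C_3$-decomposability via Proposition~\ref{prop:slack} is correct, and indeed the paper's table shows that all three of~$T_5^8$, $T_5^{10}$, $T_5^{12}$ have strict slack, so no fallback argument is needed there.

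The genuine gap is in your proof of balancedness. You assert that~$Q(T_5^9,1)$ is the rank-one matrix~$c\,vv^\top$ with~$v=(1,-1,-1,1)$; this is false. From Appendix~\ref{appendix:matrix},
\[
  Q(T_5^9,1)=\frac{1}{40}
  \begin{pmatrix}
    75 & -33 & -117 & 75\\
    -33 & 33 & 33 & -33\\
    -117 & 33 & 201 & -117\\
    75 & -33 & -117 & 75
  \end{pmatrix},
\]
whose characteristic polynomial~$x^4-\frac{48}{5}x^3+\frac{693}{100}x^2$ shows rank~$2$, and a direct check shows~$(1,-1,-1,1)$ is not even an eigenvector. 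So Proposition~\ref{prop:eigen} does not hand you~$\bm{\phi^1}((\alpha-\beta)^2)=0$ directly.

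The paper instead computes the two actual eigenvectors~$v_1,v_2$ (with irrational entries involving~$\sqrt{179}$), applies Proposition~\ref{prop:eigen} to each to get~$\bm{\phi^1}(F(v_1))=\bm{\phi^1}(F(v_2))=0$ a.s., and observes that~$v_1-v_2$ is a scalar multiple of~$(0,1,-1,0)$, so~$F(v_1)-F(v_2)$ is proportional to~$\vec C_3^1-\Tr_3^{1,M}$. Taking expectations then gives~$\phi(\vec C_3)=\frac{1}{3}\phi(\Tr_3)$, hence~$\phi(\vec C_3)=1/4$, and Lemma~\ref{lem:balanced} yields balancedness. Note this is weaker than~$\bm{\phi^1}(\alpha)=\bm{\phi^1}(\beta)$ a.s.\ obtained directly from a rank-one certificate; one really needs the extra step through Lemma~\ref{lem:balanced}.
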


\begin{proof}
  By Lemma~\ref{lemma:lb_triangle} and by Proposition~\ref{prop:semidefinite} (see
  also Section~\ref{sec:UB}), we know that
  \begin{align*}
    \max\{\psi(T_5^9) : \psi\in\Homp{0}\} & = \frac{3}{8},
  \end{align*}
  and that the matrices~$Q(T_5^9,1)$ and~$Q(T_5^9,\vec C_3^*)$ from the semidefinite
  method are an optimum solution with value~$3/8$.

  Let then~$\phi\in\Homp{0}$ be a homomorphism that maximizes the density
  of~$T_5^9$.

  Let us prove that~$\phi$ is~$\vec C_3$-decomposable. To do this, we will use
  Proposition~\ref{prop:slack}. Table~\ref{tab:T9slack} has the values
  of~$p(T_5^9+g;T')$ for~$T'\in\mathcal{T}_5$ and where
  \begin{align*}
    g & = \llbracket F(Q(T_5^9,1))\rrbracket_1
    + \llbracket F(Q(T_5^9,\vec C_3^*))\rrbracket_{\vec C_3^*}.
  \end{align*}

  \begin{table}[ht]
    \begin{center}
      \begin{tabular}{%
          >{$\displaystyle}c<{$}|*{12}{>{$\displaystyle}c<{$}}
        }
        T' & T_5^1 & T_5^2 & T_5^3 & T_5^4 & T_5^5 & T_5^6
        & T_5^7 & T_5^8 & T_5^9 & T_5^{10} & T_5^{11} & T_5^{12}
        \\
        \hline\noalign{\smallskip}
        p(T_5^9+g;T')
        & \frac{3}{8} & \frac{3}{8} & \frac{3}{8}
        & \frac{3}{8} & \frac{3}{8} & \frac{3}{8}
        & \frac{3}{8} & -\frac{289}{200} & \frac{3}{8}
        & \frac{3}{200} & \frac{3}{8} & \frac{11}{40}
      \end{tabular}
      \caption{Values~$p(T_5^9+g;T')$ for~$T'\in\mathcal{T}_5$ and where~$g =
        \llbracket F(Q(T_5^9,1))\rrbracket_1 + \llbracket F(Q(T_5^9,\vec
        C_3^*))\rrbracket_{\vec C_3^*}$.}
      \label{tab:T9slack}
    \end{center}
  \end{table}

  Proposition~\ref{prop:slack} implies that~$\phi(T_5^8+T_5^{10}+T_5^{12})=0$, that
  is, the homomorphism~$\phi$ is~$\vec C_3$-decomposable.

  It remains only to prove that~$\phi$ is balanced.

  We first note that the matrix~$Q(T_5^9,1)$ has eigenvectors
  \begin{align*}
    v_1 & = \left(
    \begin{array}{*{4}{>{\displaystyle}c}}
      1, &
      \frac{-16+\sqrt{179}}{7}, &
      \frac{2-\sqrt{179}}{7}, &
      1
    \end{array}
    \right);
    \\
    v_2 & = \left(
    \begin{array}{*{4}{>{\displaystyle}c}}
    1, &
    \frac{-16-\sqrt{179}}{7}, &
    \frac{2+\sqrt{179}}{7}, &
    1
    \end{array}
    \right)
  \end{align*}
  (indexed by~$(\Tr_3^{1,L},\vec C_3^1,\Tr_3^{1,M},\Tr_3^{1,W})$) with the
  eigenvalues~$12(16 + \sqrt{179})$ and~$12(16 - \sqrt{179})$ respectively.

  By Proposition~\ref{prop:eigen}, we know that~$\bm{\phi^1}(F(v_1)) =
  \bm{\phi^1}(F(v_2)) = 0$ a.s. This implies that
  \begin{align*}
    0 & = \expect{\bm{\phi^1}(F(v_1) - F(v_2))}
    =
    \frac{2\sqrt{179}}{7}
    \expect{\bm{\phi^1}(\vec C_3^1 - \Tr_3^{1,M})}
    =
    \frac{2\sqrt{179}}{7}
    \left(
    \phi(\vec C_3)
    - \frac{1}{3}\phi(\Tr_3)
    \right).
  \end{align*}

  Hence~$\phi(\vec C_3) = 1/4$ (since~$\vec C_3 + \Tr_3 = 1_0$), which by
  Lemma~\ref{lem:balanced}, implies that~$\phi$ is balanced.

  Therefore~$\phi$ satisfies~\ref{it:QTbalC3dec}.
\end{proof}

\begin{lemma}
  We have~$\ref{it:QTmaxT11}\implies\ref{it:QTbalC3dec}$.
\end{lemma}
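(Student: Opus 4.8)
The plan is to follow verbatim the structure of the proof of $\ref{it:QTmaxT9}\implies\ref{it:QTbalC3dec}$ just given, with $T_5^9$ replaced by $T_5^{11}$. By Lemma~\ref{lemma:lb_triangle} and Proposition~\ref{prop:semidefinite} (see also Section~\ref{sec:UB}), we have $\max\{\psi(T_5^{11}):\psi\in\Homp{0}\}=1/16$ and the matrices $Q(T_5^{11},1)$ and $Q(T_5^{11},\vec C_3^*)$ from the semidefinite method form an optimum solution of value $1/16$. I would fix a homomorphism $\phi\in\Homp{0}$ with $\phi(T_5^{11})=1/16$ and show that it is both $\vec C_3$-decomposable and balanced.

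For $\vec C_3$-decomposability I would set
\[
  g = \llbracket F(Q(T_5^{11},1))\rrbracket_1 + \llbracket F(Q(T_5^{11},\vec C_3^*))\rrbracket_{\vec C_3^*}\in\Csem{0},
\]
tabulate $p(T_5^{11}+g;T')$ for every $T'\in\mathcal{T}_5$ (the analogue of Table~\ref{tab:T9slack}), and invoke Proposition~\ref{prop:slack}. Since $\phiC$ attains the maximum and, by Theorem~\ref{thm:C3deccharac}, vanishes on $T_5^8$, $T_5^{10}$ and $T_5^{12}$ (the only non-$\vec C_3$-decomposable tournaments of size $5$), the slack-table entries at $T_5^8$, $T_5^{10}$ and $T_5^{12}$ must be strictly below $1/16$; hence $\phi(T_5^8)=\phi(T_5^{10})=\phi(T_5^{12})=0$, i.e.\ $\phi(T_5^8+T_5^{10}+T_5^{12})=0$ and $\phi$ is $\vec C_3$-decomposable.

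For balancedness I would diagonalize the $4\times 4$ matrix $Q(T_5^{11},1)$ (indexed by $(\Tr_3^{1,L},\vec C_3^1,\Tr_3^{1,M},\Tr_3^{1,W})$) and apply Proposition~\ref{prop:eigen}: for each eigenvector $v$ with positive eigenvalue we get $\bm{\phi^1}(F(v))=0$ a.s. Taking a suitable linear combination of these relations I would isolate $\expect{\bm{\phi^1}(\vec C_3^1-\Tr_3^{1,M})}=\phi(\vec C_3)-\tfrac{1}{3}\phi(\Tr_3)$, which together with $\vec C_3+\Tr_3=1_0$ forces $\phi(\vec C_3)=1/4$; Lemma~\ref{lem:balanced} then gives that $\phi$ is balanced. (If $Q(T_5^{11},1)$ happens to be a positive multiple of $vv^\top$ with $v=(1,-1,-1,1)$, as for $T_5^7$ and $T_5^{12}$, this step is immediate, since then $\bm{\phi^1}((\alpha-\beta)^2)=0$ a.s.) Combining the two parts yields~\ref{it:QTbalC3dec}.

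The argument requires no new idea beyond the preceding lemma; the only real work is computational — checking the entries of the $T_5^{11}+g$ slack table over $\mathcal{T}_5$ and verifying that the eigenstructure of $Q(T_5^{11},1)$ is rich enough that the relations $\bm{\phi^1}(F(v_i))=0$ genuinely recover $\phi(\vec C_3)=1/4$ rather than only weaker information. Both checks are finite and follow directly from the explicit matrices in Appendix~\ref{appendix}, and it is harmless if the slack table happens to force $\phi$ to vanish on size-$5$ tournaments beyond $T_5^8$, $T_5^{10}$, $T_5^{12}$.
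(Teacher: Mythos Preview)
Your outline matches the paper's proof almost exactly. In particular, your parenthetical case is precisely what occurs: from the Appendix, $Q(T_5^{11},1)=\tfrac{5}{16}\,vv^\top$ with $v=(1,-1,-1,1)$, so Proposition~\ref{prop:eigen} gives $\bm{\phi^1}((\alpha-\beta)^2)=0$ a.s.\ and balancedness is immediate (the paper does this step first). For $\vec C_3$-decomposability the paper tabulates $p(T_5^{11}+g;T')$ and finds the entries at $T_5^8,T_5^{10},T_5^{12}$ to be $-3/400,-23/400,1/80$, all strictly below $1/16$, so Proposition~\ref{prop:slack} applies.

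One point to correct: the sentence ``Since $\phiC$ attains the maximum and \ldots\ vanishes on $T_5^8,T_5^{10},T_5^{12}$, the slack-table entries \ldots\ must be strictly below $1/16$'' is not a valid deduction. Proposition~\ref{prop:slack} runs the other way---strict slack at $T'$ forces $\phi(T')=0$ for every extremal $\phi$---but knowing that some particular extremal homomorphism vanishes on $T'$ places no constraint whatsoever on the slack at $T'$. (Concretely: in Table~\ref{tab:T12slack} the slack at $T_5^8$ and $T_5^{10}$ is exactly $1/16$, even though $\phiR$ vanishes on both since they are not locally transitive.) The strictness you need here is purely a computational fact about the specific certificate matrices, as you correctly acknowledge at the end; it is not guaranteed a priori.
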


\begin{proof}
  By Lemma~\ref{lemma:lb_triangle} and by Proposition~\ref{prop:semidefinite} (see
  also Section~\ref{sec:UB}), we know that
  \begin{align*}
    \max\{\psi(T_5^{11}) : \psi\in\Homp{0}\} & = \frac{1}{16},
  \end{align*}
  and that the matrices~$Q(T_5^{11},1)$ and~$Q(T_5^{11},\vec C_3^*)$ from the
  semidefinite method are an optimum solution with value~$1/16$.

  Let then~$\phi\in\Homp{0}$ be a homomorphism that maximizes the density
  of~$T_5^{11}$.

  Since
  \begin{align*}
    Q(T_5^{11},1) & = \frac{5}{16}vv^\top,
  \end{align*}
  where~$v = (1,-1,-1,1)$ (indexed by~$(\Tr_3^{1,L},\vec
  C_3^1,\Tr_3^{1,M},\Tr_3^{1,W})$), Proposition~\ref{prop:eigen} implies that
  \begin{align*}
    \bm{\phi^1}(F(v)) & =
    \bm{\phi^1}(\Tr_3^{1,L} - \vec C_3^1 - \Tr_3^{1,M} + \Tr_3^{1,W}) =
    \bm{\phi^1}((\alpha-\beta)^2)
    0\as
  \end{align*}
  Therefore~$\bm{\phi^1}(\alpha)=\bm{\phi^1}(\beta)$ a.s., that is, the
  homomorphism~$\phi$ is balanced.

  It remains only to prove that~$\phi$ is~$\vec C_3$-decomposable. To do this, we
  will use Proposition~\ref{prop:slack}. Table~\ref{tab:T11slack} has the values
  of~$p(T_5^{11}+g;T')$ for~$T'\in\mathcal{T}_5$ and where
  \begin{align*}
    g & = \llbracket F(Q(T_5^{11},1))\rrbracket_1
    + \llbracket F(Q(T_5^{11},\vec C_3^*))\rrbracket_{\vec C_3^*}.
  \end{align*}

  \begin{table}[ht]
    \begin{center}
      \begin{tabular}{%
          >{$\displaystyle}c<{$}|*{12}{>{$\displaystyle}c<{$}}
        }
        T' & T_5^1 & T_5^2 & T_5^3 & T_5^4 & T_5^5 & T_5^6
        & T_5^7 & T_5^8 & T_5^9 & T_5^{10} & T_5^{11} & T_5^{12}
        \\
        \hline\noalign{\smallskip}
        p(T_5^{11}+g;T')
        & \frac{1}{16} & \frac{1}{16} & \frac{1}{16}
        & \frac{1}{16} & \frac{1}{16} & \frac{1}{16}
        & \frac{1}{16} & -\frac{3}{400} & \frac{1}{16} 
        & -\frac{23}{400} & \frac{1}{16} & \frac{1}{80}
      \end{tabular}
      \caption{Values~$p(T_5^{11}+g;T')$ for~$T'\in\mathcal{T}_5$ and where~$g =
        \llbracket F(Q(T_5^{11},1))\rrbracket_1 + \llbracket F(Q(T_5^{11},\vec
        C_3^*))\rrbracket_{\vec C_3^*}$.}
      \label{tab:T11slack}
    \end{center}
  \end{table}

  Proposition~\ref{prop:slack} implies that~$\phi(T_5^8+T_5^{10}+T_5^{12})=0$, that
  is, the homomorphism~$\phi$ is~$\vec C_3$-decomposable.

  Therefore~$\phi$ satisfies~\ref{it:QTbalC3dec}.
\end{proof}

For the next two implications, we will need to use the notion of a~$\vec
C_3$-decomposition of a~$\vec C_3$-decomposable tournament. To make it precise, let
us first fix some notation. Let
\begin{align*}
  \Sigma^* & = \{(\sigma_i)_{i=1}^k :
  k\in\mathbb{N}\land\forall i\in[k],\sigma_i\in[3]\}
\end{align*}
denote the set of all finite sequences of elements in~$[3] = \{1,2,3\}$ (and let us denote the
empty sequence by~$\oldepsilon$).

As usual, we will denote by~$\sigma\tau$ the sequence obtained by concatenating~$\tau\in\Sigma^*$
to the end of~$\sigma\in\Sigma^*$ and we will denote the length of a
sequence~$\sigma\in\Sigma^*$ by~$\lvert\sigma\rvert$.

\begin{definition}
  Let~$T$ be a~$\vec C_3$-decomposable tournament. A~\emph{$\vec C_3$-decomposition}
  of~$T$ is a family of sets~$A = (A_\sigma)_{\sigma\in\Sigma^*}$ indexed by~$\Sigma^*$
  such that
  \begin{enumerate}[label={\roman*.}, ref={(\roman*)}]
  \item We have~$A_\oldepsilon = V(T)$;
  \item For every~$\sigma\in\Sigma^*$ such that~$\lvert A_\sigma\rvert\geq 2$, the
    triple~$(A_{\sigma1},A_{\sigma2},A_{\sigma3})$ satisfies the items in
    Definition~\ref{def:C3dec} for~$T\vert_{A_\sigma}$;
  \item For every~$\sigma\in\Sigma^*$ such that~$\lvert A_\sigma\rvert\leq 1$, the
    sets~$A_{\sigma1}$, $A_{\sigma2}$ and~$A_{\sigma3}$ are pairwise disjoint
    and~$A_{\sigma1}\cup A_{\sigma2}\cup A_{\sigma3} = A_\sigma$.
  \end{enumerate}

  For every~$k\in\mathbb{N}$, the~\emph{$k$-th level} of the~$\vec
  C_3$-decomposition~$A$ is the family of sets~$A_\sigma$ such
  that~$\lvert\sigma\rvert=k$. The \emph{skewness} of the~$k$-th level of~$A$
  (denoted~$\Delta_k(A)$) is defined as
  \begin{align*}
    \Delta_k(A) & = 
    \max\{\lvert A_\sigma\rvert : \sigma\in\Sigma^*\land\lvert\sigma\rvert=k\} -
    \min\{\lvert A_\sigma\rvert : \sigma\in\Sigma^*\land\lvert\sigma\rvert=k\}.
  \end{align*}
\end{definition}

Note that a tournament is~$k$-equally~$\vec C_3$-decomposable if and only if it has
a~$\vec C_3$-decomposition~$A = (A_\sigma)_{\sigma\in\Sigma^*}$ such
that~$\Delta_\ell(A)\leq 1$ for every~$\ell\leq k$.

Let us now define some notation on tournaments.

\begin{definition}
  Let~$T$ be a tournament and~$A\subset V(T)$. We define
  \begin{align*}
    N^+(A) & = \{v\in V(T) : \forall a\in A, av\in A(T)\};\\
    N^-(A) & = \{v\in V(T) : \forall a\in A, va\in A(T)\}.
  \end{align*}
\end{definition}

Note that~$N^+(A)\cup N^-(A)$ is always a subset of~$V(T)\setminus A$ and may be a
proper subset.

We now prove two basic facts about tournaments.

\begin{lemma}\label{lem:weakbal}
  If~$(T_n)_{n\in\mathbb{N}}$ is a sequence of tournaments
  with~$\lim_{n\to\infty}\lvert T_n\rvert=\infty$ and~$c\geq 1/2$ is a constant such
  that all but~$o(\lvert T_n\rvert)$ vertices of~$T_n$ have indegree greater
  than~$(c+o(1))\lvert T_n\rvert$, then~$c=1/2$.
\end{lemma}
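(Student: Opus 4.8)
The plan is to use a simple double-counting argument on the arcs of $T_n$. The key identity is that in any tournament $T$ on $N$ vertices, the sum of the indegrees over all vertices equals the number of arcs, namely $\binom{N}{2} = N^2/2 - N/2$, so this sum is $\lvert T\rvert^2/2 - o(\lvert T\rvert^2)$ as $N\to\infty$.

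First I would argue by contradiction, assuming $c>1/2$ and setting $\varepsilon = c - 1/2 > 0$. Unpacking the hypothesis, there is a sequence $a_n\to 0$ and, for each $n$, a set $S_n\subseteq V(T_n)$ with $\lvert V(T_n)\setminus S_n\rvert = o(\lvert T_n\rvert)$ such that every vertex of $S_n$ has indegree greater than $(c+a_n)\lvert T_n\rvert$. Since $a_n\to 0$, for all sufficiently large $n$ we have $a_n > -\varepsilon/2$, hence every vertex of $S_n$ has indegree greater than $(1/2+\varepsilon/2)\lvert T_n\rvert$.

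Next I would bound the sum of indegrees of $T_n$ from below by discarding the exceptional vertices (each contributes a nonnegative amount) and summing only over $S_n$:
\begin{align*}
  \sum_{v\in V(T_n)}d^-_{T_n}(v)
  \;\geq\;
  \lvert S_n\rvert\cdot\Bigl(\tfrac12+\tfrac{\varepsilon}{2}\Bigr)\lvert T_n\rvert
  \;=\;
  \Bigl(\tfrac12+\tfrac{\varepsilon}{2}\Bigr)\lvert T_n\rvert^2 - o(\lvert T_n\rvert^2),
\end{align*}
using $\lvert S_n\rvert = \lvert T_n\rvert - o(\lvert T_n\rvert)$. Comparing with the exact value $\binom{\lvert T_n\rvert}{2} = \lvert T_n\rvert^2/2 - o(\lvert T_n\rvert^2)$ of the left-hand side gives $(\varepsilon/2)\lvert T_n\rvert^2 \leq o(\lvert T_n\rvert^2)$, which is false for large $n$ since $\lvert T_n\rvert\to\infty$. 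This contradiction shows $c\leq 1/2$, and combined with the standing assumption $c\geq 1/2$ we conclude $c=1/2$.

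There is essentially no obstacle here; the only mild point requiring care is the interpretation of the ``$o(1)$'' inside $(c+o(1))\lvert T_n\rvert$, which could a priori be negative. But because we assume $c>1/2$ strictly, after $n$ is large this vanishing term is absorbed into a fixed slack of size $\varepsilon/2$, and the $o(\lvert T_n\rvert)$ exceptional vertices contribute only a lower-order term to the indegree sum (since each indegree is at most $\lvert T_n\rvert$), so they can be dropped safely.
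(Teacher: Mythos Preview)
Your proof is correct and entirely elementary: you double-count arcs via $\sum_v d^-(v)=\binom{\lvert T_n\rvert}{2}$ and compare with the lower bound coming from the hypothesis. The paper's proof takes a different route through the flag algebra machinery: it passes to a convergent subsequence with limit $\phi\in\Homp{0}$, observes that the hypothesis translates to $\bm{\phi^1}(\beta)\geq c$ almost surely, and then uses $\expect{\bm{\phi^1}(\beta)}=1/2$ to conclude $c\leq 1/2$. Of course $\expect{\bm{\phi^1}(\beta)}=\phi(\llbracket\beta\rrbracket_1)=1/2$ is exactly the limit form of your double-counting identity, so the underlying arithmetic is the same; the difference is packaging. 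Your argument is more self-contained and avoids invoking Theorem~\ref{thm:ext}, while the paper's version is shorter given that the flag algebra framework is already set up and keeps the argument in the same language as the surrounding lemmas.
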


\begin{proof}
  Let~$(T_n')_{n\in\mathbb{N}}$ be a convergent subsequence
  of~$(T_n)_{n\in\mathbb{N}}$ and let~$\phi\in\Homp{0}$ be its limit and note
  that~$\bm{\phi^1}(\beta)\geq c$ a.s.

  Since~$\expect{\bm{\phi^1}(\beta)} = 1/2$, we get~$c\leq 1/2$.
\end{proof}

\begin{lemma}\label{lem:balneigh}
  Let~$(T_n)_{n\in\mathbb{N}}$ be a sequence of tournaments converging to a balanced
  homomorphism~$\phi$ and for every~$n\in\mathbb{N}$, let~$A_n\subset V(T_n)$ be such
  that~$\lvert A_n\rvert = \Omega(\lvert T_n\rvert)$.

  Under these circumstances, if~$N^+(A_n)\cup N^-(A_n) = V(T_n)\setminus A_n$ for
  every~$n\in\mathbb{N}$, then
  \begin{align*}
    \lvert N^+(A_n)\rvert - \lvert N^-(A_n)\rvert & = o(\lvert T_n\rvert).
  \end{align*}
\end{lemma}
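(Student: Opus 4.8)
The plan is to compute $\sum_{v\in A_n} d^+_{T_n}(v)$ — the total out-degree in $T_n$ over the vertices of $A_n$ — in two different ways and compare. Write $n=\lvert T_n\rvert$, $a=\lvert A_n\rvert$, $p=\lvert N^+(A_n)\rvert$, $q=\lvert N^-(A_n)\rvert$ and $d^+_{T_n}(v)=\lvert N^+(v)\rvert$. For $n$ large we have $a=\Omega(n)\geq 1$, so $A_n$, $N^+(A_n)$, $N^-(A_n)$ are pairwise disjoint, and by hypothesis they partition $V(T_n)$; hence $a+p+q=n$.

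The first (exact) computation uses the local structure at a vertex of $A_n$. If $v\in A_n$ and $w\in N^+(A_n)$ then $vw\in A(T_n)$, while if $w\in N^-(A_n)$ then $wv\in A(T_n)$; so the out-neighbours of $v$ outside $A_n$ are exactly $N^+(A_n)$. Thus $d^+_{T_n}(v)=d^+_{T_n\vert_{A_n}}(v)+p$ for every $v\in A_n$, and since the out-degrees of the tournament $T_n\vert_{A_n}$ sum to $\binom a2$, summing over $v\in A_n$ gives the identity
\begin{align*}
  \sum_{v\in A_n}d^+_{T_n}(v) & = \binom a2 + ap.
\end{align*}

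The second computation uses balancedness. Since $\phi$ is balanced, Lemma~\ref{lem:balanced} gives $\phi(\Tr_3)=3/4$, hence $p(\Tr_3;T_n)\to 3/4$. Counting copies of $\Tr_3$ by their source vertex gives $\sum_v\binom{d^+_{T_n}(v)}{2}=\binom n3 p(\Tr_3;T_n)$, so (using also $\sum_v d^+_{T_n}(v)=\binom n2$) one gets $\sum_v d^+_{T_n}(v)^2=\frac{n^3}{4}+o(n^3)$ and therefore $\sum_{v\in V(T_n)}\left(d^+_{T_n}(v)-\frac n2\right)^2=o(n^3)$; intuitively, almost every vertex of $T_n$ has out-degree $(1/2+o(1))n$. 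Restricting the last sum to $v\in A_n$ and applying Cauchy--Schwarz with $a\leq n$ gives $\bigl\lvert\sum_{v\in A_n}\left(d^+_{T_n}(v)-\frac n2\right)\bigr\rvert=o(n^2)$, i.e.
\begin{align*}
  \sum_{v\in A_n}d^+_{T_n}(v) & = \frac{an}{2}+o(n^2).
\end{align*}

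Finally I would equate the two expressions: $\binom a2+ap=\frac{an}{2}+o(n^2)$. Dividing through by $a$ — this is the one place the hypothesis $\lvert A_n\rvert=\Omega(\lvert T_n\rvert)$ is used, to turn both $o(n^2)/a$ and the $\frac12$ coming from $\binom a2=\frac{a^2}{2}-\frac a2$ into $o(n)$ — gives $p=\frac n2-\frac a2+o(n)=\frac{p+q}{2}+o(n)$, whence $\lvert N^+(A_n)\rvert-\lvert N^-(A_n)\rvert=p-q=o(n)$, as claimed. The only point requiring any care is the step ``$\phi$ balanced $\Rightarrow$ $\ell^2$-concentration of the out-degrees of $T_n$ around $n/2$'': this is not immediate from the definition of balancedness, which describes the limit object rather than $T_n$, but it follows from the elementary counting identity $\sum_v\binom{d^+_{T_n}(v)}{2}=\binom n3 p(\Tr_3;T_n)$ once we know $p(\Tr_3;T_n)\to 3/4$. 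Everything else is bookkeeping, and no passage to a subsequence is needed.
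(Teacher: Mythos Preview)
Your proof is correct and takes a genuinely different route from the paper's. The paper argues by contradiction: after passing to a subsequence (and possibly reversing all arcs) so that $\lvert N^+(A_n)\rvert-\lvert N^-(A_n)\rvert\geq\epsilon\lvert T_n\rvert$, it computes for each $v\in A_n$ that $d^-_{A_n}(v)\geq\bigl(\tfrac{1+\epsilon}{2}+o(1)\bigr)\lvert A_n\rvert$ whenever $v$ has near-average degree in $T_n$, and then invokes the auxiliary Lemma~\ref{lem:weakbal} on the induced sequence $(T_n\vert_{A_n})_n$ to reach a contradiction. Your argument is direct: you extract from Lemma~\ref{lem:balanced} the $\ell^2$-concentration $\sum_v\bigl(d^+(v)-\tfrac{\lvert T_n\rvert}{2}\bigr)^2=o(\lvert T_n\rvert^3)$, apply Cauchy--Schwarz to control $\sum_{v\in A_n}d^+(v)$, and equate this with the exact identity $\sum_{v\in A_n}d^+(v)=\binom{\lvert A_n\rvert}{2}+\lvert A_n\rvert\cdot\lvert N^+(A_n)\rvert$. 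Your approach avoids subsequences and the appeal to Lemma~\ref{lem:weakbal}, and is slightly more quantitative; the paper's approach is a bit more structural, recycling the imbalance inside $T_n\vert_{A_n}$. Both ultimately rest on the same fact that balancedness forces near-regularity of out-degrees.
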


\begin{proof}
  Suppose not. This means that by passing to a subsequence and possibly flipping all
  arcs, we may suppose that there exists~$\epsilon > 0$ such that
  \begin{align*}
    \lvert N^+(A_n)\rvert - \lvert N^-(A_n)\rvert & \geq \epsilon \lvert T_n\rvert,
  \end{align*}
  for every~$n\in\mathbb{N}$.

  Note that if~$v\in A_n$, then we have
  \begin{align*}
    \lvert T_n\rvert - 2d^-(v)
    & \geq
    d^+(v) - d^-(v) + 1
    \\
    & =
    \lvert N^+(A_n)\rvert + d^+_{A_n}(v) - \lvert N^-(A_n)\rvert - d^-_{A_n}(v) + 1
    \\
    & \geq
    \epsilon\lvert T_n\rvert + d_{A_n}^+(v) - d^-_{A_n}(v) + 1
    \\
    & =
    \epsilon\lvert T_n\rvert + \lvert A_n\rvert - 2d^-_{A_n}(v)
    \\
    & \geq
    (1+\epsilon)\lvert A_n\rvert - 2d^-_{A_n}(v),
  \end{align*}
  where~$d^+_A(v) = \lvert N^+(v)\cap A\rvert$ and~$d^-_A(v) = \lvert N^-(v)\cap
  A\rvert$.

  Since~$\phi$ is balanced, we know that all but~$o(\lvert T_n\rvert)$ vertices
  of~$A_n$ have outdegree~$(1/2+o(1))\lvert T_n\rvert$, hence, since~$\lvert
  A_n\rvert=\Omega(\lvert T_n\rvert)$, if~$v$ is one such vertex, we have
  \begin{align*}
    d_{A_n}^-(v) & \geq \left(\frac{1 + \epsilon}{2} + o(1)\right)\lvert A_n\rvert.
  \end{align*}

  But this contradicts Lemma~\ref{lem:weakbal} for the
  sequence~$(T_n\vert_{A_n})_{n\in\mathbb{N}}$.
\end{proof}

The next technical lemma says that if a sequence of~$\vec C_3$-decomposable
tournaments converges to a balanced homomorphism, then we may suppose that at least
two of~$A^{(n)}_1$, $A^{(n)}_2$ or~$A^{(n)}_3$ have non-negligible size. We defer the
proof of this lemma to Section~\ref{sec:prooftwononneg}.

\begin{lemma}\label{lem:twononneg}
  If~$(T_n)_{n\in\mathbb{N}}$ is a sequence of~$\vec C_3$-decomposable tournaments
  that converges to a balanced homomorphism~$\phi\in\Homp{0}$, then there
  exists a sequence~$(T_n')_{n\in\mathbb{N}}$ of~$\vec C_3$-decomposable tournaments
  and for every~$n\in\mathbb{N}$ a~$\vec
  C_3$-decomposition~$(A^{(n)}_\sigma)_{\sigma\in\Sigma^*}$ of~$T_n'$ such that
  \begin{itemize}
  \item There exists a subsequence~$(T_{k_n})_{n\in\mathbb{N}}$
    of~$(T_n)_{n\in\mathbb{N}}$ such that the tournament~$T_n'$ can be obtained
    from~$T_{k_n}$ by flipping~$o(\lvert T_{k_n}\rvert^2)$ arcs
    (hence~$(T_n')_{n\in\mathbb{N}}$ also converges to~$\phi$);
  \item We have~$\lvert A^{(n)}_1\rvert = \Omega(\lvert T_n'\rvert)$ and~$\lvert
    A^{(n)}_2\rvert = \Omega(\lvert T_n'\rvert)$.
  \end{itemize}
\end{lemma}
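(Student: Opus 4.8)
The plan is to distinguish two cases according to the size of the largest top‑level part of the $\vec C_3$‑decomposition. Fix for each $n$ a $\vec C_3$‑decomposition of $T_n$ and let $A_1,A_2,A_3$ be its top‑level parts in cyclic order (so $A_i\times A_{i+1}\subseteq A(T_n)$, indices modulo $3$), labelled so that $\lvert A_1\rvert\ge\lvert A_2\rvert,\lvert A_3\rvert$; after passing to a subsequence assume $\lvert A_1\rvert/\lvert T_n\rvert\to\lambda$. If $\lambda<1$ then $\lvert A_2\rvert+\lvert A_3\rvert=\Omega(\lvert T_n\rvert)$, so one of $A_2,A_3$ has size $\Omega(\lvert T_n\rvert)$, and together with $\lvert A_1\rvert\ge\lvert T_n\rvert/3$ we get two top‑level parts of linear size; relabelling cyclically so that these are the parts indexed by $1$ and $2$, we are done with $T_n'=T_n$. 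So from now on assume $\lambda=1$, i.e.\ $\lvert A_2\rvert,\lvert A_3\rvert=o(\lvert T_n\rvert)$.

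In this remaining case I would walk down the decomposition, always into the largest child, producing $V(T_n)=A^{(0)}\supseteq A^{(1)}\supseteq\cdots\supseteq A^{(K)}$ with $\lvert A^{(K)}\rvert\le1$, where $A^{(j)}$ is the largest part of the decomposition of $A^{(j-1)}$ and $B^{(j)}$, $C^{(j)}$ are the other two parts, $B^{(j)}$ the one beaten by $A^{(j)}$ and $C^{(j)}$ the one beating $A^{(j)}$. Tracing arcs one checks that every vertex of $A^{(j)}$ beats every vertex of $B^{(k)}$ and is beaten by every vertex of $C^{(k)}$ for all $k\le j$; that the layers $B^{(j)}$ (in decreasing order of $j$) and the layers $C^{(j)}$ (in increasing order of $j$) form transitive blow‑ups; that $B^{(j)}\times C^{(j)}\subseteq A(T_n)$ while $C^{(j)}\times B^{(i)}\subseteq A(T_n)$ for $i\ne j$; and consequently that the out‑degree of a vertex of $B^{(j)}$ equals $\sum_{k<j}\lvert B^{(k)}\rvert$ up to an error $O(\lvert B^{(j)}\rvert+\lvert C^{(j)}\rvert)$, and that of a vertex of $C^{(j)}$ equals $\lvert T_n\rvert-\sum_{k\le j}\lvert C^{(k)}\rvert$ up to the same error. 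Since Lemma~\ref{lem:balneigh} applied to any $A^{(j)}$ of linear size forces $\sum_{k\le j}\lvert B^{(k)}\rvert=\sum_{k\le j}\lvert C^{(k)}\rvert+o(\lvert T_n\rvert)$, and the partial sums $\sum_{k\le j}\lvert B^{(k)}\rvert$ and $\sum_{k\le j}\lvert C^{(k)}\rvert$ are non‑decreasing, the vertices in the stripped‑off layers whose out‑degree is within $\delta\lvert T_n\rvert$ of $\lvert T_n\rvert/2$ number only $O\bigl((\delta+\max_j(\lvert B^{(j)}\rvert+\lvert C^{(j)}\rvert)/\lvert T_n\rvert)\lvert T_n\rvert\bigr)$. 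As $\phi$ is balanced this is impossible unless some peel $\lvert B^{(j)}\rvert+\lvert C^{(j)}\rvert$ is $\Omega(\lvert T_n\rvert)$; so, tuning the stopping rule, I would stop the descent at a level $j_0$ whose node $S$ has $\lvert S\rvert=\Omega(\lvert T_n\rvert)$, has a decomposition with two parts of linear size (it is \emph{splitting}), and — \emph{this is the point needing care} — is reached through layers that are individually $o(\lvert T_n\rvert)$.

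Given such an $S$, set $W_n=V(T_n)\setminus S=\mathcal B\sqcup\mathcal C$, where $\mathcal B$ is the union of the $B$‑layers and $\mathcal C$ the union of the $C$‑layers stripped off above $S$; then $S\times\mathcal B\subseteq A(T_n)$, $\mathcal C\times S\subseteq A(T_n)$, and $\mathcal B,\mathcal C$ are $\vec C_3$‑decomposable (as subtournaments of $T_n$, by Theorem~\ref{thm:C3deccharac}, and in fact transitive blow‑ups). The only arcs of $T_n\vert_{W_n}$ incompatible with the linear order ``$\mathcal C$, then $S$, then $\mathcal B$'' are those inside each pair $B^{(j)}\times C^{(j)}$, numbering $\sum_j\lvert B^{(j)}\rvert\lvert C^{(j)}\rvert$. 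If $\lvert W_n\rvert=o(\lvert T_n\rvert)$, I would flip every arc incident to $W_n$ (that is $O(\lvert W_n\rvert\lvert T_n\rvert)=o(\lvert T_n\rvert^2)$ arcs), keep the decomposition $P\to Q\to R\to P$ of $S$ with $\lvert P\rvert,\lvert Q\rvert=\Omega(\lvert T_n\rvert)$, make $W_n$ transitive and redirect all arcs between $W_n$ and $S$ so that $(P,\,Q\cup W_n,\,R)$ becomes a valid top‑level $\vec C_3$‑decomposition with two parts of linear size. If instead $\lvert W_n\rvert=\Omega(\lvert T_n\rvert)$, I would flip the $\sum_j\lvert B^{(j)}\rvert\lvert C^{(j)}\rvert$ offending arcs to obtain the transitive blow‑up $\mathcal C\to S\to\mathcal B$; then $(\mathcal C,\,S\cup\mathcal B,\,\varnothing)$ is a valid $\vec C_3$‑decomposition, and since Lemma~\ref{lem:balneigh} gives $\lvert\mathcal B\rvert=\lvert\mathcal C\rvert+o(\lvert T_n\rvert)$ we have $\lvert\mathcal C\rvert=\tfrac12\lvert W_n\rvert+o(\lvert T_n\rvert)=\Omega(\lvert T_n\rvert)$ and $\lvert S\cup\mathcal B\rvert\ge\lvert T_n\rvert/2$. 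In either case $T_n'$ is $\vec C_3$‑decomposable, differs from $T_n$ by $o(\lvert T_n\rvert^2)$ flips (hence still converges to $\phi$), and has the required decomposition; passing to the subsequence produced above completes the argument.

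The hard part will be the italicised claim of the second paragraph — choosing the stopping level so that $S$ is simultaneously heavy, splitting, and reached through negligible layers — because the bound $\sum_j\lvert B^{(j)}\rvert\lvert C^{(j)}\rvert\le\tfrac14(\max_j\mathrm{peel}_j)\cdot\lvert W_n\rvert$ is $o(\lvert T_n\rvert^2)$ exactly when the individual peels are $o(\lvert T_n\rvert)$, whereas stopping at the first constant‑fraction drop only makes them $O(\lvert T_n\rvert)$. I expect this is resolved by iterating the whole lemma: whenever the largest top‑level part of the current tournament is $(1-o(1))$ of it, replace the tournament by that part and repeat; the degree count above, run along the entire descent, rules out an infinite chain of $(1-o(1))$‑steps, so after finitely many iterations one reaches a node that is of linear size, splitting, and has complement $o(\lvert T_n\rvert)$, i.e.\ the first branch of the reassembly applies. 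Making this termination, and the bookkeeping of flips accumulated over the iterations, precise is where I expect the real work to lie, and is presumably the content of Section~\ref{sec:prooftwononneg}.
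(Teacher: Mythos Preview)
Your structural analysis of the chain $A^{(0)}\supseteq A^{(1)}\supseteq\cdots$ is correct and matches the paper's: the identities you list for the out-degrees of vertices in $B^{(j)}$ and $C^{(j)}$, and the consequence via Lemma~\ref{lem:balneigh} that $\sum_{k\le j}\lvert B^{(k)}\rvert-\sum_{k\le j}\lvert C^{(k)}\rvert=o(\lvert T_n\rvert)$ whenever $A^{(j)}$ has linear size, are exactly what the paper uses (its Claim~\ref{clm:intervalsamesize}). The difficulty you isolate in italics is also the right one.

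The gap is that your proposed resolution --- iterating the lemma by repeatedly passing to the largest child --- does not work as stated. For a fixed $n$ the descent is always finite, so ``ruling out an infinite chain of $(1-o(1))$-steps'' is not the issue; the issue is uniformity in $n$. Concretely, nothing prevents a sequence where, say, the first $\log\log\lvert T_n\rvert$ peels each have size $\lvert T_n\rvert/\log\lvert T_n\rvert$ and then a single peel has size $\lvert T_n\rvert/3$: every node you reach is either non-splitting or preceded by a peel of linear size, and your flip count $\sum_j\lvert B^{(j)}\rvert\lvert C^{(j)}\rvert$ is $\Theta(\lvert T_n\rvert^2)$ at the natural stopping point. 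Your iteration does not obviously escape this, because after zooming into the big child you may face the same picture at the next scale, and you have no bound on the number of scales independent of $n$.

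The paper sidesteps this by arguing by contradiction rather than constructively. Assuming no good $T_n'$ exists, it shows (Claim~\ref{clm:nonnegpred}) that whenever $\lvert B^{(j+1)}\rvert=\Omega(\lvert T_n\rvert)$ for some $j$ (student's notation), the union $\bigcup_{t\le j}B^{(t)}$ is also $\Omega(\lvert T_n\rvert)$ --- because if that union were $o(\lvert T_n\rvert)$, then merging levels $0,\ldots,j+1$ into a single top level costs only $o(\lvert T_n\rvert^2)$ flips and produces the forbidden $T_n'$. It then upgrades this (Claim~\ref{clm:nonnegpredstrong}) to a \emph{single} earlier level $w$ with $\lvert B^{(w)}\rvert=\Omega(\lvert T_n\rvert)$, and finally (Claim~\ref{clm:twononnegrel}) shows via a second application of Lemma~\ref{lem:balneigh} to $B^{(u)}$ itself that if $u<v$ are two levels with $\lvert B^{(u)}\rvert,\lvert B^{(v)}\rvert=\Omega(\lvert T_n\rvert)$ then $\lvert B^{(u)}\rvert\ge 3\lvert B^{(v)}\rvert+o(\lvert T_n\rvert)$. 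Taking $v$ to be the \emph{minimal} level with $\lvert B^{(v)}\rvert\ge\epsilon\lvert T_n\rvert$ and producing such a $u<v$ gives $\lvert B^{(u)}\rvert\ge 3\epsilon\lvert T_n\rvert$, contradicting minimality. The point is that the flip count is controlled not by arranging for small peels, but by the contradiction hypothesis itself; the ``$3\times$ growth'' inequality is the missing idea that makes the argument terminate.
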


\begin{lemma}\label{lem:QTbalC3dec->QTseqskequallyC3dec}
  We have~$\ref{it:QTbalC3dec}\implies\ref{it:QTseqskequallyC3dec}$.
\end{lemma}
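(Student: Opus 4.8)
The plan is to prove, by induction on $k\ge 0$, that $\phi$ is the limit of a sequence of $k$-equally $\vec C_3$-decomposable tournaments; property~\ref{it:QTseqskequallyC3dec} is exactly this for all $k$. Throughout I pass to subsequences finitely often without further comment, and use repeatedly that flipping $o(m^2)$ arcs of an $m$-vertex tournament changes the density of every fixed tournament by $o(1)$, so such modifications preserve the limit homomorphism.

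\emph{Base case $k=0$.} A $0$-equally $\vec C_3$-decomposable tournament is just a $\vec C_3$-decomposable one. Since $\phi$ is $\vec C_3$-decomposable, $\phi(T_5^8+T_5^{10}+T_5^{12})=0$, so any sequence $(U_n)_{n\in\mathbb{N}}$ with $\lim_n p(F;U_n)=\phi(F)$ for all $F$ has $p(T_5^8;U_n),p(T_5^{10};U_n),p(T_5^{12};U_n)\to 0$. By the induced removal lemma (applied to tournaments, viewed as orientations of complete graphs), for each $n$ one can flip $o(\lvert U_n\rvert^2)$ arcs of $U_n$ to obtain a tournament $S_n$ containing no copy of $T_5^8$, $T_5^{10}$ nor $T_5^{12}$; by Theorem~\ref{thm:C3deccharac} each $S_n$ is $\vec C_3$-decomposable, and $(S_n)_{n\in\mathbb{N}}$ still converges to $\phi$.

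\emph{Inductive step.} Let $(S_n)_{n\in\mathbb{N}}$ be a sequence of $(k-1)$-equally $\vec C_3$-decomposable tournaments converging to $\phi$, and fix for each $n$ a $\vec C_3$-decomposition $A^{(n)}$ with $\Delta_\ell(A^{(n)})\le 1$ for all $\ell\le k-1$. As $k$ is fixed, every level-$(k-1)$ part $P$ satisfies $\lvert P\rvert=(1+o(1))\lvert S_n\rvert/3^{k-1}=\Omega(\lvert S_n\rvert)$. A geometric-sum degree count — at each level $\ell\le k-1$ a vertex of $S_n$ beats one entire sibling part, of size $(1+o(1))\lvert S_n\rvert/3^\ell$ — together with the balancedness of $\phi$ shows that inside each such $P$ all but $o(\lvert P\rvert)$ vertices have out-degree $(1/2+o(1))\lvert P\rvert$; hence, after a subsequence, each $S_n\vert_P$ converges to a balanced homomorphism, and $S_n\vert_P$ is $\vec C_3$-decomposable. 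For each of the $3^{k-1}$ addresses of level-$(k-1)$ parts, apply Lemma~\ref{lem:twononneg} to $(S_n\vert_P)_n$: after a further subsequence we may flip $o(\lvert P\rvert^2)$ arcs inside $P$ so that the top triple $(B_1,B_2,B_3)$ of the resulting $\vec C_3$-decomposition of $S_n\vert_P$ has $\lvert B_1\rvert,\lvert B_2\rvert=\Omega(\lvert P\rvert)$, without disturbing the (balanced) limit of $(S_n\vert_P)_n$. For large $n$ we then have $N^+(B_1)=B_2$ and $N^-(B_1)=B_3$, and symmetrically for $B_2$, so $N^+(B_i)\cup N^-(B_i)=P\setminus B_i$ for $i\in\{1,2\}$, and two applications of Lemma~\ref{lem:balneigh} show that $\lvert B_1\rvert$, $\lvert B_2\rvert$ and $\lvert B_3\rvert$ are pairwise equal up to $o(\lvert P\rvert)$; since they sum to $\lvert P\rvert$, each equals $(1+o(1))\lvert P\rvert/3$. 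Now, inside each $P$, relocate vertices between $B_1,B_2,B_3$ one at a time until the three sizes differ pairwise by at most $1$: only $o(\lvert P\rvert)$ relocations are needed, and each relocation of a vertex $v$ into a part $B_i$ is realized by making $v$ a source of the current $B_i$ and reorienting $v$'s other incident arcs accordingly, which costs at most $\lvert P\rvert$ flips and, by Theorem~\ref{thm:C3deccharac} (heredity of $\vec C_3$-decomposability under subtournaments, plus the fact that prepending a single source to a $\vec C_3$-decomposable tournament keeps it $\vec C_3$-decomposable), leaves every part $\vec C_3$-decomposable. Reassembling the modified tournaments on the level-$(k-1)$ parts inside $S_n$ yields a tournament $S_n'$ obtained from $S_n$ by $\sum_P o(\lvert P\rvert^2)=o(\lvert S_n\rvert^2)$ arc flips (there are $3^{k-1}$ parts, each of size $\Theta(\lvert S_n\rvert/3^{k-1})$); its level-$\ell$ structure for $\ell\le k-1$ is that of $S_n$ and the top split of each level-$(k-1)$ part is now balanced, so $S_n'$ is $k$-equally $\vec C_3$-decomposable, and $(S_n')_{n\in\mathbb{N}}$ converges to $\phi$. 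This completes the induction and the proof.

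The step I expect to be the main obstacle is the inductive step: establishing that each level-$(k-1)$ part of a $(k-1)$-equally $\vec C_3$-decomposable tournament converging to a balanced homomorphism must itself converge, along a subsequence, to a balanced homomorphism — this is what makes Lemmas~\ref{lem:twononneg} and~\ref{lem:balneigh} applicable part by part — together with the bookkeeping that the three sources of edits (the removal lemma in the base case, the applications of Lemma~\ref{lem:twononneg}, and the vertex relocations) each change only $o(\lvert S_n\rvert^2)$ arcs, so that the limit is preserved through all $k$ rounds of balancing.
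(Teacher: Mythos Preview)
Your proof is correct and follows essentially the same inductive strategy as the paper: apply Lemma~\ref{lem:twononneg} to each of the $3^{k-1}$ level-$(k-1)$ parts, use Lemma~\ref{lem:balneigh} to force the three level-$k$ subparts to be nearly equal, then edit $o(n^2)$ arcs to make the decomposition $k$-equally balanced. The only differences are cosmetic: you invoke the removal lemma for the base case where the paper uses the flag-algebra fact that a homomorphism in the universal theory $T_{\vec C_3}$ is the limit of a sequence of models, and you apply Lemma~\ref{lem:balneigh} inside each part (after checking the restriction is asymptotically balanced) whereas the paper applies it in the ambient tournament, having arranged $\Delta_t(A^{(n)})=0$ for $t\le k-1$ by keeping $\lvert T_n^{(k)}\rvert$ a power of~$3$ so that the higher-level contributions to $\lvert N^{\pm}\rvert$ cancel exactly.
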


\begin{proof}
  Suppose that~$\phi$ is balanced and~$\vec C_3$-decomposable. Let~$T_{\vec C_3}$ be
  the universal theory of~$\vec C_3$-decomposable tournaments, that is, the theory of
  tournaments that have no copy of~$T_5^8$, $T_5^{10}$ nor of~$T_5^{12}$ and note
  that~$\phi$ can also be thought of as an element of~$\Hom^+(\mathcal{A}^0[T_{\vec
      C_3}],\mathbb{R})$. This means that there exists a
  sequence~$(T_n^{(0)})_{n\in\mathbb{N}}$ of~$\vec C_3$-decomposable tournaments that
  converges to~$\phi$ (which is, by definition, a sequence of~$0$-equally~$\vec
  C_3$-decomposable tournaments). Furthermore, we may also suppose without loss of
  generality that~$\lvert T_n^{(0)}\rvert$ is a power of~$3$ for
  every~$n\in\mathbb{N}$.

  Let us now construct by induction in~$k$ the
  sequences~$(T_n^{(k)})_{n\in\mathbb{N}}$ of~$k$-equally~$\vec C_3$-decomposable
  tournaments converging to~$\phi$ and preserving the property that~$\lvert
  T_n^{(k)}\rvert$ is a power of~$3$ for every~$n\in\mathbb{N}$.

  Suppose~$k>0$ and that we have already
  constructed~$(T_n^{(k-1)})_{n\in\mathbb{N}}$. Applying Lemma~\ref{lem:twononneg} a
  total of~$3^{k-1}$ times to the tournaments induced by the~$(k-1)$-th level of
  the~$\vec C_3$-decompositions of the~$T_n^{(k-1)}$, we know that there exists a
  sequence~$(T_n')_{n\in\mathbb{N}}$ of~$\vec C_3$-decomposable tournaments and for
  every~$n\in\mathbb{N}$ there is a~$\vec C_3$-decomposition~$A^{(n)} =
  (A^{(n)}_\sigma)_{\sigma\in\Sigma^*}$ of~$T_n'$ such that
  \begin{itemize}
  \item For every~$t\leq k-1$, we have~$\Delta_t(A^{(n)}) = 0$;
  \item There exists a subsequence~$(T_{m_n}^{(k-1)})_{n\in\mathbb{N}}$
    of~$(T_n^{(k-1)})_{n\in\mathbb{N}}$ such that the tournament~$T_n'$ can be
    obtained from~$T_{m_n}^{(k-1)}$ by flipping~$o(\lvert T_{m_n}^{(k-1)}\rvert^2)$
    arcs, all completely contained within one of the sets~$A^{(n)}_\sigma$ for
    some~$\sigma\in\Sigma^*$ with~$\lvert\sigma\rvert = k-1$;
  \item For every~$\sigma\in\Sigma^*$ with~$\lvert\sigma\rvert = k-1$, we have
    \begin{align*}
      \lvert A^{(n)}_{\sigma1}\rvert & = \Omega(\lvert T_n'\rvert);
      &
      \lvert A^{(n)}_{\sigma2}\rvert & = \Omega(\lvert T_n'\rvert);
    \end{align*}
  \end{itemize}

  Fix~$\sigma\in\Sigma^*$ with~$\lvert\sigma\rvert = k-1$ and note that
  \begin{align*}
    N^+(A^{(n)}_{\sigma1})\cup N^-(A^{(n)}_{\sigma1}) & = V(T_n')\setminus A^{(n)}_{\sigma1};\\
    N^+(A^{(n)}_{\sigma2})\cup N^-(A^{(n)}_{\sigma2}) & = V(T_n')\setminus A^{(n)}_{\sigma2}.
  \end{align*}
  Furthermore, since~$\Delta_t(A^{(n)}) = 0$ for every~$t\leq k-1$, we also have
  \begin{align*}
    \lvert N^+(A^{(n)}_{\sigma1})\rvert - \lvert N^-(A^{(n)}_{\sigma1})\rvert & =
    \lvert A^{(n)}_{\sigma2}\rvert - \lvert A^{(n)}_{\sigma3}\rvert;
    \\
    \lvert N^+(A^{(n)}_{\sigma2})\rvert - \lvert N^-(A^{(n)}_{\sigma2})\rvert & =
    \lvert A^{(n)}_{\sigma3}\rvert - \lvert A^{(n)}_{\sigma1}\rvert.
  \end{align*}

  Applying Lemma~\ref{lem:balneigh} to~$(A^{(n)}_{\sigma1})_{n\in\mathbb{N}}$
  and~$(A^{(n)}_{\sigma2})_{n\in\mathbb{N}}$, we get
  \begin{align*}
    \lvert A^{(n)}_{\sigma2}\rvert - \lvert A^{(n)}_{\sigma3}\rvert & =
    o(\lvert T_n'\rvert);
    \\
    \lvert A^{(n)}_{\sigma3}\rvert - \lvert A^{(n)}_{\sigma1}\rvert & =
    o(\lvert T_n'\rvert).
  \end{align*}

  Since~$\sigma$ was chosen arbitrarily, we conclude that~$\Delta_k(A^{(n)})=o(\lvert
  T_n'\rvert)$. This means that we can edit~$\lvert T_n'\rvert^2$ arcs of~$T_n'$ and
  obtain a~$k$-equally~$\vec C_3$-decomposable tournament~$T_n^{(k)}$ (note that it
  is crucial that~$\lvert T_n'\rvert$ is a power of~$3$) and since this doesn't
  affect the convergence of the sequence~$(T_n')_{n\in\mathbb{N}}$, the
  sequence~$(T_n^{(k)})_{n\in\mathbb{N}}$ also converges to~$\phi$ and we still have
  that~$\lvert T_n^{(k)}\rvert$ is a power of~$3$.
\end{proof}

\begin{lemma}\label{lem:QTseqskequallyC3dec->QTphiC3}
  We have~$\ref{it:QTseqskequallyC3dec}\implies\ref{it:QTphiC3}$.
\end{lemma}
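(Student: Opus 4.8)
The plan is to show directly that $\phi(T) = \phiC(T)$ for every tournament $T$; since a positive homomorphism in $\Homp{0}$ is determined by its values on tournaments, this gives $\phi=\phiC$. Fix a tournament $T$, put $m=\lvert T\rvert$, and fix $k\in\mathbb{N}$ with $3^k\geq m$. By hypothesis \ref{it:QTseqskequallyC3dec} there is a sequence $(T_n^{(k)})_{n\in\mathbb{N}}$ of $k$-equally~$\vec C_3$-decomposable tournaments (of strictly increasing, hence unbounded, sizes) converging to $\phi$, so $\phi(T)=\lim_{n\to\infty}p(T;T_n^{(k)})$. The heart of the argument is an estimate of $p(T;S)$ for an arbitrary $k$-equally~$\vec C_3$-decomposable tournament $S$ of large size.

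The key structural observation is as follows. Let $S$ be $k$-equally~$\vec C_3$-decomposable and fix a $\vec C_3$-decomposition $A=(A_\sigma)_{\sigma\in\Sigma^*}$ with $\Delta_\ell(A)\leq 1$ for all $\ell\leq k$. For $\lvert S\rvert$ large (given $k$), every level-$\ell$ part with $\ell\leq k$ has size $(1+o(1))\lvert S\rvert/3^\ell$, so at level $k$ there are exactly $3^k$ non-empty parts $(A_\sigma)_{\lvert\sigma\rvert=k}$, each of size $(1+o(1))\lvert S\rvert/3^k$. Moreover, for distinct $\sigma,\tau\in[3]^k$, if $\rho$ is their longest common prefix then $A_\sigma\subseteq A_{\rho\sigma'}$ and $A_\tau\subseteq A_{\rho\tau'}$ for distinct $\sigma',\tau'\in[3]$, and by item~\ref{it:C3} of Definition~\ref{def:C3dec} applied at $\rho$ all arcs between $A_\sigma$ and $A_\tau$ point according to the cyclic order on $\{1,2,3\}$. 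Thus, collapsing each level-$k$ part to a single vertex yields a tournament isomorphic to $\trianglen[3^k]$; equivalently, any transversal picking one vertex from each of $m$ distinct level-$k$ parts induces in $S$ the subtournament of $\trianglen[3^k]$ on the corresponding $m$ indices.

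Now sample $m$ vertices of $S$ uniformly without replacement and let $E$ be the event that they lie in $m$ pairwise distinct level-$k$ parts. Since the level-$k$ parts are nearly equal, a routine computation gives $\Pr[E]=\prod_{i=0}^{m-1}(1-i/3^k)+o(1)$ as $\lvert S\rvert\to\infty$ (with $k$ fixed), and conditioned on $E$ the $m$-subset of level-$k$ parts hit is asymptotically uniform over all $\binom{3^k}{m}$ choices. By the structural observation, conditioned on $E$ the subtournament induced by the sample is the induced subtournament of $\trianglen[3^k]$ on that $m$-subset, so the conditional probability that the sample induces a copy of $T$ is $p(T;\trianglen[3^k])+o(1)$. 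Since $\Pr[E^c]\leq\binom{m}{2}3^{-k}+o(1)$ and $0\leq p(T;\trianglen[3^k])\leq 1$, splitting $p(T;S)$ over $E$ and $E^c$ gives
\begin{align*}
  \bigl\lvert p(T;S)-p(T;\trianglen[3^k])\bigr\rvert & \leq \binom{m}{2}\,3^{-k}+o(1).
\end{align*}
Applying this with $S=T_n^{(k)}$ and letting $n\to\infty$ yields $\bigl\lvert\phi(T)-p(T;\trianglen[3^k])\bigr\rvert\leq\binom{m}{2}3^{-k}$ for every $k$ with $3^k\geq m$. Finally, letting $k\to\infty$ and invoking the convergence of $(\trianglen)_{n\in\mathbb{N}}$ (Proposition~\ref{prop:triangleconv}), which forces $\lim_{k\to\infty}p(T;\trianglen[3^k])=\phiC(T)$, we obtain $\phi(T)=\phiC(T)$; as $T$ was arbitrary, $\phi=\phiC$.

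The main obstacle is the structural observation together with the probabilistic sandwich: one must verify carefully that distinct level-$k$ parts always induce the $\trianglen[3^k]$ pattern (using items~\ref{it:pairwisedisjoint}--\ref{it:C3} of the decomposition, and needing $\lvert S\rvert$ large relative to $k$ so no level-$\leq k$ part degenerates), and that the ``differ by at most~$1$'' balance of the level-$k$ parts really does make the hit distribution asymptotically uniform. Granting these, the error $\binom{m}{2}3^{-k}$ is exactly the collision probability of $m$ balls in $3^k$ nearly-equal bins and vanishes as $k\to\infty$, which closes the argument.
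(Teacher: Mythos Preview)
Your argument is correct and takes a genuinely different route from the paper's. Both proofs rest on the same structural fact---that for a $k$-equally $\vec C_3$-decomposable tournament the arcs between distinct level-$k$ parts are completely determined and reproduce~$\trianglen[3^k]$---but they exploit it differently. The paper diagonalizes over $k$ to build a single sequence $(U_n)$ with $U_n$ being $n$-equally decomposable, then edits the $O(\lvert U_n\rvert^2 3^{-n})$ arcs lying \emph{inside} level-$n$ parts to turn $U_n$ into $\trianglen[\lvert U_n\rvert]$ exactly (for which it needs $\lvert U_n\rvert$ to be a power of~$3$), and concludes that a subsequence of $(\trianglen)$ converges to~$\phi$. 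You instead keep $k$ fixed, pass to the limit in~$n$ to obtain the clean inequality $\lvert\phi(T)-p(T;\trianglen[3^k])\rvert\leq\binom{m}{2}3^{-k}$, and then let $k\to\infty$. Your double-limit avoids the diagonalization bookkeeping and the power-of-$3$ constraint, and as a bonus the same inequality applied to $S=\trianglen$ (which is $k$-equally decomposable for every~$k$) shows directly that $(p(T;\trianglen))_n$ is Cauchy, giving an independent and shorter proof of Proposition~\ref{prop:triangleconv}. The paper's route, on the other hand, yields the slightly stronger structural statement that $\phi$ is literally a subsequential limit of $(\trianglen)$, which is what the paper reuses when proving Proposition~\ref{prop:triangleconv}.
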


\begin{proof}
  For every~$k\in\mathbb{N}$, let~$(T^{(k)}_n)_{n\in\mathbb{N}}$ be a sequence
  of~$k$-equally~$\vec C_3$-decomposable tournaments converging to~$\phi$.

  Our objective is to diagonalize the family of
  sequences~$(T_n^{(k)})_{n\in\mathbb{N}}$ in a way that the resulting sequence still
  converges to~$\phi$. To do this, we let~$(D_t)_{t\in\mathbb{N}}$ be an enumeration
  of the set of all finite tournaments~$\mathcal{T}$, we set~$f(0)=0$, and for
  every~$k>0$, we let
  \begin{align*}
    f(k) & = \min\left\{
    u\in\mathbb{N} : \lvert T_u^{(k)}\rvert > \lvert T_{f(k-1)}^{(k-1)}\rvert
    \;\land\;
    \forall t\leq k, \forall m \geq u, \lvert p(D_t;T_m^{(k)}) - \phi(D_t)\rvert
    < \frac{1}{k}
    \right\}.
  \end{align*}

  Note that the fact that~$(T_n^{(k)})_{n\in\mathbb{N}}$ converges to~$\phi$
  guarantees that~$f(k)<\infty$ for every~$k\in\mathbb{N}$.

  Define now the sequence of tournaments~$(U_n)_{n\in\mathbb{N}}$ by
  letting~$U_n=T^{(n)}_{f(n)}$ for every~$n\in\mathbb{N}$.

  We claim that~$(U_n)_{n\in\mathbb{N}}$ also converges to~$\phi$. Indeed,
  if~$T'\in\mathcal{T}$ is a tournament, then there exists~$t\in\mathbb{N}$ such
  that~$D_t=T'$, hence, for every~$n>t$, we have
  \begin{align*}
    \lvert p(D_t;U_n) - \phi(D_t)\rvert < \frac{1}{n},
  \end{align*}
  which implies
  that~$\lim_{n\to\infty}p(D_t;U_n)=\phi(D_t)$. Therefore~$(U_n)_{n\in\mathbb{N}}$
  converges to~$\phi$.

  By construction, we know~$U_n$ is~$n$-equally~$\vec C_3$-decomposable; this
  means that we can obtain~$\trianglen[\lvert U_n\rvert]$ from~$U_n$ by editing at
  most
  \begin{align*}
    \binom{\lvert U_n\rvert}{2}
    - \sum_{i=1}^n3^i\left(\frac{\lvert U_n\rvert}{3^i}\right)^2
    & =
    \binom{\lvert U_n\rvert}{2} - \lvert U_n\rvert^2\left(\frac{1 - 3^{-n}}{2}\right)
    = o(\lvert U_n\rvert^2)
  \end{align*}
  arcs of~$U_n$.

  Therefore the sequence~$(\trianglen[\lvert U_n\rvert])_{n\in\mathbb{N}}$ also
  converges to~$\phi$, and since it is a subsequence
  of~$(\trianglen)_{n\in\mathbb{N}}$, we have~$\phi=\phiC$.
\end{proof}

Finally, we prove the convergence of the
sequence~$(\trianglen)_{n\in\mathbb{N}}$. This proof can be obtained by
reinterpreting the proofs of Lemmas~\ref{lem:QTbalC3dec->QTseqskequallyC3dec}
and~\ref{lem:QTseqskequallyC3dec->QTphiC3}.

\begin{proposition}\label{prop:triangleconv}
  The sequence~$(\trianglen)_{n\in\mathbb{N}}$ is convergent.
\end{proposition}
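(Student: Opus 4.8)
The plan is to prove convergence by showing that the sequence $(\trianglen)_{n\in\mathbb{N}}$ is Cauchy with respect to the densities of every fixed tournament, using the self-similar structure of $\trianglen$ exactly as in Lemmas~\ref{lem:QTbalC3dec->QTseqskequallyC3dec} and~\ref{lem:QTseqskequallyC3dec->QTphiC3}. Since the convergence of this sequence is a hypothesis-free statement (it does not presuppose $\phiC$), I cannot appeal to $\phiC$; instead I must argue directly that for each fixed tournament $T'$, the sequence $(p(T';\trianglen))_{n\in\mathbb{N}}$ converges in $\mathbb{R}$.

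\begin{proof}
  Fix a tournament~$T'$ with~$\lvert T'\rvert = s$. We show that the
  sequence~$(p(T';\trianglen))_{n\in\mathbb{N}}$ is Cauchy.

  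First observe that for every~$k\in\mathbb{N}$, the tournament~$\trianglen$
  is~$k$-equally~$\vec C_3$-decomposable once~$n$ is large enough (indeed, it is
  its own canonical~$\vec C_3$-decomposition, and by construction every level has
  skewness at most~$1$). Hence, by the same counting as in the proof of
  Lemma~\ref{lem:QTseqskequallyC3dec->QTphiC3}, for~$n > m \geq 3$ we can
  obtain~$\trianglen[3^m\cdot 3^{\,n-m}]$-like structure by editing few arcs:
  more precisely, from~$\trianglen$ one obtains a tournament~$T^{rec}_{n,m}$ that
  is a~$\vec C_3$-blow-up of~$m$ levels with each of the~$3^m$ bottom parts being a
  copy of~$\trianglen[n_\sigma]$ with all~$n_\sigma$ equal up to~$1$, by editing at
  most~$\binom{\lvert\trianglen\rvert}{2} - \sum_{i=1}^m 3^i(n/3^i)^2 = o_n(n^2)$
  arcs, where the asymptotics is as~$n\to\infty$ with~$m$ fixed. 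Editing~$o(n^2)$
  arcs changes~$p(T';\cdot)$ by~$o_n(1)$, since~$T'$ has a bounded number of
  vertices.

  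Now the key point: the density of~$T'$ in any tournament that is an~$m$-level
  equal~$\vec C_3$-blow-up (with bottom parts~$B_\sigma$, $\sigma\in[3]^m$, of
  roughly equal size) is determined, up to an~$O(1/\min_\sigma\lvert B_\sigma\rvert)$
  error, by a fixed polynomial expression in the quantities~$p(T'';B_\sigma)$
  for~$T''$ ranging over tournaments of size at most~$s$, with coefficients that are
  rational numbers independent of~$n$. This is because an embedding of~$T'$ either
  lands inside a single bottom part or splits its~$s$ vertices among the~$3^m$ parts
  according to the~$\vec C_3$-pattern; the number of ways to do the latter is a
  polynomial in the part sizes, and for~$m$ large (specifically~$3^m > s$), a
  bottom-part copy of~$T'$ cannot straddle two parts at the top level without
  respecting the cyclic pattern. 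Taking~$m$ large enough that~$3^m > s$ and
  noting that~$B_\sigma$ is itself a copy of~$\trianglen[n_\sigma]$, we get a
  self-referential relation: $p(T';\trianglen) = P_m\bigl((p(T'';\trianglen[n_\sigma]))_{T'',\sigma}\bigr) + o_n(1)$,
  where~$P_m$ is a fixed polynomial with the property that the "diagonal" term
  (all~$s$ vertices in one part) carries total coefficient~$3^{-(s-1)m}\to 0$
  as~$m\to\infty$ and the rest involves only strictly smaller tournaments in a
  product structure.

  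\textbf{The main obstacle} is turning this self-reference into an actual Cauchy
  estimate; I would handle it by induction on~$s = \lvert T'\rvert$. For the base
  cases~$s\leq 2$ the densities are eventually constant. For the inductive step,
  assume~$(p(T'';\trianglen))_n$ converges for all~$T''$ with~$\lvert T''\rvert < s$
  and for all strongly connected or reducible~$T''$ of size~$s$ that are not
  isomorphic to~$T'$ handled earlier; more carefully, order the size-$s$ tournaments
  and induct within that layer using that a split embedding of~$T'$ across~$\geq 2$
  parts only ever references~$T''$ of size~$<s$. Then in the relation above, fix a
  large~$m$; all terms except possibly~$p(T';\trianglen[n_\sigma])$ itself converge
  as~$n\to\infty$ (the parts~$\trianglen[n_\sigma]$ form, as~$n\to\infty$,
  subsequences of~$(\trianglen)_n$ with sizes~$\to\infty$, so by induction their
  densities of smaller tournaments converge, and by the already-established
  convergence for earlier size-$s$ tournaments those converge too). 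The only
  self-referential contribution is the diagonal term, of total weight at
  most~$C_s\cdot 3^{-(s-1)m}$. Hence~$\limsup_n p(T';\trianglen) - \liminf_n p(T';\trianglen)
  \leq 2C_s\cdot 3^{-(s-1)m}$ for every~$m$, and letting~$m\to\infty$ gives
  convergence. (Alternatively, and more cleanly, one can invoke that every increasing
  sequence of tournaments has a convergent subsequence, let~$\phi,\psi$ be limits of
  two subsequences, and run the identical self-similarity argument to force~$\phi(T') = \psi(T')$
  for all~$T'$ by induction on~$\lvert T'\rvert$, using that the~$\vec C_3$-blow-up
  pattern expresses~$\phi$ on size-$s$ tournaments in terms of~$\phi$ on
  size-$<s$ tournaments plus a vanishing diagonal correction; this is exactly the
  content one extracts by "reinterpreting the proofs of
  Lemmas~\ref{lem:QTbalC3dec->QTseqskequallyC3dec}
  and~\ref{lem:QTseqskequallyC3dec->QTphiC3}".)
\end{proof}
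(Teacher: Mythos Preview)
Your approach is correct and takes a genuinely different route from the paper's. The paper argues via compactness: it takes any convergent subsequence with limit~$\phi_I$, observes that~$\phi_I$ is balanced and~$\vec C_3$-decomposable (hence satisfies~\ref{it:QTbalC3dec}), and then literally reruns the proofs of Lemmas~\ref{lem:QTbalC3dec->QTseqskequallyC3dec} and~\ref{lem:QTseqskequallyC3dec->QTphiC3} (with the extra bookkeeping that all tournament sizes may be taken in a fixed subsequence~$I_0\subset\{3^n:n\in\mathbb{N}\}$) to deduce~$\phi_I=\phi_{I_0}$; since all subsequential limits coincide, the full sequence converges. Your argument is instead a direct self-similarity induction on~$\lvert T'\rvert$: because~$\trianglen$ is already an~$m$-level equal~$\vec C_3$-blow-up for every~$m$, the density~$p(T';\trianglen)$ splits into a diagonal term of weight at most~$3^{-(s-1)m}$ plus terms involving only strictly smaller tournaments in the parts (which converge by induction), and letting~$m\to\infty$ kills the diagonal. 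This is more elementary---it never invokes balancedness, Lemma~\ref{lem:balneigh}, or Lemma~\ref{lem:twononneg}---and could stand on its own; the paper's version is more economical within its own narrative since it simply recycles work already done. One cosmetic slip: your editing bound~$\binom{n}{2}-\sum_{i=1}^m 3^i(n/3^i)^2$ is~$\Theta(n^2)$, not~$o(n^2)$, for fixed~$m$---but this is harmless, since~$\trianglen$ already \emph{is} the required~$m$-level blow-up with level-$m$ part sizes equal up to~$1$, so no editing is needed at all.
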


\begin{proof}
  Let
  \begin{align*}
    \mathcal{C} & =
    \{I\subset\mathbb{N} : (\trianglen[i])_{i\in I}\text{ is convergent}\},
  \end{align*}
  and for every~$I\in\mathcal{C}$, let~$\phi_I$ denote the limit
  of~$(\trianglen[i])_{i\in I}$.

  From compactness of~$[0,1]^{\mathcal{T}}$, we know
  that~$\mathcal{C}\neq\varnothing$. Even more, from compactness
  of~$[0,1]^{\mathcal{T}}$, we know that there exists~$I_0\in\mathcal{C}$ such that
  \begin{align*}
    I_0\subset \{3^n : n\in\mathbb{N}\}.
  \end{align*}

  Note that if~$I\in\mathcal{C}$, then~$\phi_I$ is~$\vec C_3$-decomposable (since
  the~$\trianglen$ are~$\vec C_3$-decomposable) and balanced (since all vertices of
  the~$\vec C_3$ have outdegree either~$\lfloor\lvert\trianglen\rvert/2\rfloor$
  or~$\lceil\lvert\trianglen\rvert/2\rceil$). Therefore~$\phi_I$
  satisfies~\ref{it:QTbalC3dec}, for every~$I\in\mathcal{C}$.

  Now we repeat the proof of Lemma~\ref{lem:QTbalC3dec->QTseqskequallyC3dec} for
  each~$I\in\mathcal{C}$ to obtain sequences~$(T_n^{(k)})_{n\in\mathbb{N}}$
  of~$k$-equally~$\vec C_3$-decomposable tournaments converging to~$\phi_I$ for
  each~$k\in\mathbb{N}$. However, we require that these sequences are such
  that~$\lvert T_n^{(k)}\rvert\in I_0$ for every~$n,k\in\mathbb{N}$.

  We proceed then to the proof of Lemma~\ref{lem:QTseqskequallyC3dec->QTphiC3} and we
  get that~$\phi_I$ is also the limit of a subsequence
  of~$(\trianglen[i])_{i\in I_0}$, hence~$\phi_I=\phi_{I_0}$ for every~$I\in\mathcal{C}$.

  Therefore, every convergent subsequence of~$(\trianglen)_{n\in\mathbb{N}}$
  converges to the same limit~$\phi_{I_0}$. By compactness of~$[0,1]^{\mathcal{T}}$
  again, this implies that~$(\trianglen)_{n\in\mathbb{N}}$ is convergent.
\end{proof}


		
\section{Proof of Theorem~\protect\ref{thm:C3deccharac}}
\label{sec:proofdec}

For convenience of the reader we state the theorem again below.

\begin{theorem*}
  A tournament~$T$ is~$\vec C_3$-decomposable if and only if it has no copies
  of~$T_5^8$, $T_5^{10}$ nor of~$T_5^{12}$.
\end{theorem*}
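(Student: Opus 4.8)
The proof has two directions. For the easy direction ($\vec C_3$-decomposable $\implies$ no copy of $\toito$, $\tdez$, $\tdoze$), I would first record that the class of $\vec C_3$-decomposable tournaments is hereditary, i.e.\ closed under induced subtournaments: arguing by induction on $|T|$, if $(A,B,C)$ witnesses $T\in\mathcal{B}_{|T|}$ and $\varnothing\neq S\subseteq V(T)$, then either $S$ lies inside one of $A,B,C$, in which case the induction hypothesis applied to the corresponding part does the job, or $(A\cap S,\,B\cap S,\,C\cap S)$ is itself a triple satisfying the conditions of Definition~\ref{def:C3dec} for $T|_S$. With heredity in hand, it suffices to check that none of $\toito,\tdez,\tdoze$ is $\vec C_3$-decomposable. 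Each is strongly connected on five vertices, so a top-level $\vec C_3$-decomposition would be a partition into three nonempty parts $A,B,C$ with $A\times B,\,B\times C,\,C\times A\subseteq A(T)$; going through the two possible size profiles $\{3,1,1\}$ and $\{2,2,1\}$ and comparing with the outdegree sequences of the three tournaments rules this out in every case. Hence any tournament with a copy of one of them fails to be $\vec C_3$-decomposable.

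For the converse I would show, by strong induction on $|T|$, that a tournament avoiding $\toito,\tdez,\tdoze$ is $\vec C_3$-decomposable. The cases $|T|\le 1$ are the base of the induction; for $|T|\ge 2$ the key observation is that a nontrivial top-level $\vec C_3$-decomposition of a strongly connected $T$ is exactly a partition of $V(T)$ into three nonempty \emph{modules} whose quotient is $\vec C_3$. Three cases arise. If $T$ is not strongly connected, take the first strongly connected component $C_1$ of its condensation and put $A=C_1$, $B=V(T)\setminus C_1$, $C=\varnothing$; the parts are smaller, avoid the forbidden tournaments, hence are $\vec C_3$-decomposable by induction, so $(A,B,C)$ witnesses $T\in\mathcal{B}_{|T|}$. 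If $T$ is strongly connected but has a nontrivial module $M$, then the quotient $Q=T/M$ is isomorphic to the induced subtournament on $(V(T)\setminus M)\cup\{m_0\}$ for any $m_0\in M$; it is strongly connected, has fewer vertices, and avoids the forbidden tournaments, so by induction it is $\vec C_3$-decomposable, and being strongly connected with at least three vertices it has a top-level partition into three nonempty modules forming a $\vec C_3$. Pulling this partition back along the quotient map $V(T)\to V(Q)$ (preimages of modules are modules) produces a nontrivial $\vec C_3$-decomposition $(A,B,C)$ of $T$ whose parts are again smaller and forbidden-free, hence $\vec C_3$-decomposable by induction; so $T\in\mathcal{B}_{|T|}$.

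The remaining case, and the heart of the argument, is $T$ strongly connected and \emph{prime} (no nontrivial module). For $|T|=3$ this forces $T=\vec C_3\in\mathcal{B}_3$ via its three singletons, and I claim that no prime strongly connected tournament with $|T|\ge 4$ avoids all three of $\toito,\tdez,\tdoze$. First, a prime tournament on at least three vertices is automatically strongly connected (a source or a sink yields a nontrivial module), and there is no prime tournament on exactly four vertices. A finite check over the six strongly connected five-vertex tournaments shows the $\vec C_3$-decomposable ones are exactly $\tsete,\tnove,\tonze$, equivalently the prime ones are exactly $\toito,\tdez,\tdoze$. For $|T|\ge 5$ I would invoke the hereditary indecomposability theorem of Schmerl and Trotter --- every indecomposable tournament on $n\ge 5$ vertices contains an indecomposable induced subtournament on $n-1$ or $n-2$ vertices --- and descend, at each step staying prime and of size at least $5$ (using the non-existence of prime $4$-vertex tournaments to rule out overshooting), until reaching an induced prime subtournament on exactly five vertices, which must be one of $\toito,\tdez,\tdoze$; this contradicts that $T$ avoids them. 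Thus the prime case is vacuous for $|T|\ge 4$, completing the induction.

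The main obstacle is precisely this last case: it is where genuine structure theory of indecomposable tournaments enters, via the Schmerl--Trotter descent. If one prefers a self-contained treatment, that input can be replaced by a direct (but somewhat technical) argument that a prime strongly connected tournament on at least six vertices contains a prime strongly connected tournament on five vertices; the rest of the work --- heredity of $\vec C_3$-decomposability, the non-decomposability of $\toito,\tdez,\tdoze$, and the classification of the $\vec C_3$-decomposable strongly connected $5$-vertex tournaments --- is routine finite verification.
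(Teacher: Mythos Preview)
Your proof is correct, and it takes a genuinely different route from the paper's own argument.

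The paper proceeds by a direct, self-contained construction: given a non-transitive $T$ avoiding the three tournaments, it picks a $3$-cycle $a,b,c$ minimizing the number of vertices that dominate or are dominated by all of $\{a,b,c\}$, partitions $V(T)\setminus\{a,b,c\}$ into the eight sets $V_{abc},V_{ab},\dotsc,V_\varnothing$ according to the arcs to $a,b,c$, and then checks, case by case, that every cross-arc between these sets is forced (each wrong orientation produces a concrete copy of $\toito$, $\tdez$ or $\tdoze$). The minimality choice then pins down $V_{abc}\times V_\varnothing$, and the resulting picture (Figure~\ref{fig:strucC3dec}) immediately yields a top-level $\vec C_3$-decomposition.

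Your argument instead goes through modular decomposition: the crucial observation is that $\toito$, $\tdez$, $\tdoze$ are exactly the \emph{prime} tournaments on five vertices, so by the Schmerl--Trotter/Ille downward theorem every prime tournament on at least five vertices contains one of them. Hence a tournament avoiding all three is either small, or not strongly connected, or has a nontrivial module; in each case you read off a top-level split and recurse. This is more conceptual---it explains \emph{why} these particular three tournaments are the minimal obstructions---at the cost of importing Schmerl--Trotter as a black box, whereas the paper's approach is elementary and entirely self-contained. Both approaches share the same hereditary direction and the same finite checks on five vertices; they diverge only in how the inductive step is driven. Two small points worth making explicit in your write-up: in Case~2 you should note that strong connectivity forces $\lvert M\rvert\le\lvert T\rvert-2$ (so that $\lvert Q\rvert\ge 3$), and in the Schmerl--Trotter descent from $\lvert T\rvert=6$ the nonexistence of prime $4$-vertex tournaments is what guarantees the subtournament produced lies on exactly five vertices.
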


\begin{proof}
  It is straightforward to check that~$T_5^8$, $T_5^{10}$ and~$T_5^{12}$ are
  not~$\vec C_3$-decomposable and that the property of~$\vec C_3$-decomposability is
  hereditary (i.e., every subtournament of a~$\vec C_3$-decomposable tournament is
  also~$\vec C_3$-decomposable). This concludes the proof for one direction.

  \medskip

  We will prove the other direction by induction in the size~$n$ of the
  tournament~$T$ with no copies of~$T_5^8$, $T_5^{10}$ nor of~$T_5^{12}$.

  If~$n\leq 2$, then trivially~$T$ is~$\vec C_3$-decomposable. So let~$n\geq 3$ and
  suppose the assertion is true for tournaments of size smaller than~$n$.

  If~$T$ is transitive, then we can let~$A$ be the singleton consisting of the vertex
  of~$T$ with maximum outdegree, let~$B=V(T)\setminus A$ and~$C=\varnothing$ and note
  that~$(A,B,C)$ satisfies the items in Definition~\ref{def:C3dec} (using inductive
  hypothesis for item~\ref{it:recursion}), hence~$T$ is~$\vec C_3$-decomposable.

  Suppose then that~$T$ is not transitive and let~$a,b,c\in V(T)$ be such
  that~$ab,bc,ca\in A(T)$.

  Define the following sets
  \begin{align*}
    V_{abc} & = \{v\in V(T) : va, vb, vc\in A(T)\}; &
    V_{ab} & = \{v\in V(T) : va, vb, cv\in A(T)\};
    \\
    V_{bc} & = \{v\in V(T) : av, vb, vc\in A(T)\}; &
    V_{ac} & = \{v\in V(T) : va, bv, vc\in A(T)\};
    \\
    V_a & = \{v\in V(T) : va, bv, cv\in A(T)\}; &
    V_b & = \{v\in V(T) : av, vb, cv\in A(T)\};
    \\
    V_c & = \{v\in V(T) : av, bv, vc\in A(T)\}; &
    V_\varnothing & = \{v\in V(T) : av, bv, cv\in A(T)\};
  \end{align*}
  and note that these sets form a partition of~$V(T)\setminus\{a,b,c\}$.

  We may suppose furthermore that~$(a,b,c)$ is chosen in such a way as to
  minimize~$\lvert V_{abc}\cup V_\varnothing\rvert$.

  We claim now the following assertions (see Figure~\ref{fig:arcsC3dec}).
  \begin{enumerate}[label={\alph*.}, ref={(\alph*)}]
  \item $V_{ab}\times V_{bc}, V_{bc}\times V_{ac}, V_{ac}\times V_{ab}\subset A(T)$,
    otherwise there would exist a copy of~$T_5^{10}$ in~$T$;
    \label{it:arcs22}
  \item $V_a\times V_b, V_b\times V_c, V_c\times V_a\subset A(T)$, otherwise there
    would exist a copy of~$T_5^{10}$ in~$T$;
    \label{it:arcs11}
  \item $V_a\times V_{ab}, V_b\times V_{bc}, V_c\times V_{ac}\subset A(T)$, otherwise
    there would exist a copy of~$T_5^8$ in~$T$;
    \label{it:arcs12}
  \item $V_{ab}\times V_c, V_{bc}\times V_a, V_{ac}\times V_b\subset A(T)$, otherwise
    there would exist a copy of~$T_5^{12}$ in~$T$;
    \label{it:arcs21}
  \item $V_{abc}\times(V_a\cup V_b\cup V_c)\subset A(T)$, otherwise there would exist
    a copy of~$T_5^{10}$ in~$T$;
    \label{it:arcs31}
  \item $V_{abc}\times(V_{ab}\cup V_{bc}\cup V_{ac})\subset A(T)$, otherwise there
    would exist a copy of~$T_5^8$ in~$T$;
    \label{it:arcs32}
  \item $(V_a\cup V_b\cup V_c)\times V_\varnothing\subset A(T)$, otherwise there
    would exist a copy of~$T_5^8$ in~$T$;
    \label{it:arcs10}
  \item $(V_{ab}\cup V_{bc}\cup V_{ac})\times V_\varnothing\subset A(T)$, otherwise there
    would exist a copy of~$T_5^{10}$ in~$T$.
    \label{it:arcs20}
  \end{enumerate}

  \begin{figure}[ht]
    \begin{center}
      \input{arcsC3dec}
      \caption{Contradictions of the proof of Theorem~\ref{thm:C3deccharac} involving
        arcs between the sets~$V_{abc}$, $V_{ab}$, $V_{bc}$, $V_{ac}$, $V_a$, $V_b$,
        $V_c$ and~$V_\varnothing$ and forbidden tournaments~$T_5^8$, $T_5^{10}$
        and~$T_5^{12}$. The arcs omitted are all oriented downward.}
      \label{fig:arcsC3dec}
    \end{center}
  \end{figure}

  Now we claim that~$V_{abc}\times V_\varnothing\subset A(T)$. Suppose not, that is,
  suppose that~$v_{abc}\in V_{abc}$ and~$v_\varnothing\in V_\varnothing$ are such
  that~$v_\varnothing v_{abc}\in A(T)$. Since~$v_{abc}a,av_\varnothing\in A(T)$, we
  have
  \begin{align*}
    \{v\in V(T) : va, vv_{abc}, vv_\varnothing\in A(T)\}\cup
    \{v\in V(T) : av, v_{abc}v, v_\varnothing v\in A(T)\} & \subset
    (V_{abc}\cup V_\varnothing)\setminus\{v_{abc},v_\varnothing\},
  \end{align*}
  contradicting the choice of~$(a,b,c)$ such as to minimize~$\lvert V_{abc}\cup
  V_\varnothing\rvert$. Therefore we must have~$V_{abc}\times V_\varnothing\subset
  A(T)$.


  Figure~\ref{fig:strucC3dec} shows all arcs of~$T$ proven so far.

  \begin{figure}[ht]
    \begin{center}
      \tikzsetnextfilename{strucC3dec}
\begin{tikzpicture}
  \def\psize{2pt}
  \def\externrad{5}
  \def\circumrad{5}
  \def\abcrad{3}
  \def\setrad{0.4}
  \def\abclabeldist{0.4}
  \def\lwidth{3}
  \def\shortenpoint{0.2}
  \def\shorten{0.2}

  \pgfmathsetmacro{\shortenset}{\setrad + \shorten}
  \pgfmathsetmacro{\shortencircum}{\circumrad + \shorten}

  \begin{scope}[xshift={-\externrad cm}, rotate={-90}]
    \pgfmathsetmacro{\outrad}{\circumrad - \setrad}
    \pgfmathsetmacro{\inrad}{2*\abcrad - \outrad}

    \coordinate (abcCenter) at (0cm, 0cm);

    \def\pp{c}
    \def\po{a}
    \def\pi{ac}
    \def\pa{-30}
    \coordinate (\pp) at (\pa:\abcrad cm);
    \coordinate (v\po) at (\pa:\outrad cm);
    \coordinate (v\pi) at (\pa:\inrad cm);
    \foreach[%
      remember=\p as \pp,
      remember=\o as \po,
      remember=\i as \pi,
      remember=\a as \pa%
    ] \p/\o/\i/\a in {%
      a/b/ab/90,
      b/c/bc/210,
      c/a/ac/330%
    }{
      \coordinate (\p) at (\a:\abcrad cm);
      \coordinate (v\o) at (\a:\outrad cm);
      \coordinate (v\i) at (\a:\inrad cm);

      \filldraw (\p) circle (\psize);
      \draw (v\o) circle (\setrad cm);
      \draw (v\i) circle (\setrad cm);

      \node at (v\o) {$V_{\o}$};
      \node at (v\i) {$V_{\i}$};
      \node at ($(\p) + (\a+45:\abclabeldist)$) {$\p$};

      \draw[thick, arrows={-latex},
        shorten <=\shortenset cm, shorten >=\shortenpoint cm]
      (v\i) -- (\p);
      \draw[thick, arrows={-latex},
        shorten <=\shortenpoint cm, shorten >=\shortenset cm]
      (\p) -- (v\o);

      \pgfmathsetmacro{\ta}{\a-90}
      \pgfmathsetmacro{\pta}{\pa+90}
      \pgfmathsetmacro{\ma}{(\a+\pa)/2}

      \pgfmathsetmacro{\inabcrad}{(\inrad+\abcrad)/2}
      \pgfmathsetmacro{\outabcrad}{(\outrad+\abcrad)/2}

      \foreach \orig/\origsiz/\dest/\destsiz/\contrad in {%
        v\pi/\shortenset/v\i/\shortenset/\inrad,
        v\pi/\shortenset/\p/\shortenpoint/\inabcrad,
        v\pi/\shortenset/v\o/\shortenset/\abcrad,
        \pp/\shortenpoint/v\i/\shortenset/\inabcrad,
        \pp/\shortenpoint/\p/\shortenpoint/\abcrad,
        \pp/\shortenpoint/v\o/\shortenset/\outabcrad,
        v\po/\shortenset/v\i/\shortenset/\abcrad,
        v\po/\shortenset/\p/\shortenpoint/\outabcrad,
        v\po/\shortenset/v\o/\shortenset/\outrad%
      }{
        \draw[thick, arrows={-latex}, shorten <=\origsiz cm, shorten >=\destsiz cm]
        (\orig) .. controls (\ma:\contrad cm) .. (\dest);

      }
    }
  \end{scope}

  \draw (abcCenter) circle (\circumrad cm);

  \coordinate (vabc) at (60:\externrad cm);
  \coordinate (vempty) at (300:\externrad cm);

  \node at (vabc) {$V_{abc}$};
  \node at (vempty) {$V_\varnothing$};

  \draw (vabc) circle (\setrad);
  \draw (vempty) circle (\setrad);

  \draw[line width=\lwidth, arrows={-latex},
    shorten <=\shortenset cm, shorten >=\shortencircum cm]
  (vabc) -- (abcCenter);
  \draw[line width=\lwidth, arrows={-latex},
    shorten <=\shortencircum cm, shorten >=\shortenset cm]
  (abcCenter) -- (vempty);
  \draw[line width=\lwidth, arrows={-latex},
    shorten <=\shortenset cm, shorten >=\shortenset cm]
  (vabc) -- (vempty);
\end{tikzpicture}
      \caption{Typical structure of~$T$ in the proof of
        Theorem~\ref{thm:C3deccharac}.}
      \label{fig:strucC3dec}
    \end{center}
  \end{figure}
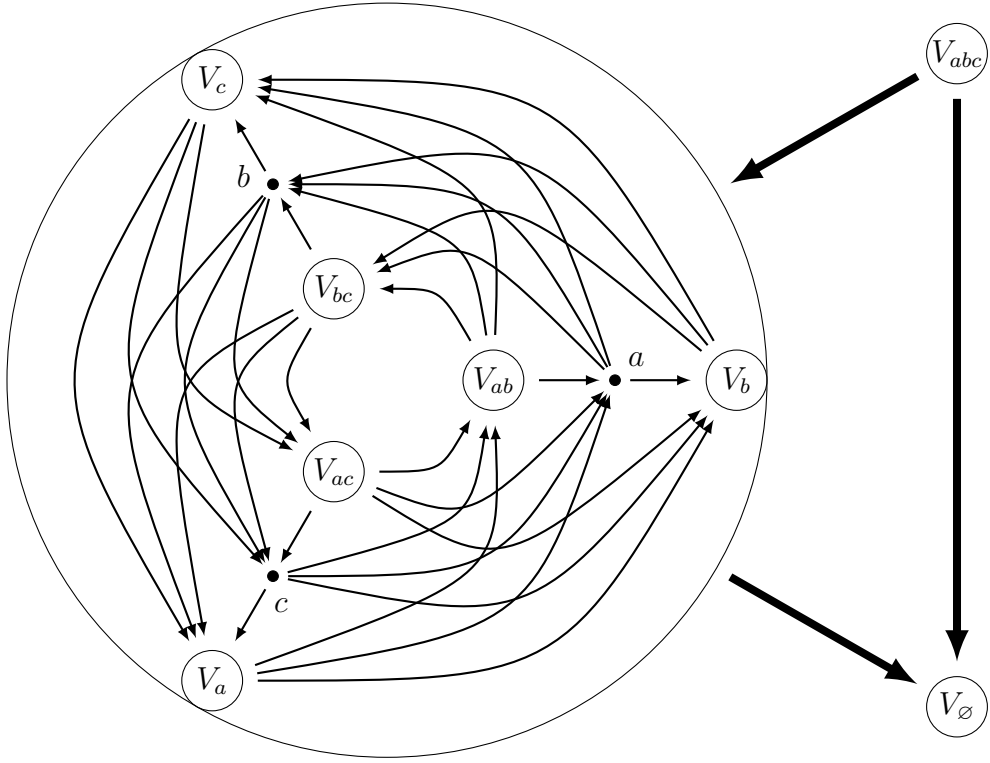

  Finally, we consider three cases.

  If~$V_{abc}\neq\varnothing$, let~$A = V_{abc}$, $B = V(T) \setminus V_{abc}$
  and~$C=\varnothing$ and note that~$(A,B,C)$ satisfies the items in
  Definition~\ref{def:C3dec} (using inductive hypothesis for
  item~\ref{it:recursion}), hence~$T$ is~$\vec C_3$-decomposable.
  
  If~$V_\varnothing\neq\varnothing$, let~$A = V(T) \setminus V_\varnothing$,
  $B=V_\varnothing$ and~$C=\varnothing$ and note that~$(A,B,C)$ satisfies the items
  in Definition~\ref{def:C3dec} (using inductive hypothesis for
  item~\ref{it:recursion}), hence~$T$ is~$\vec C_3$-decomposable.

  And finally, if~$V_{abc}\cup V_\varnothing = \varnothing$, let
  \begin{align*}
    A & = \{a\}\cup V_b\cup V_{ab}; &
    B & = \{b\}\cup V_c\cup V_{bc}; &
    C & = \{c\}\cup V_a\cup V_{ac};
  \end{align*}
  and note that~$(A,B,C)$ satisfies the items in Definition~\ref{def:C3dec} (using
  inductive hypothesis for item~\ref{it:recursion}), hence~$T$ is~$\vec
  C_3$-decomposable.
\end{proof}


	
\section{Proof of Lemma~\protect\ref{lem:twononneg}}
\label{sec:prooftwononneg}

For convenience of the reader we state the lemma again below.

\begin{lemma*}
  If~$(T_n)_{n\in\mathbb{N}}$ is a sequence of~$\vec C_3$-decomposable tournaments
  that converges to a balanced homomorphism~$\phi\in\Homp{0}$, then there
  exists a sequence~$(T_n')_{n\in\mathbb{N}}$ of~$\vec C_3$-decomposable tournaments
  and for every~$n\in\mathbb{N}$ a~$\vec
  C_3$-decomposition~$(A^{(n)}_\sigma)_{\sigma\in\Sigma^*}$ of~$T_n'$ such that
  \begin{itemize}
  \item There exists a subsequence~$(T_{k_n})_{n\in\mathbb{N}}$
    of~$(T_n)_{n\in\mathbb{N}}$ such that the tournament~$T_n'$ can be obtained
    from~$T_{k_n}$ by flipping~$o(\lvert T_{k_n}\rvert^2)$ arcs
    (hence~$(T_n')_{n\in\mathbb{N}}$ also converges to~$\phi$);
  \item We have~$\lvert A^{(n)}_1\rvert = \Omega(\lvert T_n'\rvert)$ and~$\lvert
    A^{(n)}_2\rvert = \Omega(\lvert T_n'\rvert)$.
  \end{itemize}
\end{lemma*}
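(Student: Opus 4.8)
The plan is to extract from a $\vec C_3$-decomposition of each~$T_n$ one nested chain of sets along which the tournament ``unfolds'', and then to prove the conclusion by a ``regrouping'' move applied either to the whole chain or to a suitable upper segment of it. First pass to a convergent subsequence — harmless, since the statement only asks that \emph{some} subsequence~$(T_{k_n})$ exist — and for each~$n$ fix a $\vec C_3$-decomposition and follow the \emph{greedy chain}
\begin{align*}
  V(T_n) = D_0^{(n)} \supseteq D_1^{(n)} \supseteq \cdots \supseteq D_{\ell_n}^{(n)},
\end{align*}
in which~$D_{i+1}^{(n)}$ is the largest of the three parts into which~$T_n\vert_{D_i^{(n)}}$ is decomposed; call the other two parts~$F_i^{(n)}$ and~$G_i^{(n)}$, labelled so that~$D_{i+1}^{(n)}\times F_i^{(n)}$, $F_i^{(n)}\times G_i^{(n)}$ and~$G_i^{(n)}\times D_{i+1}^{(n)}$ all lie in~$A(T_n)$. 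Since we always take the largest child, $\lvert D_{i+1}^{(n)}\rvert\geq\lvert D_i^{(n)}\rvert/3$ (so~$\ell_n\leq\log_3\lvert T_n\rvert$) and~$\lvert F_i^{(n)}\rvert,\lvert G_i^{(n)}\rvert\leq\lvert D_i^{(n)}\rvert/2\leq\lvert T_n\rvert/2$. Checking which part contains a given vertex at a given level yields the arc pattern among the leftover sets: for~$i<j$ the sets~$F_j^{(n)}\times F_i^{(n)}$, $G_i^{(n)}\times G_j^{(n)}$, $G_j^{(n)}\times F_i^{(n)}$, $G_i^{(n)}\times F_j^{(n)}$ lie in~$A(T_n)$, while~$F_i^{(n)}\times G_i^{(n)}\subseteq A(T_n)$ within each level; and the last block~$D_{\ell_n}^{(n)}$ (at most one vertex) is beaten by every~$G_i^{(n)}$ and beats every~$F_i^{(n)}$.

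The central move is \emph{regrouping}: set~$F^{(n)}=\bigcup_i F_i^{(n)}$, $G^{(n)}=\bigcup_i G_i^{(n)}$; by the arc pattern, the only arcs obstructing ``$G^{(n)}$ beats~$F^{(n)}\cup D_{\ell_n}^{(n)}$ entirely'' are those in~$\bigcup_i\bigl(F_i^{(n)}\times G_i^{(n)}\bigr)$, so after flipping exactly these~$\sum_i\lvert F_i^{(n)}\rvert\lvert G_i^{(n)}\rvert$ arcs we obtain a $\vec C_3$-decomposable~$T_n'$ with top triple~$\bigl(G^{(n)},\,F^{(n)}\cup D_{\ell_n}^{(n)},\,\varnothing\bigr)$ (using that induced subtournaments of $\vec C_3$-decomposable tournaments are such, and that adjoining the dominating block~$D_{\ell_n}^{(n)}$ preserves this). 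That the two parts have linear size is where balancedness enters: since~$(T_n)_{n\in\mathbb{N}}$ converges to the balanced~$\phi$, all but~$o(\lvert T_n\rvert)$ vertices of~$T_n$ have outdegree~$(1/2+o(1))\lvert T_n\rvert$ (this follows from~$\phi(\vec C_3)=1/4$, cf.~Lemma~\ref{lem:balanced}, together with convexity of the out-degree statistic); now~$d^+(v)\geq\sum_{i<j}\lvert F_i^{(n)}\rvert$ for~$v\in F_j^{(n)}$ and~$d^-(v)\geq\sum_{i<j}\lvert G_i^{(n)}\rvert$ for~$v\in G_j^{(n)}$, and, averaging over the balanced vertices inside a largest leftover set that contains one (using that each leftover set has size at most~$\lvert T_n\rvert/2$, by greediness), one concludes~$\lvert F^{(n)}\rvert,\lvert G^{(n)}\rvert=\Omega(\lvert T_n\rvert)$ (together with~$\lvert F^{(n)}\rvert+\lvert G^{(n)}\rvert=\lvert T_n\rvert-O(1)$).

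It remains to keep the number of flipped arcs at~$o(\lvert T_n\rvert^2)$ so that convergence to~$\phi$ is preserved. Regrouping the whole chain costs~$\sum_i\lvert F_i^{(n)}\rvert\lvert G_i^{(n)}\rvert$, which is~$o(\lvert T_n\rvert^2)$ as soon as~$\max_i\lvert F_i^{(n)}\rvert=o(\lvert T_n\rvert)$ (or the same for~$G$), because~$\sum_i\lvert F_i^{(n)}\rvert\lvert G_i^{(n)}\rvert\leq\max_i\lvert F_i^{(n)}\rvert\cdot\sum_i\lvert G_i^{(n)}\rvert$. In the complementary case some leftover set~$F_{i^*(n)}^{(n)}$ or~$G_{i^*(n)}^{(n)}$ has size~$\Omega(\lvert T_n\rvert)$; taking~$i^*(n)$ minimal with this property and again using greediness, $T_n\vert_{D_{i^*(n)}^{(n)}}$ already has a top split into two parts of linear size and~$D_{i^*(n)}^{(n)}$ is a linear fraction of~$\lvert T_n\rvert$, so one keeps that split as the top split of~$T_n'$ and re-attaches the vertices strictly above level~$i^*(n)$ by a further regrouping applied only to the levels~$0,\dots,i^*(n)-1$. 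A diagonal argument over~$n$ then merges the two cases into one sequence~$(T_n')_{n\in\mathbb{N}}$; relabelling the two linear top parts as~$A_1^{(n)},A_2^{(n)}$ gives the statement.

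The step I expect to be the main obstacle is precisely the bookkeeping in the last paragraph. The dangerous configuration is a chain skewed at \emph{every} level but only mildly so, which can pile up~$\Omega(\lvert T_n\rvert)$ vertices in the leftover sets before any single one becomes a positive fraction of~$\lvert T_n\rvert$: then one cannot just discard everything above a ``good'' level and declare victory. The point that rescues the argument is that in exactly that configuration~$\sum_i\lvert F_i^{(n)}\rvert\lvert G_i^{(n)}\rvert=o(\lvert T_n\rvert^2)$ (every factor being small), so a single global regrouping applies and — thanks to the balancedness estimate, which prevents the regrouped mass from collapsing onto one side — yields two linear parts. Carrying this out means making the interlocking~$o(\cdot)$ and~$\Omega(\cdot)$ bounds simultaneously precise: in particular that~$i^*(n)$ can be taken at bounded depth, and that in the~$\Omega(\lvert T_n\rvert^2)$ regime the regrouping of the levels above~$i^*(n)$ really costs only~$o(\lvert T_n\rvert^2)$.
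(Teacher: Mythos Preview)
Your greedy chain and the ``regrouping'' move are exactly the paper's setup (it writes $B^{(n)}_{1^t}$ for your $D_t^{(n)}$ and $B^{(n)}_{1^t2},B^{(n)}_{1^t3}$ for your $F_t^{(n)},G_t^{(n)}$), and your regrouping is precisely the construction that appears inside Claim~\ref{clm:nonnegpred}. One small slip that matters later: $\lvert D_{i+1}\rvert\geq\lvert D_i\rvert/3$ gives $\ell_n\geq\log_3\lvert T_n\rvert$, not $\leq$; peeling off one vertex per level is allowed by greediness, so $\ell_n$ --- and hence $i^*(n)$ --- can be as large as $\lvert T_n\rvert-1$.

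The genuine gap is your complementary case. Once you fix a constant $c>0$ witnessing $\lvert F_{i^*}\rvert\geq c\lvert T_n\rvert$, minimality of $i^*$ only gives $\lvert F_i\rvert,\lvert G_i\rvert<c\lvert T_n\rvert$ for $i<i^*$, so regrouping those levels costs $\sum_{i<i^*}\lvert F_i\rvert\lvert G_i\rvert\leq c\lvert T_n\rvert^2$, which is \emph{not} $o(\lvert T_n\rvert^2)$; and you cannot send $c\to0$ diagonally without simultaneously losing the $\Omega$-size of the level-$i^*$ split. Nor is $i^*(n)$ bounded, by the previous paragraph. Balancedness does eventually forbid the bad configurations, but extracting that requires exactly the size relations the paper proves: applying Lemma~\ref{lem:balneigh} to $D_{j+1}$ and to $F_j$ yields $\lvert D_{j+1}\rvert=\lvert G_j\rvert+o(\lvert T_n\rvert)$ whenever $\lvert F_j\rvert=\Omega(\lvert T_n\rvert)$, whence $\lvert F_u\rvert\geq 3\lvert F_v\rvert-o(\lvert T_n\rvert)$ for any two $\Omega$-sized pieces at depths $u<v$ (Claim~\ref{clm:twononnegrel}). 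The paper packages this as a contradiction rather than a direct construction: assume no good $T_n'$ exists, use your regrouping (Claims~\ref{clm:nonnegpred}--\ref{clm:nonnegpredstrong}) to locate an $\Omega$-sized piece $F_{w_0}$ with $w_0$ minimal for a fixed threshold~$\epsilon$, then manufacture a strictly shallower $\Omega$-sized piece $F_u$ and invoke the tripling relation to get $\lvert F_u\rvert\geq\epsilon\lvert T_n\rvert$, contradicting minimality of~$w_0$. Your outline has the right ingredients but is missing this last mechanism.
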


\begin{proof}
  Suppose the lemma is not true and let~$(T_n)_{n\in\mathbb{N}}$ be a counter-example
  sequence.

  For every~$n\in\mathbb{N}$, let~$(B^{(n)}_\sigma)_{\sigma\in\Sigma^*}$ be a~$\vec
  C_3$-decomposition of~$T_n$. Without loss of generality, we may suppose that
  \begin{align*}
    \forall n\in\mathbb{N}, \forall \sigma\in\Sigma^*,
    \lvert B^{(n)}_{\sigma1}\rvert \geq \lvert B^{(n)}_{\sigma2}\rvert
    \land
    \lvert B^{(n)}_{\sigma1}\rvert \geq \lvert B^{(n)}_{\sigma3}\rvert.
  \end{align*}

  \begin{claim}\label{clm:intervalsamesize}
    Suppose~$u,v\:\mathbb{N}\to\mathbb{N}$ are two functions such that~$u(n)\leq
    v(n)$ for every~$n\in\mathbb{N}$.

    If~$\lvert B^{(n)}_{1^{v(n)+1}}\rvert = \Omega(\lvert T_n\rvert)$, then
    \begin{align*}
      \left\lvert\bigcup_{t=u(n)}^{v(n)}B^{(n)}_{1^t2}\right\rvert -
      \left\lvert\bigcup_{t=u(n)}^{v(n)}B^{(n)}_{1^t3}\right\rvert & = o(\lvert
      T_n\rvert).
    \end{align*}
  \end{claim}

  \begin{proof}
    Note that
    \begin{align*}
      N^+(B^{(n)}_{1^{v(n)+1}}) & = \bigcup_{t=0}^{v(n)}B^{(n)}_{1^t2}; &
      N^-(B^{(n)}_{1^{v(n)+1}}) & = \bigcup_{t=0}^{v(n)}B^{(n)}_{1^t3}.
    \end{align*}

    Since~$\lvert B^{(n)}_{1^{v(n)+1}}\rvert = \Omega(\lvert T_n\rvert)$, by
    Lemma~\ref{lem:balneigh}, we have
    \begin{align}\label{eq:vsiz}
      \left\lvert\bigcup_{t=0}^{v(n)}B^{(n)}_{1^t2}\right\rvert -
      \left\lvert\bigcup_{t=0}^{v(n)}B^{(n)}_{1^t3}\right\rvert & =
      o(\lvert T_n\rvert).
    \end{align}

    Note that, since~$u(n)\leq v(n)$, we have~$B^{(n)}_{1^{v(n)+1}}\subset
    B^{(n)}_{1^{u(n)}}$, which implies~$\lvert B^{(n)}_{1^{u(n)}}\rvert=\Omega(\lvert
    T_n\rvert)$, hence we have
    \begin{align}\label{eq:usiz}
      \left\lvert\bigcup_{t=0}^{u(n)-1}B^{(n)}_{1^t2}\right\rvert -
      \left\lvert\bigcup_{t=0}^{u(n)-1}B^{(n)}_{1^t3}\right\rvert & =
      o(\lvert T_n\rvert),
    \end{align}
    analogously to the case with~$v(n)$.

    The result follows by subtracting equation~\eqref{eq:usiz} from
    equation~\eqref{eq:vsiz}.
  \end{proof}

  \begin{claim}\label{clm:nonnegpred}
    If~$u\:\mathbb{N}\to\mathbb{N}$ is a function such that~$\lvert
    B^{(n)}_{1^{u(n)+1}2}\rvert = \Omega(\lvert T_n\rvert)$, then
    \begin{align*}
	  \left\lvert\bigcup_{t=0}^{u(n)}B^{(n)}_{1^t2}\right\rvert & =
	  \Omega(\lvert T_n\rvert).
    \end{align*}
  \end{claim}

  \begin{proof}
    Suppose the claim is not true. This means that there is a
    subsequence~$(T_{k_n})_{n\in\mathbb{N}}$ of~$(T_n)_{n\in\mathbb{N}}$ such that
    \begin{align}\label{eq:falsenonnegpred}
	  \left\lvert\bigcup_{t=0}^{u(k_n)}B^{(k_n)}_{1^t2}\right\rvert & =
	  o(\lvert T_{k_n}\rvert).
    \end{align}

    Let~$T_n'$ be the tournament obtained from~$T_{k_n}$ by flipping all the arcs in
    \begin{align*}
      A(T_{k_n})\cap
      &
	  \left(
      \left(
      B^{(k_n)}_{1^{u(k_n)+1}3}\times
      \left(\bigcup_{t=0}^{u(k_n)}B^{(k_n)}_{1^t2}\right)
      \right)
      \right.\\ & \cup
      \left(
      \left(\bigcup_{t=0}^{u(k_n)}B^{(k_n)}_{1^t3}\right)\times
      \left(\bigcup_{t=0}^{u(k_n)}B^{(k_n)}_{1^t2}\right)
      \right)
      \\ &\left. \cup
      \left(
      \left(\bigcup_{t=0}^{u(k_n)}B^{(k_n)}_{1^t3}\right)\times
      B^{(k_n)}_{1^{u(k_n)+1}2}
      \right)
      \right)
    \end{align*}
    and note that equation~\eqref{eq:falsenonnegpred} and
    Claim~\ref{clm:intervalsamesize} imply that the total of arcs flipped is~$o(\lvert
    T_{k_n}\rvert^2)$.

    Let
    \begin{align*}
      A^{(n)}_1 & = B^{(k_n)}_{1^{u(k_n)+2}}; &
      A^{(n)}_2 & = \bigcup_{t=0}^{u(k_n)+1}B^{(k_n)}_{1^t2}; &
      A^{(n)}_3 & = \bigcup_{t=0}^{u(k_n)+1}B^{(k_n)}_{1^t3}; &
    \end{align*}
    and note that~$\lvert A^{(n)}_1\rvert\geq\lvert A^{(n)}_2\rvert = \Omega(\lvert
    T_n'\rvert)$.

    Completing~$(A^{(n)}_1,A^{(n)}_2,A^{(n)}_3)$ to a~$\vec C_3$-decomposition
    of~$T_n'$ contradicts the choice of~$(T_n)_{n\in\mathbb{N}}$ as a counter-example
    sequence.
  \end{proof}

  \begin{claim}\label{clm:nonnegpredstrong}
	If~$u\:\mathbb{N}\to\mathbb{N}$ is a function such that~$\lvert
    B^{(n)}_{1^{u(n)+1}}\rvert = \Omega(\lvert T_n\rvert)$ and
    \begin{align*}
      \left\lvert\bigcup_{t=0}^{u(n)}B^{(n)}_{1^t2}\right\rvert & =
	  \Omega(\lvert T_n\rvert),
    \end{align*}
    then there exists a function~$w\:\mathbb{N}\to\mathbb{N}$ such that~$w(n)\leq
    u(n)$ and~$\lvert B^{(n)}_{1^{w(n)}2}\rvert = \Omega(\lvert T_n\rvert)$.
  \end{claim}

  \begin{proof}
    For every~$n\in\mathbb{N}$, let
    \begin{align*}
      M(n) & = \max\{\lvert B^{(n)}_{1^w2}\rvert : w \leq u(n)\};
      \\
      R(n) & = \max\left\{
      \left\lvert\bigcup_{t=0}^r B^{(n)}_{1^t2}\right\rvert
      - \left\lvert\bigcup_{t=0}^r B^{(n)}_{1^t3}\right\rvert : r\leq u(n)
      \right\};
      \\
      S(n) & = \max\{
      \lvert B^{(n)}_{1^s3}\rvert
      - \lvert B^{(n)}_{1^s2}\rvert : s\leq u(n)
      \}.
    \end{align*}

    By Claim~\ref{clm:intervalsamesize}, we know that~$R(n) = o(\lvert T_n\rvert)$
    and~$S(n) = o(\lvert T_n\rvert)$.

    Suppose towards a contradiction that the claim is false. This means that we must
    have~$M(n)\neq\Omega(\lvert T_n\rvert)$, that is, there exists a
    subsequence~$(T_{k_n})_{n\in\mathbb{N}}$ of~$(T_n)_{n\in\mathbb{N}}$ such
    that~$M(k_n)=o(\lvert T_{k_n}\rvert)$.

    Note now that if~$t\leq u(k_n)$ and~$v_t\in B^{(k_n)}_{1^t2}$ (see
    Figure~\ref{fig:neighbourhoodpiece} further ahead for the neighbourhoods of the
    set~$B^{(k_n)}_{1^t2}$), then
    \begin{align*}
	  d^+(v_t) & =
      \left\lvert\bigcup_{t=0}^{t-1}B^{(k_n)}_{1^t2}\right\rvert +
      \left\lvert B^{(k_n)}_{1^t3}\right\rvert +
      d^+_{B^{(k_n)}_{1^t2}}(v_t);
      \\
	  d^-(v_t) & =
      \left\lvert\bigcup_{t=0}^{t-1}B^{(k_n)}_{1^t3}\right\rvert +
      \left\lvert B^{(k_n)}_{1^{t+1}}\right\rvert +
      d^-_{B^{(k_n)}_{1^t2}}(v_t),
    \end{align*}
    where~$d^+_A(v) = \lvert N^+(v)\cap A\rvert$ and~$d^-_A(v) = \lvert N^-(v)\cap
    A\rvert$.

    Since
    \begin{align*}
      \left\lvert\bigcup_{t=0}^{t-1}B^{(k_n)}_{1^t2}\right\rvert
      - \left\lvert\bigcup_{t=0}^{t-1}B^{(k_n)}_{1^t3}\right\rvert
      & \leq
      R(k_n);
      \\
      \left\lvert B^{(k_n)}_{1^t3}\right\rvert
      & \leq
      \left\lvert B^{(k_n)}_{1^t2}\right\rvert + S(k_n)
      \leq
      M(k_n) + S(k_n);
      \\
      d^+_{B^{(k_n)}_{1^t2}}(v_t) - d^-_{B^{(k_n)}_{1^t2}}(v_t)
      & \leq
      \lvert B^{(k_n)}_{1^t2}\rvert
      \leq
      M(k_n);
      \\
      \left\lvert B^{(k_n)}_{1^{t+1}}\right\rvert
      & \geq
      \left\lvert B^{(k_n)}_{1^{u(k_n)+1}}\right\rvert;
    \end{align*}
    we have
    \begin{align*}
      d^+(v_t) - d^-(v_t)
      & \leq
      R(k_n) + 2M(k_n) + S(k_n) - \left\lvert B^{(k_n)}_{1^{u(k_n)+1}}\right\rvert.
    \end{align*}
    Note that this bound does not depend on~$t$.

    Since~$R(k_n)=o(\lvert T_{k_n}\rvert)$, $M(k_n)=o(\lvert T_{k_n}\rvert)$,
    $S(k_n)=o(\lvert T_{k_n}\rvert)$ and~$\lvert
    B^{(k_n)}_{1^{u(k_n)+1}}\rvert=\Omega(\lvert T_{k_n}\rvert)$, this implies that
    \begin{align*}
      d^+(v) - d^-(v)
      & \leq
      R(k_n) + 2M(k_n) + S(k_n) - \left\lvert B^{(k_n)}_{1^{u(k_n)+1}}\right\rvert
      \leq
      -\epsilon\lvert T_{k_n}\rvert,    
    \end{align*}
    for every~$v\in\bigcup_{t=0}^{u(k_n)}B^{(k_n)}_{1^t2}$ and~$n\in\mathbb{N}$ large
    enough, which contradicts the fact that~$\phi$ is balanced
    (since~$\lvert\bigcup_{t=0}^{u(k_n)}B^{(k_n)}_{1^t2}\rvert = \Omega(\lvert
    T_{k_n}\rvert)$).
  \end{proof}

  \begin{claim}\label{clm:twononnegrel}
    Suppose~$u,v\:\mathbb{N}\to\mathbb{N}$ are two functions such that~$u(n) < v(n)$
    for every~$n\in\mathbb{N}$.

    If~$\lvert B^{(n)}_{1^{u(n)}2}\rvert = \Omega(\lvert T_n\rvert)$ and~$\lvert
    B^{(n)}_{1^{v(n)}2}\rvert = \Omega(\lvert T_n\rvert)$, then
    \begin{align*}
      \lvert B^{(n)}_{1^{u(n)}2}\rvert & =
      2\left\lvert\bigcup_{t=u(n)+1}^{v(n)-1}B^{(n)}_{1^t2}\right\rvert
      + 3\lvert B^{(n)}_{1^{v(n)}2}\rvert
      + o(\lvert T_n\rvert).
    \end{align*}
  \end{claim}

  \begin{proof}
    By Lemma~\ref{lem:balneigh}, we know that~$\lvert N^+(B^{(n)}_{1^{v(n)}2})\rvert -
    \lvert N^-(B^{(n)}_{1^{v(n)}2})\rvert = o(\lvert T_n\rvert)$, that is, we have
    (see Figure~\ref{fig:neighbourhoodpiece})
    \begin{align*}
      \left\lvert\bigcup_{t=0}^{v(n)-1}B^{(n)}_{1^t2}\right\rvert
      + \lvert B^{(n)}_{1^{v(n)}3}\rvert
      - \left\lvert\bigcup_{t=0}^{v(n)-1}B^{(n)}_{1^t3}\right\rvert
      - \lvert B^{(n)}_{1^{v(n)+1}}\rvert
      & =
      o(\lvert T_n\rvert).
    \end{align*}

    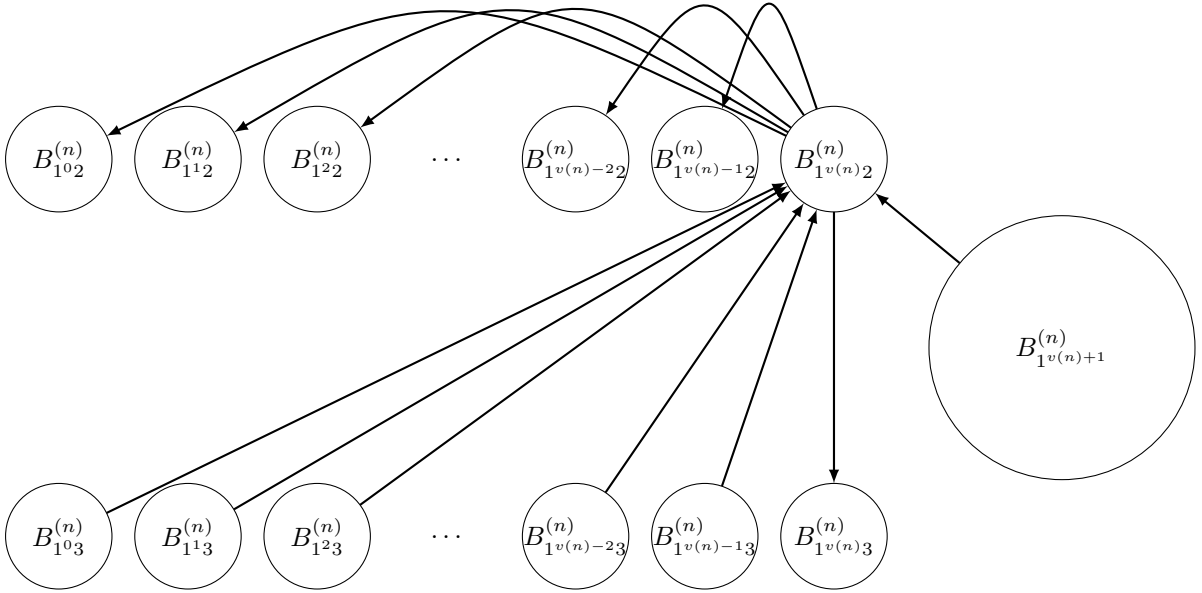
\begin{figure}[ht]
      \tikzsetnextfilename{neighbourhoodpiece}
\begin{footnotesize}
\begin{tikzpicture}
  \def\piecerad{0.7}
  \def\pieceheight{2.5}
  \def\piecehstep{1.7}
  \def\startendpieceextrahdist{\piecehstep}
  \def\lastpiecerad{1.75}
  \def\lastpiecehstep{3}
  \def\arrowcontrolheight{2.5}
  \def\piecenumberstartminusone{2}
  \def\piecenumberend{2}

  \coordinate (v-1) at ($\piecenumberstartminusone*(\piecehstep cm,0cm) +
  \piecenumberend*(\piecehstep cm,0cm) + (\startendpieceextrahdist cm,0cm) +
  (\piecehstep cm, 0cm) + (\lastpiecehstep cm, 0cm)$);
  \coordinate (v-2) at ($(v-1) + (-\lastpiecehstep cm,\pieceheight)$);
  \coordinate (v-3) at ($(v-1) + (-\lastpiecehstep cm,-\pieceheight)$);
  \draw (v-1) circle (\lastpiecerad cm);
  \draw (v-2) circle (\piecerad cm);
  \draw (v-3) circle (\piecerad cm);
  \node at (v-1) {$B^{(n)}_{1^{v(n)+1}}$};
  \node at (v-2) {$B^{(n)}_{1^{v(n)}2}$};
  \node at (v-3) {$B^{(n)}_{1^{v(n)}3}$};

  \draw[thick,arrows={-latex},shorten <=\piecerad cm, shorten >= \piecerad cm]
  (v-2) -- (v-3);
  \draw[thick,arrows={-latex},shorten <=\lastpiecerad cm, shorten >= \piecerad cm]
  (v-1) -- (v-2);

  \foreach \i in {0,...,\piecenumberstartminusone}{
    \coordinate (\i-2) at ($(\i*\piecehstep cm, \pieceheight cm)$);
    \coordinate (\i-3) at ($(\i*\piecehstep cm, -\pieceheight cm)$);
    \draw (\i-2) circle (\piecerad cm);
    \draw (\i-3) circle (\piecerad cm);
    \node at (\i-2) {$B^{(n)}_{1^{\i}2}$};
    \node at (\i-3) {$B^{(n)}_{1^{\i}3}$};

    \draw[thick,arrows={-latex},shorten <=\piecerad cm, shorten >= \piecerad cm]
    (v-2) .. controls ($1/2*(\i-2) + 1/2*(v-2) + (0cm, \arrowcontrolheight cm)$)
    .. (\i-2);
    \draw[thick,arrows={-latex},shorten <=\piecerad cm, shorten >= \piecerad cm]
    (\i-3) -- (v-2);
  }

  \foreach \i in {1,...,\piecenumberend}{
    \coordinate (e\i-2) at ($(v-2) + (-\i*\piecehstep cm, 0 cm)$);
    \coordinate (e\i-3) at ($(v-3) + (-\i*\piecehstep cm, 0 cm)$);
    \draw (e\i-2) circle (\piecerad cm);
    \draw (e\i-3) circle (\piecerad cm);
    \node at (e\i-2) {$B^{(n)}_{1^{v(n)-\i}2}$};
    \node at (e\i-3) {$B^{(n)}_{1^{v(n)-\i}3}$};

    \draw[thick,arrows={-latex},shorten <=\piecerad cm, shorten >= \piecerad cm]
    (v-2) .. controls ($1/2*(e\i-2) + 1/2*(v-2) + (0cm, \arrowcontrolheight cm)$)
    .. (e\i-2);
    \draw[thick,arrows={-latex},shorten <=\piecerad cm, shorten >= \piecerad cm]
    (e\i-3) -- (v-2);
  }

  \node at ($1/2*(e\piecenumberend-2) + 1/2*(\piecenumberstartminusone-2)$)
  {$\ldots$};
  \node at ($1/2*(e\piecenumberend-3) + 1/2*(\piecenumberstartminusone-3)$)
  {$\ldots$};
\end{tikzpicture}
\end{footnotesize}
      \caption{Neighbourhoods of~$B^{(n)}_{1^{v(n)}2}$.}
      \label{fig:neighbourhoodpiece}
    \end{figure}

    By Claim~\ref{clm:intervalsamesize}, this implies that
    \begin{align}\label{eq:vbal}
      \lvert B^{(n)}_{1^{v(n)}2}\rvert
      - \lvert B^{(n)}_{1^{v(n)+1}}\rvert
      & = o(\lvert T_n\rvert).
    \end{align}

    With an analogous argument for~$u(n)$, we get
    \begin{align*}
      \lvert B^{(n)}_{1^{u(n)}2}\rvert
      - \lvert B^{(n)}_{1^{u(n)+1}}\rvert & =
      o(\lvert T_n\rvert),
    \end{align*}
    which implies
    \begin{align*}
      \lvert B^{(n)}_{1^{u(n)}2}\rvert
      - \left\lvert
      \bigcup_{t=u(n)+1}^{v(n)-1}(B^{(n)}_{1^t2}\cup B^{(n)}_{1^t3})
      \right\rvert
      - \lvert B^{(n)}_{1^{v(n)+1}}\rvert
      - \lvert B^{(n)}_{1^{v(n)}2}\rvert
      - \lvert B^{(n)}_{1^{v(n)}3}\rvert
      & =
      o(\lvert T_n\rvert).
    \end{align*}

    Since~$\lvert B^{(n)}_{1^{v(n)}}\rvert\geq\lvert
    B^{(n)}_{1^{v(n)+1}}\rvert\geq\lvert B^{(n)}_{1^{v(n)}2}\rvert=\Omega(\lvert
    T_n\rvert)$, two more applications of Claim~\ref{clm:intervalsamesize} yield
    \begin{align}\label{eq:ubalfinal}
      \lvert B^{(n)}_{1^{u(n)}2}\rvert
      - 2\left\lvert\bigcup_{t=u(n)+1}^{v(n)-1}B^{(n)}_{1^t2}\right\rvert
      - \lvert B^{(n)}_{1^{v(n)+1}}\rvert
      - 2\lvert B^{(n)}_{1^{v(n)}2}\rvert    
      & = o(\lvert T_n\rvert).
    \end{align}

    Subtracting equation~\eqref{eq:vbal} from~\eqref{eq:ubalfinal}, we get
    \begin{align*}
      \lvert B^{(n)}_{1^{u(n)}2}\rvert
      - 2\left\lvert\bigcup_{t=u(n)+1}^{v(n)}B^{(n)}_{1^t2}\right\rvert
      - 3\lvert B^{(n)}_{1^{v(n)}2}\rvert    
      & = o(\lvert T_n\rvert).
      \qedhere
    \end{align*}
  \end{proof}

  We are now in condition of finishing the proof of the lemma. For
  every~$n\in\mathbb{N}$, let
  \begin{align*}
    v(n) & = \min\left\{v\in\mathbb{N} :
    \left\lvert\bigcup_{t=0}^v (B^{(n)}_{1^t2}\cup B^{(n)}_{1^t3})\right\rvert
    \geq \frac{2}{3}\lvert T_n\rvert
    \right\}.
  \end{align*}
  Note that~$v(n)$ is well-defined for~$n\geq 3$ and, by
  Claim~\ref{clm:intervalsamesize}, we have~$\lvert\bigcup_{t=0}^{v(n)}
  B^{(n)}_{1^t2}\rvert = \Omega(\lvert T_n\rvert)$.

  Furthermore, note that
  \begin{align*}
    \lvert B^{(n)}_{1^{v(n)+1}}\rvert & \geq \frac{1}{9}\lvert T_n\rvert;
  \end{align*}
  which, by Claim~\ref{clm:nonnegpredstrong}, implies that there exists a
  function~$w\:\mathbb{N}\to\mathbb{N}$ with~$w(n)\leq v(n)$ for
  every~$n\in\mathbb{N}$ and such that~$\lvert B^{(n)}_{1^{w(n)}2}\rvert =
  \Omega(\lvert T_n\rvert)$, that is, there exists~$n_0\in\mathbb{N}$
  and~$\epsilon>0$ such that
  \begin{align*}
    \lvert B^{(n)}_{1^{w(n)}2}\rvert & \geq \epsilon\lvert T_n\rvert
  \end{align*}
  for every~$n\geq n_0$.

  For every~$n\in\mathbb{N}$, let
  \begin{align*}
    w_0(n) & = \min\left\{w\in\mathbb{N} :
    \lvert B^{(n)}_{1^w2}\rvert
    \geq \epsilon\lvert T_n\rvert
    \right\}
  \end{align*}
  and note that, for every~$n\geq n_0$, we have that~$w_0(n)$ is well-defined
  and~$w_0(n)\leq w(n)$.

  Since~$\lvert B^{(n)}_{1^{w_0(n)}2}\rvert = \Omega(\lvert T_n\rvert)$, by
  Claim~\ref{clm:nonnegpred}, we know that
  \begin{align*}
    \left\lvert\bigcup_{t=0}^{w_0(n)-1}B^{(n)}_{1^t2}\right\rvert & =
	\Omega(\lvert T_n\rvert).
  \end{align*}

  Another application of Claim~\ref{clm:nonnegpredstrong} yields then a
  function~$u\:\mathbb{N}\to\mathbb{N}$ with~$u(n)\leq w_0(n)-1$ for
  every~$n\in\mathbb{N}$ and such that~$\lvert B^{(n)}_{1^{u(n)}2}\rvert =
  \Omega(\lvert T_n\rvert)$.

  Now, by Claim~\ref{clm:twononnegrel}, we have
  \begin{align*}
    \lvert B^{(n)}_{1^{u(n)}2}\rvert & =
    2\left\lvert\bigcup_{t=u(n)+1}^{w_0(n)-1}B^{(n)}_{1^t2}\right\rvert
    + 3\lvert B^{(n)}_{1^{w_0(n)}2}\rvert
    + o(\lvert T_n\rvert)
    \geq
    3\epsilon\lvert T_n\rvert + o(\lvert T_n\rvert),
  \end{align*}
  which implies that for~$n\in\mathbb{N}$ large enough, we have~$\lvert
  B^{(n)}_{1^{u(n)}2}\rvert\geq\epsilon\lvert T_n\rvert$, contradicting the definition
  of~$w_0(n)$.
\end{proof}


	
%
\bibliographystyle{amsplain}
\bibliography{bibliografia}

\appendix
\section{Appendix}
\label{appendix}

\subsection{Positive semi-definite matrices used}
\label{appendix:matrix}

\begin{align*}
  Q(T_5^7,1) & =
  \frac{35}{48}\cdot
  \!\!\!\kbordermatrix{
    & \Tr_3^{1,L} & \vec C_3^1 & \Tr_3^{1,M} & \Tr_3^{1,W}\\
    & 1 & -1 & -1 & 1\\
    & -1 & 1 & 1 & -1\\
    & -1 & 1 & 1 & -1\\
    & 1 & -1 & -1 & 1
  }
  =
  \frac{35}{48}\cdot
  \left(
  \begin{array}{c}
    1\\ -1\\ -1\\ 1\\
  \end{array}
  \right)
  \cdot
  \left(
  \begin{array}{c}
    1\\ -1\\ -1\\ 1\\
  \end{array}
  \right)^\top.
\end{align*}

\begin{align*}
  Q(T_5^7,\Tr_3^*) & =
  5\cdot
  \!\!\!\kbordermatrix{
    & \Tr_4^{\Tr_3^*,3} & W_4^{\Tr_3^*} & 
    \Tr_4^{\Tr_3^*,2} & R_4^{\Tr_3^*,1} &
    L_4^{\Tr_3^*} & \Tr_4^{\Tr_3^*,1} &
    R_4^{\Tr_3^*,2} & \Tr_4^{\Tr_3^*,0}\\
    & 1 & 0 & -1 & 1 & 0 & 1 & -1 & -1\\
    & 0 & 0 & 0 & 0 & 0 & 0 & 0 & 0\\
    & -1 & 0 & 1 & -1 & 0 & -1 & 1 & 1\\
    & 1 & 0 & -1 & 1 & 0 & 1 & -1 & -1\\
    & 0 & 0 & 0 & 0 & 0 & 0 & 0 & 0\\
    & 1 & 0 & -1 & 1 & 0 & 1 & -1 & -1\\
    & -1 & 0 & 1 & -1 & 0 & -1 & 1 & 1\\
    & -1 & 0 & 1 & -1 & 0 & -1 & 1 & 1
  }
  \displaybreak[0]\\
  & = 5\cdot
  \left(
  \begin{array}{c}
    1 \\ 0 \\ -1 \\ 1 \\ 0 \\ 1 \\ -1 \\ -1
  \end{array}
  \right)
  \cdot
  \left(
  \begin{array}{c}
    1 \\ 0 \\ -1 \\ 1 \\ 0 \\ 1 \\ -1 \\ -1
  \end{array}
  \right)^\top.
\end{align*}

\begin{align*}
  Q(T_5^7,\vec C_3^*) & =
  12\cdot
  \!\!\!\kbordermatrix{
    & R_4^{\vec C_3^*,3} & L_4^{\vec C_3^*} &
    R_4^{\vec C_3^*,2} & R_4^{\vec C_3^*,1} &
    R_4^{\vec C_3^*,23} & W_4^{\vec C_3^*} &
    R_4^{\vec C_3^*,12} & R_4^{\vec C_3^*,13}\\
    & 0 & 0 & 0 & 0 & 0 & 0 & 0 & 0\\
    & 0 & 1 & 0 & 0 & 0 & -1 & 0 & 0\\
    & 0 & 0 & 0 & 0 & 0 & 0 & 0 & 0\\
    & 0 & 0 & 0 & 0 & 0 & 0 & 0 & 0\\
    & 0 & 0 & 0 & 0 & 0 & 0 & 0 & 0\\
    & 0 & -1 & 0 & 0 & 0 & 1 & 0 & 0\\
    & 0 & 0 & 0 & 0 & 0 & 0 & 0 & 0\\
    & 0 & 0 & 0 & 0 & 0 & 0 & 0 & 0
  }
  \displaybreak[0]\\
  & = 12\cdot
  \left(
  \begin{array}{c}
    0 \\ 1 \\ 0 \\ 0 \\ 0 \\ -1 \\ 0 \\ 0
  \end{array}
  \right)
  \cdot
  \left(
  \begin{array}{c}
    0 \\ 1 \\ 0 \\ 0 \\ 0 \\ -1 \\ 0 \\ 0
  \end{array}
  \right)^\top.
\end{align*}

\begin{align*}
  Q(T_5^8,1) & =
  \kbordermatrix{
    & \Tr_3^{1,L} & \vec C_3^1 & \Tr_3^{1,M} & \Tr_3^{1,W}\\
    & \frac{2473}{6400} & -\frac{363}{1600} & -\frac{757}{1600} & \frac{2007}{6400}
    \\
    & -\frac{363}{1600} & \frac{1407}{6400} & \frac{1441}{6400} & -\frac{349}{1600}
    \\
    & -\frac{757}{1600} & \frac{1441}{6400} & \frac{4659}{6400} & -\frac{12}{25}
    \\
    & \frac{2007}{6400} & -\frac{349}{1600} & -\frac{12}{25} & \frac{2461}{6400}
  }.
\end{align*}

\begin{align*}
  Q(T_5^8,A) & =
  \frac{99}{3200}\cdot
  \!\!\!\kbordermatrix{
    & I^A & \vec C_3^A & \Tr_3^A & O^A\\
    & 1 & -1 & -1 & 1\\
    & -1 & 1 & 1 & -1\\
    & -1 & 1 & 1 & -1\\
    & 1 & -1 & -1 & 1
  }
  =
  \frac{99}{3200}\cdot
  \left(
  \begin{array}{c}
    1\\ -1\\ -1\\ 1\\
  \end{array}
  \right)
  \cdot
  \left(
  \begin{array}{c}
    1\\ -1\\ -1\\ 1\\
  \end{array}
  \right)^\top.
\end{align*}

\begin{align*}
  Q(T_5^8,\Tr_3^*) & =
  \kbordermatrix{
    & \Tr_4^{\Tr_3^*,3} & W_4^{\Tr_3^*} & 
    \Tr_4^{\Tr_3^*,2} & R_4^{\Tr_3^*,1} &
    L_4^{\Tr_3^*} & \Tr_4^{\Tr_3^*,1} &
    R_4^{\Tr_3^*,2} & \Tr_4^{\Tr_3^*,0}
    \\
    & \frac{31}{40} & \frac{43}{40} & -\frac{61}{200} & \frac{79}{100} 
    & -\frac{241}{200} & \frac{1}{25} & -\frac{4}{5} & -\frac{37}{100}
    \\
    & \frac{43}{40} & \frac{1511}{200} & -\frac{131}{200} & \frac{331}{100} 
    & -\frac{1253}{200} & \frac{3}{4} & -\frac{113}{25} & -\frac{5}{4}
    \\
    & -\frac{61}{200} & -\frac{131}{200} & \frac{18}{25} & -\frac{29}{50} 
    & \frac{3}{4} & -\frac{1}{4} & \frac{7}{25} & \frac{1}{25}
    \\
    & \frac{79}{100} & \frac{331}{100} & -\frac{29}{50} & \frac{187}{50} 
    & -\frac{113}{25} & \frac{7}{25} & -\frac{223}{100} & -\frac{79}{100}
    \\
    & -\frac{241}{200} & -\frac{1253}{200} & \frac{3}{4} & -\frac{113}{25} 
    & \frac{15}{2} & -\frac{67}{100} & \frac{331}{100} & \frac{11}{10}
    \\
    & \frac{1}{25} & \frac{3}{4} & -\frac{1}{4} & \frac{7}{25} 
    & -\frac{67}{100} & \frac{67}{100} & -\frac{27}{50} & -\frac{7}{25}
    \\
    & -\frac{4}{5} & -\frac{113}{25} & \frac{7}{25} & -\frac{223}{100} 
    & \frac{331}{100} & -\frac{27}{50} & \frac{187}{50} & \frac{19}{25}
    \\
    & -\frac{37}{100} & -\frac{5}{4} & \frac{1}{25} & -\frac{79}{100} 
    & \frac{11}{10} & -\frac{7}{25} & \frac{19}{25} & \frac{79}{100}
  }.
\end{align*}

\begin{align*}
  Q(T_5^8,\vec C_3^*) & =
  \kbordermatrix{
    & R_4^{\vec C_3^*,3} & L_4^{\vec C_3^*} &
    R_4^{\vec C_3^*,2} & R_4^{\vec C_3^*,1} &
    R_4^{\vec C_3^*,23} & W_4^{\vec C_3^*} &
    R_4^{\vec C_3^*,12} & R_4^{\vec C_3^*,13}
    \\
    & \frac{391}{100} & -\frac{319}{100} & \frac{13}{20} & \frac{13}{20} 
    & -\frac{123}{50} & \frac{87}{50} & \frac{9}{50} & -\frac{37}{25}
    \\
    & -\frac{319}{100} & \frac{367}{50} & -\frac{159}{50} & -\frac{159}{50} 
    & \frac{7}{4} & -\frac{76}{25} & \frac{7}{4} & \frac{7}{4}
    \\
    & \frac{13}{20} & -\frac{159}{50} & \frac{389}{100} & \frac{13}{20} 
    & -\frac{37}{25} & \frac{7}{4} & -\frac{123}{50} & \frac{9}{50}
    \\
    & \frac{13}{20} & -\frac{159}{50} & \frac{13}{20} & \frac{389}{100} 
    & \frac{9}{50} & \frac{7}{4} & -\frac{37}{25} & -\frac{123}{50}
    \\
    & -\frac{123}{50} & \frac{7}{4} & -\frac{37}{25} & \frac{9}{50} 
    & \frac{389}{100} & -\frac{159}{50} & \frac{13}{20} & \frac{13}{20}
    \\
    & \frac{87}{50} & -\frac{76}{25} & \frac{7}{4} & \frac{7}{4} 
    & -\frac{159}{50} & \frac{367}{50} & -\frac{159}{50} & -\frac{159}{50}
    \\
    & \frac{9}{50} & \frac{7}{4} & -\frac{123}{50} & -\frac{37}{25} 
    & \frac{13}{20} & -\frac{159}{50} & \frac{389}{100} & \frac{13}{20}
    \\
    & -\frac{37}{25} & \frac{7}{4} & \frac{9}{50} & -\frac{123}{50} 
    & \frac{13}{20} & -\frac{159}{50} & \frac{13}{20} & \frac{389}{100}
  }.
\end{align*}


\begin{align*}
  Q(T_5^9,1) & =
  \frac{1}{40}\cdot
  \!\!\!\kbordermatrix{
    & \Tr_3^{1,L} & \vec C_3^1 & \Tr_3^{1,M} & \Tr_3^{1,W}\\
    & 75 & -33 & -117 & 75\\
    & -33 & 33 & 33 & -33\\
    & -117 & 33 & 201 & -117\\
    & 75 & -33 & -117 & 75
  }.
\end{align*}

\begin{align*}
  Q(T_5^9,\vec C_3^*) & =
  \frac{1}{5}\cdot
  \!\!\!\kbordermatrix{
    & R_4^{\vec C_3^*,3} & L_4^{\vec C_3^*} &
    R_4^{\vec C_3^*,2} & R_4^{\vec C_3^*,1} &
    R_4^{\vec C_3^*,23} & W_4^{\vec C_3^*} &
    R_4^{\vec C_3^*,12} & R_4^{\vec C_3^*,13}
    \\
    & 36 & 0 & -18 & -18 & -18 & 0 & -18 & 36\\
    & 0 & 192 & 0 & 0 & 0 & -192 & 0 & 0\\
    & -18 & 0 & 36 & -18 & 36 & 0 & -18 & -18\\
    & -18 & 0 & -18 & 36 & -18 & 0 & 36 & -18\\
    & -18 & 0 & 36 & -18 & 36 & 0 & -18 & -18\\
    & 0 & -192 & 0 & 0 & 0 & 192 & 0 & 0\\
    & -18 & 0 & -18 & 36 & -18 & 0 & 36 & -18\\
    & 36 & 0 & -18 & -18 & -18 & 0 & -18 & 36
  }.
\end{align*}


\begin{align*}
  Q(T_5^{11},1) & =
  \frac{5}{16}\cdot
  \!\!\!\kbordermatrix{
    & \Tr_3^{1,L} & \vec C_3^1 & \Tr_3^{1,M} & \Tr_3^{1,W}\\
    & 1 & -1 & -1 & 1\\
    & -1 & 1 & 1 & -1\\
    & -1 & 1 & 1 & -1\\
    & 1 & -1 & -1 & 1
  }
  =
  \frac{5}{16}\cdot
  \left(
  \begin{array}{c}
    1\\ -1\\ -1\\ 1\\
  \end{array}
  \right)
  \cdot
  \left(
  \begin{array}{c}
    1\\ -1\\ -1\\ 1\\
  \end{array}
  \right)^\top.
\end{align*}

\begin{align*}
  Q(T_5^{11},\vec C_3^*) & =
  \frac{1}{5}\cdot
  \kbordermatrix{
    & R_4^{\vec C_3^*,3} & L_4^{\vec C_3^*} &
    R_4^{\vec C_3^*,2} & R_4^{\vec C_3^*,1} &
    R_4^{\vec C_3^*,23} & W_4^{\vec C_3^*} &
    R_4^{\vec C_3^*,12} & R_4^{\vec C_3^*,13}
    \\
    & 24 & 12 & 6 & 6 & -6 & -12 & -6 & -24\\
    & 12 & 25 & 12 & 12 & -12 & -25 & -12 & -12\\
    & 6 & 12 & 27 & 6 & -27 & -12 & -6 & -6\\
    & 6 & 12 & 6 & 24 & -6 & -12 & -24 & -6\\
    & -6 & -12 & -27 & -6 & 27 & 12 & 6 & 6\\
    & -12 & -25 & -12 & -12 & 12 & 25 & 12 & 12\\
    & -6 & -12 & -6 & -24 & 6 & 12 & 24 & 6\\
    & -24 & -12 & -6 & -6 & 6 & 12 & 6 & 24
  }.
\end{align*}


\begin{align*}
  Q(T_5^{12},1) & =
  \frac{1}{16}\cdot
  \!\!\!\kbordermatrix{
    & \Tr_3^{1,L} & \vec C_3^1 & \Tr_3^{1,M} & \Tr_3^{1,W}\\
    & 1 & -1 & -1 & 1\\
    & -1 & 1 & 1 & -1\\
    & -1 & 1 & 1 & -1\\
    & 1 & -1 & -1 & 1
  }
  =
  \frac{1}{16}\cdot
  \left(
  \begin{array}{c}
    1\\ -1\\ -1\\ 1\\
  \end{array}
  \right)
  \cdot
  \left(
  \begin{array}{c}
    1\\ -1\\ -1\\ 1\\
  \end{array}
  \right)^\top.
\end{align*}

\begin{align*}
  Q(T_5^{12},\Tr_3^*) & =
  \frac{1}{2}\cdot
  \!\!\!\kbordermatrix{
    & \Tr_4^{\Tr_3^*,3} & W_4^{\Tr_3^*} & 
    \Tr_4^{\Tr_3^*,2} & R_4^{\Tr_3^*,1} &
    L_4^{\Tr_3^*} & \Tr_4^{\Tr_3^*,1} &
    R_4^{\Tr_3^*,2} & \Tr_4^{\Tr_3^*,0}
    \\
    & 3 & 2 & -3 & 3 & -2 & 3 & -3 & -3\\
    & 2 & 3 & -2 & 2 & -3 & 2 & -2 & -2\\
    & -3 & -2 & 3 & -3 & 2 & -3 & 3 & 3\\
    & 3 & 2 & -3 & 3 & -2 & 3 & -3 & -3\\
    & -2 & -3 & 2 & -2 & 3 & -2 & 2 & 2\\
    & 3 & 2 & -3 & 3 & -2 & 3 & -3 & -3\\
    & -3 & -2 & 3 & -3 & 2 & -3 & 3 & 3\\
    & -3 & -2 & 3 & -3 & 2 & -3 & 3 & 3
  }.
\end{align*}

\begin{align*}
  Q(T_5^{12},\vec C_3^*) & =
  \frac{1}{2}\cdot
  \!\!\!\kbordermatrix{
    & R_4^{\vec C_3^*,3} & L_4^{\vec C_3^*} &
    R_4^{\vec C_3^*,2} & R_4^{\vec C_3^*,1} &
    R_4^{\vec C_3^*,23} & W_4^{\vec C_3^*} &
    R_4^{\vec C_3^*,12} & R_4^{\vec C_3^*,13}
    \\
    & 7 & 5 & 7 & 7 & -7 & -5 & -7 & -7\\
    & 5 & 5 & 5 & 5 & -5 & -3 & -5 & -5\\
    & 7 & 5 & 7 & 7 & -7 & -5 & -7 & -7\\
    & 7 & 5 & 7 & 7 & -7 & -5 & -7 & -7\\
    & -7 & -5 & -7 & -7 & 7 & 5 & 7 & 7\\
    & -5 & -3 & -5 & -5 & 5 & 5 & 5 & 5\\
    & -7 & -5 & -7 & -7 & 7 & 5 & 7 & 7\\
    & -7 & -5 & -7 & -7 & 7 & 5 & 7 & 7
  }.
\end{align*}


\subsection{Characteristic polynomials of matrices used}\label{appendix:polchar}

\begin{align*}
  P_{Q(\tsete,1)}(x) & =
  x^4 - \frac{35}{12} x^3.
  \\
  P_{Q(\tsete,\Tr_3^*)}(x) & =
  x^8 - 30 x^7.
  \\
  P_{Q(\tsete,\vec C_3^*)}(x) & =
  x^8 - 24 x^7.
  \\
  P_{Q(\toito,1)}(x) & =
  x^4
  - \frac{55}{32} x^3
  + \frac{3450823}{10240000} x^2
  - \frac{255999851}{16384000000} x.
  \\
  P_{Q(\toito,A)}(x) & =
  x^4
  - \frac{99}{800} x^3.
  \\
  P_{Q(\toito,\Tr_3^*)}(x) & =
  x^8
  - \frac{2549}{100} x^7
  + \frac{675593}{5000} x^6
  - \frac{149230249}{500000} x^5
  + \frac{133434036319}{400000000} x^4
  \\ & \qquad {}
  - \frac{1980952353887}{10000000000} x^3
  + \frac{11839377144943}{200000000000} x^2
  - \frac{346051162035699}{50000000000000} x.
  \\
  P_{Q(\toito,\vec C_3^*)}(x) & =
  x^8
  - \frac{951}{25} x^7
  + \frac{5084929}{10000} x^6
  - \frac{159696453}{50000} x^5
  + \frac{125755799203}{12500000} x^4
  \\ & \qquad {}
  - \frac{1934738582639}{125000000}x^3
  + \frac{700918768199117}{62500000000} x^2
  - \frac{300346502258201}{97656250000} x.
  \\
  P_{Q(\tnove,1)}(x) & =
  x^4 - \frac{48}{5} x^3 + \frac{693}{100} x^2.
  \\
  P_{Q(\tnove,\vec C_3^*)}(x) & =
  x^8 - 120 x^7 + \frac{94608}{25} x^6 - \frac{4478976}{125} x^5.
  \\
  P_{Q(\tonze,1)}(x) & =
  x^4 - \frac{5}{4} x^3.
  \\
  P_{Q(\tonze,\vec C_3^*)}(x) & =
  x^8
  - 40 x^7
  + \frac{12828}{25} x^6
  - \frac{327744}{125} x^5
  + \frac{565056}{125} x^4.
  \\
  P_{Q(\tdoze,1)}(x) & =
  x^4 - \frac{1}{4} x^3.
  \\
  P_{Q(\tdoze,\Tr_3^*)}(x) & =
  x^8 - 12 x^7 + 15 x^6.
  \\ 
  P_{Q(\tdoze,\vec C_3^*)}(x) & =
  x^8 - 26 x^7 + 34 x^6 - 9 x^5.
\end{align*}


\end{document}